

\documentclass[a4paper,11pt,reqno]{amsart}

\usepackage[utf8]{inputenc}

\usepackage{etex}

\usepackage{xcolor}

\definecolor{verydarkblue}{rgb}{0,0,0.5}

\usepackage[
    breaklinks,
    colorlinks,
    citecolor=verydarkblue,
    linkcolor=verydarkblue,
    urlcolor=verydarkblue,
    pagebackref=true,
    hyperindex
]{hyperref}

\backrefenglish

\usepackage{fancyhdr}

\usepackage[
    hscale=0.7,
    vscale=0.75,
    headheight=13pt,
    centering,
]{geometry}

\usepackage{amsmath}
\usepackage{amsthm}
\usepackage{amssymb}
\usepackage{mathtools}
\usepackage{bm}		
\usepackage{mathdots}
\usepackage{framed}
\usepackage[capitalize]{cleveref}
\usepackage{array}
\usepackage[alphabetic,msc-links]{amsrefs}
\usepackage[all,cmtip]{xy}

\theoremstyle{plain}

\newtheorem{introtheorem}{Theorem}

\crefname{introtheorem}{Theorem}{Theorems}

\newtheorem{theorem}{Theorem}[section]
\newtheorem*{theorem*}{Theorem}
\newtheorem{proposition}[theorem]{Proposition}
\newtheorem{lemma}[theorem]{Lemma}
\newtheorem{corollary}[theorem]{Corollary}

\theoremstyle{definition}

\newtheorem{definition}[theorem]{Definition}

\theoremstyle{remark}

\newtheorem{remark}[theorem]{Remark}
\newtheorem{example}[theorem]{Example}
\newtheorem{question}[theorem]{Question}

\numberwithin{figure}{section}

\numberwithin{equation}{section}

\IfFileExists{./article-style.tex}{


\pagestyle{fancy}
\fancyhead{}
\fancyfoot{}
\fancyhead[LE,RO]{\small \thepage}
\fancyhead[RE]{\small \nouppercase{\rightmark}}
\fancyhead[LO]{\small \nouppercase{\leftmark}}

\setcounter{tocdepth}{1}

\makeatletter


\global\BR@BackrefAlttrue

\renewcommand*{\backrefalt}[4]{%
    \tiny%
    (%
    \ifcase #1 not cited%
          \or cit.~on~p.~#2%
          \else cit.~on~pp.~#2%
    \fi%
    )%
}

\def\print@backrefs#1{%
    \space\SentenceSpace%
    \begingroup%
        \expandafter\providecommand\csname brc@#1\endcsname{0}%
        \expandafter\providecommand\csname brcd@#1\endcsname{0}%
        \expandafter\backrefalt%
            \csname brc@#1\expandafter\endcsname%
            \csname brl@#1\expandafter\endcsname%
            \csname brcd@#1\expandafter\endcsname%
            \csname brld@#1\endcsname%
    \endgroup%
}

%

\def\maketitle{\par
  \@topnum\z@ 
  \@setcopyright
  \thispagestyle{empty}
  \ifx\@empty\shortauthors \let\shortauthors\shorttitle
  \else \andify\shortauthors
  \fi
  \@maketitle@hook
  \begingroup
  \@maketitle
  \toks@\@xp{\shortauthors}\@temptokena\@xp{\shorttitle}%
  \toks4{\def\\{ \ignorespaces}}
  \edef\@tempa{%
    \@nx\markboth{\the\toks4
      \@nx\MakeUppercase{\the\toks@}}{\the\@temptokena}}%
  \@tempa
  \endgroup
  \c@footnote\z@
    \renewcommand{\footnoterule}{%
      \kern -3pt
      \hrule width \textwidth height .5pt
      \kern 2pt
    }
  {
    \renewcommand\thefootnote{}
    \vspace{-2em}
    \footnote{
      \par\vspace{-1.2em}\noindent%
      \setlength{\parindent}{0pt}%
      \def\@footnotetext##1{\noindent{\footnotesize##1}\par}%
      \let\@makefnmark\relax  \let\@thefnmark\relax
      \ifx\@empty\@date\else \@footnotetext{\@setdate}\fi
      \ifx\@empty\@subjclass\else \@footnotetext{\@setsubjclass}\fi
      \ifx\@empty\@keywords\else \@footnotetext{\@setkeywords}\fi
      \ifx\@empty\thankses\else \@footnotetext{%
        \@setthanks}%
      \fi
    }
    \addtocounter{footnote}{-1}
  }
  \@cleartopmattertags
}

%

\def\@adminfootnotes{\@empty}

%

\def\@settitle{\begin{center}%
  \baselineskip14\p@\relax
    \bfseries
\Large
  \@title
  \end{center}%
}

%

\def\@setauthors{%
  \begingroup
  \def\thanks{\protect\thanks@warning}%
  \trivlist
  \centering\footnotesize \@topsep30\p@\relax
  \advance\@topsep by -\baselineskip
  \item\relax
  \author@andify\authors
  \def\\{\protect\linebreak}%
  \large{\authors}%
  \ifx\@empty\contribs
  \else
    ,\penalty-3 \space \@setcontribs
    \@closetoccontribs
  \fi
  \endtrivlist
  \endgroup
}

%

\def\@setaddresses{\par
  \nobreak \begingroup
\footnotesize
  \def\author##1{\end{minipage}\hskip 2em \begin{minipage}[t]{.5\textwidth minus
  1em}\raggedright%
    ~\\[2em]{\bf##1}\\[.5em]%
  }%
  \interlinepenalty\@M
  \def\address##1##2{\begingroup
    {\ignorespaces##2}\endgroup\\[.5em]}%
  \def\curraddr##1##2{\begingroup
    \@ifnotempty{##2}{\nobreak\indent\curraddrname
      \@ifnotempty{##1}{, \ignorespaces##1\unskip}\/:\space
      ##2\par}\endgroup}%
  \def\email##1##2{\begingroup
    \@ifnotempty{##2}{\nobreak\indent
      \@ifnotempty{##1}{, \ignorespaces##1\unskip}
      \ttfamily##2\par}\endgroup}%
  \def\urladdr##1##2{\begingroup
    \def~{\char`\~}%
    \@ifnotempty{##2}{\nobreak\indent\urladdrname
      \@ifnotempty{##1}{, \ignorespaces##1\unskip}\/:\space
      \ttfamily##2\par}\endgroup}%
  \setlength{\parindent}{0pt}%
  \vfill%
  {
  \hskip -2em%
  \begin{minipage}{0mm}
  \addresses
  \end{minipage}
  }
  \endgroup
}

%

\renewcommand{\author}[2][]{%
  \ifx\@empty\authors
    \gdef\authors{#2}%
    \g@addto@macro\addresses{\author{#2}}%
  \else
    \g@addto@macro\authors{\and#2}%
    \g@addto@macro\addresses{\author{#2}}%
  \fi
  \@ifnotempty{#1}{%
    \ifx\@empty\shortauthors
      \gdef\shortauthors{#1}%
    \else
      \g@addto@macro\shortauthors{\and#1}%
    \fi
  }%
}
\edef\author{\@nx\@dblarg
  \@xp\@nx\csname\string\author\endcsname}

%

\def\@secnumfont{\@empty}

%

\def\section{\@startsection{section}{1}%
  \z@{.7\linespacing\@plus\linespacing}{.5\linespacing}%
  {\large\bfseries\centering}}


\newcommand{\@doititle@doi}[1]{%
    \href%
        {https://doi.org/\csname bib'doi\endcsname}%
        {\textit{#1}}%
}

\newcommand{\@doititle@url}[1]{%
    \href%
        {\csname bib'url\endcsname}%
        {\textit{#1}}%
}

\newcommand{\@doititle@mr}[1]{{%
    \def\MR##1{##1}%
    \href%
        {http://www.ams.org/mathscinet-getitem?mr=\csname bib'review\endcsname}%
        {\textit{#1} [\def\MR##1{##1}\csname bib'review\endcsname]}%
}}

\newcommand{\@doititle}[1]{%
    \IfEmptyBibField{doi}{%
        \IfEmptyBibField{url}{%
            \IfEmptyBibField{review}{%
                \let\@tempa\textit
            }{%
                \let\@tempa\textit
            }%
        }{%
            \let\@tempa\@doititle@url
        }%
    }{%
        \let\@tempa\@doititle@doi
    }%
    \@tempa{#1}%
}

\BibSpec{article}{%
    +{}  {\PrintAuthors}                {author}
    +{,} { \@doititle}                  {title}
    +{.} { }                            {part}
    +{:} { \textit}                     {subtitle}
    +{,} { \PrintContributions}         {contribution}
    +{.} { \PrintPartials}              {partial}
    +{,} { }                            {journal}
    +{}  { \textbf}                     {volume}
    +{}  { \PrintDatePV}                {date}
    +{,} { \issuetext}                  {number}
    +{,} { \eprintpages}                {pages}
    +{,} { }                            {status}
    +{,} { available at \eprint}        {eprint}
    +{}  { \PrintTranslation}           {translation}
    +{;} { \PrintReprint}               {reprint}
    +{.} { }                            {note}
    +{.} {}                             {transition}
}

\BibSpec{book}{%
    +{}  {\PrintPrimary}                {transition}
    +{,} { \@doititle}                  {title}
    +{.} { }                            {part}
    +{:} { \textit}                     {subtitle}
    +{,} { \PrintEdition}               {edition}
    +{}  { \PrintEditorsB}              {editor}
    +{,} { \PrintTranslatorsC}          {translator}
    +{,} { \PrintContributions}         {contribution}
    +{,} { }                            {series}
    +{,} { \voltext}                    {volume}
    +{,} { }                            {publisher}
    +{,} { }                            {organization}
    +{,} { }                            {address}
    +{,} { \PrintDateB}                 {date}
    +{,} { }                            {status}
    +{}  { \parenthesize}               {language}
    +{}  { \PrintTranslation}           {translation}
    +{;} { \PrintReprint}               {reprint}
    +{.} { }                            {note}
    +{.} {}                             {transition}
}

\BibSpec{collection.article}{%
    +{}  {\PrintAuthors}                {author}
    +{,} { \@doititle}                  {title}
    +{.} { }                            {part}
    +{:} { \textit}                     {subtitle}
    +{,} { \PrintContributions}         {contribution}
    +{,} { \PrintConference}            {conference}
    +{}  {\PrintBook}                   {book}
    +{,} { }                            {booktitle}
    +{,} { \PrintDateB}                 {date}
    +{,} { pp.~}                        {pages}
    +{,} { }                            {status}
    +{,} { available at \eprint}        {eprint}
    +{}  { \parenthesize}               {language}
    +{;} { \PrintReprint}               {reprint}
    +{.} { }                            {note}
    +{.} {}                             {transition}
}

\makeatother
}{}

\usepackage{marginnote}

\def\N{{\mathbb N}}
\def\Z{{\mathbb Z}}
\def\Q{{\mathbb Q}}
\def\R{{\mathbb R}}
\def\C{{\mathbb C}}

\def\A{{\mathbb A}}
\def\P{{\mathbb P}}

\def\cI{\mathcal{I}}

\def\cP{\mathcal{P}}

\def\B{\mathcal{B}}

\def\O{\mathcal{O}}

\def\fa{\mathfrak{a}}
\def\fb{\mathfrak{b}}

\def\fm{\mathfrak{m}}
\def\fn{\mathfrak{n}}
\def\fp{\mathfrak{p}}
\def\fq{\mathfrak{q}}

\def\a{\alpha}
\def\b{\beta}
\def\g{\gamma}
\def\d{\delta}
\def\f{\varphi}
\def\ff{\psi}
\def\e{\eta}
\def\ep{\epsilon}

\def\m{\mu}

\def\p{\pi}

\def\s{\sigma}
\def\t{\tau}
\def\x{\xi}

\def\D{\Delta}

\def\Om{\Omega}
\def\vp{\varphi}

\def\.{\cdot}
\let\circum\^
\def\^{\widehat}
\def\~{\widetilde}
\def\o{\circ}
\def\ov{\overline}

\def\surj{\twoheadrightarrow}
\def\inj{\hookrightarrow}

\def\({\left(}
\def\){\right)}

\def\liminv{\underleftarrow{\lim}}

\def\*{{}^*}

\def\cotimes{\hat\otimes}

\renewcommand{\and}{ \ \ \text{ and } \ \ }

\def\sm{\mathrm{sm}}

\def\Jac{\mathrm{Jac}}

\def\MJ{\mathrm{MJ}}

\DeclareMathOperator{\codim} {codim}

\DeclareMathOperator{\rank} {rank}
\DeclareMathOperator{\im} {Im}

\DeclareMathOperator{\Gr} {Gr}

\DeclareMathOperator{\Spec} {Spec}
\DeclareMathOperator{\Spf} {Spf}

\DeclareMathOperator{\Sing} {Sing}

\DeclareMathOperator{\depth} {depth}
\DeclareMathOperator{\adj} {adj}

\DeclareMathOperator{\ord} {ord}

\DeclareMathOperator{\Sym} {Sym}

\DeclareMathOperator{\dirlim} {\varinjlim}
\DeclareMathOperator{\invlim} {\varprojlim}

\DeclareMathOperator{\ini} {in}

\DeclareMathOperator{\Fitt} {Fitt}

\DeclareMathOperator{\Hom} {Hom}

\DeclareMathOperator{\gr} {gr}

\DeclareMathOperator{\Tor} {Tor}

\DeclareMathOperator{\height} {ht}
\DeclareMathOperator{\mld} {mld}

\DeclareMathOperator{\Quot} {Quot}
\DeclareMathOperator{\trdeg}{trdeg}
\DeclareMathOperator{\grlex}{grlex}

\def\embdim{\mathrm{edim}}
\def\embcodim{\mathrm{ecodim}}
\def\fembcodim{\mathrm{fcodim}}

\def\isom{\simeq}

\begin{document}


\title{Embedding codimension of the space of arcs}

\author{Christopher Chiu}

\address[C.\ Chiu]{%
    Fakult\"at f\"ur Mathematik\\
    Universit\"at Wien\\
    Oskar-Morgenstern-Platz 1\\
    A-1090 Wien (\"Osterreich);\\[0.25em]
    and\\[0.25em]
    Department of Mathematics and Computer Science\\
    Eindhoven University of Technology\\
    De Groene Loper 5\\
    5612 AZ Eindhoven (Netherlands)%
}

\email{c.h.chiu@tue.nl}

\author{Tommaso de Fernex}

\address[T.\ de Fernex]{%
    Department of Mathematics\\
    University of Utah\\
    155 South 1400 East\\
    Salt Lake City, UT 48112 (USA)%
}

\email{defernex@math.utah.edu}

\author{Roi Docampo}

\address[R.\ Docampo]{%
    Department of Mathematics\\
    University of Oklahoma\\
    601 Elm Avenue, Room 423\\
    Norman, OK 73019 (USA)%
}

\email{roi@ou.edu}

\subjclass[2020]{%
Primary {\scriptsize 14E18, 13B35};
Secondary {\scriptsize 14B05, 14B20, 13C15, 13F25}.}
\keywords{Arc space, Drinfeld--Grinberg--Kazhdan decomposition, embedding
codimension, power series ring}

\thanks{%
The research of the first author was partially supported by the Austrian
Science Fund (FWF) project P31338 and by the NWO Vici grant 639.033.514.
The research of the second author was partially supported by NSF Grants 
DMS-1700769 and DMS-2001254, and by NSF Grant DMS-1440140 while in residence at
MSRI in Berkeley during the Spring 2019 semester.
The research of the third author was partially supported by a grant from the
Simons Foundation (638459,~RD)%
}

\begin{abstract}
We introduce a notion of embedding codimension of an arbitrary local ring,
establish some general properties, and study in detail the case of arc spaces
of schemes of finite type over a field. Viewing the embedding codimension as a
measure of singularities, our main result can be interpreted as saying that the
singularities of the arc space are maximal at the arcs that are fully embedded
in the singular locus of the underlying scheme, and progressively improve as we
move away from said locus. As an application, we complement a theorem of
Drinfeld, Grinberg, and Kazhdan on formal neighborhoods in arc spaces by
providing a converse to their theorem, an optimal bound for the embedding
codimension of the formal model appearing in the statement, a precise formula
for the embedding dimension of the model constructed in Drinfeld's proof,
and a geometric meaningful way of realizing the decomposition stated in the
theorem. 
\end{abstract}


\maketitle


\section{Introduction}

\label{s:intro}

In this paper we define the \emph{embedding codimension} of an arbitrary local
ring and use it to quantify singularities of arc spaces. 

The embedding codimension is a familiar notion in the Noetherian setting,
where it is defined, for local rings, 
as the difference between the embedding dimension and the Krull dimension.
It was studied for instance in \cite{Lec64} under the name of \emph{regularity defect}.
Note, however, that if the ring is not Noetherian then both of these quantities can
be infinite, and even when they are finite it can happen that the embedding
dimension is smaller than the dimension. 
Rank-two valuation rings give simple examples where this phenomenon occurs.
 
Arc spaces provide a situation of geometric interest where non-Noetherian rings
and rings of infinite embedding dimension naturally arise. With this in mind,
we extend the definition of embedding codimension to arbitrary local rings
$(A,\fm,k)$, by setting
\[
\embcodim(A) := \height(\ker(\g))
\]
where $\g \colon \Sym_k(\fm/\fm^2) \to \gr(A)$ is the natural homomorphism. 
Geometrically, we may think of $\embcodim(A)$ as the codimension of the tangent
cone of $A$ inside its tangent space. Note that when $A$ is Noetherian this
notion agrees with the classical definition of embedding codimension, as in
this case $\dim(A)=\dim(\gr(A))$.
When $A$ is a $k$-algebra, we 
have $\embcodim(A) = 0$ if and only if $A$ is formally smooth over $k$, and
therefore one can view the embedding codimension as a (rough) measure of singularity.

If $(A,\fm,k)$ is equicharacteristic, then an alternative definition can be
given by considering the infimum of $\height(\ker(\t))$ for all surjective continuous
$\t \colon k[[x_i \mid i \in I ]] \to \^A$. We call the resulting notion
\emph{formal embedding codimension}, and denote it by $\fembcodim(A)$. 
In this paper we establish the following comparison theorem.

\begin{introtheorem}
\label{t:intro:embcodim-vs-grcodim}
For every equicharacteristic local ring $(A,\fm,k)$, we have
\[
\embcodim(A) \le \fembcodim(A),
\]
and equality holds in the following cases:
\begin{enumerate}
\item
the ring $A$ has embedding dimension
$\embdim(A) < \infty$, or
\item
there exists a scheme $X$ of finite type over $k$ such that $A$ is isomorphic
to the local ring of the arc space of $X$ at a $k$-rational point.
\end{enumerate}
\end{introtheorem}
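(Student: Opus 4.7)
My plan is to prove the inequality first in full generality, and then establish equality in each of the two cases separately, using a minimal presentation for Case (1) and the Drinfeld--Grinberg--Kazhdan theorem for Case (2). For the general inequality, given any continuous surjection $\t \colon S = k[[x_i : i \in I]] \surj \hat A$, I consider the associated graded map $\gr(\t) \colon \gr(S) \to \gr(A)$ and factor it as $\gr(\t) = \g \circ \phi$, where $\phi \colon \gr(S) = \Sym_k(\fm_S/\fm_S^2) \to \Sym_k(\fm/\fm^2)$ is the surjection induced (via the symmetric algebra functor) by the surjective cotangent map $\fm_S/\fm_S^2 \surj \fm/\fm^2$. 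Surjectivity of $\phi$ gives $\height(\ker(\gr(\t))) = \height(\phi^{-1}(\ker(\g))) \ge \height(\ker(\g)) = \embcodim(A)$, reducing the problem to the comparison $\height(\ker(\t)) \ge \height(\ini(\ker(\t)))$, where $\ini(\ker(\t)) = \ker(\gr(\t))$ is the ideal of leading forms. This is classical in the Noetherian case (in fact $\dim(S/I) = \dim(\gr(S)/\ini(I))$ yields equality); for our potentially non-Noetherian $S$, I would reduce prime chains below $\ker(\t)$ to ones involving only finitely many $x_i$, thereby invoking the Noetherian case on a suitable sub--power series ring.

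For Case (1), when $n := \embdim(A) < \infty$, pick a continuous surjection $\t \colon k[[x_1, \ldots, x_n]] \surj \hat A$ lifting a basis of $\fm/\fm^2$. Here $S$ is Noetherian, $\phi$ above is an isomorphism, and the classical equality $\height(I) = \height(\ini(I))$ gives $\height(\ker(\t)) = \embcodim(A)$, which combined with the general inequality yields $\fembcodim(A) = \embcodim(A)$.

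For Case (2), the Drinfeld--Grinberg--Kazhdan theorem provides an isomorphism $\hat A \cong \hat R \mathbin{\hat\otimes_k} k[[y_j : j \in J]]$, where $\hat R$ is a Noetherian complete local $k$-algebra (the finite model at $\a$) and $J$ a possibly countable set. The tensor structure readily yields $\embcodim(A) = \embcodim(\hat R)$: freely adjoining the $y_j$ contributes the same polynomial factor to both $\Sym_k(\fm/\fm^2)$ and $\gr(A)$, so $\g$ is the base change of the analogous map for $\hat R$ and the height of its kernel is unaffected. For the reverse inequality $\fembcodim(A) \le \embcodim(\hat R)$, one starts from a minimal presentation $\hat R \cong k[[z_1,\ldots,z_m]]/\cI$ and builds $\t \colon S' = k[[z_1,\ldots,z_m, y_j : j \in J]] \surj \hat A$ with $\ker(\t) = \cI \cdot S'$; the task is then to verify $\height(\cI \cdot S') = \height(\cI) = \embcodim(\hat R)$, which should hold because $\cI$ is generated by finitely many elements of the Noetherian subring $k[[z_1,\ldots,z_m]]$, and going-down for the flat extension $k[[z_1,\ldots,z_m]] \hookrightarrow S'$ allows clean lifting of prime chains.

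The main obstacle throughout is the non-Noetherianity of $S$ and $S'$: standard facts about height behavior---passage to initial ideals in the general inequality, and extension of ideals by free variables in Case (2)---must be carefully extended to this setting, typically by exploiting that the relevant ideals (or at least their generating sets) live in Noetherian subrings and reducing to the Noetherian case there.
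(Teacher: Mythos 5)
Your reduction of the general inequality to the statement $\height(\ker\t) \ge \height(\ini(\ker\t))$ matches the paper, but your plan for proving it---``reduce prime chains below $\ker(\t)$ to ones involving only finitely many $x_i$ and invoke the Noetherian case''---has a genuine gap. The ideal $\fa = \ker(\t)$ lives in the power series ring $\^P$ and $\ini(\fa)$ lives in the polynomial ring $P$, and the two restrictions $\fa \cap \^P_J$ and $\ini(\fa) \cap P_J$ for a finite subset $J$ are not readily comparable: there is no reason for $\ini(\fa \cap \^P_J)$ to contain $\ini(\fa) \cap P_J$, so a prime chain below $\ini(\fa)$ living in $P_J$ does not transfer to a chain below $\fa$ inside $\^P_J$. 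The paper instead takes a homogeneous regular sequence $g_1,\dots,g_c$ in $\ini(\fa)$ (which automatically lives in finitely many variables), lifts it to $f_1,\dots,f_c \in \fa$, and shows via a degeneration argument using the extended Rees algebra $\bigoplus_n \^{\fm^n}u^{-n}$ that the lifts remain a regular sequence in $\^P$, giving $\height(\fa) \ge \depth(\fa,\^P) \ge c$. That flatness/separatedness argument is the technical heart of the result and is missing from your plan.

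For Case (1) your argument is sound; it is a mild variant of the paper's, which instead goes through the identity $\embdim = \dim + \embcodim$ valid for Noetherian rings.

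For Case (2) there are two issues. First, the Drinfeld--Grinberg--Kazhdan theorem only applies when $\a(\e) \in X_\sm$; the statement covers all $k$-rational arcs, so you must also treat arcs fully contained in the singular locus. The paper does this via a separate theorem showing $\embcodim(\O_{X_\infty,\a}) = \infty$ in that case (so both sides are infinite). Second, your verification of $\height(\cI S') = \height(\cI)$ invokes going-down for the flat extension $k[[z_1,\dots,z_m]] \hookrightarrow S'$, but going-down yields the inequality $\height(\cI S') \ge \height(\cI)$, which is the wrong direction: you already have $\fembcodim(A) \ge \embcodim(\^R)$ from the general inequality, and what you need is $\height(\cI S') \le \height(\cI)$. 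Establishing that bound is nontrivial---the paper proves it by showing the localization of the infinite-variable power series ring at a finitely-definable prime is Noetherian, via a Weierstrass division argument---and it does not follow from flatness alone.
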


In order to prove \cref{t:intro:embcodim-vs-grcodim} and related results on the
formal embedding codimension,
we make use of various results concerning 
infinite-variate power series rings and their localizations
which are of independent interest in the study of non-Noetherian rings.

Local rings of arc spaces and their completions where studied in
\cite{GK00,Dri,Reg06,Reg09,Reg18,MR18,dFD}. The paper \cite{dFD} looks
at the embedding dimension of the local rings to characterize stable points of
arc spaces, which were originally studied in \cite{DL99,Reg06}. 
In this paper, we consider the embedding codimension.

Let $X$ be a scheme of finite type over a field $k$, and let $X_\infty$ be its
arc space. A point $\a \in X_\infty$ corresponds to a formal arc $\a \colon
\Spec K[[t]] \to X$ where $K$ is the residue field, and defines a valuation
$\ord_\a$ on the local ring of $X$ at the base point $\a(0)$ of the arc (the
image of the closed point of $\Spec K[[t]]$). It is convenient to denote by
$\a(\e)$ the image of the generic point of $\Spec K[[t]]$. With this notation,
we can state our next theorem. 

\begin{introtheorem}
\label{t:intro:arc-finite-grcodim-embcodim}
Let $X$ be a scheme of finite type over a field $k$, and $\a \in X_\infty$.
\begin{enumerate}
\item
Assume that either $k$ has characteristic zero or $\a \in X_\infty(k)$. Then 
we have that $\embcodim(\O_{X_\infty,\a}) < \infty$ if and only if $\a(\e) \in
X_\sm$.
\item
Assume that $k$ is perfect and $\a(\e) \in X_\sm$, and 
let $X^0 \subset X$ be the irreducible component containing $\a(\e)$. Then
\[
    \embcodim(\O_{X_\infty,\a}) \le \ord_\a(\Jac_{X^0}),
\]
where $\Jac_{X^0}$ is the Jacobian ideal of $X^0$. 
\end{enumerate}
\end{introtheorem}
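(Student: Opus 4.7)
The plan is to use \cref{t:intro:embcodim-vs-grcodim}(2) to reduce both parts to bounds on $\fembcodim(\O_{X_\infty,\alpha})$, and then to analyze an explicit presentation of the completion $\widehat{\O_{X_\infty,\alpha}}$. One first reduces to the case where $X = X^0$ is affine and embedded in $\A^N$ as a local complete intersection of codimension $c$ at $\alpha(\eta)$, cut out by $c$ equations $f_1, \dots, f_c$ (replacing $X^0$ by such a local model around $\alpha(\eta)$ does not change the completion of the arc ring at $\alpha$, since $\alpha(\eta)$ lies in the open locus where $X^0$ and the CI agree). Setting $y_i(t) = \sum_{j \ge 0} y_i^{(j)} t^j$ and expanding $f_\ell(\alpha(t) + y(t)) = \sum_k G_\ell^{(k)} t^k$, we get $\widehat{\O_{X_\infty,\alpha}} = \widehat R/I$ with $\widehat R = k[[y_i^{(j)}]]$ and $I = (G_\ell^{(k)})$. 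Modulo $(y)^2$, the vector $G(t) = (G_1, \dots, G_c)^T$ equals $J(\alpha(t))\,y(t)$, where $J = (f_{\ell,x_i})$ is viewed as a $c \times N$ matrix over $k[[t]]$.

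The key step for (2) is Smith normal form applied to $J(\alpha(t))$ in the DVR $k[[t]]$. Since $\alpha(\eta) \in X_\sm$, $J(\alpha(t))$ has generic rank $c$, so there exist $P \in \GL_c(k[[t]])$ and $Q \in \GL_N(k[[t]])$ with
\[
  P\,J(\alpha(t))\,Q = \mathrm{diag}(t^{a_1}, \dots, t^{a_c}, 0, \dots, 0),
  \qquad e := \ord_\alpha(\Jac_{X^0}) = a_1 + \cdots + a_c.
\]
Expanding the combinations $\~G_\ell(t) := \sum_\mu P_{\ell,\mu}(t)\,G_\mu(t)$ and new variables $\~y(t) := Q^{-1}(t)\,y(t)$ coefficient-by-coefficient in $t$ yields a continuous automorphism of $\widehat R$ together with a new generating set for $I$, with $\~G_\ell(t) \equiv t^{a_\ell}\,\~y_\ell(t) \pmod{(\~y)^2}$ for all $\ell \le c$. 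In particular, $\~G_\ell^{(k)}$ has leading linear term $\~y_\ell^{(k-a_\ell)}$ when $k \ge a_\ell$, and is of order $\ge 2$ otherwise.

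A formal implicit function argument in the complete local ring $\widehat R$ then uses the first family of relations to eliminate the variables $\~y_\ell^{(j)}$ for $\ell \le c$, $j \ge 0$, producing an isomorphism
\[
    \widehat{\O_{X_\infty,\alpha}} \simeq k[[\~y_i^{(j)} : i > c,\ j \ge 0]]/I',
\]
where $I'$ is generated by the $e = a_1 + \cdots + a_c$ remaining equations, each of order $\ge 2$. An infinite-variate Krull-type height bound for finitely generated ideals in such power series rings — of the kind developed earlier in the paper — then gives $\height(I') \le e$, so $\fembcodim \le e$, and \cref{t:intro:embcodim-vs-grcodim}(2) transfers this to $\embcodim$, proving (2). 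Part (1)'s ``if'' direction is immediate from (2), since $\alpha(\eta) \in X^0_\sm$ is equivalent to $\ord_\alpha(\Jac_{X^0}) < \infty$.

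For the converse in (1), suppose $\alpha(\eta) \notin X_\sm$. Then every maximal minor of the Jacobian of every irreducible component through $\alpha(\eta)$ vanishes identically along $\alpha$; using the $k$-rationality of $\alpha$, or a specialization argument in characteristic zero that reduces to this case, the relations $G_\ell^{(k)}$ lose their linear parts in infinitely many coordinate directions, and one extracts infinitely many linearly independent initial forms of degree $\ge 2$ in $\gr(\widehat R)$, forcing the height of the kernel of $\Sym_k(\fm/\fm^2) \to \gr(\widehat R/I)$ to be infinite and thus $\embcodim = \infty$. The main obstacle in (2) is justifying the simultaneous elimination of infinitely many variables via the formal implicit function theorem on the infinite-variate ring $\widehat R$, together with the corresponding height bound for finitely generated ideals; in (1), the main obstacle is the converse direction, where the Smith-normal-form trick no longer applies and one must manufacture infinitely many independent higher-order initial forms directly from the defining equations of $X$ at $\alpha$.
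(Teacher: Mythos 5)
Your approach is genuinely different from the paper's, and it has two concrete gaps that, as written, are not resolved.

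The paper does not argue via an explicit presentation of $\widehat{\O_{X_\infty,\alpha}}$ at all. Instead, it proves $\embcodim(\O_{X_\infty,\alpha}) \le \limsup_n \embcodim(\O_{X_n,\alpha_n})$ (\cref{t:grcodim-of-arcs}) by taking a general linear projection $X \to \A^d$, comparing cotangent maps at the jet and arc level, and running a $\limsup$ argument on heights in a direct system of \emph{Noetherian} graded rings; the bound in part (2) then falls out of the jet-level embedding-dimension formula of \cite{dFD} combined with the elementary inequality $\embdim = \dim + \embcodim$ at the Noetherian level. The converse (1) is proved via \cref{p:grcodim-projection} using the rank of the truncation differential versus the dimension of the image of $\pi_n$, showing the embedding codimension grows linearly in $n$. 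Your Smith-normal-form/Taylor-expansion approach, on the other hand, is essentially Drinfeld's DGK argument, which the paper deliberately avoids for exactly this part (it uses DGK only later, in \S 9, where it is needed and cited as a black box).

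The first gap is in (2): after eliminating the variables $\~y_\ell^{(j)}$ ($\ell \le c$), you obtain an ideal $I'$ generated by $e$ power series in an infinite-variate power series ring, and you assert $\height(I') \le e$ by appealing to ``an infinite-variate Krull-type height bound for finitely generated ideals.'' No such bound is available in the paper, and it would require Noetherian-type machinery that fails here. The paper's results on heights (\cref{r:ht(p_j)=ht(p)}, \cref{p:finite-def-finite-ht}) apply only to ideals of \emph{finite definition}, and the question of whether a general finitely generated ideal of $k[[x_i \mid i \in I]]$ even has finitely generated initial ideal (and hence finite height) is precisely the open \cref{q:in-fin-gen}. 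In fact, the example at the end of \S 8 of the paper carries out your elimination step for a hypersurface and explicitly notes that the resulting ideal is \emph{not} a priori of finite polynomial definition — i.e., the authors anticipate exactly this obstruction and do not claim it can be bypassed. To make your route work you would essentially have to reprove the core of the DGK theorem (to show $I'$ is of finite polynomial definition), at which point the height bound follows, but at that stage you have done more work than the paper's clean jet-scheme argument.

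The second gap is in your converse of (1). Producing ``infinitely many linearly independent initial forms of degree $\ge 2$'' in $\ker(\gamma)$ does not imply $\height(\ker(\gamma)) = \infty$: consider $(x_1^2, x_1 x_2, x_1 x_3, \dots)$ in $k[x_1, x_2, \dots]$, which has infinitely many independent degree-2 generators but has radical $(x_1)$, hence height $1$. What one actually needs is a lower bound on the height, not merely many generators of the kernel. The paper obtains this lower bound via \cref{p:grcodim-projection}, which bounds $\embcodim$ from below by the rank of a cotangent map minus the dimension of a Noetherian target, together with the inequality $\dim(\ov{\pi_n(C)}) \le (n+1)\dim_{\alpha(0)}(X)$ and the strict inequality $d(\alpha) > \dim_{\alpha(0)}(X)$. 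Without some such geometric dimension count your argument for the ``only if'' direction does not go through.

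A smaller remark: you invoke \cref{t:intro:embcodim-vs-grcodim}(2) to transfer a bound on $\fembcodim$ to $\embcodim$, but that equality is stated only for $k$-rational $\alpha$, while part (2) of the theorem allows arbitrary $\alpha$ with $\alpha(\eta) \in X_\sm$ over a perfect field. You only need the one-sided inequality $\embcodim \le \fembcodim$ of \cref{t:grcodim-embcodim}, which holds unconditionally, so this is harmless but should be stated correctly.
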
 
  
One of the motivations behind this result comes from the following theorem,
originally conjectured by Drinfeld and proved by Grinberg and Kazhdan in
characteristic zero and then by Drinfeld in arbitrary characteristic. 
Here and in the following, we exclude the trivial case where
$X$ is, locally, just a reduced point. 

\begin{theorem}[\cite{GK00,Dri,Dri18}]
\label{th:DGK-intro}
Let $X$ be a scheme of finite type over a field $k$, and let $\a \in
X_\infty(k)$ be a $k$-rational point. If $\a(\e) \in X_\sm$, then there exists a
decomposition
\[
\^{X_{\infty,\a}} \isom \^{Z_z} \hat\times \D^\N
\]
where $\^{Z_z}$ is the formal completion of a scheme $Z$ of finite type over
$k$ at a point $z \in Z(k)$, $\D^\N = \Spf (k[[x_i \mid i \in \N]])$, 
and $\hat\times$ denotes the product in the category of formal $k$-schemes. 
\end{theorem}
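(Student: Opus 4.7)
The plan is to reduce the statement to a problem about solving equations in power series, following the strategy of Drinfeld. Since the conclusion concerns only the formal neighborhood $\^{X_{\infty,\a}}$, I work locally and embed an open neighborhood of $\a(0)$ in $X$ as a closed subscheme $V(f_1,\dots,f_r) \subset \A^n_k$. The arc $\a$ then corresponds to a tuple $(\a_1(t),\dots,\a_n(t)) \in k[[t]]^n$, and the ambient formal neighborhood $\^{(\A^n)_{\infty,\a}}$ is $\Spf k[[h_i^{(j)} : 1\le i\le n,\ j\ge 0]]$, where we think of $h_i(t) = \sum_{j\ge 0} h_i^{(j)} t^j$ as a small deformation of $\a_i(t)$. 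The formal neighborhood of $\a$ in $X_\infty$ is cut out inside this by the equations obtained from expanding $f_\ell(\a_i(t) + h_i(t)) = 0$ coefficient by coefficient in $t$.

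The assumption $\a(\e) \in X_\sm$ supplies the analytic input. After reordering coordinates, the minor
\[
D(t) := \det\bigl(\partial f_\ell/\partial x_i\bigr|_{x=\a(t)}\bigr)_{1\le \ell,i\le r}
\]
is a nonzero element of $k[[t]]$ of some finite order $e = \ord_t D(t)$. Writing the defining equations of $X_\infty$ via the Taylor expansion
\[
\sum_{i=1}^n \frac{\partial f_\ell}{\partial x_i}(\a(t))\,h_i(t) \;+\; R_\ell(h) \;=\; 0,
\]
where $R_\ell$ lies in the square of the ideal $(h_1,\dots,h_n)$ of $k[[t]][[h]]$, suggests solving for $h_1(t),\dots,h_r(t)$ in terms of $h_{r+1}(t),\dots,h_n(t)$ by a Newton-type iteration, after multiplying through by the adjugate of the Jacobian and absorbing a power of $D(t)$.

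The core step is then a power-series implicit function theorem: there exists an integer $M$ depending only on $e$ such that, for each $j \ge M$ and each $i \in \{1,\dots,r\}$, the coordinate $h_i^{(j)}$ is determined, as a formal power series in the remaining variables, by the values of $\{h_{i'}^{(j')} : i' > r\}$ together with $\{h_{i'}^{(j')} : i' \le r,\ j' < M\}$. This is proved by $t$-adic Newton iteration: at each stage one uses the nonvanishing of $D(t)$ modulo a suitable power of $t$ to improve a candidate solution, obtaining convergence in the completed ring. Once this triangulation is in place, the finitely many coordinates $\{h_i^{(j)} : i \le r,\ j < M\}$ together with $\{h_i^{(j)} : i > r,\ j < M\}$ (subject to finitely many remaining obstructions at low order in $t$) cut out a finite-type affine $k$-scheme $Z$ with a distinguished point $z$ corresponding to $h=0$, while the unconstrained variables $\{h_i^{(j)} : i > r,\ j \ge M\}$ contribute the $\D^\N$ factor. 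Matching completed coordinate rings on both sides then realizes the decomposition in the category of formal $k$-schemes.

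The hardest part is the rigorous implementation of the implicit function theorem in this non-Noetherian, infinite-variate setting. One must verify convergence of the Newton iteration in the $(h)$-adic topology on the completed power series ring, carefully track which variables appear in each solved expression to cleanly separate the finite-type part from the free part, and check that the resulting change of variables is a continuous $k$-algebra isomorphism between $\^{\O_{X_\infty,\a}}$ and $\^{\O_{Z,z}} \cotimes_k k[[x_i : i \in \N]]$. A secondary subtlety is that the precise integer $M$ and the scheme $Z$ depend on choices (of embedding and of adjugate), and one has to check that the resulting formal product is canonical enough to make the statement meaningful.
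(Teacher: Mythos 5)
The paper does not prove this theorem itself; it cites \cite{GK00,Dri,Dri18}, and the construction underlying Drinfeld's proof is recalled in \cref{s:Drinfeld-model}. Your sketch is aimed in the same general direction as Drinfeld's argument, but it has a genuine gap at the crucial step.

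Your plan is: Taylor-expand, solve for $h_1^{(j)},\dots,h_r^{(j)}$ for $j\geq M$ by a Newton iteration, and declare that the leftover low-order relations cut out a finite-type scheme $Z$. The problem is that the eliminated coordinates $h_i^{(j)}$ ($i\le r$, $j\ge M$) depend on \emph{all} of the free coordinates $\{h_{i'}^{(j')}:i'>r\}$, so after substitution the residual obstruction equations are power series involving infinitely many of the remaining variables, and there is no reason they should be polynomials. What you obtain this way is precisely a \emph{formal embedding} of $\^{\O_{X_\infty,\a}}$ into a power series ring (essentially the content of \cref{c:proj-eff-formal-emb}), not a DGK decomposition. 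The paper makes exactly this point in the worked example at the end of \cref{s:DGK}: after the analogous coordinate change, the ideal $\^\fa=(\bar h_0,\dots,\bar h_{d-1})$ one obtains is \emph{not} visibly of finite polynomial definition in the chosen coordinates, and so does not directly yield a $Z$ of finite type. The gap between a formal embedding and an actual finite-type model is the whole difficulty, and it is nontrivial: \cref{e:weak-DGK-vs-strong-DGK} (Whitney's example) shows that a complete Noetherian local ring need not be the completion of a local ring of a finite-type scheme, so one cannot sidestep the issue.

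Drinfeld's proof closes this gap with two ingredients your sketch omits. First, one reduces to a complete intersection $X'$ containing the component $X^0$ through $\a(\e)$; without this reduction, if $r>\codim(X)$ then the $r\times r$ minor $D(t)$ you write down is identically zero on $X$, so the iteration never starts. Second, and most importantly, one applies Weierstrass preparation to produce a monic polynomial $q(t)$ of degree $e=\ord_\a(\Jac_{X^0})$ and changes coordinates by $x(t)=q(t)^2\xi(t)+\bar x(t)$. It is this reparametrization, together with the explicit congruence conditions \eqref{eq:dri-model-equations} (polynomial in the coefficients of $q,\bar x,\bar y$), that disentangles a genuine finite-type scheme $Z\subset\A^m$ with $m=e(1+2d+c)$ from an honest formal disk $\D^\N$ in the new $\xi$-coordinates. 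Your "secondary subtlety" at the end of the sketch is in fact the main point, and without the Weierstrass step the argument does not reach the stated conclusion.
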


Given the existence of an isomorphism as in \cref{th:DGK-intro}, we say that
$X_\infty$ admits a \emph{DGK decomposition} at $\a$. The germ $(Z,z)$ (given
by $\Spec \O_{Z,z}$) as well as its completion $\^{Z_z}=\Spf(\^{\O_{Z_z}})$ are
often referred to as a \emph{formal model} for $\a$.
Drinfeld's proof yields an algorithm for computing such a model; we 
will refer to it as a \emph{Drinfeld model}.

Partial converses of \cref{th:DGK-intro} were obtained in \cite{BS17a}, where
it is given an explicit example of a $k$-valued constant arc through the
singular locus of $X$ for which a DGK decomposition does not exist, and in
\cite{CH}, where it is shown that, in characteristic zero, if $\a$ is any
constant arc contained in the singular locus of $X$ then there are no smooth
factors in $\^{X_{\infty,\a}}$ at all. Examples of non-constant arcs that are
contained in the singular locus for which there is no DGK decomposition can
easily be constructed from these results; see also \cite{Seb16,BS17c} for
related results. An extension of the theorem to formal schemes is given in
\cite{BS17b}. Formal models of toric singularities are studied in 
\cite{BS19b}. 

As an application of our methods, we give a sharp converse to \cref{th:DGK-intro}
and provide an optimal bound to the embedded codimension of the formal model.

\begin{introtheorem}
\label{t:intro:DGK-converse}
Let $X$ be a scheme of finite type over a field $k$, and let $\a \in
X_\infty(k)$ be a $k$-rational point. 
\begin{enumerate}
\item
If $X_\infty$ admits a DGK decomposition at $\a$, then $\a(\e) \in X_\sm$. 
\item
Assume that $k$ is perfect and $\a(\e) \in X_\sm$, and let $X^0 \subset X$ 
be the irreducible component containing $\a(\e)$. 
Then for any formal model $(Z,z)$ for $\a$ we have
\[
\embcodim(\O_{Z,z}) \le \ord_\a(\Jac_{X^0}).
\]
Moreover, for every $e \in \N$ there exist $X$ and $\a$ such that both sides in
this formula are equal to $e$.
\end{enumerate}
\end{introtheorem}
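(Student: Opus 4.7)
The strategy is to upgrade Theorems A and B from local rings of arc spaces to formal models by exploiting the behavior of the formal embedding codimension under the completed product $\hat\times$ appearing in \cref{th:DGK-intro}. Since $\D^\N = \Spf k[[x_i \mid i \in \N]]$ is formally smooth, its formal embedding codimension vanishes, and we expect---based on results about power series rings in infinitely many variables developed earlier in the paper---that $\fembcodim$ is additive under $\hat\times$, so that a DGK decomposition yields $\fembcodim(\^{X_{\infty,\a}}) = \fembcodim(\O_{Z,z})$.

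For part (1), the ring $\O_{Z,z}$ is Noetherian, so Theorem A(1) gives $\fembcodim(\O_{Z,z}) = \embcodim(\O_{Z,z}) \le \embdim(\O_{Z,z}) < \infty$. Completion invariance of $\fembcodim$ then forces $\fembcodim(\O_{X_\infty,\a}) < \infty$; Theorem A(2) upgrades this to $\embcodim(\O_{X_\infty,\a}) < \infty$, and Theorem B(1) concludes $\a(\e) \in X_\sm$. For part (2), assuming $\a(\e) \in X_\sm$, the desired bound follows from the chain
$$\embcodim(\O_{Z,z}) = \fembcodim(\O_{Z,z}) = \fembcodim(\O_{X_\infty,\a}) = \embcodim(\O_{X_\infty,\a}) \le \ord_\a(\Jac_{X^0}),$$
combining Theorem A applied both to the Noetherian ring $\O_{Z,z}$ and to the arc-space local ring, additivity of $\fembcodim$ under $\hat\times$ together with completion invariance, and Theorem B(2).

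For the sharpness claim in part (3), I would exhibit, for each $e \in \N$, an explicit pair $(X,\a)$ realizing equality. For $e \ge 2$ a natural candidate is the $A_{e-1}$-singularity $X = V(xy - z^e) \subset \A^3_k$ with the $k$-rational arc $\a(t) = (t^e, 0, 0)$: one computes $\Jac_X = (y, x, z^{e-1})$ and $\ord_\a(\Jac_X) = e$, while $\a(\e)$ is a smooth point of $X$ since the partial $\partial_y f = x = t^e$ does not vanish in $k((t))$. The equality $\embcodim(\O_{Z,z}) = e$ is then verified by either running Drinfeld's algorithm at $\a$ (feasible since $X$ is a hypersurface) or by exhibiting $e$ independent generators in the defining ideal of the tangent cone of $X_\infty$ at $\a$. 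The degenerate cases $e = 0, 1$ are handled separately, e.g.\ by taking $X$ smooth or a nodal plane cubic.

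The main obstacle is establishing additivity of $\fembcodim$ under $\hat\times$ in this non-Noetherian, infinite-variable setting: one must analyze the completed tensor product $\^{\O_{Z,z}} \hat\otimes_k k[[x_i]]_{i \in \N}$ and show that its tangent cone decomposes as the product of the tangent cones of the two factors. A secondary difficulty in part (3) is computing a Drinfeld model with embedding codimension matching $\ord_\a(\Jac_X)$ on the nose, which effectively says Drinfeld's algorithmic model is minimal in these specific examples.
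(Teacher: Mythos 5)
Your overall approach is essentially the one used in the paper: bound $\embcodim(\O_{Z,z})$ by passing to $\embcodim(\O_{X_\infty,\a})$ via the DGK decomposition and then apply Theorems~A and~B, with sharpness coming from an explicit hypersurface example. However, you have correctly identified, but \emph{not resolved}, the single piece of genuine mathematical content on which the whole argument turns: the fact that for any DGK decomposition $\^A \isom \^B\,\cotimes_k\,\^P$ one has $\embcodim(A)=\fembcodim(A)=\embcodim(B)$. This is exactly \cref{p:DGK-grcodim} in the paper, and its proof is not a formal ``additivity of $\fembcodim$ under $\hat\times$'': it requires showing that the height of a finite-definition ideal $\fa\subset k[[x_i\mid i\in I]]$ equals the height of $\fa_J$ in the Noetherian subring $k[[x_j\mid j\in J]]$ (\cref{r:ht(p_j)=ht(p)}), as well as the compatibility of initial ideals with this extension (\cref{l:in-for-fin-def}); these in turn rest on flatness of the completion map (\cref{c:flatness-completion}), Noetherianity of the localization $\^P_{(x_j\mid j\in J)}$ (\cref{c:aq-noetherian}, proved via a Weierstrass division argument in the infinite-variable ring), and on standard-basis considerations. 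None of this is routine, and merely observing that $\D^\N$ is formally smooth does not supply it. Note also a minor redundancy: for part (1) you do not need Theorem~A(2); once $\fembcodim(\O_{X_\infty,\a})<\infty$, the general inequality $\embcodim\le\fembcodim$ (\cref{t:grcodim-embcodim}) already gives $\embcodim<\infty$, and Theorem~A(2) is itself proved in the paper using the very additivity result you flag as the obstacle, so invoking it does not save any work.

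On the sharpness example, your candidate $X=V(xy-z^e)$, $\a=(t^e,0,0)$ has the correct $\ord_\a(\Jac_X)=e$, but the verification that $\embcodim(\O_{Z,z})=e$ is not nearly as immediate as you suggest, and it is a genuinely different computation from the one the paper carries out. The paper works with $X=V(x_0x_{n+1}+x_1x_2)$, $\a=(t^m,0,\dots,0)$: here the crucial point is that $f=x_1x_2$ has multiplicity two, so the initial forms $\ini(f^{(0)}),\dots,\ini(f^{(m-1)})$ are degree-two polynomials that (a) involve only the $x_1,x_2$ variables and thus survive in the quotient by the linear part of the defining ideal, and (b) form a regular sequence, which makes the height computation immediate. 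In your example, $f=-z^e$ has multiplicity $e$, so for $e\ge3$ the initial forms $\ini(f^{(\ell)})$ for $\ell<e$ are quadratic expressions $\sum_{k+l=\ell}x^{(k)}y^{(l)}$ that all lie in the linear ideal generated by the $y^{(j)}$'s and therefore die in the tangent cone; the contributions to the embedding codimension only appear after eliminating the $y^{(j)}$'s, at which point one must track degree-$e$ terms (coming from $z^e$). Even for $e=2$, where the multiplicity is $2$, the initial forms $(z^{(0)})^2$ and $z^{(0)}z^{(1)}$ are \emph{not} a regular sequence, and the required height-$2$ contribution only emerges from syzygies involving $q$-variables in the associated Rees/Weierstrass analysis. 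So your example is plausibly correct, but it would need an argument quite different from the paper's; as written, the claim is asserted rather than proved.
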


The next result, which combines results of this paper with
\cref{th:DGK-intro}, provides a geometrically meaningful way of realizing a DGK
decomposition and gives an explicit formula for the embedding dimension of
a Drinfeld model.

\begin{introtheorem}
\label{th:DGK-proj-intro}
Let $X$ be an affine scheme of finite type over a perfect field $k$, let $\a \in
X_\infty(k)$ be a $k$-rational point with $\a(\e) \in X_\sm$, and let $d =
\dim_{\a(\e)}(X)$. Let $f \colon X \to Y := \A^d$ be a general linear
projection. 
\begin{enumerate} 
\item
\label{item1:DGK-proj-intro}
The map $f_\infty \colon X_\infty \to Y_\infty$ induces an isomorphism from the
Zariski tangent space of $X_\infty$ at $\a$ to the Zariski tangent space of
$Y_\infty$ at $\b := f_\infty(\a)$, and hence a closed embedding 
\[
\^{f_{\infty,\a}} \colon \^{X_{\infty,\a}} \inj \^{Y_{\infty,\b}}.
\]
\item
\label{item2:DGK-proj-intro}
For a suitable isomorphism $\^{Y_{\infty,\b}} \isom \Spec k[[u_i \mid i \in
\N]]$, the scheme $\^{X_{\infty,\a}}$ is defined in this embedding by finitely
many polynomials in the variables $u_j$, and hence the embedding gives a DGK
decomposition of $X_\infty$ at $\a$.
\item
\label{item3:DGK-proj-intro}
Let $X^0 \subset $X be the irreducible component containing $\a(\e)$,
and set $e := \ord_\a(\Jac_{X^0})$. Denote by $\^{Y_{2e-1,\b_{2e-1}}}$ the
completion of the 
$(2e-1)$-jet scheme of $Y$ at the truncation of $\b$. 
If $(Z,z)$ is a Drinfeld model compatible with the projection $f$, then the
composition of maps
\[
\^{Z_z} \inj \^{X_{\infty,\a}} \inj \^{Y_{\infty,\b}} \surj \^{Y_{2e-1,\b_{2e-1}}}
\]
gives an embedding of $\^{Z_z}$ into $\^{Y_{2e-1,\b_{2e-1}}}$,
and this embedding induces an isomorphism at the level of continuous tangent
spaces. In particular, the local ring $\O_{Z,z}$ has embedding dimension
\[
\embdim(\O_{Z,z}) = 2d \ord_\a(\Jac_{X^0}).
\]
\end{enumerate}
\end{introtheorem}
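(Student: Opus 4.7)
The plan is to handle the three parts in sequence, exploiting the explicit coordinate structure of $Y_\infty = \A^d_\infty$ and combining it with the embedding-codimension bound from \cref{t:intro:arc-finite-grcodim-embcodim}. After replacing $X$ by an affine neighborhood of $\a(0)$, embed $X \inj \A^N$ with defining equations $F_1, \ldots, F_m$, choose coordinates so that $f$ is the projection onto the first $d$ coordinates, and set $e := \ord_\a(\Jac_{X^0})$.

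For (\ref{item1:DGK-proj-intro}), the Zariski tangent space identifies with $T_\a X_\infty = \ker\bigl(\Jac_F(\a) \colon k[[t]]^N \to k[[t]]^m\bigr)$ while $T_\b Y_\infty = k[[t]]^d$, so $df_\infty$ becomes the projection onto the first $d$ coordinates. Since $\a(\e) \in X_\sm$ lies on the $d$-dimensional component $X^0$, the matrix $\Jac_F$ has rank $N-d$ at $\a(\e)$; a general choice of projection arranges that an $(N-d)\times(N-d)$ minor of the dependent-columns submatrix realizes the minimal order $e$. Injectivity of $df_\infty$ is then immediate, and surjectivity follows by Cramer's rule: for any $w \in k[[t]]^d$ the candidate solution in the remaining coordinates lies a priori in $k((t))$, but the minimality of the determinant order forces the requisite $t$-divisibility so that the solution is in $k[[t]]$, and the supplementary $F_\ell$ are in the ideal generated by $N-d$ of them at the smooth generic stalk of $X^0$, so the extra equations hold automatically. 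To pass from iso of tangent spaces to a closed embedding of completions, I endow the tangent and cotangent spaces with their natural $t$-adic linear topologies, under which they are continuous duals of each other, so that the iso dualizes to a topological iso of cotangent spaces and hence, via a topological-Nakayama argument, to a surjection of complete local rings.

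For (\ref{item2:DGK-proj-intro}), write $\^{Y_{\infty,\b}} = \Spf k[[u_{j,n}]]$ with $u_{j,n} = y_j^{(n)} - \b_j^{(n)}$ for $1 \le j \le d$ and $n \ge 0$. Using (\ref{item1:DGK-proj-intro}) together with a formal implicit-function argument applied to the Hasse--Schmidt equations $F_\ell^{(n)} = 0$, express each dependent coordinate $x_j^{(n)} - \a_j^{(n)}$ with $j > d$ as a formal power series in the $u_{j,n}$. Substituting back, only finitely many of the $F_\ell^{(n)}$ yield nontrivial polynomial relations in the $u_{j,n}$, with the cutoff governed by $e$: once $n$ is large enough the dependent Jacobian solves the system formally and no new constraint is imposed. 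These finitely many polynomial relations cut out the Drinfeld model $(Z,z)$, while the remaining independent variables provide the smooth factor $\D^\N$, giving the required DGK decomposition.

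For (\ref{item3:DGK-proj-intro}), \cref{t:intro:arc-finite-grcodim-embcodim} gives $\embcodim(\O_{X_\infty,\a}) \le e$, and formal smoothness of $\D^\N$ implies $\embcodim(\O_{Z,z}) = \embcodim(\O_{X_\infty,\a}) \le e$. To prove the claimed embedding and dimension formula, I plan to track Drinfeld's algorithm and verify that the equations defining $Z$ only involve variables $u_{j,n}$ with $n \le 2e-1$, the core arithmetic input being that the recursion for solving the dependent variables stabilizes after $2e$ steps by the Greenberg-type estimate controlled by $e$. Consequently $\^{Z_z} \inj \^{Y_{\infty,\b}}$ factors through $\^{Y_{2e-1,\b_{2e-1}}} \isom \Spf k[[u_{j,n} : n \le 2e-1]]$, whose $k$-tangent space has dimension $2de$, yielding $\embdim(\O_{Z,z}) \le 2de$. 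The main obstacle is the reverse inequality: showing the induced map is an iso on continuous tangent spaces (and not merely a closed embedding) requires ruling out additional linear relations among the $u_{j,n}$ with $n \le 2e-1$ in the Drinfeld equations, which I expect to carry out by combining the sharp bound on $\embcodim(\O_{Z,z})$ with a careful block-triangular analysis of the recursion in Drinfeld's construction.
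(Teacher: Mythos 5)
Your plan for part~(\ref{item1:DGK-proj-intro}) works at the level of Zariski \emph{tangent} spaces and then tries to dualize to cotangent spaces via a topological argument; this is where the main gap is. In infinite dimensions, an isomorphism of tangent spaces $(\fm/\fm^2)^* \to (\fn/\fn^2)^*$ does \emph{not} imply an isomorphism of cotangent spaces $\fn/\fn^2 \to \fm/\fm^2$ — the $k$-linear dual is not faithful in that direction. (For instance $\phi \colon k^{(\N)} \to k^{(\N)}$, $\phi(e_i)=e_i+e_{i+1}$, is injective but not surjective, yet its full dual $k^\N\to k^\N$, $(a_j)\mapsto(a_j+a_{j-1})$, is bijective.) Equipping both sides with $t$-adic linear topologies and invoking a duality of topological vector spaces would need to be set up carefully, and you do not specify in what category the duality holds; this is not a routine step. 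The paper sidesteps the issue entirely by proving the isomorphism directly at the level of cotangent spaces, using the formula $\Omega_{X_\infty/k}\otimes L \cong \Omega_{X/k}\otimes_{\O_X} P_L$ from \cite{dFD} (with $P_L = L(\!(t)\!)/tL[[t]]$) and a $\Tor$ computation showing that the two torsion modules $T_X$ and $T_{X/Y}$ have equal length when the projection is general. That is the correct direction for deducing surjectivity of $\^{\O_{Y_\infty,\b}} \to \^{\O_{X_\infty,\a}}$.

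There is a second gap in your part~(\ref{item1:DGK-proj-intro}) argument even on its own terms: surjectivity of the tangent map $T_\a X_\infty \to T_\b Y_\infty = k[[t]]^d$ is claimed to follow because ``minimality of the determinant order forces the requisite $t$-divisibility.'' It does not. Given $w\in k[[t]]^d$, Cramer's rule yields candidate values $v_j \in k(\!(t)\!)$ for the dependent coordinates with denominator of order $e$, and there is no a priori reason the numerators are divisible by $t^e$. Establishing that divisibility is precisely the content one has to prove (in the paper it is built into the $\Tor$-equality between $T_X$ and $T_{X/Y}$), and asserting it as a consequence of minimality is circular.

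Parts~(\ref{item2:DGK-proj-intro}) and~(\ref{item3:DGK-proj-intro}) of your plan are largely deferred (``formal implicit-function argument,'' ``track Drinfeld's algorithm,'' ``block-triangular analysis of the recursion''). The paper handles~(\ref{item2:DGK-proj-intro}) by combining part~(\ref{item1:DGK-proj-intro}) with Drinfeld's existence theorem (\cref{t:DGK}), the minimal Zariski-local embedding $Z \subset \A^n$ from \cref{t:min-zariski-emb}, and \cref{p:emb-indep-basis}; there is no need to re-derive the implicit-function step yourself. For~(\ref{item3:DGK-proj-intro}) the paper (\cref{p:dgk-comparison}) writes the Drinfeld map $Z\times\A^\N\to Y_\infty$ as $x(t)=q(t)^2\xi(t)+\bar x(t)$ and computes its differential in closed form, obtaining the explicit block-triangular expression for $dx_i^{(n)}$ split at $n<e$, $e\le n<2e$, $n\ge 2e$. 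The invertible change of coordinates $\lambda$ and the truncation $\mu$ then reduce the whole problem to showing that $\psi\colon\gr(\O_{Y_{2e-1},\b_{2e-1}})\to\gr(\O_{Z,z})$ is surjective, which follows from surjectivity of the full cotangent map (part~(\ref{item1:DGK-proj-intro})). The reverse inequality you were worried about — that no extra relations appear in degrees $n\le 2e-1$ — drops out of the same computation because the map on tangent spaces is injective; it does not require the separate analysis you propose. If you want to salvage your route, you should replace the tangent-space argument in~(\ref{item1:DGK-proj-intro}) by one at the cotangent level, and replace the hand-waved divisibility by the $\Tor$ argument (or an equivalent one).
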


By combining \cref{t:intro:DGK-converse,th:DGK-proj-intro}, one sees that all
Drinfeld models $(Z,z)$ have the same dimension, and this dimension
satisfies
\[
(2d-1) \ord_\a(\Jac_{X^0}) \le \dim(Z) \le 2d \ord_\a(\Jac_{X^0}).
\]
In general, Drinfeld models are different from the minimal formal model. The
above theorem
implies that $2d \ord_\a(\Jac_{X^0})$ is an upper bound
on the embedding dimension of the minimal formal model.

The proofs of \cref{t:intro:arc-finite-grcodim-embcodim,th:DGK-proj-intro} rely on
a formula on the sheaf of differentials of $X_\infty$ from \cite{dFD}. A result
related to part~\eqref{item1:DGK-proj-intro} 
of \cref{th:DGK-proj-intro}, dealing with the case where $f
\colon X \to Y$ is a generically finite morphism of equidimensional schemes
with $X$ smooth, was obtained in \cite{EM09} using a more direct computation of
the map induced at the level of Zariski tangent spaces.
General projections to $\A^d$ are also used in \cite{Dri} in order to set
up the proof for the Weierstrass preparation theorem; however, Drinfeld's proof
does not lead to the same conclusions about $\^{f_{\infty,\a}}$ or about the
embedding of $\^{Z_z}$ into $\^{Y_{2e-1,\b_{2e-1}}}$. 
General projections to $\A^d$ were also used in \cite{Reg18,MR18}, 
and in fact our results give a new proof of one of the theorems of \cite{MR18}. 

There have been several attempts at extending \cref{th:DGK-intro} to a more
global statement, see \cite{BNS16,Ngo17,BK17,HW} (see also the more recent
\cite{Bou} which supersedes \cite{BK17}),
which at their core all rely
on the Weierstrass preparation theorem. The question stems from the expectation
that there should exist a well-behaved theory of perverse sheaves on arc spaces
(as well as on other closely related infinite dimensional spaces).
\cref{th:DGK-intro} suggests that one could try to define such perverse sheaves
in terms of the intersection complexes of the formal models, but one needs a
more global version of the DGK decomposition to make sense out of this. We
refer to the above citations for the motivations behind these questions.

Our interest in \cref{th:DGK-proj-intro} comes from the observation that the
same projection $f \colon X \to \A^d$ works for all arcs $\a' \in
X_\infty(k)$ in a neighborhood of $\a$ and having the same order of contact
with $\Jac_{X^0}$. The order of
contact with the Jacobian ideals of the irreducible components of $X$ gives 
a stratification of $X_\infty$, and the hope is
that the theorem may turn out to be useful in order to understand how the DGK
decomposition varies along strata. 

The paper is organized as follows. In the first few sections we review
some basic properties of power series rings in an arbitrary number of
indeterminates and establish various properties that we have been unable to
locate in the literature. Ideals of finite definition, which provide the
algebraic interpretation of a DGK decomposition, are discussed in
\cref{s:findef}. These sections provide some general results on non-Noetherian
rings and are independent from our applications to the study of singularities
of arc spaces.
In the following two sections the embedding
codimension and its formal counterpart are defined and general properties of these
notions are studied. Starting from \cref{s:genproj}, we focus on the case of arc spaces,
proving some technical theorems in \cref{s:genproj} and then addressing the
theorem of Drinfeld, Grinberg, and Kazhdan in \cref{s:DGK,s:Drinfeld-model}.
The last section is devoted to some applications related to Mather--Jacobian discrepancies,
among others to the case of toric singularities using results of \cite{BS19b}.

\cref{t:intro:embcodim-vs-grcodim} follows from
\cref{t:grcodim-embcodim,p:grcodim-embcodim:finite-embdim,c:emb=femb}.
\cref{t:intro:arc-finite-grcodim-embcodim} follows from
\cref{c:arc-finite-grcodim-embcodim,t:arc-finite-grcodim}.
\cref{t:intro:DGK-converse} follows from
\cref{t:DGK+converse,e:second-DGK-example}.
\cref{th:DGK-proj-intro} follows from
\cref{c:proj-eff-formal-emb,p:dgk-comparison}.

\subsubsection*{Acknowledgments}
We wish to thank Herwig Hauser,
Mel Hochster,
Hiraku Kawanoue,
Fran\c cois Loeser, 
Mircea Musta\c t\u a,
and Karl Schwede
for useful comments and discussions. 
We are also very grateful to the referee for their careful reading of the paper
and their valuable comments and remarks.


\section{Rings of formal power series}

\label{s:power-series}

In the paper we will work with rings of power series in an arbitrary number of
indeterminates. For our purposes, we adopt the definition of these rings as
completions of polynomial rings, a definition that differs from other standard
approaches to the theory. In this section, we briefly review the notions of
completion, graded ring, and rings of power series. All of the material here is
standard, but we want to fix notation and bring attention to some of the
subtleties that appear in the infinite-dimensional setting. 

Let $A$ be a ring and $\fm \subset A$ an ideal. We do not assume that $\fm$ is
a maximal ideal. We denote by $\^A := \liminv_n A/\fm^n$ the $\fm$-adic
completion of $A$ and regard it as a topological ring with respect to its limit
topology. Given an ideal $\fa \subset A$, we denote by $\^\fa \subset \^A$ the
completion of $\fa$ as a topological $A$-submodule. A basis for all neighborhoods
$U\subset \^A$ of $0$ is given by the descending chain of ideals
\[
  \^{\fm^n}=\ker(\^A\to A/\fm^n).
\]
The ideal $\^\fa$ then coincides with the topological closure of $\fa$ inside
$\^A$, that is,
\[
  \^\fa=\bigcap_n (\fa+\^{\fm^n}).
\]
Note that, if $\fm$ is not finitely generated, then the natural topology on $\^A$
may differ from the $\^\fm$-adic topology of $\^A$, see
\cref{r:limit-top-not-adic}.

We will denote by $\gr_{\fm}(A) := \bigoplus_{n \ge 0} \fm^n/\fm^{n+1}$ the
graded algebra of $A$ with respect to the $\fm$-adic filtration. If $\fm$ is
understood from context, we simply write $\gr(A)$ for $\gr_\fm(A)$. We will
regard $\^A$ as endowed with the filtration $\{\^{\fm^n}\}$ induced by the
completion, and therefore its graded algebra is given by $\gr(\^A) :=
\bigoplus_{n \ge 0} \^{\fm^n}/\^{\fm^{n+1}}$. There are natural isomorphisms
$\^{\fm^p}/\^{\fm^q} \isom \fm^p/\fm^q$ for all $p < q$. This gives a natural
identification between $\gr(\^A)$ and $\gr(A)$. If $\fa \subset A$ is an ideal
of $A$, then we write $\ini(\fa) := \bigoplus_{n \geq 0} (\fa \cap \fm^n)/\fm^{n+1}$
for the corresponding initial ideal. Similarly, for $\fa \subset \^A$, we set
$\ini(\fa) := \bigoplus_{n \geq 0} (\fa \cap \^{\fm^n})/\^{\fm^{n+1}}$. For an
element $f \in A$ (or $f \in \^A$), we write $\ini(f) \in \gr(A)$ for the
corresponding initial form.

Let $S$ be a ring, and let $\{x_i \mid i\in I\}$ be a collection of
indeterminates indexed by an arbitrary set $I$. We consider the polynomial ring
$P = S[x_i \mid i\in I]$ and denote by $\^P = S[[ x_i \mid i \in I]]$ the
completion of $P$ with respect to the ideal $(x_i \mid i \in I)$, that is,
\[
  S[[x_i \mid i \in I]] := \invlim_n S[x_i \mid i \in I]/(x_i \mid i\in I)^n.
\]

\begin{definition}
  We call $S[[x_i \mid i \in I]]$ the \emph{power series ring} in the
  indeterminates $x_i$, with $i \in I$, and with coefficients in $S$.
\end{definition}

\begin{remark}
  The polynomial ring $S[x_i \mid i\in I]$ is always the colimit (i.e., the
  union) of all $S[x_j \mid j \in J]$ with $J \subset I$ finite. On the other
  hand, if $I$ infinite then $S[[x_i \mid i \in I]]$ is not the colimit of all
  $S[[x_j \mid j \in J]]$ with $J \subset I$ finite; it is, however, the colimit
  of all $S[[x_j \mid j \in J]]$ with $J \subset I$ countable.
\end{remark}

\begin{remark}
  \label{r:limit-top-not-adic}
  Denoting $\fm = (x_i \mid i\in I)$, we have the exact sequence
  \[
    0 \to \^{\fm^n} \to S[[x_i \mid i \in I]] \to S[x_i \mid i \in I]/\fm^n \to 0.
  \]
  If $I$ is infinite then $S[[x_i \mid i \in I]]$ is not $\^\fm$-adically complete, i.e., 
  the natural topology on $S[[x_i \mid i \in I]]$ coming from the completion does not coincide with the $\^\fm$-adic
  one, as for instance the inclusion $\^\fm^2 \subset \^{\fm^2}$ is strict in this case, 
  see \cite[\href{https://stacks.math.columbia.edu/tag/05JA}{Tag 05JA}]{stacks-project}. 
%
\end{remark}

\begin{remark}
  \label{r:bourbaki-power-series}
  Let us contrast the above definition of $S[[x_i \mid i\in I]]$ with the ring of
  formal power series defined in \cite[Chapter~III, Section~2.11]{Bou74}, 
  which we want to briefly
  recall. For any set $I$ we denote by $\N^{(I)}$ the set of functions $I \to \N$
  that take only finitely many non-zero values. Then $\N^{(I)}$ is a monoid,
  which we identify with the collection of monomials in the variables $\{x_i \mid
  i \in I\}$ by writing 
  $x^\a = \prod_{i\in I,\:\a(i)\neq 0} x_i^{\a(i)}$ for every $\a \in \N^{(I)}$.
  The $S$-module $S^{\N^{(I)}}$ can be made into an $S$-algebra as follows:
  writing an element $a=(a_\a)_{\a\in\N^{(I)}}$ as
  $a=\sum_{\a\in\N^{(I)}} a_\a x^\a$,
  multiplication is defined via formal extension of $x^\a \cdot x^\b:=x^{\a+\b}$.
  We call $S^{\N^{(I)}}$ the \emph{ring of Bourbaki power series}.

  Notice that there is a natural inclusion of rings $S[[x_i \mid i\in I]] \subset
  S^{\N^{(I)}}$. This inclusion is an equality if $|I| < \infty$, and is a strict
  inclusion if $|I| = \infty$ as in this case $\sum_{i\in I} x_i$ is in
  $S^{\N^{(I)}}$ but not in $S[[x_i \mid i\in I]]$.
\end{remark}

\begin{remark}
  \label{r:series-of-series}
  It is often convenient to expand a formal power series in a subset of the
  indeterminates, but this becomes delicate in the infinite-dimensional case. Let
  $I$ and $J$ be arbitrary sets, and let $x_i$ and $y_j$ be indeterminates indexed by
  $i \in I$ and $j \in J$, respectively.
  Dropping for short the index sets from the notation, we
  have the following natural injections:
  \[
    S[[x_i]] \otimes_S S[[y_j]]
    \hookrightarrow
    S[[x_i]][[y_j]]
    \hookrightarrow
    S[[x_i,y_j]]
    \hookrightarrow
    (S[[x_i]])^{\N^{(J)}}.
  \]
  The first inclusion is always strict, and the other two are equalities if and
  only if $J$ is finite. For example, 
  if $x = x_{i_0}$ and $y = y_{i_0}$ are two respective indeterminates, then
  the series $\sum_{n \ge 0}x^ny^n$ belongs to $S[[x_i]][[y_j]]$ but is not in the 
  image of $S[[x_i]] \otimes_S S[[y_j]]$, and   
  if $\N \subset J$ and $x = x_{i_0}$ is one
  of the indeterminates, then the series $\sum_{n \geq 1} y_n x^n$ belongs to
  $S[[x_i, y_j]]$ but not to $S[[x_i]][[y_j]]$. Notice that Bourbaki power series
  are better behaved in this respect, as
  $S^{\N^{(I\sqcup J)}}
  =
  (S^{\N^{(I)}})^{\N^{(J)}}
  =
  (S^{\N^{(J)}})^{\N^{(I)}}$.
\end{remark}

\begin{remark}
  \label{r:gr-are-poly}
  Let $I$ be an arbitrary set, possibly infinite. We have natural
  identifications
  \[
    \gr(S[[x_i \mid i \in I]])
    \isom
    \gr(S[x_i \mid i \in I])
    \isom
    S[x_i \mid i \in I].
  \]
  Any non-zero power series $f \in S[[x_i \mid i \in I]]$ can be written as $f =
  \sum_{n=n_0}^\infty f_n$ where $f_n \in S[x_i \mid i \in I]$ is homogeneous of degree $n$
  and $f_{n_0} \neq 0$. Under the above identification, the initial form of $f$
  is given by $\ini(f) = f_{n_0}$. If $\fa \subset S[[x_i \mid i \in I]]$ is an ideal, then
  $\ini(\fa)$ gets identified with the ideal of $S[x_i \mid i \in I]$ generated by the initial
  forms of elements of $\fa$.
\end{remark}

\begin{proposition}
  \label{p:f.g.initial}
  Let $P = S[x_i \mid i \in I]$ and $\^P = S[[x_i \mid i \in I]]$,
  where $S$ is a ring and $I$ a set.
  Let $\fa \subset \^P$ be an ideal such that $\ini(\fa) \subset P$
  is finitely generated. Then $\fa$ is finitely generated and is closed in $\^P$.
\end{proposition}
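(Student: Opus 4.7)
The plan is a standard lifting-of-initial-forms argument that will simultaneously yield finite generation and closedness. Let $\phi_1,\dots,\phi_r$ be homogeneous generators of the graded ideal $\ini(\fa)\subset P$ (using that it is finitely generated), and lift them to $f_1,\dots,f_r\in\fa$ with $\ini(f_j)=\phi_j$. Write $\overline{\fa}:=\bigcap_n(\fa+\^{\fm^n})$ for the closure of $\fa$ in $\^P$. I will show that every $g\in\overline{\fa}$ lies in the ideal $(f_1,\dots,f_r)\^P$; this gives both $\fa\subseteq(f_1,\dots,f_r)\^P\subseteq\fa$ (so $\fa$ is finitely generated) and $\overline{\fa}\subseteq\fa$ (so $\fa$ is closed).

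The preliminary observation needed is that $\ini(\overline{\fa})=\ini(\fa)$: for $g\in\overline{\fa}$ and any $N>\ord(g)$, we may write $g=f+h$ with $f\in\fa$ and $h\in\^{\fm^N}$, whence $\ini(g)=\ini(f)\in\ini(\fa)$. Starting from $g\in\overline{\fa}$, I construct iteratively a sequence $(g_n)\subset\overline{\fa}$ of strictly increasing order. Set $g_0:=g$. Given $g_n\in\overline{\fa}$, its initial form $\ini(g_n)$ is a homogeneous element of $\ini(\fa)=(\phi_1,\dots,\phi_r)$, so it decomposes as $\ini(g_n)=\sum_j h_j^{(n)}\phi_j$ with each $h_j^{(n)}\in P$ homogeneous of degree $\ord(g_n)-\deg(\phi_j)$ (or zero). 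Setting $g_{n+1}:=g_n-\sum_j h_j^{(n)}f_j\in\overline{\fa}$, the choice of the $h_j^{(n)}$ forces $\ord(g_{n+1})>\ord(g_n)$, so $\ord(g_n)\to\infty$ and $g_n\to 0$ in $\^P$.

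For each $j$, the sum $c_j:=\sum_n h_j^{(n)}$ is a sum of homogeneous polynomials of strictly increasing degrees, so its partial sums form a Cauchy sequence in $\^P$ and define an element $c_j\in\^P$. The telescoping identity $\sum_{n<N}\sum_j h_j^{(n)}f_j=g-g_N$ combined with $g_N\to 0$ then yields $\sum_j c_j f_j=g$ in $\^P$, placing $g\in(f_1,\dots,f_r)\^P$ as required.

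The one potentially subtle point is this last convergence step: as Remark~\ref{r:series-of-series} emphasizes, carelessly formed infinite sums of power series in infinitely many indeterminates may land only in the Bourbaki ring $S^{\N^{(I)}}$ rather than in $\^P$. What keeps us inside $\^P$ here is strict homogeneity: each $h_j^{(n)}$ is a homogeneous polynomial of a single, strictly increasing degree, so the partial sums of $c_j$ are coherent modulo every $\fm^N$, and multiplication by the fixed $f_j$ preserves this coherence since $\ord(h_j^{(n)}f_j)\ge\ord(g_n)$. All remaining steps amount to routine bookkeeping with initial forms.
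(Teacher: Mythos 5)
Your proof is correct and is precisely the standard lifting-of-initial-forms argument: the paper's own proof is a one-line citation to Eisenbud, Proposition 7.12, which establishes this by the same iterative Newton-type construction for any ring complete with respect to a filtration. You have correctly filled in the details for the infinite-variable power series setting, in particular the check that each coefficient series $c_j$ converges in $\^P$ (rather than merely in the Bourbaki ring) because its homogeneous summands have strictly increasing degrees.
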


\begin{proof}
  This is proven in \cite[Proposition 7.12]{Eis95} for any ring $R$ which is complete with respect to some filtration $\fm_i$.
\end{proof}

\begin{question}
\label{q:in-fin-gen}
Does the converse of \cref{p:f.g.initial} hold, that is, is the initial ideal
$\ini(\fa)$ finitely generated for any finitely generated ideal $\fa$ of $\^P$?
Alternatively, is any finitely generated $\fa$ already closed inside $\^P$?
\end{question}


\section{Embedding dimension}

\label{s:embdim}

In this section we briefly recall the notion of embedding dimension
and review some basic properties.

\begin{definition}
  The \emph{embedding dimension} of a local ring $(A, \fm, k)$ is defined to be
  \[
    \embdim(A) := \dim_k(\fm/\fm^2).
  \]
  The $k$-vector space $\fm/\fm^2$ is called the \emph{Zariski cotangent space}
  of $A$.
\end{definition}

When the local ring is equicharacteristic, the embedding dimension can
equivalently be computed as the dimension of an embedding of the completion in
a formal power series ring. Even more, if $A$ is essentially of finite type
over an infinite field $k$, then this embedding exists already at a
Zariski-local level (see \cref{t:min-zariski-emb}). Before we review these
facts, it is convenient to introduce some terminology and discuss some general
properties.

\begin{definition}
  Let $(A, \fm, k)$ be an equicharacteristic local ring. A \emph{formal
  coefficient field} of $A$ is a subfield $K \subset \^A$ that maps
  isomorphically to $\^A/\^\fm$ via the residue map.
\end{definition}

As it is well known, every equicharacteristic local ring $(A, \fm, k)$ admits a
formal coefficient field $K \subset \^A$ (see \cref{r:existence-coeff-field}).

In order to talk about a well-behaved notion of cotangent map between
completions of non-Noetherian rings, we make the following definition.

\begin{definition}
  \label{d:cont-Zariski-cotang-sp}
  Let $(A, \fm, k)$ be a local ring. The $k$-vector space $\^\fm/\^{\fm^2}$ is
  called the \emph{continuous Zariski cotangent space} of $\^A$. A collection of
  elements $a_i\in \^A$, $i\in I$, are called \emph{formal coordinates} if 
  their images $\ov a_i$ in $\^\fm/\^{\fm^2}$ form a basis.
\end{definition}

\begin{remark}
  The continuous Zariski cotangent space $\^\fm/\^{\fm^2}$ of $\^A$ is
  naturally isomorphic to the Zariski cotangent space $\fm/\fm^2$ of $A$, but
  in general it is not the same as the Zariski cotangent space $\^\fm/\^\fm^2$
  of $\^A$, as seen in \cref{r:limit-top-not-adic}.
\end{remark}

\begin{remark}
  If $(A, \fm, k)$ is a local ring admitting a coefficient field, then the
  continuous cotangent space of $\^A$ is isomorphic to $\^\Om_{A/k}
  \otimes_{\^A} k$, where
  \[
   \^\Om_{A/k}:=\varprojlim_n \Om_{(A/\fm^n)/k}
  \]
  is defined as in \cite[Chapter~$0_{\text{IV}}$, 20.7]{EGAiv_1}.
\end{remark}

\begin{definition}
  \label{def:formal-emb}
  Let $(A, \fm, k)$ be an equicharacteristic local ring. A \emph{formal
  embedding} of $A$ is a surjective continuous homomorphism $\t \colon \^P \to
  \^A$ where $\^P = k[[x_i \mid i\in I]]$ is a power series ring. A formal
  embedding $\t$ is called \emph{efficient} if the induced map at the level of
  continuous Zariski cotangent spaces $\^\fn/\^{\fn^2} \to \^\fm/\^{\fm^2}$ is
  an isomorphism.
\end{definition}

\begin{proposition}
  \label{l:tau-exists}
  Let $(A, \fm, k)$ be an equicharacteristic local ring. Let $K \subset \^A$ be
  a formal coefficient field, and let $a_i\in \^A$, $i\in I$, be formal
  coordinates. Then there exists a unique efficient formal embedding $\t \colon
  \^P=k[[ x_i \mid i\in I]] \to \^A$ such that $\t(k) = K$ and $\tau(x_i) =
  a_i$. Every efficient formal embedding of $A$ is of this form.
\end{proposition}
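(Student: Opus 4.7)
The plan is to separate the statement into three sub-claims: existence of $\tau$, uniqueness under the prescribed constraints, and the assertion that every efficient formal embedding arises in this fashion.

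\textbf{Existence.} I would first build $\tau$ on the polynomial subring $P = k[x_i \mid i \in I] \subset \^P$. Using the isomorphism $k \isom K$ given by the residue map, there is a unique ring homomorphism $\tau_0 \colon P \to \^A$ extending $k \isom K \hookrightarrow \^A$ and sending $x_i \mapsto a_i$. Since the $a_i$ lie in $\^\fm$, we have $\tau_0(\fn^n) \subset \^{\fm}^n \subset \^{\fm^n}$ where $\fn = (x_i)$, so $\tau_0$ is continuous with respect to the $\fn$-adic topology on $P$ and the natural topology on $\^A$. Because $\^A$ is complete and separated in its natural topology (being a limit $\liminv_n A/\fm^n$), the universal property of completion extends $\tau_0$ to a unique continuous homomorphism $\tau \colon \^P \to \^A$. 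By construction the induced map on continuous Zariski cotangent spaces sends $\ov{x}_i \mapsto \ov{a}_i$ and is thus an isomorphism, so $\tau$ is efficient provided it is surjective.

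\textbf{Surjectivity via the graded ring.} The associated graded map $\gr(\tau) \colon \gr(\^P) \to \gr(\^A)$ is a homomorphism of $k$-algebras. Using \cref{r:gr-are-poly} we identify $\gr(\^P)$ with the polynomial ring $k[x_i \mid i \in I]$, and using $\gr(\^A) \isom \gr(A) = \bigoplus_n \fm^n/\fm^{n+1}$, which is generated in degree one over $k$, the surjectivity of $\gr(\tau)$ reduces to the surjectivity in degree one, i.e., to $\^\fn/\^{\fn^2} \to \^\fm/\^{\fm^2}$ being onto — which is the efficient hypothesis. A standard successive-approximation argument now yields surjectivity of $\tau$: given $\alpha \in \^A$, I inductively choose homogeneous $f_n \in P$ of degree $n$ so that $\alpha - \tau(f_0 + \cdots + f_n) \in \^{\fm^{n+1}}$; then $f := \sum_n f_n$ is a well-defined element of $\^P$ (the graded-ring identification above making this precise), and continuity of $\tau$ together with completeness of $\^A$ in its natural topology gives $\tau(f) = \alpha$.

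\textbf{Uniqueness and characterization.} Any continuous $\tau$ satisfying $\tau(k)=K$ and $\tau(x_i)=a_i$ is determined on the dense subring $P \subset \^P$, hence on all of $\^P$ by continuity. Finally, suppose $\tau' \colon \^P' = k[[y_j \mid j \in J]] \to \^A$ is an arbitrary efficient formal embedding. Because $\tau'$ is continuous and surjective, the restriction $\tau'|_k$ is a field map whose image $K' := \tau'(k)$ surjects onto $\^A/\^\fm \isom k$, so $K'$ is a formal coefficient field of $\^A$. The elements $b_j := \tau'(y_j)$ lie in $\^\fm$, and their images in $\^\fm/\^{\fm^2}$ are the image under the cotangent map of the basis $\ov{y}_j$ of $\^{\fn'}/\^{\fn'^2}$; efficiency of $\tau'$ therefore forces the $b_j$ to be formal coordinates. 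Applying the existence and uniqueness parts above to $(K', \{b_j\})$ exhibits $\tau'$ in the prescribed form.

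\textbf{Expected obstacle.} The one delicate point is the surjectivity step: because $\^A$ need not be $\^\fm$-adically complete (\cref{r:limit-top-not-adic}), one must be careful to track the approximation errors in the larger ideals $\^{\fm^{n+1}}$ (which define the natural topology) rather than in $\^{\fm}^{n+1}$, and to verify that the formal sum $\sum f_n$ really does converge in the natural topology of $\^P$ — this is precisely where the identifications in \cref{r:gr-are-poly} are essential.
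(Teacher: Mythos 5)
Your proof is correct and takes essentially the same route as the paper: the paper also constructs $\t$ as a limit (there via compatible truncations $P/\fn^n \to A/\fm^n$, you via the universal property of completion applied to $\tau_0 \colon P \to \^A$) and then deduces surjectivity from surjectivity of $\gr(\tau)$, which the paper delegates to Bourbaki and you spell out as a successive-approximation argument.
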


\begin{proof}
  First, note that for every $n \ge 1$ the composition $K \to \^A \to
  \^A/\^{\fm^n}$ is injective (since $K$ maps isomorphically to the residue
  field $k = \^A/\^\fm$) and hence gives an embedding $K \subset A/\fm^n$ via
  the natural isomorphism $\^A/\^{\fm^n} \isom A/\fm^n$. Letting $P = k[x_i
  \mid i \in I]$ and $\fn = (x_i \mid i \in I) \subset P$, we have compatible
  homomorphisms $\t_n \colon P/\fn^n \to A/\fm^n$ such that $\t_n(k) = K$ and
  $\t_n(x_i) = a_i+\fm^n$. Taking limits, these maps define $\t$ and determine
  it uniquely. By construction $\gr(\t)$ is surjective, and hence $\t$ is
  surjective by \cite[Chapter~III, Section~2.8, Corollary~2]{Bou72}. Since
  $\t$ induces an isomorphism at the level of continuous Zariski cotangent
  spaces, we see that $\t$ is an efficient formal embedding. For the last
  statement, notice that if $\t$ is an efficient formal embedding, then clearly
  $K = \t(k)$ is a formal coefficient field and $\t(x_i)$, $i\in I$, are formal
  coordinates.
\end{proof}

The map $\t$ in \cref{l:tau-exists} can be interpreted as follows. For short,
let $P := \Sym_k(\fm/{\fm^2})$. Every choice of formal coefficient field $K
\subset \^A$ and formal coordinates $a_i \in \^A,$ $i \in I$, determines an
embedding $\fm/{\fm^2} \inj \^A$ as a $K$-vector space, and hence a map $\t_0
\colon P \to \^A$. Letting $x_i = a_i + \fm^2 \in P$, we get a natural
identification $P = k[x_i \mid i\in I]$. Then the map $\t$ is obtained from
$\t_0$ by completing the domain $P$.

\begin{remark}
  \label{r:tau-using-Sym}
  There is also a natural embedding $\fm/\fm^2 \inj \gr(A)$ as a $k$-vector
  space, which induces a map $\g \colon P \to \gr(A)$. It is immediate from the
  construction that $\gr(\t) = \g$. In particular we see that $\gr(\t)$ is
  independent of any choices. On the other hand, $\t$ itself certainly depends
  on the choices of the formal coefficient field $K$ and formal coordinates
  $a_i\in\^A$, $i\in I$. 
\end{remark}

The following result addresses the dependence of $\t$ on $K$ and $\{a_i \mid
i\in I\}$.

\begin{proposition}
  \label{p:emb-indep-basis}
  Let $(A, \fm, k)$ be an equicharacteristic local ring, let $K, K' \subset
  \^A$ be two formal coefficient fields, and let $\{a_i \mid i\in I\} \subset
  \^A$ and $\{a'_i \mid i\in I\} \subset \^A$ be two sets of formal
  coordinates. Consider the two maps $\t \colon \^P := k[[x_i \mid i\in I]] \to
  \^A$ and $\t' \colon \^P' := k[[x_i' \mid i\in I]] \to \^A$ given by
  \cref{l:tau-exists}. Then there exists an isomorphism $\f \colon \^P' \to
  \^P$ such that $\t' = \t \o \f$. 
\end{proposition}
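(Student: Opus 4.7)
My plan is to apply \cref{l:tau-exists} to the local ring $A := P_{\fn}$, where $P = k[x_i \mid i \in I]$ and $\fn = (x_i \mid i \in I)$; since the $\fn$-adic completion of this $A$ is canonically $\^P$, that proposition produces efficient formal embeddings into $\^P$ from the data of a formal coefficient field of $\^P$ and a system of formal coordinates. To obtain $\f$ satisfying $\t' = \t \circ \f$, I will choose a formal coefficient field $L \subset \^P$ with $\t(L) = K'$ and formal coordinates $b_i \in \^P$ with $\t(b_i) = a_i'$, and let $\f$ be the efficient formal embedding supplied by \cref{l:tau-exists} with $\f(k) = L$ and $\f(x_i') = b_i$. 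The equality $\t' = \t \circ \f$ will then follow from the uniqueness half of \cref{l:tau-exists} applied to $\^A$: both $\t \circ \f$ and $\t'$ are efficient formal embeddings of $\^P'$ into $\^A$ sending $k$ isomorphically onto $K'$ (via the residue-inverse) and sending each $x_i'$ to $a_i'$, hence they must coincide.

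The $b_i$ are immediate: since $\t$ is a surjective local homomorphism and $a_i' \in \^\fm$, there exist lifts $b_i \in \^\fn$ with $\t(b_i) = a_i'$, and efficiency of $\t$ forces the classes $b_i \bmod \^{\fn^2}$ to form a basis of $\^\fn/\^{\fn^2}$, making the $b_i$ formal coordinates of $\^P$. For $L$, I would build a compatible sequence of coefficient fields $L_n \subset P/\fn^n$ with $\t_n(L_n) = K' \bmod \fm^n$ by induction on $n$, where compatibility means that the projection $P/\fn^{n+1} \to P/\fn^n$ restricts to an isomorphism $L_{n+1} \to L_n$. The base case $L_1 = k$ is trivial. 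The inductive step lifts $L_n$ through the surjection $P/\fn^{n+1} \to P/\fn^n$ (whose square-zero kernel $\fn^n/\fn^{n+1}$ is a $k$-module) while keeping the lift inside $\t_{n+1}^{-1}(K' \bmod \fm^{n+1})$; this is a standard Cohen-style lifting problem, feasible in the equicharacteristic setting by a Zorn's lemma argument combined with henselianity of local rings with nilpotent maximal ideal (so that algebraic elements can be lifted via Hensel's lemma). Setting $L := \invlim L_n \subset \invlim P/\fn^n = \^P$ then gives a subring mapping isomorphically onto $k$ via residue and satisfying $\t(L) = K'$.

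It remains to verify that $\f$ is an isomorphism. Surjectivity is built into the notion of efficient formal embedding. For injectivity, by \cref{r:gr-are-poly} the associated graded $\gr(\f) \colon k[x_i'] \to k[x_i]$ is a homomorphism of polynomial rings whose degree-one component is the isomorphism $\^{\fn'}/\^{\fn'^2} \to \^\fn/\^{\fn^2}$ guaranteed by efficiency, and since it sends the polynomial generators $x_i'$ to a $k$-basis of the degree-one part of the target, it is itself an isomorphism of graded rings. Because $\bigcap_n \^{\fn'^n} = 0$ by the inverse limit construction of $\^P'$, every non-zero $u \in \^P'$ has a well-defined non-zero initial form in $\gr(\^P')$, which cannot be annihilated by the injective $\gr(\f)$, and hence $\f(u) \neq 0$. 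The main obstacle I anticipate is the inductive construction of $L$: lifting coefficient fields across nilpotent extensions is classical, but here each $L_n$ must simultaneously project correctly to $L_{n-1}$ and map onto $K' \bmod \fm^n$ under $\t_n$, so some care is required to combine the Zorn--Hensel lifting argument with the explicit position of $K'$ inside $\^A$ at every stage.
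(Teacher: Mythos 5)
Your overall plan is sound and runs parallel to the paper's proof: produce a coefficient field $L \subset \^P$ with $\t(L) = K'$ and formal coordinates $b_i \in \^P$ with $\t(b_i) = a_i'$, define $\f$ from this data via \cref{l:tau-exists} applied to $\^P$, verify it is an isomorphism by a graded argument, and conclude $\t' = \t\circ\f$ by uniqueness. The passage about the $b_i$ is correct, and your graded/separatedness argument for injectivity of $\f$ is essentially what \cref{p:inverse-fn-thm} encapsulates; your use of the uniqueness clause of \cref{l:tau-exists} makes explicit what the paper leaves implicit.

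The soft spot is the construction of $L$, and your proposed repair is not quite the right tool. ``Zorn plus Hensel'' is the classical argument for the \emph{existence} of a coefficient field in a complete or Henselian local ring; here you need something sharper, namely a coefficient field of $\^P$ constrained to lie over a \emph{prescribed} one in the quotient. Your inductive scheme can be made to work, but at each stage you must first restrict attention to the local ring $R_n := \pi_n^{-1}(L_n)\cap \t_{n+1}^{-1}(K' \bmod \fm^{n+1})$ with square-zero maximal ideal, identify $R_n/\cI_n \isom k$, and then lift $k$ into $R_n$; the lift is available because $k$ is separable, hence formally smooth, over its prime field. That is precisely the content of \cref{p:formally-smooth-lift} applied directly to $C = \^P$ and $\cI = \ker(\t)$, which yields $v \colon k \to \^P$ with $\t(v(k)) = K'$ in one step and without worrying about compatibility of the $L_n$ in the inverse limit. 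The paper includes \cref{p:formally-smooth-lift} exactly for this purpose (compare \cref{r:existence-coeff-field}), and invoking it would replace the least rigorous part of your argument with a citation.
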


The proof of \cref{p:emb-indep-basis} relies on the following basic property of
formally smooth algebras. The statement is a natural generalization of the
definition of formal smoothness which guarantees lifting not only via
extensions with nilpotent kernel, but also via extensions with topologically
nilpotent kernel.

\begin{proposition}
  \label{p:formally-smooth-lift}
  Let $k_0$ be a topological ring, and $k$ a formally smooth $k_0$-algebra. Let
  $C$ be a complete metrizable topological $k_0$-algebra, and $\cI \subset C$ a
  closed ideal such that $\{\cI^n\}$ tends to zero. Then every continuous
  $k_0$-algebra homomorphism $u \colon k \to C/\cI$ factors as $k
  \xrightarrow{v} C \to C/\cI$, where $v$ is a continuous $k_0$-algebra
  homomorphism.
  \[
  \xymatrix{
  k_0 \ar[r] \ar[d]
  & C \ar[d]
  \\ k \ar[r]^{u} \ar@{-->}[ur]^{v}
  & C/\cI
  }
  \]
\end{proposition}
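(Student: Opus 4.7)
The plan is to build $v$ as the limit of a tower of continuous $k_0$-algebra lifts $v_n \colon k \to C/\cI^n$, obtained by iterated application of formal smoothness, and then identify this inverse limit with $C$ itself using completeness.

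For the inductive construction I would set $v_1 := u$ and, assuming $v_{n-1} \colon k \to C/\cI^{n-1}$ has been built, lift it across the surjection $C/\cI^n \to C/\cI^{n-1}$. Its kernel $\cI^{n-1}/\cI^n$ is square-zero in $C/\cI^n$ for $n \ge 2$, since $(\cI^{n-1})^2 \subset \cI^{2n-2} \subset \cI^n$. Formal smoothness of $k$ over $k_0$ then provides the continuous $k_0$-algebra lift $v_n$, each $C/\cI^n$ being endowed with its quotient topology (which is separated because $\cI$ is closed).

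To assemble the $v_n$ into $v$, I would show that the natural continuous $k_0$-algebra map $C \to \varprojlim_n C/\cI^n$ is a bijection. Injectivity uses that $C$ is Hausdorff (as a metrizable space) and that $\cI^n \to 0$ forces $\bigcap_n \cI^n = 0$. For surjectivity, given a compatible system with representatives $c_n \in C$ satisfying $c_n - c_{n-1} \in \cI^{n-1}$, the sequence $(c_n)$ is Cauchy because $\cI^n \to 0$ in the topology of $C$, and hence converges to some $c \in C$ by completeness and metrizability; this $c$ maps to the prescribed system. Composing $v_\infty := \varprojlim v_n$ with the inverse of this bijection yields a $k_0$-algebra homomorphism $v \colon k \to C$ lifting $u$.

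Finally, for continuity of $v$, given a neighborhood $U \subset C$ of $0$ I would pick a neighborhood $W$ with $W + W \subset U$ and then choose $N$ with $\cI^N \subset W$; continuity of $v_N = \pi_N \circ v$ (where $\pi_N \colon C \to C/\cI^N$) furnishes a neighborhood $V \subset k$ of $0$ with $v(V) \subset W + \cI^N \subset U$. The main obstacle is conceptual: one must invoke the topological (adic) formulation of formal smoothness, in the sense of EGA $0_{\text{IV}}$, 19.3, so that the lifts $v_n$ come out continuous with respect to the quotient topology on each $C/\cI^n$---this continuity at every finite level is precisely what allows the continuity of $v$ to be extracted at the end.
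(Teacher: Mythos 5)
The paper's own proof is simply a citation to EGA $0_{\text{IV}}$, Proposition 19.3.10, so you are in effect reconstructing that result, and the route you take---iterated lifting across the square-zero surjections $C/\cI^n \to C/\cI^{n-1}$ and then assembling the lifts via completeness---is indeed the standard one.

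There is, however, a recurring gap: $\cI$ closed does \emph{not} imply $\cI^n$ closed for $n \geq 2$. Your parenthetical assertion that $C/\cI^n$ is separated ``because $\cI$ is closed'' is therefore wrong, and the same issue silently enters the two places where you lean on it. First, in the claimed surjectivity of $C \to \varprojlim_n C/\cI^n$: from the Cauchy sequence $(c_n)$ you obtain a limit $c$, but each $c - c_n$ is a limit of elements of $\cI^n$ and hence lies only in $\overline{\cI^n}$, not necessarily in $\cI^n$, so $c$ need not map to the prescribed compatible system. Second, the identity $v_N = \pi_N \circ v$ used in the continuity argument suffers from the same defect: one only gets $v(x) - c_N(x) \in \overline{\cI^N}$. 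The repair is to run the construction with the closures $\overline{\cI^n}$ in place of $\cI^n$ throughout: continuity of multiplication together with $\cI^{2n} \subset \cI^{n+1}$ (for $n \geq 1$) gives $(\overline{\cI^n})^2 \subset \overline{\cI^{n+1}}$, so the successive kernels remain square-zero, and regularity of the metrizable topology on $C$ shows that $\{\overline{\cI^n}\}$ still tends to zero. Finally, you should be more precise about which version of topological formal smoothness furnishes a \emph{continuous} lift $v_n$ when $C/\cI^n$ carries the quotient (rather than discrete) topology and the square-zero ideal is closed but not open; the definition in EGA $0_{\text{IV}}$, 19.3.1 is stated for discrete targets, and bridging that gap to the present setting is exactly the content of 19.3.10, so ``invoke the topological (adic) formulation'' at the induction step is not yet a self-contained move.
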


\begin{proof}
  See \cite[Chapter~$0_{\text{IV}}$, Proposition~19.3.10]{EGAiv_1}.
\end{proof}

\begin{remark}
  \label{r:existence-coeff-field}
  \cref{p:formally-smooth-lift} implies the existence of formal coefficient
  fields for any equicharacteristic local ring $(A, \fm, k)$. In this case $C =
  \^A$, $\cI = \^\fm$, $k_0$ is the prime field contained in $\^A$, $k$ is the
  residue field, and $u$ is the identity. Notice that $k_0$ is perfect, and
  therefore $k$ is separable over $k_0$ (hence formally smooth). Then $K =
  v(k)$ is a formal coefficient field.
\end{remark}

Let $S$ be any discrete topological ring. For any two topological $S$-algebras
$T$ and $T'$ the tensor product $T\otimes_S T'$ is endowed with the final
topology with respect to its natural maps. The \emph{completed tensor product}
$T \cotimes_S T'$ is defined to be the completion of $T\otimes_S T'$. Note that
the operation $\cotimes_S$ is the coproduct in the category of complete
topological $S$-algebras.

\begin{lemma}
\label{p:inverse-fn-thm}
  Let $(S,\fn,k)$ be a local $k$-algebra. Any continuous $S$-algebra map
  \[
    \varphi
    \colon
    S \,\cotimes_k\, k[[t_i \mid i\in I]]
    \to
    S \,\cotimes_k\, k[[z_i \mid i\in I]]
  \]
  which induces an isomorphism of continuous cotangent spaces is an isomorphism.
\end{lemma}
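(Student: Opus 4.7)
The plan is to reduce to a normalized form where $\varphi(t_i) \equiv z_i$ modulo higher-order terms, construct a section by successive approximation, and finally upgrade it to a two-sided inverse. Denote $A = S \cotimes_k k[[t_i \mid i\in I]]$ and $B = S \cotimes_k k[[z_i \mid i\in I]]$, with maximal ideals $\fm_A$ and $\fm_B$. Since $\varphi$ is $S$-linear, on continuous cotangent spaces it restricts to the identity on $\fn/\fn^2$. The cotangent spaces decompose as $\fn/\fn^2 \oplus \bigoplus_i k \bar t_i$ and $\fn/\fn^2 \oplus \bigoplus_i k \bar z_i$, respectively (direct sums, consistent with the structure of the completed tensor product, since each homogeneous part of an element of $A$ is a polynomial with finitely many terms). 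Writing $\varphi(t_i) \equiv s_i + \sum_j a_{ij} z_j \pmod{\^{\fm_B^2}}$ with $s_i \in \fn$ and row-finite $(a_{ij})$, the isomorphism hypothesis forces $(a_{ij})$ to admit a row-finite inverse $(b_{jl})$. The continuous $S$-algebra maps $\alpha \colon A \to A$, $t_i \mapsto t_i - s_i$, and $\beta \colon B \to B$, $z_j \mapsto \sum_l b_{jl} z_l$, are then well-defined automorphisms by the universal property of the completed tensor product, and replacing $\varphi$ by $\beta \circ \varphi \circ \alpha$ we may assume $\varphi(t_i) = z_i + h_i$ with $h_i \in \^{\fm_B^2}$.

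Under this normalization, the $\fm_A$-adic associated graded map $\gr(\varphi)$ is identified, via $\gr(A) \cong \gr(S) \otimes_k k[t_i]$ and $\gr(B) \cong \gr(S) \otimes_k k[z_i]$, with the graded $\gr(S)$-algebra isomorphism sending $t_i \to z_i$; in particular $\gr(\varphi)$ is surjective in every degree. Using this, I construct $\psi \colon B \to A$ iteratively: set $w_i^{(0)} = t_i$, and given $w_i^{(n)}$ with $\varphi(w_i^{(n)}) \equiv z_i \pmod{\fm_B^{n+2}}$, pick $\delta_i^{(n)} \in \fm_A^{n+2}$ with $\varphi(\delta_i^{(n)}) \equiv z_i - \varphi(w_i^{(n)}) \pmod{\fm_B^{n+3}}$ (possible by surjectivity of $\gr(\varphi)$ in degree $n+2$), and set $w_i^{(n+1)} = w_i^{(n)} + \delta_i^{(n)}$. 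One verifies that $\fm_A^N$ lies in the natural-topology ideal $\ov{\fn^m A + (t_i)^m A}$ whenever $N \ge 2m-1$, so the sequence $w_i^{(n)}$ is Cauchy in the natural topology of $A$ and converges to some $w_i \in \fm_A$ satisfying $\varphi(w_i) = z_i$. By the universal property of $B$, this yields a continuous $S$-algebra map $\psi$ with $\varphi \circ \psi = \id_B$.

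For the two-sided inverse, observe that $\varphi\psi = \id_B$ together with $\bar\varphi$ being an isomorphism forces $\bar\psi = \bar\varphi^{-1}$ to be an isomorphism as well. Applying the construction to $\psi$ (with the roles of $A$ and $B$ swapped), we obtain $\psi' \colon A \to B$ with $\psi\psi' = \id_A$. Since $\psi$ then has left inverse $\varphi$ and right inverse $\psi'$, the two must coincide, giving $\varphi = \psi'$ and hence $\psi\varphi = \psi\psi' = \id_A$. The main obstacle will be the convergence argument in the second step: because $A$ is not $\fm_A$-adically complete for infinite $I$ (cf.\ \cref{r:limit-top-not-adic}), one must work in the natural topology on $A$ and carefully track containments between the algebraic powers of $\fm_A$ and the defining ideals of the natural topology.
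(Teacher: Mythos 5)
Your three-step plan (normalize, build a section by successive approximation, upgrade to a two-sided inverse) is conceptually sound, and the first and last steps work; the issue is in the middle step, specifically in the choice of filtration. You claim the $\fm_A$-adic associated graded ring is $\gr(S)\otimes_k k[t_i]$, but for infinite $I$ this is false: already in degree one, $\fm_A/\fm_A^2$ is the full Zariski cotangent space, which strictly contains the continuous cotangent space $\^{\fm_A}/\^{\fm_A^2}$ because $\fm_A^2\subsetneq\^{\fm_A^2}$ (cf.\ \cref{r:limit-top-not-adic}). The identification $\gr(A)\cong\gr(S)[t_i]$ holds for the \emph{natural} filtration $\fm_n$ defining the topology of the completed tensor product, not for the $\fm_A$-adic one. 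This mismatch is not cosmetic: the normalization step gives $h_i\in\^{\fm_B^2}$, not $h_i\in\fm_B^2$, so the base case of your induction (that $\varphi(t_i)\equiv z_i\pmod{\fm_B^2}$) already fails to follow; and the surjectivity in each degree that you invoke to produce $\delta_i^{(n)}\in\fm_A^{n+2}$ is unestablished, since it rests on the incorrect description of the $\fm_A$-adic graded ring.

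The fix is to replace the algebraic powers $\fm_A^n$, $\fm_B^n$ by the natural filtration ideals (equivalently, their closures $\^{\fm_A^n}$, $\^{\fm_B^n}$, which coincide with them). The base case then matches what normalization gives, the degree-$n$ surjectivity is the correct statement that $\gr(\varphi)\colon\gr(S)[t_i]\to\gr(S)[z_i]$ is a $\gr(S)$-algebra isomorphism (it is determined by its degree-one part, the cotangent isomorphism, and the row-finite inverse of $(a_{ij})$ supplies its inverse in all degrees), and convergence is automatic since these ideals define the topology — the auxiliary containment $\fm_A^N\subset\ov{\fn^m A+(t_i)^m A}$ becomes unnecessary. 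Once set up this way, your iteration is precisely the standard proof that a map of complete filtered rings inducing an isomorphism on associated graded rings is itself an isomorphism, which is \cite[Lemma~10.23]{AM69}; the paper's proof simply computes $\gr$ of the completed tensor product, observes that the cotangent hypothesis forces $\gr(\varphi)$ to be an isomorphism, and cites that lemma.
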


\begin{proof}
  Note that a basis for the topology on $S \otimes_k k[[t_i \mid i \in I]]$ is given by the
  filtration
  \[
  \fm_n:=\sum_{d+e=n} \fn^d + \big((t_i \mid i\in I)^e\big)\widehat{\phantom{t}}.
  \]
  Thus it follows that for the associated graded rings we have
  \[
  \gr(S \,\cotimes_k\, k[[t_i \mid i\in I]])\isom\gr(S \otimes_k k[t_i \mid
  i\in I])\isom\gr(S)\otimes_k k[t_i \mid i\in I].
  \]
  The map $\varphi$ induces a $\gr(S)$-algebra map
  \[
  \gr(S)[t_i \mid i\in I] \to \gr(S)[z_i \mid i\in I], 
  \]
  which by assumption is an isomorphism. Thus we can use
  \cite[Lemma~10.23]{AM69} to see that $\varphi$ is bijective. It is easy to
  check that $\varphi^{-1}$ is continuous and we are done.
\end{proof}

\begin{proof}[Proof of \cref{p:emb-indep-basis}]
  Let $k_0$ be the prime field contained in $\^A$. Notice that $k$ is formally
  smooth over $k_0$. We apply \cref{p:formally-smooth-lift} in the situation in
  which $C = \^P$, $\cI = \ker(\t)$, and $u \colon k \to \^A = C/\cI$ is the
  map such that $u(k) = K'$. Notice that $\cI = \t^{-1}(0)$ is closed because
  $\^A$ is separated, and that $\{\cI^n\}$ tends to zero because $\cI^n \subset
  \^{\fn^n}$. We get a map $v \colon k \to \^P$ verifying $\t(v(k)) = K'$.

  Since $\t$ is surjective, there exist power series $f_i \in \^P$ such that
  $\t(f_i) = a_i'$. The map $\f$ is given by $\f(x_i') = f_i$ and $\f|_k = v$.
  \cref{p:inverse-fn-thm} shows that $\f$ is an isomorphism.
\end{proof}

\begin{remark}
  \label{r:emb-indep-basis}
  By the same argument of the proof of \cref{p:emb-indep-basis}, one can see
  that given any two formal embeddings $\t \colon \^P \to \^A$ and $\t' \colon
  \^P' \to \^A$ (not necessarily efficient) there is always a map $\f \colon
  \^P' \to \^P$ such that $\t' = \t \o \f$, and if $\t$ efficient then $\f$ is
  surjective. 
\end{remark}

\begin{proposition}
  \label{th:embdim=inf}
  For every equicharacteristic local ring $(A, \fm, k)$ we have
  \[
    \embdim(A) = \min_{\tau} \dim \^P
  \]
  where the minimum is taken over all choices of formal embeddings $\tau \colon
  \^P \to \^A$ and is achieved whenever $\t$ is an efficient formal embedding.
\end{proposition}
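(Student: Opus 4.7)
The plan is to prove the equality by establishing the two inequalities $\min_\tau \dim \^P \le \embdim(A)$ and $\dim \^P \ge \embdim(A)$ for every formal embedding $\tau$, and then checking that the minimum is in fact attained at every efficient formal embedding. The first is a direct application of \cref{l:tau-exists}, the second uses \cref{r:emb-indep-basis} to reduce to the efficient case.

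For the upper bound, choose any formal coefficient field $K \subset \^A$ and any set of formal coordinates $\{a_i\}_{i \in I}$ of $\^A$. By the natural isomorphism $\fm/\fm^2 \isom \^\fm/\^{\fm^2}$ (see the remark after \cref{d:cont-Zariski-cotang-sp}), the index set $I$ has cardinality $|I| = \dim_k(\fm/\fm^2) = \embdim(A)$. \cref{l:tau-exists} yields an efficient formal embedding $\tau \colon \^P = k[[x_i \mid i \in I]] \to \^A$ with $\dim \^P = \embdim(A)$. Moreover, for \emph{any} efficient $\tau$, the definition forces $\dim \^P = \dim_k(\^\fn/\^{\fn^2}) = \dim_k(\^\fm/\^{\fm^2}) = \embdim(A)$, so the minimum is attained at every efficient embedding.

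For the reverse inequality, let $\tau' \colon \^P' \to \^A$ be any formal embedding and fix an efficient embedding $\tau \colon \^P \to \^A$ as above. By \cref{r:emb-indep-basis}, there is a surjective continuous homomorphism $\phi \colon \^P' \to \^P$ with $\tau' = \tau \circ \phi$. Passing to continuous Zariski cotangent spaces, we obtain a commutative triangle
\[
\^{\fn'}/\^{\fn'^2} \xrightarrow{\phi_*} \^\fn/\^{\fn^2} \xrightarrow{\tau_*} \^\fm/\^{\fm^2},
\]
in which $\tau_*$ is an isomorphism by efficiency. The composition $\tau'_* = \tau_* \circ \phi_*$ is surjective: continuity of $\tau'$ forces each $\tau'(x_i') \in \^\fm$ (otherwise the powers $\tau'(x_i')^n$ would be units and so could not tend to $0$ as $x_i'^{\,n}$ does), hence $\tau'(\^{\fn'}) \subset \^\fm$; combined with surjectivity of $\tau'$ and the decomposition $\^P' = k \oplus \^{\fn'}$, this yields $\tau'(\^{\fn'}) + \^{\fm^2} = \^\fm$. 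Consequently $\phi_*$ is surjective and
\[
\dim \^P' = \dim_k(\^{\fn'}/\^{\fn'^2}) \ge \dim_k(\^\fn/\^{\fn^2}) = \embdim(A),
\]
as needed.

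No serious obstacle is anticipated, since the technical content has been packaged into \cref{l:tau-exists,r:emb-indep-basis}. The only delicate point is verifying that the cotangent map of an arbitrary formal embedding is surjective, which reduces to the continuity argument above showing that indeterminates must map into the maximal ideal of $\^A$.
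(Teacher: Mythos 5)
Your proof is correct and takes essentially the same route as the paper: the key step in both is the continuity argument showing that any formal embedding sends the indeterminates into $\^\fm$ (a unit cannot have powers tending to $0$), so the induced map on continuous cotangent spaces is surjective, which gives $\dim \^P \ge \embdim(A)$, with equality when $\tau$ is efficient. The detour through \cref{r:emb-indep-basis} and the factorization $\tau' = \tau \circ \phi$ is unnecessary — once you know $\tau'_*$ is surjective onto $\^\fm/\^{\fm^2}$, the inequality $\dim \^P' \ge \embdim(A)$ follows immediately without comparing to an efficient embedding.
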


\begin{proof}
  Let $\t \colon \^P \to \^A$ be a formal embedding, and write $P = k[x_i \mid i
  \in I]$ and $\fn = (x_i \mid i\in I) \subset P$. Since $\t$ is continuous, we
  have that $\t(\^\fn^c) \subset \^\fm$ for some $c$. As $\^\fm$ is maximal, this
  forces $\t(\^\fn) \subset \^\fm$, and continuity gives $\t(\^{\fn^n}) \subset
  \^{\fm^n}$ for all $n$. Hence we get an induced map at the level of graded
  rings $\gr(\t) \colon P \to \gr(A)$. Since $\t$ is surjective, $\gr(\t)$ is
  also surjective and $\t(\^{\fn^n}) = \^{\fm^n}$ for every $n$. In particular
  $\t$ induces a surjection at the level of Zariski cotangent spaces $\fn/\fn^2
  \to \fm/\fm^2$ and we see that $\embdim(A) \leq \dim \^P$. If $\t$ is an
  efficient formal embedding, then the map $\fn/\fn^2 \to \fm/\fm^2$ is an
  isomorphism and we have $\embdim(A) = \dim \^P$. 
\end{proof}

We finish this section by recalling the following result, which guarantees the
existence of a Zariski-local minimal embedding for singular points of a scheme
of finite type over an infinite field. This is well-known in the case of
complex varieties (see for example \cite[Theorem 3]{BK84}) and we provide an
extension of the proof to the more general case considered here.

\begin{theorem}
  \label{t:min-zariski-emb}
  Let $X$ be a scheme of finite type over an infinite field $k$ and $x\in
  X(k)$. If $\embdim(X,x)=d$ and $X$ is not smooth at $x$, then there exists a
  closed subscheme $Y\subset \A^d_k$, a point $y\in Y(k)$, and an isomorphism
  \[
   \O_{Y,y} \isom \O_{X,x}.
  \]
\end{theorem}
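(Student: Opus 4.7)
The plan is to realize $\O_{X,x}$ as a quotient of $\O_{\A^d_k,0}$ by constructing a finite linear projection $\pi \colon X \to \A^d_k$ whose fiber over the origin is set-theoretically $\{x\}$. After shrinking to an affine open neighborhood, embed $X$ as a closed subscheme of $\A^N_k$ with $x$ at the origin; write $A = k[T_1,\dots,T_N]/I$ and $\fm = (T_1,\dots,T_N)A$. The hypothesis that $X$ is not smooth at $x$ forces $\dim_x X < \embdim(X,x) = d$, and after removing the components of $X$ not passing through $x$, we may assume $\dim X < d$.

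Next I would choose linear forms $y_1,\dots,y_d$ in the $T_i$ (vanishing at $x$) satisfying three conditions: (i) their images form a basis of $\fm/\fm^2$; (ii) the projective closure $\ov X \subset \P^N_k$ is disjoint from the center of projection $L' := V(Y_0, y_1, \dots, y_d) \subset \P^N_k$; and (iii) the linear subspace $L := V(y_1,\dots,y_d) \subset \A^N_k$ meets $X$ set-theoretically only at $x$. Each of these is a Zariski-open condition on the tuple $(y_1,\dots,y_d)$ in the parameter space $V^d$ of $d$-tuples of linear forms vanishing at $x$; since $k$ is infinite and $V^d$ is irreducible, it suffices to check each condition is non-empty. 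Conditions (i) and (ii) follow from standard expected-dimension counts using $\dim \ov X \le d$. The key point, and the main obstacle, is condition (iii): an incidence-correspondence argument computes that the locus of ``bad'' tuples (those for which $L$ meets $X\setminus\{x\}$) has dimension at most $\dim X + d(N-1)$, which is strictly less than $dN = \dim V^d$ precisely because $\dim X < d$. This is the one place where the non-smoothness hypothesis enters essentially.

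With such $y_i$'s chosen, condition (ii) makes $\ov\pi := [Y_0:y_1:\cdots:y_d]$ regular on $\ov X$, giving a proper morphism whose affine restriction $\pi \colon X \to \A^d_k$ is proper. Condition (iii), combined with upper-semicontinuity of fiber dimension for proper morphisms, makes $\pi$ finite in a Zariski neighborhood of $x$. Writing $B := k[Z_1,\dots,Z_d]$ with $Z_i \mapsto y_i$ and $\fn := (Z_1,\dots,Z_d)$, the finiteness of $\pi$ combined with $\fm$ being the unique prime of $A$ over $\fn$ (by (iii)) gives $A_\fm = A \otimes_B B_\fn$ as a finite $B_\fn$-module. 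By (i) and Nakayama applied to $A_\fm$, the $y_i$ generate $\fm A_\fm$, hence $A_\fm/\fn A_\fm = A_\fm/\fm A_\fm = k$; Nakayama applied to the finite $B_\fn$-module $A_\fm$ then forces the map $B_\fn \to A_\fm$ to be surjective. Finally, setting $J := \ker(B \to A_\fm)$ and $Y := V(J) \subset \A^d_k$ with $y \in Y(k)$ the point corresponding to $\fn$, flatness of $B \to B_\fn$ yields $\O_{Y,y} = B_\fn/JB_\fn \isom A_\fm = \O_{X,x}$, completing the proof.
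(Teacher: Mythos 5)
Your proof is correct and follows essentially the same strategy as the paper's: construct a general linear projection $\pi\colon X\to\A^d$ that is finite, set-theoretically injective over $\pi(x)$, and an isomorphism on cotangent spaces, then conclude by Nakayama. The small differences — the paper passes to $\bar k$, compactifies, uses the tangent plane $\bar T$ and secant cone $\bar S$ inside a Grassmannian, and descends, while you parametrize the linear forms affinely over $k$ and bound the bad locus via an incidence correspondence — are presentational, and the crucial numerical input $\dim_x X < d = \embdim(X,x)$ coming from non-smoothness is used identically in both.
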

 
\begin{proof}
  We may assume that $X$ is projective and embedded in $\P^n$ for some $n> d$.
  Denote by $\bar{k}$ the algebraic closure of $k$ and write
  $\bar{X}:=X\times_k \Spec(\bar{k})$ and $\bar{x}$ for the $\bar{k}$-point on
  $\bar{X}$ corresponding to $x$. As $\O_{\bar{X},\bar{x}}$ is not a regular
  ring, we have
  \[
   \dim_{\bar{x}}(\bar{X})<\embdim(\bar{X},\bar{x})=\embdim(X,x)=d.
  \]
  
  Suppose we can find a linear projection $\pi\colon \P^n \to \P^d$ defined
  over $k$ such that, if $\bar{Y}$ denotes the scheme-theoretic image of
  $\bar{X}$ under $\pi$ and $\bar{y}=\pi(\bar{x})$, then the induced map
  $\O_{\bar{Y},\bar{y}}\to \O_{\bar{X},\bar{x}}$ is an isomorphism. Since
  $\bar{Y}=Y\times_k \Spec(\bar{k})$, where $Y$ is the scheme-theoretic image
  of $X$ under the linear projection centered at $x$, we obtain a map
  $\O_{Y,y}\to \O_{X,x}$ whose base change to $\bar{k}$ gives the map above.
  Thus, by faithfully flat descent, we get that $\O_{Y,y}\isom \O_{X,x}$.
  
  Now, in order to prove the claim, let $\bar{T}\subset \P^n_{\bar{k}}$ be the
  unique linear space passing through $\bar{x}$ whose tangent space at
  $\bar{x}$ agrees with that of $\bar{X}$. Furthermore, let $\bar{S}$ be the
  closure of the set of all lines connecting $\bar{z}$ with $\bar{x}$, where
  $\bar{z}\in \bar{X}$, $\bar{z}\neq \bar{x}$. Note that
  $\dim(\bar{S})=\dim(\bar{X})+1\leq d$. Consider now the closure $\bar{Z}$ of
  the set $\bar{X}\cup \bar{T} \cup \bar{S}$, equipped with its reduced scheme
  structure. Since $\dim(\bar{S})\leq d$ the set of all linear spaces $\bar{L}$
  with $\bar{L}\cap \bar{Z}=\emptyset$ is open inside $\Gr(n-d-1,n)\times_k
  \Spec(\bar{k})$. The preimage of this set in $\Gr(n-d-1,n)$ is a nonempty
  open set, and since $k$ is infinite, it has a $k$-rational point, which we
  denote by $L$. Hence we have that the corresponding projection $\pi_L\colon
  \P^n\to \P^d$, defined over $k$, satisfies $\pi_{\bar{L}}^{-1}(\bar{y}) \cap
  \bar{X} =\{\bar{x}\}$ set theoretically, where $\bar{y}$ corresponds to the
  $k$-point $y:=\pi_L(x)$. Writing $\bar{Y}:=\pi_{\bar{L}}(\bar{X})$ we get
  that the map of local rings $\O_{\bar{Y},\bar{y}}\to \O_{\bar{X},\bar{x}}$ is
  injective and finite. Since $\bar{L}\cap \bar{T}=\emptyset$ the tangent
  spaces of $\bar{x}$ and $\bar{y}$ are isomorphic and thus
  $\fm_{\bar{y}}\O_{\bar{X},\bar{x}}=\fm_{\bar{x}}$. The claim now follows from
  the Nakayama lemma.
\end{proof}


\section{Flatness of completion}

\label{s:flatness}

Let $A$ be a ring and $\fm$ an ideal in $A$. Given an $A$-module $E$ we will
consider the $\fm$-adic topology on $E$ and we will denote by $\^E$ its
$\fm$-adic completion. We are interested in conditions guaranteeing that
the natural map $A \to \^A$ is flat.  

\begin{definition}
  Let $E$ be an $A$-module and $F$ a submodule of $E$. We say that $F \subseteq
  E$ has the \emph{Artin--Rees property} with respect to $\fm$ if there exists a
  $c \in \N$ such that, for all $n > c$, we have
  \[
    \fm^n E \cap F = \fm^{n-c}(\fm^c E \cap F).
  \]
  The smallest such $c$ is called the \emph{Artin--Rees index} of $F \subseteq E$
  with respect to $\fm$. We say that $A$ has the \emph{Artin--Rees property} with
  respect to $\fm$ if so does every finitely generated submodule of a finitely
  generated free $A$-module.
\end{definition}

The Artin--Rees property for $F \subseteq E$ guarantees that the $\fm$-adic
topology of $F$ coincides with the topology induced by the $\fm$-adic topology
of $E$. In this context it is natural to consider the Rees algebra $A^* =
\bigoplus_{n \geq 0} \fm^n$ and the graded $A^*$-modules
\[
  E^* = \bigoplus_{n \geq 0} \fm^n E
  \qquad\text{and}\qquad
  F^* = \bigoplus_{n \geq 0} \fm^n E \cap F.
\]

\begin{lemma}
  \label{l:rees-algebra}
  $F \subseteq E$ has the Artin--Rees property if and only if there exists a $c
  \in \N$ such that $F^*$ is generated as a graded $A^*$-module by elements of
  degree $\leq c$. Moreover, the Artin--Rees index of $F \subseteq E$ is the
  smallest such $c$.
\end{lemma}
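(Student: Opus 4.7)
The plan is to unpack the two conditions into elementwise statements about the homogeneous pieces $(F^*)_n = \fm^n E \cap F$ and show they coincide in a degreewise fashion. Recall that $F^*$ is generated over $A^*$ by elements of degree $\leq c$ precisely when, for every $n > c$, one has
\[
\fm^n E \cap F = \sum_{i=0}^{c} \fm^{n-i}(\fm^i E \cap F),
\]
since $(A^*)_{n-i} = \fm^{n-i}$. So the task is to compare this identity with the Artin--Rees identity $\fm^n E \cap F = \fm^{n-c}(\fm^c E \cap F)$ for $n > c$.

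The implication ``Artin--Rees $\Rightarrow$ generation'' is immediate: if $\fm^n E \cap F = \fm^{n-c}(\fm^c E \cap F)$ for all $n > c$, then the degree-$n$ piece of $F^*$ lies in $A^* \cdot (F^*)_c$, so $F^*$ is generated in degrees $\leq c$. For the converse, I would argue that the sum $\sum_{i=0}^{c} \fm^{n-i}(\fm^i E \cap F)$ collapses into $\fm^{n-c}(\fm^c E \cap F)$: for each $i \leq c$, one has
\[
\fm^{c-i}(\fm^i E \cap F) \subseteq \fm^c E \cap \fm^{c-i} F \subseteq \fm^c E \cap F,
\]
so $\fm^{n-i}(\fm^i E \cap F) = \fm^{n-c}\cdot\fm^{c-i}(\fm^i E \cap F) \subseteq \fm^{n-c}(\fm^c E \cap F)$. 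Combined with the trivial reverse inclusion $\fm^{n-c}(\fm^c E \cap F) \subseteq \fm^n E \cap F$, this gives the Artin--Rees identity.

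For the ``moreover'' statement about the smallest such $c$, I would observe that both properties are monotone in $c$: if the Artin--Rees identity holds with index $c$, then applying the case $n = c+1$ gives $\fm^{c+1}E \cap F = \fm(\fm^c E \cap F)$, and substituting this yields the identity with index $c+1$; similarly, a generating set in degrees $\leq c$ is a fortiori a generating set in degrees $\leq c+1$. Thus the set of valid $c$'s is upward closed on each side, and by the equivalence proved above the two sets of valid $c$'s coincide, so they have the same minimum.

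The proof is essentially bookkeeping; the only subtle point is to remember that $\fm^i E \cap F$ \emph{increases} as $i$ decreases, which is why the key containment $\fm^{c-i}(\fm^i E \cap F) \subseteq \fm^c E \cap F$ is not automatic from a naive monotonicity argument but requires using both factors of the product. I do not anticipate any real obstacle beyond writing these containments in the correct order.
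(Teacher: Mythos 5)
Your proof is correct and takes essentially the same approach as the paper, which simply labels this lemma ``immediate from the definitions.'' You have supplied the bookkeeping the paper omits — unpacking the graded generation condition into the degreewise identity, verifying the containment $\fm^{c-i}(\fm^i E \cap F) \subseteq \fm^c E \cap F$ that collapses the sum, and using the $c$-by-$c$ equivalence to match the minima — and all steps check out.
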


\begin{proof}
  This is immediate from the definitions. Compare with \cite[Chapter~III,
  Section~3.1, Theorem~1]{Bou72} or \cite[Theorem~8.5]{Mat89} or
  \cite[Lemma~10.8]{AM69}, but notice that no finite generation hypotheses are
  needed for the statement of the lemma.
\end{proof}

\begin{remark}
\label{r:example-AR-fails}
  By the classical Artin--Rees lemma \cite[Theorem~8.5]{Mat89}, any Noetherian
  ring $A$ has the Artin--Rees property with respect to any ideal $\fm \subset
  A$. By contrast, there exist non-Noetherian rings, even finite dimensional,
  which do not have the Artin--Rees property. A zero-dimensional example is given
  by 
  \[
    A = k[x_i \mid i \in \N]/(x_1-x_m^m \mid m \ge 2) + (x_n^{n+1} \mid n \ge 1),
  \] 
  with $\fm = (x_i \mid i \in \N)$ and $F = (x_1) \subset E = A$. Clearly $x_1
  \in \fm^n$ for all $n$, but there is no $f \in \fm$ such that $x_1 = x_1 f$.
\end{remark}

In complete analogy with the Noetherian case, we prove that the Artin--Rees
property implies flatness of the completion. We recall that a ring is
\emph{coherent} if every finitely generated ideal is finitely presented.

\begin{proposition}
  \label{p:flatness-completion}
  Let $A$ be a coherent ring with the Artin--Rees property with respect to $\fm
  \subset A$, and let $\^A$ be its $\fm$-adic completion. Then $A \to \^A$ is
  flat. Moreover, if $\fa \subset A$ is a finitely generated ideal, then $\fa
  \^A$ is closed in $\^A$ (that is, $\fa \^A = \^\fa$).
\end{proposition}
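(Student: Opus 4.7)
The plan is to adapt the standard Noetherian argument (as in \cite{Mat89}), with coherence playing the role of Noetherianity and our Artin--Rees hypothesis replacing the classical Artin--Rees lemma. The key intermediate claim will be: for every finitely presented $A$-module $M$, the natural comparison map $M \otimes_A \^A \to \^M$ is an isomorphism, where $\^M$ denotes the $\fm$-adic completion of $M$.

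First I would check that completion is exact on any short exact sequence $0 \to F \to E \to E/F \to 0$ in which $E$ is a finitely generated free module and $F \subset E$ is finitely generated. The Artin--Rees property for $F \subset E$ makes the $\fm$-adic topology on $F$ coincide with the one induced from $E$, so the relevant inverse system has surjective transition maps and the Mittag--Leffler argument preserves exactness under the inverse limit. In particular, completion preserves surjections of finitely generated modules; applying this to a surjection $A^n \to N$ with $N$ finitely generated then yields that $N \otimes_A \^A \to \^N$ is surjective for every finitely generated $N$.

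Next, for a finitely presented $M$, coherence lets us pick a presentation $0 \to K \to A^n \to M \to 0$ with $K$ finitely generated. Tensoring with $\^A$ gives a right-exact sequence; completing (using the previous step on $K \subset A^n$) gives the short exact sequence $0 \to \^K \to \^A^n \to \^M \to 0$. Since $K \otimes_A \^A \to \^K$ is surjective by the previous paragraph, a routine diagram chase in the resulting commutative square shows that $M \otimes_A \^A \to \^M$ is an isomorphism.

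Finally, for a finitely generated ideal $\fa$, coherence makes $\fa$ finitely presented, so $\fa \otimes_A \^A \isom \^\fa$ by the intermediate claim. The Artin--Rees property for $\fa \subset A$ identifies the $A$-module completion $\^\fa$ with the closure of $\fa$ in $\^A$, so the natural map $\^\fa \to \^A$ is injective. Composing, $\fa \otimes_A \^A \to \^A$ is injective, which yields flatness of $A \to \^A$ via the standard finitely-generated-ideal criterion; and the image of $\fa \otimes_A \^A$ in $\^A$ is exactly $\fa \^A$, so $\fa \^A = \^\fa$, proving the closedness statement. The main obstacle is the very first step: without Noetherianity one must verify Mittag--Leffler for the inverse systems coming from $0 \to F \to E \to E/F \to 0$, and the Artin--Rees hypothesis is needed precisely to guarantee the agreement of the $\fm$-adic and induced topologies on $F$ that makes this work; everything downstream is a formal diagram chase.
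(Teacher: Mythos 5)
Your proof is correct and takes essentially the same route as the paper's: both use coherence to produce a finite presentation, the Artin--Rees hypothesis to make completion exact on that presentation (via the standard inverse-limit / Mittag--Leffler argument), the resulting isomorphism $\fa \otimes_A \^A \isom \^\fa$, and \cite[Theorem~7.7]{Mat89} to pass from injectivity of $\fa \otimes_A \^A \to \^A$ to flatness. The only difference is cosmetic --- you detour through the general comparison $M \otimes_A \^A \isom \^M$ for finitely presented $M$, whereas the paper works directly with the ideal $\fa$ --- and your claim that $N \otimes_A \^A \to \^N$ is surjective ``for every finitely generated $N$'' is justified by your step~2 only when $\ker(A^n \to N)$ is itself finitely generated (i.e.\ when $N$ is finitely presented), which is all you actually use.
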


\begin{proof}
  Let $\fa$ be a finitely generated ideal of $A$. Since $A$ is coherent, there
  exists an exact sequence
  \[
    \xymatrix{
      A^p \ar[r] & A^q \ar[r]^\varphi & \fa \ar[r] & 0.
    }
  \]
  Moreover, since the Artin--Rees property holds for $\ker \varphi \subset A^q$,
  the $\fm$-adic topology on $\ker \varphi$ agrees with the one induced by the
  inclusion $\ker \varphi \subset A^q$. From \cite[Chapter~III, Section~2.12,
  Lemma~2]{Bou72} or \cite[Lemma~10.3]{AM69}, the sequence remains exact after
  taking $\fm$-adic completions,
  and we have a commutative diagram
  \[
    \xymatrix@C=10pt{
      A^p \otimes_A \^A \ar[r] \ar[d] & A^q \otimes_A \^A \ar[r] \ar[d] 
      & \fa \otimes_A \^A \ar[r] \ar[d] & 0 \\
      \^{A^p} \ar[r] & \^{A^q} \ar[r]^{\^\varphi} & \^\fa \ar[r] & 0
    }
  \]
  with exact rows. Since taking completion commutes with finite direct sums, the
  map $\fa \otimes_A \^A \to \^\fa$ is an isomorphism. As the natural map $\^\fa
  \to \^A$ is an injection, flatness of $A \to \^A$ follows from
  \cite[Theorem~7.7]{Mat89}. The fact that $\fa \otimes_A \^A \to \^\fa$ is an
  isomorphism also shows that $\fa\^A = \^\fa$. 
\end{proof}

The following theorem gives a first example of a non-Noetherian ring with the
Artin--Rees property. We were not able to find a reference for this
statement in the literature.

\begin{theorem}
  \label{t:AR-property}
  Let $S$ be a Noetherian ring and $\fn$ any ideal of $S$. For any set $I$
  consider $P = S[x_i \mid i \in I]$ and $\fm = (x_i \mid i \in I) + \fn$. Then
  $P$ has the Artin--Rees property with respect to $\fm$.
\end{theorem}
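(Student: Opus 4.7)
The plan is to reduce to the finite-variable Noetherian situation, where the classical Artin--Rees lemma applies, and then use the remaining indeterminates as a grading to patch things together. Given a finitely generated submodule $F \subseteq E := P^r$ with generators $f_1, \dots, f_s$, each $f_k$ involves only finitely many of the $x_i$, so there is a finite subset $J \subseteq I$ with $f_1, \dots, f_s \in P_J^r$, where $P_J := S[x_j \mid j \in J]$. Let $F_0 \subseteq P_J^r$ be the $P_J$-submodule generated by $f_1, \dots, f_s$, and set $\fm_J := (x_j \mid j \in J) + \fn \subseteq P_J$. Since $P_J$ is Noetherian, the classical Artin--Rees lemma gives a constant $c \in \N$ such that
\[
\fm_J^n P_J^r \cap F_0 = \fm_J^{n-c}\bigl(\fm_J^c P_J^r \cap F_0\bigr) \quad \text{for all } n > c.
\]
The goal is to show that this same $c$ is an Artin--Rees index for $F \subseteq P^r$ with respect to $\fm$.

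Next I would introduce a grading for the remaining variables. Writing $y_i := x_i$ for $i \in I \setminus J$ and $\mathfrak{y} := (y_i \mid i \in I \setminus J)$, the ring $P = P_J[y_i \mid i \in I \setminus J]$ is graded by total $y$-degree, with $P = \bigoplus_{d \ge 0} P_J \cdot M_d$, where $M_d$ is the set of monomials in the $y_i$ of degree $d$. Since $\fm = \fm_J P + \mathfrak{y}$, expanding $\fm^n = \sum_{k + l = n} \fm_J^k\, \mathfrak{y}^l$ and collecting terms by $y$-degree yields
\[
(\fm^n P^r)_d = \fm_J^{\max(n - d,\, 0)}\, P_J^r \cdot M_d.
\]
Because $P$ is free over $P_J$ on monomials in the $y_i$, the submodule $F = F_0 \cdot P$ is itself $y$-graded with $F_d = F_0 \cdot M_d$, and therefore
\[
\fm^n P^r \cap F = \bigoplus_{d \ge 0} \bigl(\fm_J^{\max(n - d,\, 0)} P_J^r \cap F_0\bigr) \cdot M_d.
\]

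For $n > c$, the remaining task is to show each graded summand above lies in $\fm^{n - c}(\fm^c P^r \cap F)$. Fix $d$, a monomial $m \in M_d$, and $v \in \fm_J^{\max(n - d, 0)} P_J^r \cap F_0$. If $d < n - c$, then $n - d > c$, and the classical Artin--Rees relation gives $v = \sum_j \alpha_j w_j$ with $\alpha_j \in \fm_J^{n - d - c}$ and $w_j \in \fm_J^c P_J^r \cap F_0$; then $v m = \sum_j (\alpha_j m)\, w_j$, with $\alpha_j m \in \fm_J^{n - d - c}\, \mathfrak{y}^d \subseteq \fm^{n - c}$ and $w_j \in \fm^c P^r \cap F$. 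If instead $d \ge n - c$, I split the monomial as $m = m_1 m_2$ with $\deg m_1 = n - c$ and $\deg m_2 = d - (n - c)$; then $m_1 \in \mathfrak{y}^{n - c} \subseteq \fm^{n - c}$, and $v m_2 \in \fm_J^{\max(n - d, 0)}\, \mathfrak{y}^{d - n + c} P^r \cap F$, which lies in $\fm^c P^r \cap F$ because $\max(n - d, 0) + (d - n + c) \ge c$. Summing these contributions gives the desired inclusion; the reverse inclusion is immediate. I expect the main obstacle to be purely bookkeeping: verifying the $y$-graded decomposition of $\fm^n$ cleanly and making sure, in each case, that missing powers of $\fm_J$ are absorbed into the $y$-monomials. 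Beyond that, no genuinely new tool is needed past the classical Artin--Rees lemma.
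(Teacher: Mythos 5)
Your proof is correct, and it takes a genuinely different route from the paper's. Both arguments begin the same way, fixing a finite $J \subset I$ containing all variables that appear in the chosen generators of $F$ and applying the classical Artin--Rees lemma over the Noetherian ring $P_J$ to get the constant $c$. From there, the paper passes through a separate lemma (\cref{l:AR-index}) showing that extending $P_J$ by a single new variable leaves the Artin--Rees index unchanged, proved via the Rees-algebra reformulation (\cref{l:rees-algebra}); it then applies this lemma iteratively to all finite $J' \supseteq J$ and takes a colimit over such $J'$ to conclude for $P$ itself. Your argument skips both the one-variable induction and the colimit: you work with the $y$-degree grading of $P = P_J[y_i \mid i \in I\setminus J]$, compute directly that $(\fm^n E)_d \cap F_d = (\fm_J^{\max(n-d,0)} P_J^r \cap F_0)\cdot M_d$, and then verify the inclusion $\fm^n E \cap F \subseteq \fm^{n-c}(\fm^c E \cap F)$ graded-piece by graded-piece, splitting into the two cases $d < n-c$ (where the classical AR relation over $P_J$ does the work, with the powers of $\fm_J$ then multiplied up by the $y$-monomial) and $d \ge n-c$ (where a factor of $y$-degree $n-c$ already supplies the $\fm^{n-c}$, with the rest absorbed into $\fm^c E \cap F$). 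What the paper's route buys is that the one-variable lemma isolates precisely the statement that the Artin--Rees index is stable under polynomial extension, a fact of independent interest, and the colimit argument is robust. What your route buys is a shorter, self-contained argument that treats all the extra variables simultaneously and never needs the Rees algebra or the colimit machinery. I verified the grading computation and both cases of your inclusion; they are sound.
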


\begin{proof}
  Let $E$ be a finitely generated free $P$-module and $F \subseteq  E$ a finitely
  generated submodule. Assume that $E$ is freely generated by $e_1, \ldots, e_s$

  Given any subset $J \subseteq I$, we write $P_J := S[x_i \mid i \in J]$, and for
  any ideal $\fa \subseteq P$ we denote $\fa_J := \fa \cap P_J$. We define $E_J :=
  P_J \cdot e_1 \oplus \cdots \oplus P_J \cdot e_s$, and for any $P$-submodule $G
  \subseteq E$ we write $G_J := E_J \cap G$. Note that $P, \fm, \fa, E, G$ are the
  colimits of $P_J, \fm_J, \fa_J, E_J, G_J$ for $J \subseteq I$ finite. 
  We have
  \[
      G_J \cap G'_J
      =
      (G \cap G')_J,
      \quad
      \fa_J G_J \subseteq (\fa G)_J,
      \quad
      \fa_J E_J
      = 
      (\fa E)_J,
      \quad\text{and}\quad
      (\fm_J)^n
      =
      (\fm^n)_J.
  \]
  In particular, for
  all $n, d \in \N$ with $n > d$, we have
  \[
      \fm_J^n E_J \cap F_J
      =
      (\fm^n E \cap F)_J
      \quad\text{and}\quad
      \fm_J^{n-d}(\fm_J^d E_J \cap F_J)
      \subseteq
      (\fm^{n-d}(\fm^d E \cap F))_J.
  \]

  Assume that $F$ is generated by $f_1, \ldots, f_r$. Then there exists a finite
  set $L \subset I$ such that $f_1,\dots,f_r \in F_L$, and for any $J$ with $L
  \subseteq J \subseteq I$ we have $F_J = P_J \cdot f_1 + \cdots + P_J \cdot f_r
  = P_J \cdot F_L$.

  Since $P_L$ is Noetherian it has the Artin--Rees property with respect to
  $\fm_L$, and hence there exists a $c \in \N$ such that
  \[
    \fm_L^n E_L \cap F_L = \fm_L^{n-c}(\fm_L^c E_L \cap F_L)
  \]
  for all $n > c$. The smallest such $c$ is the Artin--Rees index of $F_L
  \subseteq E_L$. Since for any finite set $J$ with $L \subseteq J \subset I$ we
  have $F_J = P_J \cdot F_L$, we can apply \cref{l:AR-index} and we see that the
  Artin--Rees index of $F_J \subseteq E_J$ is again $c$. This implies that
  \[
      (\fm^n E \cap F)_J
      \subseteq
      (\fm^{n-c}(\fm^c E \cap F))_J.
  \]
  Taking the colimit for all finite $J \subset I$ we get that
  \[
      \fm^n E \cap F
      \subseteq
      \fm^{n-c}(\fm^c E \cap F).
  \]
  The reversed inclusion is immediate, and the theorem follows.
\end{proof}

\begin{lemma}
  \label{l:AR-index}
  Let $A_0$ be a Noetherian ring, $\fm_0 \subset A_0$ an ideal, $E_0$ a finitely
  generated $A_0$-module, and $F_0 \subseteq E_0$ a submodule. Let $z$ be a new
  variable and consider the ring $A = A_0[z]$, the ideal $\fm = \fm_0 A + (z)$,
  the extension $E = A \otimes_{A_0} E_0 = E_0[z]$, and $F = A \otimes_{A_0} F_0
  = F_0[z]$. Then the Artin--Rees index of $F \subseteq E$ with respect to $\fm$
  equals the Artin--Rees index of $F_0 \subseteq E_0$ with respect to $\fm_0$.
\end{lemma}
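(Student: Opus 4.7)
The plan is to compare the two Artin--Rees indices by explicitly describing the filtration of $E = E_0[z]$ induced by $\fm = \fm_0 A + (z)$, and then comparing coefficient by coefficient.

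First I would write down the key identity: expanding $\fm^n = \sum_{i+j=n} \fm_0^i z^j A$ and using that every element of $E = E_0[z]$ is uniquely a finite sum $f = \sum_k e_k z^k$ with $e_k \in E_0$, one checks that
\[
  f \in \fm^n E \,\Longleftrightarrow\, e_k \in \fm_0^{\max(n-k,0)} E_0 \text{ for all } k,
\]
and since $F = F_0[z]$, adding the condition $f \in F$ just forces $e_k \in F_0$. Consequently,
\[
  \fm^n E \cap F = \Bigl\{\sum_k e_k z^k \;\Big|\; e_k \in F_0 \cap \fm_0^{\max(n-k,0)} E_0 \text{ for all } k \Bigr\}.
\]
By \cref{l:rees-algebra}, the Artin--Rees index in each case is the minimal $c$ for which $\fm^n E \cap F = \fm^{n-c}(\fm^c E \cap F)$ holds for all $n > c$ (and similarly with the subscript $0$); call these indices $c$ and $c_0$. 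Note the reverse containment is automatic in each identity, so we only ever have to verify the forward containment.

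Next, to prove $c \le c_0$, I would fix $n > c_0$ and $f = \sum_k e_k z^k \in \fm^n E \cap F$, and decompose each coefficient: for $k \ge n-c_0$ one simply writes $e_k z^k = z^{n-c_0} \cdot (e_k z^{k-n+c_0})$, and verifies that the second factor lies in $F_0 \cap \fm_0^{\max(n-k,0)} E_0 \subseteq \fm^{c_0} E \cap F$; for $k < n-c_0$, one invokes Artin--Rees for $F_0 \subseteq E_0$ (with index $c_0$) to write $e_k = \sum_\ell a_{k,\ell} b_{k,\ell}$ with $a_{k,\ell} \in \fm_0^{n-k-c_0}$ and $b_{k,\ell} \in F_0 \cap \fm_0^{c_0} E_0$, and then $e_k z^k = \sum_\ell (a_{k,\ell} z^k) \cdot b_{k,\ell}$ displays $e_k z^k \in \fm^{n-c_0}(\fm^{c_0}E \cap F)$ since $a_{k,\ell} z^k \in \fm^{n-c_0}$. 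Summing over $k$ gives $f \in \fm^{n-c_0}(\fm^{c_0} E \cap F)$.

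For the reverse inequality $c_0 \le c$, I would extract the constant ($z$-free) term. Any relation $f = \sum_j p_j q_j$ in $\fm^{n-c}(\fm^c E \cap F)$, written out with $p_j = \sum_m p_{j,m} z^m$ and $q_j = \sum_m q_{j,m} z^m$, yields a constant term
\[
  (f)_{z=0} = \sum_j p_{j,0}\, q_{j,0}
\]
where $p_{j,0} \in \fm_0^{n-c}$ and $q_{j,0} \in F_0 \cap \fm_0^c E_0$ by the description in Step~1 applied to $p_j$ and $q_j$. On the other hand, the constant terms of elements of $\fm^n E \cap F$ range over all of $F_0 \cap \fm_0^n E_0$. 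Hence an Artin--Rees identity with index $c$ in $(E,F)$ forces $F_0 \cap \fm_0^n E_0 \subseteq \fm_0^{n-c}(F_0 \cap \fm_0^c E_0)$ for every $n > c$, i.e.\ Artin--Rees holds for $F_0 \subseteq E_0$ with index at most $c$, giving $c_0 \le c$.

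The two steps together yield $c = c_0$. I do not see any real obstacle here; the only subtlety is to be careful with the $\max(n-k,0)$ in the description of $\fm^n E \cap F$ (in particular handling the two ranges $k \ge n - c_0$ and $k < n - c_0$ separately in Step~2).
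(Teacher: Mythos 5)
Your proof is correct and follows essentially the same strategy as the paper: both rest on the coefficient-by-coefficient description of $\fm^n E \cap F$, proving $c \le c_0$ by decomposing each monomial $e_k z^k$ (splitting into high and low $z$-degree) and proving $c_0 \le c$ by extracting the constant term in $z$. The paper routes the argument through the Rees-module reformulation of \cref{l:rees-algebra} (generation in bounded degree), whereas you work directly with the defining Artin--Rees equality, but the two are just different packagings of the same computation.
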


\begin{proof}
  Let $c_0$ and $c$ be the Artin--Rees indexes of $F_0 \subseteq E_0$ and $F
  \subseteq E$. As in \cref{l:rees-algebra}, consider the Rees algebras
  \[
      A_0^* = \bigoplus_{n \geq 0} \fm_0^n
      \quad\text{and}\quad
      A^* = \bigoplus_{n \geq 0} \fm^n,
  \]
  and the graded modules
  \[
      F^*_0 = \bigoplus_{n \geq 0} \fm_0^n E_0 \cap F_0
      \quad\text{and}\quad
      F^* = \bigoplus_{n \geq 0} \fm^n E \cap F.
  \]
  Then $F_0$ is generated in degree $\leq c_0$ as a graded $A_0$-algebra (and not
  in any lower degree), and similarly for $F$.

  Any element $f \in \fm^n E \cap F$ can be written as
  $f = \sum_{i=0}^n f_i z^{n-i}$ where $f_i \in \fm_0^i E_0 \cap F_0$.
  In particular, $F^*$ is generated by $F^*_0$ as an $A^*$-algebra, and therefore
  $c \leq c_0$. Conversely, if $F^*$ is generated by homogeneous elements
  $f^{(1)}, \ldots, f^{(r)}$ with $f^{(j)} = \sum_i f_i^{(j)} z^{n_j -i}$, then
  $F^*_0$ is generated by $f^{(1)}_{n_1}, \ldots, f^{(r)}_{n_r}$. We see that
  $c_0 \leq c$, and the result follows.
\end{proof}

\begin{remark}
  If $A$ and $\fm$ are as in \cref{r:example-AR-fails} then we have $A =
  \dirlim_m A_m$ where 
  \[
  A_m = k[x_1,\dots,x_m]/(x_1-x_i^i,x_i^{i+1},\:1<i\leq m).
  \]
  It is easy to check that the Artin--Rees index of $(x_1) \subset A_m$ is $m$
  and $A$ does not have the Artin--Rees property.
\end{remark}

Recall that for any discrete topological ring $S$ and any two topological
$S$-algebras $T$ and $T'$, the completed tensor product $T \cotimes_S T'$ is
defined to be the completion of $T\otimes_S T'$ with respect to its natural
topology.

\begin{corollary}
  \label{c:flatness-completion}
  Let $S \to T$ be a map of Noetherian rings. As above, suppose that $S$ has the
  discrete topology, and let $T$ be equipped with the $\fn$-adic topology where
  $\fn \subset T$ is an ideal. Then the natural map
  \[
    T[x_i \mid i\in I]
    = T\otimes_S S[x_i \mid i\in I]
    \to
    T \cotimes_S S[x_i \mid i\in I]
  \]
  is flat. In particular:
  \begin{enumerate}
    \item
    \label{item1:flatness-completion}
    for any index set $I$ the completion map
    $S[x_i \mid i\in I] \to S[[x_i \mid i\in I]]$
    is flat, and
    \item
    \label{item2:flatness-completion}
    for every finite subset $J\subset I$ the inclusion
    $S[[x_j \mid j\in J]] \to S[[x_i \mid i\in I]]$
    is flat.
  \end{enumerate}
\end{corollary}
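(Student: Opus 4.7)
The plan is to identify the completed tensor product in the statement as an $\fm$-adic completion of a ring meeting the hypotheses of \cref{p:flatness-completion}, and then read off (1) and (2) as special cases.

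Since $S$ carries the discrete topology, the natural (tensor product) topology on $T \otimes_S S[x_i \mid i\in I] = T[x_i \mid i\in I]$ coincides with the $\fm$-adic topology for $\fm := \fn T[x_i \mid i\in I] + (x_i \mid i\in I)$: indeed a basis of neighborhoods of $0$ is given by ideals of the form $\fn^a T[x_i \mid i\in I] + (x_i \mid i\in I)^b$, which is cofinal with the chain $\{\fm^n\}$. Hence $T \,\cotimes_S\, S[x_i \mid i\in I]$ is precisely the $\fm$-adic completion of $T[x_i \mid i\in I]$. Now \cref{t:AR-property}, applied to the Noetherian ring $T$ with its ideal $\fn$, gives the Artin--Rees property of $T[x_i \mid i\in I]$ with respect to $\fm$. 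For coherence, I would write $T[x_i \mid i\in I]$ as the filtered colimit of the Noetherian (hence coherent) subrings $T[x_j \mid j\in J]$ as $J$ ranges over finite subsets of $I$, and invoke the general principle that a filtered colimit of coherent rings along flat transition maps is again coherent; concretely, given a finitely generated ideal in the colimit, its finite presentation descends to some finite stage by coherence there, and then re-ascends to the colimit by flatness. Applying \cref{p:flatness-completion} yields the desired flatness of $T[x_i \mid i\in I] \to T\,\cotimes_S\, S[x_i \mid i\in I]$.

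For part (1), take $T = S$ with $\fn = 0$, so $T$ is discrete; then $S \,\cotimes_S\, S[x_i \mid i\in I]$ is the $(x_i \mid i\in I)$-adic completion of $S[x_i \mid i\in I]$, i.e.\ $S[[x_i \mid i\in I]]$ by definition. For part (2), fix a finite $J \subset I$, take $T = S[[x_j \mid j\in J]]$ (which is Noetherian) equipped with $\fn = (x_j \mid j\in J)$, and apply the main claim to the index set $I \setminus J$. The resulting completed tensor product $T \,\cotimes_S\, S[x_i \mid i\in I \setminus J]$ is the $\fm$-adic completion of $S[[x_j \mid j\in J]][x_i \mid i\in I\setminus J]$, and since $S[x_i \mid i\in I]$ sits inside this ring as a dense subring carrying the same topology, the completion coincides with $S[[x_i \mid i\in I]]$.

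The only mildly delicate point will be the topological identifications of the completed tensor products, particularly in (2) where one has to match two a priori different completion procedures; the algebraic heart of the argument is already packaged in \cref{t:AR-property,p:flatness-completion}.
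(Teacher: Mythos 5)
Your proposal matches the paper's proof: same topological identification reducing to the $\fm$-adic completion, same appeal to \cref{t:AR-property} and \cref{p:flatness-completion}, same treatment of \eqref{item1:flatness-completion}, and the same choice $T = S[[x_j \mid j\in J]]$ for \eqref{item2:flatness-completion}. The only real variation is that you re-derive coherence of $T[x_i \mid i\in I]$ via a filtered-colimit argument (the paper just cites Glaz), and for \eqref{item2:flatness-completion} you identify $T\,\cotimes_S\, S[x_i \mid i\in I\setminus J]$ with $S[[x_i \mid i\in I]]$ by a density argument, whereas the paper instead factors the inclusion as $S[[x_j \mid j\in J]] \to S[[x_j\mid j\in J]][x_i \mid i\in I\setminus J] \to S[[x_i\mid i\in I]]$. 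Note that what your version directly establishes is flatness of the second arrow in that factorization; to get flatness of the inclusion $S[[x_j\mid j\in J]] \to S[[x_i\mid i\in I]]$ itself you should add the (trivial, but worth stating) remark that the first arrow is a polynomial extension, hence flat, so the composite is flat.
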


\begin{proof}
  Observe that a basis for the topology on $T[x_i \mid i\in I]$ is given by
  \[
    \fn^m[x_i \mid i\in I]+(x_i \mid i\in I)^n,\quad m,n\in\N,
  \]
  which is easily seen to be equivalent to the $\fm$-adic one, where $\fm:=(x_i
  \mid i\in I)+\fn$. As $T[x_i \mid i \in I]$ is coherent 
  (e.g., see \cite[Theorem 6.2.2]{Gla89}), 
  the first assertion follows from
  \cref{t:AR-property,p:flatness-completion}. Regarding the last two assertions,
  \eqref{item1:flatness-completion} 
  follows by observing that $S \cotimes_S S[x_i \mid i\in I] = S[[x_i \mid
  i\in I]]$, and \eqref{item2:flatness-completion} 
  by taking $T = S[[x_j \mid j\in J]]$ with the $(x_j \mid
  j\in J)$-adic topology and observing that the given inclusion factors as
  \[
    S[[x_j \mid j\in J]]
    \to S[[x_j \mid j\in J]][x_i \mid i\in I \setminus J]
    \to S[[x_i \mid i\in I]]
  \]
  and so is flat.
\end{proof}

\begin{remark}
  For quotients $A$ of $k[x_i \mid i\in I]$ the completion map $A\to \^{A}$ need
  not be flat, even if the topology of $A$ is separated. Consider the ideal
  \[
    \fa=(yx_1,yx_n^{n}-zx_{n-1}^{n-1} \mid n>1)
  \]
  in $P=k[x_n,y,z \mid n\in\N_{>0}]$ and the quotient $A=P/\fa$. Let
  $\fm=(x_n,y,z \mid n\in\N_{>0})\subset A$. As $\fm$ is weighted homogeneous
  with respect to the positive weights $w(x_n)=w(y)=1$, $w(z)=2$, it follows that
  the $\fm$-adic topology on $A$ is separated. Consider the element $y-z$, which
  is annihilated by the series $f=\sum_{n\geq1} x_n^n$. If $\^A$ were flat over
  $A$, there would exist polynomials $a_1,\ldots,a_r\in A$ annihilating $y-z$
  such that $f$ can be written as
  $f=\sum_{j=1}^r a_j b_j$ where $b_j\in \^A$.

  Considering this equation modulo $(y,z)$, we have written $f$ as a linear
  combination of polynomials in $k[x_n \mid n\in\N_{>0}]$, which is clearly
  impossible.
\end{remark}

We close this section with the following analogue to \cref{p:f.g.initial}
for polynomial rings. 

\begin{proposition}
Let $P = S[x_i \mid i \in I]$ and $\fm = (x_i \mid i \in I)$, where $S$ is a ring and $I$ a set.
Let $\fa \subset P_\fm$ be an ideal such that $\ini(\fa) \subset P$ is finitely generated. 
Then $\fa$ is finitely generated.
\end{proposition}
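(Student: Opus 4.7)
The plan is to adapt the proof of \cref{p:f.g.initial} by an initial-form approximation inside the completion $\^P = S[[x_i \mid i \in I]]$ and then descend from $\^P$ to $P_\fm$ by faithful flatness. Throughout, $\ini(\fa)$ denotes the ideal of $P$ generated by the initial forms of the elements of $\fa \cap P$; since each element of $\fa$ is a unit multiple of an element of $\fa \cap P$ (denominators in $P_\fm$ are units), we have $\fa = (\fa \cap P) P_\fm$.

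First, choose $f_1, \ldots, f_r \in \fa \cap P$ whose initial forms generate $\ini(\fa)$, and set $\fa' := (f_1, \ldots, f_r) P_\fm \subseteq \fa$; the goal is $\fa = \fa'$. For $g \in \fa \cap P$ of order $n := \ord_\fm(g)$, writing $\ini(g) = \sum_j h_{j,0}\,\ini(f_j)$ with $h_{j,0} \in P$ homogeneous of degree $n - \ord(f_j)$ yields $g_1 := g - \sum_j h_{j,0} f_j \in \fa \cap P \cap \fm^{n+1}$. Iterating produces sequences $h_{j,k} \in P$ and $g_K \in \fa \cap P \cap \fm^{n+K}$ with
\[
    g - \sum_j \sum_{k<K} h_{j,k} f_j = g_K \qquad \text{for every } K \ge 1.
\]
In $\^P$ the series $H_j := \sum_{k \ge 0} h_{j,k}$ converges (since $\deg h_{j,k} \to \infty$) and $g_K \to 0$, so $g = \sum_j H_j f_j \in \fa' \^P$. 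Hence $\fa \subseteq \fa' \^P$.

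Next I would establish that $P_\fm \to \^P = \widehat{P_\fm}$ is faithfully flat, which forces $\fa' \^P \cap P_\fm = \fa'$. Both coherence and the Artin--Rees property with respect to $\fm$ descend from $P$ (\cref{t:AR-property}) to $P_\fm$: coherence is preserved by localization, and Artin--Rees descends because $P \to P_\fm$ is flat and flat base change preserves finite intersections of finitely generated submodules. Thus \cref{p:flatness-completion} gives flatness of the completion map, and faithful flatness follows from $P_\fm$ being local together with $\fm P_\fm \cdot \^P \subseteq \^\fm \subsetneq \^P$. Combining with the approximation step, $\fa \subseteq \fa' \^P \cap P_\fm = \fa' \subseteq \fa$, and so $\fa = \fa'$ is finitely generated.

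The main obstacle is the non-Noetherianity of $P_\fm$: the descent of the Artin--Rees property as well as the flatness and faithful flatness of $P_\fm \to \^P$ must be handled using the coherence-and-Artin--Rees framework of \cref{s:flatness} rather than the usual Noetherian toolkit. Once these technical inputs are secured, the initial-form recursion is a direct translation of the argument for $\^P$ behind \cref{p:f.g.initial}.
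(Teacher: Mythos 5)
Your proof follows essentially the same route as the paper's: pass to the completion $\^P$ to obtain finite generation there, then descend along the faithfully flat map $P_\fm \to \^P$. The only differences are presentational---the paper applies \cref{p:f.g.initial} to $\^\fa$ after noting $\ini(\fa)=\ini(\^\fa)$ and cites \cref{c:flatness-completion} for faithful flatness, whereas you inline the iterative initial-form approximation and re-establish faithful flatness by descending coherence and the Artin--Rees property from $P$ to $P_\fm$.
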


\begin{proof}
Let $f_1,\ldots,f_r\in \fa$ be such that $\ini(f_1),\ldots,\ini(f_r)$ generate $\ini(\fa)$. Since 
$\ini(\fa)=\ini(\^\fa)$, we can apply \cref{p:f.g.initial} to see that $\^\fa=(f_1,\ldots,f_r)\^P$. 
By \cref{c:flatness-completion} the map $P_\fm\to\^P$ is faithfully flat and thus 
$\fa\subset \^\fa\cap P_\fm=(f_1,\ldots,f_r)P_\fm$. The other inclusion is trivial, so 
$\fa=(f_1,\ldots,f_r)$.
\end{proof}


\section{Ideals of finite definition}

\label{s:findef}

In this section, we fix a field $k$ and a set $I$, and consider the polynomial
ring $P = k[x_i \mid i \in I]$ and the power series ring $\^P = k[[x_i \mid i
\in I]]$. An important class of ideals in $\^P$ are those generated by finitely
many power series involving only finitely many variables. We study their properties
in this section.

For any subset $J \subset I$, we write $P_J = k[x_i \mid i \in J]$ and $\^P_J =
k[[x_i \mid i \in J]]$, and for any ideal $\fa \subset \^P$ we denote $\fa_J :=
\fa \cap \^P_J$. 

\begin{definition}
  Let $\fa \subset \^P$ be an ideal.
  \begin{enumerate}
   \item
   We say that $\fa$ is \emph{of finite definition} with respect to the
   indeterminates $x_i$ if there exists a finite subset $J\subset I$ such that
   $\fa=\fa_J \^P$.

   \item
   Similarly, $\fa$ is \emph{of finite polynomial definition} with
   respect to the indeterminates $x_i$ if it is generated by finitely many
   polynomials, i.e., elements in $P$.

   \item
   We say that $\fa$ is \emph{of finite (polynomial) definition} if there
   exist a $k$-isomorphism $\^P \isom k[[x'_i \mid i\in I]]$ such that $\fa$ is
   of finite (polynomial) definition with respect to the formal coordinates
   $x'_i$. 
  \end{enumerate}
\end{definition}

\begin{definition}
  Let $(A, \fm, k)$ be an equicharacteristic local ring.
  \begin{enumerate}
   \item
   A \emph{weak DGK decomposition} for $A$ is an isomorphism $\^A \isom k[[x_i
   \mid i \in I]]/\fa$ where $\fa$ is an ideal of finite definition.

   \item
   A \emph{DGK decomposition} for $A$ is an isomorphism $\^A \isom k[[x_i \mid
   i\in I]]/\fa$ with $\fa$ of finite polynomial definition.

   \item
   We say that a (weak) DGK decomposition $\^A \isom k[[x_i \mid i\in I]]/\fa$
   is \emph{efficient} if the quotient map $k[[x_i \mid i\in I]] \to \^A$ is an
   efficient formal embedding.
  \end{enumerate}
\end{definition}

\begin{remark}
\label{r:DGK-and-finite-def}
  If $A$ has a DGK decomposition, then we have an isomorphism
  $\^A \isom \^B \,\cotimes_k\, \^P$
  where $\^P$ is a power series ring and $(B, \fn, k)$ is a local $k$-algebra
  which is essentially of finite type.
  Geometrically, this means that
  $\Spf(\^A) \isom \^Z_z \^\times \D^I$
  where $\D^I = \Spf (k[[x_i \mid i \in I]])$ and $\^Z_z$ is the formal
  neighborhood of a scheme $Z$ of finite type over $k$ at a point $z \in Z(k)$.
  If $A$ has a weak DGK decomposition, then 
  $\^A \isom \B \,\cotimes_k\, \^P$
  where $\B$ is a Noetherian complete local ring with residue field $k$.
\end{remark}

\begin{example}
  \label{e:weak-DGK-vs-strong-DGK}
  The existence of a weak DGK decomposition for a ring $A$  does not imply the
  existence of a DGK decomposition for $A$. This can be seen by considering the
  following example given by Whitney. Let $f(t)$ be a transcendental power
  series with complex coefficients and
  with $f(0)=0$, and consider the equation
  \[
    g = xy(y-x)(y-(3+t)x)(y-(4+f(t))x).
  \]
  It is proven in \cite[Example~14.1]{Whi65} that $\mathcal B = \C[[x,y,t]]/(g)$
  is not isomorphic to the completion
  of a local ring of a $\C$-scheme of finite type. In particular, any local
  ring $A$ for which $\^A \isom \mathcal B$ (for example, $\mathcal B$ itself)
  admits a weak DGK decomposition but not a DGK decomposition.

  We now give another example of a local ring $A$ such that $\^A \isom
  \mathcal B$. This example has the advantage of being explicitly presented as
  the localization of a quotient of a polynomial ring in countably many
  variables. Write $f(t)=\sum_{i\geq1} a_i t^i \in \C[[t]]$. Consider the
  polynomial ring $P = \C[x,y,t,z_n \mid n \geq 0]$ and the ideal  
  \[
   \fa = (h, z_{n-1} - z_n t - a_n t \mid n\geq 1)
  \]
  where
  \[
    h = xy(y-x)(y-(3+t)x)(y-(4+z_0)x).
  \]
  Let $A$ be the localization of $P/\fa$ at the ideal 
  $(x,y,t,z_n \mid n \geq 0)$.
  Then, in $\^A$, we have for each $m\geq 1$
  \[
   z_0 - f(t)
   =
   z_m t^m - \sum_{i\geq m+1} a_i t^i \in \^\fm^{m},
  \]
  and for each $m \geq n+1$
  \[
   z_n - \sum_{i\geq n+1} a_i t^{i-n}
   =
   z_m t^{m-n} - \sum_{i\geq m+1} a_i t^{i-n} \in \^\fm^{m-n},
  \]
  Thus it follows that $\^A \isom \C[[x,y,t,z_0]]/(h, z_0-f(t)) \isom
  \C[[x,y,t]]/(g) = \mathcal B$.
\end{example}

\begin{remark}
  \label{r:counterex-finite-def} 
  An analogous definition of \emph{finite definition} can be given for ideals in
  a polynomial ring $P = k[x_i \mid i \in I]$. It is easy to see that the
  definition does not depend on the choice of indeterminates, and that an ideal
  of $P$ is of finite definition if and only if it is finitely generated. By
  contrast, in a power series ring not every ideal of finite definition is so
  with respect to the given indeterminates $x_i$, and not every finitely
  generated ideal is of finite definition. For instance, consider $\^P=k[[x_n
  \mid n\in\N]]$. The principal ideal generated by $f=\sum_{n\geq1} x_n^n$
  is of finite definition by \cref{p:inverse-fn-thm} but not in the
  indeterminates $x_i$. As for the second claim, an example is given by the
  principal ideal generated by $g=\sum_{n\geq1} x_n^{n+1}$, which, as we shall
  discuss next, is not of finite definition if $k$ is of
  characteristic $0$. Indeed, assume by contradiction that there exists an
  isomorphism $\^P \isom k[[y_n \mid n\in\N]]$ such that $g\^P$ is of finite
  definition with respect to the indeterminates $y_n$. Pick a variable $y_r$ not
  appearing in the generators for $g\^P$, and consider the regular continuous
  derivation $d = \partial/\partial y_r$ on $\^P$. Notice that $d(g) = 0$. By
  regularity, we have $d(x_m)\in \^P^\times$ for some $m\geq1$. Writing 
  $d(g)=\sum_{n\geq1} (n+1)x_n^n d(x_n)$, 
  we see that $\ord_{x_m}(d(g)) < \infty$, contradicting the fact that $d(g) =
  0$.
\end{remark}

Ideals of finite definition form a class of ideals of $\^P$ for which
\cref{q:in-fin-gen} has a positive answer. We only give a sketch of the proof
of the next lemma here and refer the reader to \cite[Section 1.5]{Chi20} for
details.

\begin{lemma}
  \label{l:in-for-fin-def}
  Let $\fa \subset \^P$ be an ideal such that there exists $J \subset I$ finite
  with $\fa = \fa_J \^P$. Then $\ini(\fa) = \ini(\fa_J) P$.
\end{lemma}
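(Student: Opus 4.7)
The inclusion $\ini(\fa_J)\cdot P\subset\ini(\fa)$ is immediate from the definitions, so the substance of the lemma lies in the reverse inclusion. The plan is to choose generators of $\fa_J$ adapted to its initial ideal and lift the associated standard-basis property from $\^P_J$ to $\^P$.

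Since $J$ is finite, $\^P_J$ is a Noetherian complete local ring, so $\ini(\fa_J)\subset P_J$ is finitely generated. By the Grauert--Hironaka formal division theorem in $\^P_J$, one may choose $f_1,\dots,f_s\in\fa_J$ whose initial forms generate $\ini(\fa_J)$ and which form a \emph{standard basis}: every $g\in\fa_J$ admits a representation $g=\sum_i q_if_i$ with $q_i\in\^P_J$ and $\ord(q_if_i)\ge\ord(g)$ for every $i$. In particular $\fa_J=(f_1,\dots,f_s)\^P_J$, so $\fa=(f_1,\dots,f_s)\^P$. It therefore suffices to prove the analogous statement over $\^P$: every $g\in\fa$ has a representation $g=\sum_i q_if_i$ with $q_i\in\^P$ and $\ord(q_if_i)\ge\ord(g)$ for every $i$, since then $\ini(g)=\sum_{\ord(q_if_i)=\ord(g)}\ini(f_i)\ini(q_i)\in(\ini(f_1),\dots,\ini(f_s))\cdot P=\ini(\fa_J)\cdot P$.

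To establish this extended standard-basis property, I would proceed by successive correction. Given any representation $g=\sum_i h_if_i$, set $m:=\min_i\ord(h_if_i)$, so $m\le\ord(g)$. If $m<\ord(g)$, the degree-$m$ components of the summands cancel, yielding a homogeneous syzygy $(c_1,\dots,c_s)\in P^s$ of $(\ini(f_1),\dots,\ini(f_s))$ with $c_i=\ini(h_i)$ when $\ord(h_if_i)=m$ and $c_i=0$ otherwise. Since $P=P_J\otimes_k k[x_i\mid i\in I\setminus J]$ is free over $P_J$ with basis the monomials $x^\a$ in the variables $x_i$ for $i\in I\setminus J$, each $c_i$ expands uniquely as $c_i=\sum_\a c_{i,\a}x^\a$ with $c_{i,\a}\in P_J$, and only finitely many $c_{i,\a}$ are nonzero since $c_i$ is a polynomial. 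Each tuple $(c_{1,\a},\dots,c_{s,\a})$ is a homogeneous $P_J$-syzygy of $(\ini(f_1),\dots,\ini(f_s))$, which by the standard-basis property in $\^P_J$ lifts to a relation $\sum_i\tilde c_{i,\a}f_i=0$ in $\^P_J$ with $\ini(\tilde c_{i,\a})=c_{i,\a}$ (apply the standard basis to the element $\sum_i c_{i,\a}f_i\in\fa_J$, which has order strictly greater than that of the syzygy). Assembling, $\tilde c_i:=\sum_\a\tilde c_{i,\a}x^\a\in\^P$ is a finite sum satisfying $\sum_i\tilde c_if_i=0$ and $\ini(\tilde c_i)=c_i$; replacing $h_i$ by $h_i-\tilde c_i$ yields a new representation of $g$ with $m$ strictly increased. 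Iterating finitely many times (each step increases $m$ by at least one, and $m\le\ord(g)$) produces the desired representation.

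The main obstacle is that $\^P$ is strictly larger than any naive extension of $\^P_J$ by the new variables (cf.\ \cref{r:series-of-series}), so neither faithful flatness nor a direct tensoring argument transports the standard-basis property from $\^P_J$ to $\^P$. The plan sidesteps this by exploiting that each syzygy coefficient $c_i$, being an initial form, is a homogeneous polynomial involving only finitely many of the variables $\{x_i\mid i\in I\setminus J\}$, which reduces each lifting step to finitely many syzygy-lifts in the Noetherian ring $\^P_J$, where the standard-basis property is available.
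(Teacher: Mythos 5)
Your proof is correct, and it shares with the paper's argument the decisive structural observation: the initial forms of a finite generating set of $\fa_J$ live in $P_J$, and any syzygy of them in $P$, being a polynomial, decomposes along the monomial basis of $k[x_i\mid i\in I\setminus J]$ into finitely many $P_J$-syzygies, so all the real work happens inside the Noetherian ring $\^P_J$. Where you diverge is in implementation. The paper chooses a local monomial order compatible with the grading, takes a standard basis of $\fa_J$ with respect to the restricted order, and invokes Becker's $s$-series criterion from \cite{Bec90} to conclude that this set is automatically a standard basis of $\fa$ upstairs; the identity $\ini(\fa)=(\ini(f_1),\dots,\ini(f_r))$ then drops out. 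You instead work entirely with the $\fm$-adic filtration and prove the relevant division property over $\^P$ by hand through a successive-correction scheme, lifting each $P_J$-syzygy via the division property already available in $\^P_J$ and iterating; finiteness of $\ord(g)$ terminates the loop. Your route avoids monomial orders and Becker's criterion at the cost of more explicit bookkeeping, and amounts to a direct verification of the Buchberger-type criterion in this setting.

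One minor inaccuracy, harmless to the argument: you assert $\ini(\tilde c_{i,\a})=c_{i,\a}$, but when $c_{i,\a}=0$ the lift $\tilde c_{i,\a}$ may be a nonzero series (it equals $-b_{i,\a}$ in the notation implicit in your construction), so its initial form is then nonzero. What your correction step actually uses, and what does hold, is that $\ord(\tilde c_{i,\a}x^\a)\ge m$ in all cases, with equality and with $\ini(\tilde c_{i,\a}x^\a)=c_{i,\a}x^\a$ precisely when $c_{i,\a}\ne 0$; this is enough to ensure $\ord((h_i-\tilde c_i)f_i)>m$ for every $i$, so the loop strictly advances.
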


\begin{proof}
  Choose a local monomial order $\prec$ compatible with the standard filtration
  on $\^P$; for example, take the order defined by $x^a \prec x^b$ if $x^b
  <_{\grlex} x^a$, where $<_{\grlex}$ denotes the usual graded lexicographic
  order. Then $\prec$ restricts to a local monomial order $\prec_J$ on $\^P_J$.
  Choose a standard basis $S = \{f_1,\ldots,f_r\}$ of $\fa_J$ with respect to
  $\prec_J$. By \cite[Theorem 4.1]{Bec90} this is equivalent to $S$ being
  closed under $s$-series. Note that \cite[Theorem 4.1]{Bec90} extends directly
  to the case of infinitely many indeterminates and thus it follows immediately
  that $S$ is a standard basis of $\fa$ with respect to $\prec$. Clearly we
  have that $\ini(\fa) = (\ini(f_1),\ldots,\ini(f_r))$, which proves the claim.
\end{proof}

We are interested now in understanding heights of ideals of finite definition. Let us 
start by looking at their minimal primes.

\begin{proposition}
  \label{l:prime-extends-to-prime}
  If $\fa \subset \^P$ is an ideal of finite definition, then $\fa$ has a finite
  number of minimal primes, and each of them is of finite definition. More
  precisely, let $J \subset I$ be a finite subset and assume that $\fa = \fa_J
  \^P$. If $\fp \subset \^P$ is a minimal prime of $\fa$, then $\fp = \fp_J \^P$.
  Moreover, the assignment $\fp \mapsto \fp_J$ gives a bijection between the
  minimal primes of $\fa$ and the minimal primes of $\fa_J$.
\end{proposition}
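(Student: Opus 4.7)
The plan is to reduce the proposition to a single key claim $(\ast)$: for every prime ideal $\fq \subset \^P_J$, the extension $\fq\^P$ is a prime ideal of $\^P$. Once $(\ast)$ is in hand, the rest of the proposition will follow readily from faithful flatness and the Noetherian structure of $\^P_J$.

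For the reduction, since $J$ is finite, $\^P_J$ is Noetherian and $\fa_J$ has finitely many minimal primes $\fq_1,\ldots,\fq_n$, each finitely generated. By \cref{c:flatness-completion}\eqref{item2:flatness-completion}, the inclusion $\^P_J \hookrightarrow \^P$ is flat, and it is in fact faithfully flat, since $\^P_J$ is local and its maximal ideal extends to a proper ideal of $\^P$. This gives the contraction formula $\fb\^P \cap \^P_J = \fb$ for every ideal $\fb \subset \^P_J$ together with going-down. For a minimal prime $\fp$ of $\fa$, set $\fp_J := \fp \cap \^P_J$; a going-down argument (lifting any minimal prime $\fq \subseteq \fp_J$ of $\fa_J$ to a prime $\fp' \subseteq \fp$ of $\^P$ still containing $\fa$, then invoking the minimality of $\fp$) shows that $\fp_J$ is a minimal prime of $\fa_J$, equal to some $\fq_i$. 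Granting $(\ast)$, the ideal $\fq_i\^P$ is a prime lying between $\fa$ and $\fp$, so minimality forces $\fp = \fq_i\^P$. Faithful flatness ensures that distinct $\fq_i$ extend to distinct primes, yielding the claimed bijection $\fp \mapsto \fp_J$, and each $\fq_i\^P$ is of finite definition by construction.

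The main obstacle is establishing $(\ast)$. The first natural attempt, via the associated graded ring, is insufficient: \cref{l:in-for-fin-def} identifies $\ini(\fq\^P)$ with $\ini(\fq)P$ inside $P = \gr(\^P)$, and \cref{p:f.g.initial} shows that $\fq\^P$ is closed in $\^P$ (so $\^P/\fq\^P$ is separated with $\gr(\^P/\fq\^P) = P/\ini(\fq)P$), but $\ini(\fq)$ is typically not prime in $P_J$ even when $\fq$ is prime in $\^P_J$, as already seen for $\fq = (x_1^2 - x_2^3)$ in $k[[x_1,x_2]]$. The route I would pursue is a structural identification $\^P/\fq\^P \isom B \,\cotimes_k\, k[[x_i \mid i \in I\setminus J]]$, where $B := \^P_J/\fq$ is a Noetherian complete local domain and the completed tensor product is taken in the category of complete topological $k$-algebras; this reduces $(\ast)$ to proving that such a completed tensor product over a Noetherian complete local domain is itself a domain. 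The latter I would approach by a limit argument: for every finite $L \subset I\setminus J$, the finite-variable specialization $B \,\cotimes_k\, k[[x_i \mid i \in L]] \isom B[[x_i \mid i \in L]]$ is a Noetherian power series ring over a domain, hence a domain, and one then passes to the colimit over finite $L$ while tracking how ideals behave. Carefully justifying the identification and controlling the completed tensor product in infinitely many variables, in view of the subtleties recorded in \cref{r:series-of-series}, is where the real technical work lies.
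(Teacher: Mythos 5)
Your reduction to the key claim $(\ast)$ — that $\fq\^P$ is prime whenever $\fq \subset \^P_J$ is prime — matches the paper's, and the surrounding faithful‑flatness/going‑down bookkeeping is fine. The gap is in your proposed proof of $(\ast)$. You want to present $\^P/\fq\^P$ as $B \,\cotimes_k\, k[[x_i \mid i \in I\setminus J]]$ with $B = \^P_J/\fq$ and then ``pass to the colimit over finite $L \subset I\setminus J$'' of the domains $B[[x_i \mid i\in L]]$. But as the paper itself stresses right after the definition of the power series ring, $k[[x_i \mid i\in I]]$ (and likewise $B\,\cotimes_k\, k[[x_i\mid i\in I\setminus J]]$) is \emph{not} the colimit of its finite‑variable subrings when $I$ is infinite: a series like $\sum_n x_n^n$ lies in no finite piece. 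So showing each $B[[x_i\mid i\in L]]$ is a domain only shows the proper subring $\bigcup_L B[[x_i\mid i\in L]]$ is a domain; it says nothing about the elements created by completion. Passing to countable $L$ is legitimate but does not help, because even the countable case is a genuine completion that your argument does not reach. This is not a matter of ``carefully controlling'' the colimit — the colimit is simply the wrong object, and a new idea is needed to handle the completion.

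The paper supplies exactly that idea by working inside the Bourbaki power series ring: it embeds $\^P$ into $(\^P_J)^{\N^{(I\setminus J)}}$, uses the finite generation of $\fq$ (from Noetherianity of $\^P_J$, since $J$ is finite) to identify $\fq\,(\^P_J)^{\N^{(I\setminus J)}}$ with the Bourbaki series whose coefficients all lie in $\fq$, and thus obtains an injection $\^P/\fq\^P \hookrightarrow (\^P_J/\fq)^{\N^{(I\setminus J)}}$. The target is visibly a domain, which gives $(\ast)$. The Bourbaki ring is large enough to contain all of $\^P$ and at the same time behaves like a naive ``power series with coefficients'' ring, which is precisely what your colimit was trying (and failing) to emulate. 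If you want to rescue your route, you would need to replace the colimit of finite pieces with such an overring in which expansion in the extra variables and extension of $\fq$ can be controlled — at which point you have essentially rediscovered the Bourbaki argument.
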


\begin{proof}
  Notice that if $\fp \subset \^P$ is a prime ideal, then $\fp_J \subset \^P_J$
  remains prime. Moreover, by \cref{c:flatness-completion} we have that
  $\^P_J\to \^P$ is faithfully flat and thus $\fq = (\fq\^P)
  \cap \^P_J$ for any ideal $\fq \subset \^P_J$.
  It is therefore sufficient to show that, for every prime ideal
  $\fq \subset \^P_J$, the extension $\fq \^P$ is prime. By
  \cref{r:bourbaki-power-series} we have an injection $\^P \to (\^P_J)^{\N^{(I
  \setminus J)}}$. Since $J$ is finite, $\^P_J$ is Noetherian and $\fq$ is
  finitely generated. This implies that 
  $\fq \, (\^P_J)^{\N^{(I \setminus J)}}
  =
  \fq^{\N^{(I \setminus J)}}$,
  that is, the elements of the extension $\fq \, (\^P_J)^{\N^{(I \setminus J)}}$
  are precisely the Bourbaki power series that, when expanded in the variables indexed by
  $I \setminus J$, have coefficients in $\fq$. Therefore have an injection
  \[
    \^P/\fq \^P \inj (\^P_J/\fq)^{\N^{(I \setminus J)}}
  \]
  and the ring in the right hand side is clearly a domain. Thus $\fq \^P$ is
  prime.
\end{proof}

\begin{remark}
  In the setup of the proof of \cref{l:prime-extends-to-prime}, if $J$ is
  infinite then it is no longer true that $\fq \, (\^P_J)^{\N^{(I \setminus J)}} =
  \fq^{\N^{(I \setminus J)}}$ for an arbitrary prime $\fq \subset \^P_J$. For
  example, let $J = \N$, pick $i_0 \in I \setminus J$, let $\fq=\^\fm_J$ be the
  maximal ideal in $\^P_J$, and consider the series $f = \sum_{n \in \N} x_n
  x_{i_0}^n$. Then $f$ the belongs to $\fq^{\N^{(I \setminus J)}}$ but not to
  $\fq \, (\^P_J)^{\N^{(I \setminus J)}}$. We do not know if the extension $\fq
  \^P$ remains prime when $J$ is infinite.
\end{remark}

\begin{remark}
  \cref{l:prime-extends-to-prime} shows that the ideal $(x_i \mid i\in J)\^P$ is
  prime whenever $J$ is finite. Since colimits of prime ideals remain prime, one
  sees that $(x_i \mid i\in J)\^P$ is prime for an arbitrary subset $J$. In
  particular $\fm_0 = (x_i \mid i \in I)\^P$ is prime. Notice that $\^P/\fm_0$
  has infinite dimension when $I$ is infinite.
\end{remark}

The proof of the following \lcnamecref{r:ht(p_j)=ht(p)} uses 
the results of the previous section and \cref{c:aq-noetherian}.

\begin{theorem}
  \label{r:ht(p_j)=ht(p)}
  If $J \subset I$ is a finite subset, and $\fa = \fa_J \^P$, then $\height(\fa)
  = \height(\fa_J)$.
\end{theorem}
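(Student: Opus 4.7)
The plan is to reduce to the prime case and then prove both inequalities, exploiting faithful flatness for one direction and invoking \cref{c:aq-noetherian} for the other. Since $\height(\fa)$ equals the minimum of the heights of its minimal primes, and by \cref{l:prime-extends-to-prime} these minimal primes are exactly the extensions $\fp\^P$ for $\fp$ running over the minimal primes of $\fa_J$ in $\^P_J$, it suffices to prove $\height(\fp\^P) = \height(\fp)$ for every prime $\fp \subset \^P_J$.

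For the lower bound $\height(\fp\^P) \ge \height(\fp)$, I would use that the inclusion $\^P_J \to \^P$ is faithfully flat by \cref{c:flatness-completion}, so going-down holds. Given a saturated chain $\fp = \fp_0 \supsetneq \fp_1 \supsetneq \cdots \supsetneq \fp_n$ in $\^P_J$ realizing $n = \height(\fp)$, \cref{l:prime-extends-to-prime} ensures each $\fp_i\^P$ is prime in $\^P$, and the resulting chain of extensions is strictly descending because contracting back to $\^P_J$ recovers the original chain.

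For the upper bound $\height(\fp\^P) \le \height(\fp)$, Krull's height theorem in the Noetherian ring $\^P_J$ produces elements $a_1, \dots, a_n \in \fp$ (with $n = \height(\fp)$) such that $\fp$ is a minimal prime of $(a_1, \dots, a_n)$. Applying \cref{l:prime-extends-to-prime} to this ideal of finite definition, its minimal primes in $\^P$ are precisely the extensions of those in $\^P_J$, so $\fp\^P$ is minimal over $(a_1, \dots, a_n)\^P$. To then conclude $\height(\fp\^P) \le n$, I would invoke \cref{c:aq-noetherian} to pass to a Noetherian quotient or localization of $\^P$ in which Krull's height theorem applies to the image of this $n$-generated ideal.

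The main obstacle is precisely this last step: heights in the non-Noetherian ring $\^P$ cannot be controlled purely by the number of generators of an ideal, so the Noetherian replacement provided by \cref{c:aq-noetherian} is essential for bounding $\height(\fp\^P)$ from above. The remaining ingredients — the reduction to the prime case, the faithful flatness, and the extraction of an $n$-element generating set — follow from \cref{l:prime-extends-to-prime,c:flatness-completion} together with standard commutative algebra in $\^P_J$.
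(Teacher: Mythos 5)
Your proof is correct and follows essentially the same route as the paper's: both reduce to the prime case via \cref{l:prime-extends-to-prime}, use the flatness of $\^P_J \to \^P$ from \cref{c:flatness-completion}, and invoke \cref{c:aq-noetherian} to get a Noetherian localization at $\fb = (x_j \mid j \in J)$. The only difference is that the paper applies the dimension formula for flat local homomorphisms of Noetherian rings (\cite[Theorem~15.1]{Mat89}) to $(\^P_J)_{\fp_J} \to \^P_\fp$ in a single stroke, whereas you unpack it into its two constituent inequalities — going-down/direct chain extension for the lower bound and Krull's height theorem in $\^P_\fb$ for the upper bound.
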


\begin{proof}
  From \cref{l:prime-extends-to-prime} we can assume that $\fa = \fp = \fp_J \^P$
  is a prime ideal. Notice that $\fp \subset \fb := (x_j \mid j\in J)$. From
  \cref{c:aq-noetherian} the localization $\^P_\fb$ is Noetherian, and therefore
  $\^P_\fp$, which is a further localization of $\^P_\fb$, is also Noetherian. By
  \cref{c:flatness-completion} the extension $\^P_J \subset \^P$ is flat, and
  therefore the extension $(\^P_J)_{\fp_J} \subset \^P_\fp$ is also flat. Since
  $\^P_\fp$ is Noetherian, it follows from by \cite[Theorem~15.1]{Mat89} that
  $\height(\fp) = \height(\fp_J)$.
\end{proof}

\begin{corollary}
  \label{p:finite-def-finite-ht}
  Let $\fa \subset \^P$ be any ideal of finite definition. For every minimal
  prime $\fp$ of $\fa$, we have $\height(\fp) < \infty$.
\end{corollary}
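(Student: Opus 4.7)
The plan is to combine \cref{l:prime-extends-to-prime} with \cref{r:ht(p_j)=ht(p)} to reduce the claim to the Noetherian finite-variable case. First, since $\fa$ is of finite definition, after replacing $\^P$ by $\^P$ via a suitable $k$-isomorphism $\^P \isom k[[x_i' \mid i\in I]]$, we may assume without loss of generality that there is a finite subset $J \subset I$ with $\fa = \fa_J \^P$.

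Next, given any minimal prime $\fp$ of $\fa$, \cref{l:prime-extends-to-prime} produces a minimal prime $\fp_J = \fp \cap \^P_J$ of $\fa_J$ satisfying $\fp = \fp_J \^P$. In particular, $\fp$ is itself an ideal of finite definition in the same set of indeterminates. Applying \cref{r:ht(p_j)=ht(p)} to the prime $\fp$ yields
\[
\height(\fp) = \height(\fp_J).
\]

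Finally, since $J$ is finite, the ring $\^P_J = k[[x_j \mid j\in J]]$ is Noetherian, so every prime of $\^P_J$ has finite height. Therefore $\height(\fp_J) < \infty$, and hence $\height(\fp) < \infty$, as desired.

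There is no real obstacle here: all the technical work (stability of primes under extension, preservation of heights under extension from $\^P_J$ to $\^P$, and the flatness/Noetherianity inputs used in the proofs of those statements) has already been done in the preceding two results, and the corollary is a direct combination of them together with the fact that power series rings in finitely many variables over a field are Noetherian of finite dimension.
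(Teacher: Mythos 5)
Your proof is correct and follows exactly the route the paper intends: the corollary is stated as an immediate consequence of \cref{l:prime-extends-to-prime} and \cref{r:ht(p_j)=ht(p)} (the paper gives it no separate proof), and your assembly of those two results with the Noetherianity of $\^P_J$ for finite $J$ is the intended argument.
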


\begin{remark}
  \label{r:finite-def-properties-poly}
  In the case of polynomial rings the analogues of \cref{r:ht(p_j)=ht(p),p:finite-def-finite-ht} are 
  well known and easy to
  prove, and in fact there is a strong converse to the analogue of
  \cref{p:finite-def-finite-ht}, since
  every prime ideal of finite height in a polynomial ring $P = k[x_i \mid i \in
  I]$ is finitely generated. To see this, 
  suppose $\fp \subset k[x_i \mid i \in I]$ is a prime ideal that is not finitely
  generated. Recall that $\fp$ is the colimit of the ideals $\fp \cap P_J$ as $J$
  ranges among the finite subsets of $I$. This implies that we can fix an
  embedding $\N \subset I$ and find an increasing sequence $\{r_n \mid n \in \N\}
  \subset\N$ such that, if $\fp_n \subset k[x_i \mid i \in I]$ is the ideal
  generated by $\fp \cap k[x_1,\dots,x_{r_n}]$, then $\fp_n \subsetneq \fp_{n+1}$
  for all $n$. Since $\fp_n$ are all prime and are contained in $\fp$, it follows
  that $\height(\fp) = \infty$.

  Moreover, for arbitrary ideals $\fa$ of $P$ it is proven in \cite[Theorem
  3.3]{GH93} that $\fa$ is finitely generated if and only if it has finitely
  many associated primes, each of which is of finite height.
\end{remark}

\begin{proposition}
  \label{c:aq-noetherian}
  For every finite $J\subset I$, the localization $\^P_{(x_j \mid j\in J)}$ is
  Noetherian.
\end{proposition}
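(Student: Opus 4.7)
The proof strategy is to identify the maximal-ideal-adic completion $\widehat A$ of $A := \^P_\fq$ with a Noetherian regular local ring, show that the completion map $A \to \widehat A$ is faithfully flat, and then descend Noetherian-ness from $\widehat A$ to $A$.

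First I would set $K = I \setminus J$ and $d = |J|$, and use \cref{r:series-of-series} with the finiteness of $J$ to identify $\^P \cong \^P_K[[x_j \mid j \in J]]$. Write $R = \^P_K$; this is a domain, as it is the colimit of the domains $\^P_{J'}$ for finite $J' \subset K$ (invoking \cref{l:prime-extends-to-prime} to propagate primeness of the zero ideal). Set $F = \mathrm{Frac}(R)$. Every element of $\^P$ has a canonical expansion $f = \sum_{\alpha \in \N^J} c_\alpha x^\alpha$ with $c_\alpha \in R$. Using this expansion, one checks that the maximal ideal $\fm := \fq A$ of $A$ is finitely generated by the $x_j$, that $A/\fm \cong F$, that the $\fm$-adic filtration on $A$ is separated (here one uses that $R$ is a domain and that clearing denominators preserves the coefficient data at low order), and that there is a natural isomorphism $\gr_\fm(A) \cong F[x_j \mid j \in J]$, a Noetherian polynomial ring in $d$ variables over $F$. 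Consequently, $\widehat A$ is complete with Noetherian associated graded, so $\widehat A$ itself is Noetherian; indeed $\widehat A \cong F[[x_j \mid j \in J]]$, a regular local Noetherian ring of dimension $d$.

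The main step is to show that $A \to \widehat A$ is faithfully flat. Once this is established, Noetherian-ness of $A$ follows by standard descent: any ascending chain of ideals in $A$ extends to an ascending chain in $\widehat A$, which stabilizes by Noetherian-ness of $\widehat A$, and faithful flatness gives $I_n = I_n\widehat A \cap A$, so the original chain stabilizes. To obtain flatness I plan to invoke \cref{p:flatness-completion}: it suffices to verify that $A$ is coherent and has the Artin--Rees property with respect to $\fm$. Since $A$ and $\widehat A$ are local and the completion map is a local homomorphism, flatness automatically upgrades to faithful flatness.

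The main obstacle is verifying the Artin--Rees property and coherence of the non-Noetherian ring $A$. My plan for Artin--Rees is to clear denominators to reduce the question to the analogous property for $\^P = R[[x_1, \ldots, x_d]]$ with respect to $\fq = (x_1, \ldots, x_d)$, and then exploit that $\fq$ is generated by a regular sequence and that the quotients $\^P/\fq^n$ are free $R$-modules of finite rank; a Koszul-style syzygy argument, combined with the fact that the generators $f_1, \ldots, f_r$ of a finitely generated submodule have bounded order, then yields the required Artin--Rees bound. Coherence can be approached similarly by lifting finitely generated ideals of $A$ to finitely generated ideals of $\^P$ via clearing denominators and tracking syzygies through the localization. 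The delicate point is that $R$ itself is non-Noetherian when $K$ is infinite, so one must work entirely at the level of finitely generated submodules and avoid any appeal to classical finiteness theorems over $R$.
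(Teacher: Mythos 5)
Your approach is genuinely different from the paper's: you aim to identify $\widehat{A}\cong F[[x_j\mid j\in J]]$ with $F=\mathrm{Frac}(\^P_{I\setminus J})$, prove $A\to\widehat A$ is faithfully flat, and descend Noetherian-ness; the paper instead works entirely inside $\^P_\fq$ by induction on $|J|$, using a change-of-coordinates lemma (\cref{l:coordinate-change-regular}) and a Weierstrass division theorem (\cref{p:weierstrass-division}) adapted to these completions, showing directly that every ideal is generated by one $y$-regular element together with a Gr\"obner-type set coming from the Noetherian ring $Q_n[y_{n+1}]$. Your preparatory steps are correct: the identification $\^P\cong R[[x_j\mid j\in J]]$ via \cref{r:series-of-series}, the computation $\gr_\fm(A)\cong F[x_j\mid j\in J]$, the separatedness of the $\fm$-adic filtration on $A$, and the identification $\widehat A\cong F[[x_j\mid j\in J]]$ all check out.

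The gap is exactly where you flag it, and I don't think your sketch closes it. To invoke \cref{p:flatness-completion} you need $A$ (equivalently, $\^P$) to be coherent and to have the Artin--Rees property with respect to $\fq$. Neither is available from the paper's toolkit, and neither is a routine fact. The paper's coherence input (\cite[Theorem~6.2.2]{Gla89}) applies to \emph{polynomial} rings $T[x_i\mid i\in I]$ over Noetherian $T$, not to the completion $\^P=k[[x_i\mid i\in I]]$; coherence of power series rings in infinitely many variables is a notoriously delicate question, and the passage from coherence of a ring to coherence of its completion is precisely what fails in general. Similarly, the paper's Artin--Rees theorem (\cref{t:AR-property}) is proved for $S[x_i\mid i\in I]$ with $S$ Noetherian, by reducing to finitely many variables; here the base ring $R=\^P_{I\setminus J}$ is itself non-Noetherian, so the reduction-to-finite-$J'$ step has no Noetherian floor to land on. Your proposed remedy (``Koszul-style syzygy argument'' using that $\fq$ is a regular sequence, ``clearing denominators and tracking syzygies'') is the hard part of the whole proposition rephrased, not a proof of it: the obstruction to bounding syzygies uniformly in the $x_j$-degree over a non-Noetherian base is exactly what a Weierstrass preparation argument is designed to overcome. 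If you want to make the flat-descent route work, you would essentially need to re-prove a Weierstrass-type finiteness statement along the way, at which point the paper's inductive argument via \cref{p:aq-noetherian} is both shorter and self-contained.
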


\begin{proof}
  As discussed in \cref{r:series-of-series}, since $J$ is finite we have an
  isomorphism
  \[
    \^P \isom k[[x_i \mid i\in I\setminus J]][[x_j \mid j\in J]].
  \]
  The \lcnamecref{c:aq-noetherian} now follows from the next \lcnamecref{p:aq-noetherian}.
\end{proof}

\begin{lemma}
  \label{p:aq-noetherian}
  For any $n\in\N$, let $\^P_n:=\^P[[y_1,\ldots,y_n]]$ and consider the ideal
  $\fb_n:=(y_1, \ldots, y_n)$ in $\^P_n$. Then the localization $(\^P_n)_{\fb_n}$
  is a Noetherian ring. 
\end{lemma}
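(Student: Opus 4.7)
The plan is to show that $S := (\^P_n)_{\fb_n}$ has $\fb_n$-adic completion $\^S := \mathrm{Frac}(\^P)[[y_1,\ldots,y_n]]$, a regular Noetherian local ring of dimension $n$, and that the canonical map $S \to \^S$ is faithfully flat; Noetherianity of $S$ then follows by a standard descent argument.

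The first step is the completion computation. Since $\^P_n/\fb_n^k = \^P[y_1,\ldots,y_n]/(y_1,\ldots,y_n)^k$ is a finitely generated free $\^P$-module, and since $\^P$ is a domain (which follows from \cref{l:prime-extends-to-prime} applied to the zero ideal), localizing this quotient at $\fb_n$ amounts to inverting all nonzero elements of $\^P$. A direct common-denominator argument then gives
\[
  S/\fb_n^k S
  \;\cong\;
  \mathrm{Frac}(\^P)[y_1,\ldots,y_n]/(y_1,\ldots,y_n)^k
\]
for every $k$, and taking the inverse limit yields $\^S \cong \mathrm{Frac}(\^P)[[y_1,\ldots,y_n]]$.

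Next I would deduce Noetherianity of $S$ from faithful flatness of $S \to \^S$. Both rings are local with maximal ideals generated by $y_1,\ldots,y_n$ and common residue field $\mathrm{Frac}(\^P)$, so the map is local with trivial residue extension, and faithful flatness reduces to flatness. Granting this, for any ideal $I \subset S$ the extension $I\^S$ is finitely generated in $\^S$ by elements $f_1,\ldots,f_r$ that may be chosen in $I$, and faithful flatness forces $I = (f_1,\ldots,f_r)S$.

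The main obstacle is establishing flatness of the map $S \to \^S$. In the Noetherian setting this is a consequence of the Artin--Rees lemma, but here $S$ is non-Noetherian. A plausible route is to verify the Artin--Rees property for $S$ with respect to $\fb_n S$ and coherence of $S$ via an argument analogous to \cref{t:AR-property}, after which \cref{p:flatness-completion} yields flatness of the completion map. An alternative is to apply the local criterion of flatness, reducing the problem to the vanishing of $\mathrm{Tor}_1^S(\mathrm{Frac}(\^P), \^S)$, which via the Koszul resolution of $\mathrm{Frac}(\^P)$ over $\^S$ on the regular sequence $y_1,\ldots,y_n$ should be tractable.
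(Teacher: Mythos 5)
Your overall blueprint — compute the $\fb_n$-adic completion $\^S$, observe it is the Noetherian regular local ring $\mathrm{Frac}(\^P)[[y_1,\ldots,y_n]]$, and descend Noetherianity along a faithfully flat map $S\to\^S$ — is structurally sound, and your first and third steps are correct. However, you have identified the real obstruction yourself and then not cleared it: the flatness of $S\to\^S$ is the whole content of the lemma, and neither of the two routes you sketch actually works. Route~(a), adapting \cref{t:AR-property}, fails because the proof of that theorem requires the coefficient ring to be Noetherian and exploits the fact that $S[x_i\mid i\in I]$ is a filtered colimit of polynomial rings in finitely many variables (hence Noetherian) over $S$; here the coefficient ring is $\^P=k[[x_i\mid i\in I]]$, which is not Noetherian, and $\^P_n=\^P[[y_1,\ldots,y_n]]$ is only a colimit of subrings $k[[x_j\mid j\in J]][[y_1,\ldots,y_n]]$ over \emph{countable} $J$, which are again non-Noetherian, so the inductive control on Artin--Rees indices (\cref{l:AR-index}) is unavailable. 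Route~(b), the local criterion of flatness, requires the base ring to be Noetherian (or at least some ideal-wise separation hypothesis verified by an Artin--Rees-type argument), so invoking it to prove that $S$ is Noetherian is circular; and computing $\mathrm{Tor}_1^S(S/\fb_nS,\^S)$ via a Koszul complex only works once you know $y_1,\ldots,y_n$ is a regular sequence and you have an applicable form of the criterion, neither of which you have established over the non-Noetherian $S$.

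The paper avoids the flatness question entirely. It argues by induction on $n$, with base case $Q_0=\Quot(\^P)$ a field. In the inductive step it takes a nonzero $f$ in a given ideal $\fa\subset Q_{n+1}$, uses \cref{l:coordinate-change-regular} to make $f$ $y_{n+1}$-regular after an automorphism fixing $\fb_{n+1}$, and then applies the Weierstrass division theorem (\cref{p:weierstrass-division}) to show $\fa=(f)+\fa'Q_{n+1}$ where $\fa'=\fa\cap Q_n[y_{n+1}]$; the latter is finitely generated because $Q_n[y_{n+1}]$ is Noetherian by the inductive hypothesis and Hilbert's basis theorem. This produces a finite generating set for $\fa$ directly, with no appeal to completion or flatness. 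In effect, the Weierstrass-theoretic input that you would need to prove the Artin--Rees property for $S$ (and hence flatness of $S\to\^S$) is deployed by the paper in a more economical way. If you want to salvage your approach, the honest path is to prove the Artin--Rees property for $\fb_nS\subset S$ by a Weierstrass-type argument and then invoke \cref{p:flatness-completion} — but at that point you have essentially reproduced the paper's proof with extra overhead.
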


The proof of \cref{p:aq-noetherian} uses the following straightforward
generalization of the Weierstrass division theorem, whose proof is a simple
adaptation of the proof of \cite[VII, §3.8]{Bou72} where the adic topology on
$\^P_n$ is replaced with the inverse limit topology. We say that $f\in \^P_n$
is \emph{$y_n$-regular} of order $d$ if its image under the canonical map
$\^P_n \to k[[y_n]]$ is nonzero of order $d$.

\begin{theorem}
  \label{p:weierstrass-division}
  Let $f\in \^P_{n+1}$ be $y_{n+1}$-regular of order $d$. For every $g\in
  \^P_{n+1}$ there exist unique $q\in \^P_{n+1}$ and $r\in \^P_{n}[y_{n+1}]$ such
  that $g=q f+r$ and $r$ has degree $< r$ as a polynomial in $y_{n+1}$.
\end{theorem}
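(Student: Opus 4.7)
The plan is to mirror the classical proof of Weierstrass division in $k[[y_1,\dots,y_{n+1}]]$ (as in \cite[Chapter~VII, Section~3.8]{Bou72}), but working relatively over $\^P_n$ endowed with its natural inverse limit topology rather than an adic topology, since the two need not coincide when $I$ is infinite, by \cref{r:limit-top-not-adic}. First, using that $f$ is $y_{n+1}$-regular of order $d$, I would decompose
\[
f = y_{n+1}^d v + u,
\]
where $v \in \^P_{n+1}$ has nonzero constant term (and is therefore a unit) and $u = \sum_{j=0}^{d-1} u_j y_{n+1}^j$ with every $u_j$ lying in $\fn_0 := (x_i \mid i \in I) + (y_1,\dots,y_n) \subset \^P_n$. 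Next, define the continuous $\^P_n$-linear shift operator $\s \colon \^P_{n+1} \to \^P_{n+1}$ by $\s(\sum_j h_j y_{n+1}^j) := \sum_{j \geq d} h_j y_{n+1}^{j-d}$. Writing a putative identity $g = qf + r$ with $r$ of $y_{n+1}$-degree $< d$ and applying $\s$, one sees it is equivalent to the fixed-point equation
\[
q = v^{-1}\bigl(\s(g) - \s(qu)\bigr),
\]
after which $r$ is recovered as $r := g - qf$ and automatically satisfies $\s(r)=0$, hence $r \in \^P_n[y_{n+1}]$ with $\deg_{y_{n+1}} r < d$.

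The existence part amounts to constructing $q$ by iteration: set $q_0 := 0$ and $q_{k+1} := v^{-1}(\s(g) - \s(q_k u))$, so that $q_{k+1} - q_k = -v^{-1}\s\bigl((q_k - q_{k-1})u\bigr)$. Since each $u_j$ lies in $\fn_0$ and multiplication by any element of $\^P_n$ preserves $\fn_0$-ideals, a straightforward induction shows that every coefficient of $q_{k+1} - q_k$, viewed as a series in $y_{n+1}$ with coefficients in $\^P_n$, lies in $\fn_0^{k-1}$. Setting $\fm := \fn_0 + (y_{n+1})$, the element $q_{k+1} - q_k$ therefore belongs to the closure $\^{\fm^{k-1}}$ in $\^P_{n+1}$. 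Since $\{\^{\fm^k}\}_k$ is a neighborhood basis of $0$ and $\^P_{n+1}$ is complete in its natural topology, the iterates are Cauchy and converge to a limit $q$ that satisfies the fixed-point equation.

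For uniqueness, if $g = q_1 f + r_1 = q_2 f + r_2$, then setting $q' := q_1 - q_2$ and $r' := r_2 - r_1$ gives $q'f = r'$, so applying $\s$ yields $q' = -v^{-1}\s(q' u)$, and the same coefficient-wise argument forces $q' \in \^{\fm^k}$ for every $k$, hence $q' = 0$ and $r' = 0$ by Hausdorffness. The main obstacle is the convergence step: in Bourbaki's setting $\^P_n$ is Noetherian and its adic and inverse limit topologies agree, making convergence automatic in the $(y_1,\dots,y_{n+1})$-adic topology, whereas here the natural topology on $\^P_{n+1}$ is strictly finer than any $\fm$-adic topology, so one must verify directly that the successive differences descend into the closed filtration $\{\^{\fm^k}\}$. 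The structural input that makes this work is precisely that every coefficient of $u$ lies in the augmentation ideal $\fn_0$, so that the operator $q \mapsto \s(qu)$ is a contraction with respect to the natural inverse limit topology of $\^P_n$.
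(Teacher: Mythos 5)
Your proof correctly carries out the adaptation of Bourbaki's fixed-point argument that the paper indicates (the paper itself gives no details beyond pointing to \cite[VII, \S 3.8]{Bou72} and saying to replace the adic topology by the inverse limit topology). The essential observation — that the successive differences $q_{k+1}-q_k$ descend into the filtration $\{\^{\fm^k}\}$ coming from the completion, which is the topology for which $\^P_{n+1}$ is complete, even though this is strictly finer than any adic topology when $I$ is infinite — is precisely the point where the infinite-variable setting demands care, and you identify it correctly.

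One imprecision should be fixed. You set $\fn_0 := (x_i \mid i\in I)+(y_1,\dots,y_n)$, the ideal of $\^P_n$ generated by the variables, and assert that $y_{n+1}$-regularity of $f$ forces the coefficients $u_0,\dots,u_{d-1}$ to lie in $\fn_0$. But $y_{n+1}$-regularity only says that each $u_j$ has zero constant term, i.e.\ lies in the maximal ideal of $\^P_n$. That maximal ideal is the \emph{topological closure} $\^{\fm_n}$ of $(x_i)+(y_1,\dots,y_n)$, which is strictly larger than the ideal generated by the variables when $I$ is infinite (cf.\ \cref{r:limit-top-not-adic}). You should therefore take $\fn_0$ to be this maximal ideal. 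Your subsequent argument survives unchanged: elements of $\fn_0$ have all homogeneous pieces of degree $\ge 1$, so $\fn_0^m \subseteq \^{\fm_n^m}$, the $y_{n+1}$-coefficients of $q_{k+1}-q_k$ lie in $\fn_0^{\,k-1}$ (your exponent is off by one but harmlessly so), and hence $q_{k+1}-q_k \in \^{\fm^{k-1}}$ where $\fm$ is the maximal ideal of $\^P_{n+1}$. Completeness and Hausdorffness of the inverse limit topology then give existence and uniqueness of the fixed point, exactly as you write. It would also be worth noting explicitly that the iteration map $q\mapsto v^{-1}(\s(g)-\s(qu))$ is continuous (multiplication is continuous, and $\s$ satisfies $\s(\^{\fm^k})\subseteq\^{\fm^{k-d}}$), so that the limit actually satisfies the fixed-point equation.
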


The next lemma ensures that we can apply \cref{p:weierstrass-division} to prove
\cref{p:aq-noetherian}.

\begin{lemma}
  \label{l:coordinate-change-regular}
  Let $f\in \^P_{n+1}=\^P[[y_1,\ldots,y_{n+1}]]$ be a nonzero element. Then there
  exists a continuous $k$-automorphism $\vp \colon \^P_{n+1} \to \^P_{n+1}$ such
  that $\vp(\fb_{n+1})=\fb_{n+1}$ and $\vp(f)$ is $y_{n+1}$-regular.  
\end{lemma}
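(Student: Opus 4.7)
The plan is to reduce the statement to its classical finite-variable counterpart by exploiting the fact that the lowest-degree homogeneous component of $f$ involves only finitely many indeterminates. Decompose $f=\sum_{d\geq d_0} f_d$ into homogeneous parts with respect to the total-degree filtration by powers of $\fm=(x_i, y_1,\ldots,y_{n+1} \mid i\in I)$, and let $f_{d_0}$ be the lowest nonzero component. Since $f_{d_0}$ is a homogeneous polynomial of finite degree, it involves only finitely many of the $x_i$'s, say $x_{i_1},\ldots,x_{i_r}$. Set $R:=k[[x_{i_1},\ldots,x_{i_r},y_1,\ldots,y_{n+1}]]$ and consider the continuous projection $\pi\colon \^P_{n+1} \twoheadrightarrow R$ sending $x_i\mapsto 0$ for $i\notin\{i_1,\ldots,i_r\}$ and fixing the remaining generators. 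By construction $\pi(f_{d_0})=f_{d_0}$, so $\pi(f)\neq 0$.

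Next, I would apply the classical Weierstrass regularization argument in finitely many variables (as in the proof of the preparation theorem in \cite[Chapter VII, \S3.8]{Bou72}): there exists a continuous $k$-automorphism $\psi$ of $R$ of the form $\psi(v)=v+h_v(y_{n+1})$ for each generator $v\neq y_{n+1}$, where $h_v\in y_{n+1}k[y_{n+1}]$, and $\psi(y_{n+1})=y_{n+1}$, such that $\psi(\pi(f))$ is $y_{n+1}$-regular. Since each $\psi(y_j)-y_j\in (y_{n+1})$ and $y_{n+1}$ is fixed, such $\psi$ automatically preserves the ideal $(y_1,\ldots,y_{n+1})\subset R$. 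Now lift $\psi$ to a continuous $k$-algebra endomorphism $\varphi$ of $\^P_{n+1}$ by setting $\varphi(x_{i_k}):=\psi(x_{i_k})$ for $k=1,\ldots,r$, $\varphi(y_j):=\psi(y_j)$ for $j=1,\ldots,n+1$, and $\varphi(x_i):=x_i$ for $i\notin\{i_1,\ldots,i_r\}$. Because each $h_v$ has no constant term, $\varphi$ acts as the identity on the continuous Zariski cotangent space $\^\fm/\^{\fm^2}$ of $\^P_{n+1}$, hence \cref{p:inverse-fn-thm} implies that $\varphi$ is an automorphism.

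It remains to verify the two conclusions. For $\varphi(\fb_{n+1})=\fb_{n+1}$, each $\varphi(y_j)=\psi(y_j)$ lies in $(y_1,\ldots,y_{n+1})=\fb_{n+1}$, giving one inclusion; applying the analogous lifting procedure to $\psi^{-1}$ yields $\varphi^{-1}$, which similarly satisfies $\varphi^{-1}(\fb_{n+1})\subseteq\fb_{n+1}$, giving the other. For the $y_{n+1}$-regularity of $\varphi(f)$, let $\mathrm{ev}\colon \^P_{n+1}\to k[[y_{n+1}]]$ and $\mathrm{ev}'\colon R\to k[[y_{n+1}]]$ be the canonical evaluation maps sending every generator except $y_{n+1}$ to zero. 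A direct check on generators shows that the continuous $k$-algebra maps $\mathrm{ev}\circ\varphi$ and $\mathrm{ev}'\circ\psi\circ\pi$ coincide; therefore $\mathrm{ev}(\varphi(f))=\mathrm{ev}'(\psi(\pi(f)))\neq 0$, as desired. The main subtlety is the reduction to the finite-variable subring $R$, which works only because $f_{d_0}$ survives the projection $\pi$; the remaining steps are routine once \cref{p:inverse-fn-thm} provides the automorphism property.
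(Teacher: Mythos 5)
Your proof is correct and takes essentially the same route as the paper's: reduce to a subring $R$ in finitely many variables (the paper decomposes $f=f'+f''$ with $f'$ the part of $f$ supported on finitely many of the $x_i$, and your $\pi(f)$ is exactly that $f'$), regularize there by a finite-variable Weierstrass coordinate change (the paper cites \cite[Lemma~6.11]{Art69} for the explicit form $x'_{i_j}=x_{i_j}+y_{n+1}^{a_j}$, $y'_l=y_l+y_{n+1}^{b_l}$), and extend trivially to $\^P_{n+1}$. Your device of using the lowest-degree homogeneous component $f_{d_0}$ to select the finite variable set is a clean way to guarantee $\pi(f)\neq 0$ without further comment, whereas the paper just picks an arbitrary monomial of $f$. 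One small inaccuracy to fix: if some $h_v$ has a nonzero linear coefficient in $y_{n+1}$ (which the finite-variable regularization does allow), then $\varphi$ is \emph{not} the identity on the continuous cotangent space, since it sends $v\mapsto v+c_v y_{n+1}$; it is, however, a unipotent upper-triangular map, hence still an isomorphism, which is all that \cref{p:inverse-fn-thm} requires, so the conclusion that $\varphi$ is an automorphism stands. Alternatively, one can avoid \cref{p:inverse-fn-thm} altogether by observing (as the paper implicitly does) that $\varphi$ has an explicit inverse, namely the trivial extension of $\psi^{-1}$.
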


\begin{proof}
  If $f$ is already $y_{n+1}$-regular, then we are done. If not, then pick any
  monomial of the form
  $x_{i_1}^{d_1}\cdots x_{i_r}^{d_r} y_1^{e_1}\cdots y_{n+1}^{e_{n+1}}$
  appearing in the expansion of $f$. Then decompose $f$ as
  \[
    f=f'+f'', \quad f'\in k[[x_{i_1},\ldots,x_{i_r},y_1,\ldots,y_{n+1}]],
  \]
  such that $f'$ cannot be decomposed further as above. By \cite[Lemma
  6.11]{Art69} there exist new coordinates
  $x'_{i_j} = x_{i_j}+y_{n+1}^{a_j}$,
  $y'_l = y_l+y_{n+1}^{b_l}$, and $y'_{n+1} := y_{n+1}$.
  such that $f'(x'_{i_j},y'_l)$ is $y'_{n+1}$-regular. We may extend this change
  of coordinates trivially to a continuous automorphism $\vp \colon \^P_{n+1} \to
  \^P_{n+1}$ by setting $x'_i=x_i$ for all indices $i$ that are different from
  $i_j$ for all $j$. Then clearly $\vp(f)$ is $y_{n+1}$-regular and $\vp$ fixes
  $\fb_{n+1}=(y_1,\ldots,y_{n+1})$.
\end{proof}

\begin{proof}[Proof of \cref{p:aq-noetherian}]
  We prove the \lcnamecref{p:aq-noetherian} by induction on $n$. Let $Q_n:=(\^P_n)_{\fb_n}$.
  Clearly $Q_0\isom \Quot(\^P)$, so let us assume that $Q_n$ is Noetherian. We
  have injections $Q_n \to Q_{n+1}$. Let $\fa$ be an ideal of $Q_{n+1}$ and
  $f\in\fa$, $f\neq0$. After multiplication by a unit we may assume $f\in
  \^P_{n+1}$; by \cref{l:coordinate-change-regular} we may also assume $f$ is
  $y$-regular. Consider the ideal $\fa':=\fa\cap Q_n[y_{n+1}]$. Since $Q_n$ is
  Noetherian, so is $Q_n[y_{n+1}]$ and thus there exist $f_1,\ldots,f_r\in
  Q_n[y_{n+1}]$ that generate $\fa'$. We claim that
  $\fa=(f,f_1,\ldots,f_r)Q_{n+1}$.

  Let $g\in\fa$. By \cref{p:weierstrass-division}, there exist a unit $u\in
  Q_{n+1}$, $q\in \^P_{n+1}$, and $r\in \^P_n[y_{n+1}]$ such that $u g=q f+r$.
  Since $r\in\fa'$, we can find $v_1,\ldots,v_r\in Q_n$ such that
  $r=\sum_{j=1}^r v_j f_j$.
  Hence we have
  $g=u^{-1}q f+\sum_{j=1}^r u^{-1}v_j f_j$,
  which proves our claim.
\end{proof}


\section{Embedding codimension}

\label{s:embcodim}

Let $(A, \fm, k)$ be a local ring. The
inclusion of $\fm/\fm^2$ in the graded ring $\gr(A)$ induces a natural
surjective homomorphism of $k$-algebras
\[
  \g \colon \Sym_k(\fm/\fm^2) \to \gr(A).
\]

\begin{definition}
  \label{d:embcodim}
  The \emph{embedding codimension} of $(A, \fm, k)$ is defined to be
  \[
    \embcodim(A) := \height(\ker(\g)).
  \]
\end{definition}

\begin{proposition}
  \label{p:embdim=grcodim+dim}
  For any local ring $(A, \fm, k)$, we have
  \[
    \embdim(A) = \dim(\gr(A)) + \embcodim(A).
  \]
  In particular, if $A$ is Noetherian then 
  $\embdim(A) = \dim(A) + \embcodim(A)$.
\end{proposition}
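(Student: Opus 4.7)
The plan is to reduce the first identity to the dimension/height relation for a polynomial ring over a field. Set $S := \Sym_k(\fm/\fm^2) = k[x_i \mid i \in I]$ with $|I| = \embdim(A)$, so that $\g$ exhibits $\gr(A)$ as the quotient $S/K$ with $K := \ker(\g)$. The statement becomes
\[
  \dim(S) \;=\; \dim(S/K) + \height(K).
\]
Any saturated chain of primes in $S/K$ lifts to a chain in $S$ that may be extended downward to start at a minimal prime of $K$, so
\[
  \dim(S/K) \;=\; \sup\{\dim(S/\fp) : \fp \text{ minimal prime of } K\},
\]
while by definition $\height(K) = \inf\{\height(\fp) : \fp \text{ minimal prime of } K\}$.

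In the finite case $\embdim(A) = n < \infty$, the ring $S$ is a Noetherian polynomial ring over the field $k$, so $K$ has finitely many minimal primes and the classical dimension formula $\dim(S/\fp) = n - \height(\fp)$ holds for every prime $\fp$. The supremum and infimum above are therefore attained simultaneously, giving $\dim(S/K) = n - \height(K) = \embdim(A) - \embcodim(A)$.

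In the infinite case $\embdim(A) = \infty$ one has $\dim(S) = \infty$, and the content of the identity is that the right-hand side is also infinite. If $\embcodim(A) = \infty$ there is nothing to prove, so assume $\embcodim(A) < \infty$ and pick a minimal prime $\fp$ of $K$ achieving the infimum, $\height(\fp) = \embcodim(A) < \infty$. By \cref{r:finite-def-properties-poly} every prime of finite height in $k[x_i \mid i \in I]$ is finitely generated, so one may choose a finite $J \subset I$ containing all indices appearing in a finite generating set for $\fp$. Setting $\fp_J := \fp \cap k[x_j \mid j \in J]$, the inclusion $\fp_J \subset \fp$ shows $\fp = \fp_J \cdot S$ (using the finite generators in $k[x_j \mid j \in J]$), whence
\[
  S/\fp \;\isom\; (k[x_j \mid j \in J]/\fp_J)\bigl[x_i \mid i \in I \setminus J\bigr].
\]
Since $\fp_J$ is prime, the coefficient ring is a nonzero domain, and a polynomial ring over a nonzero domain in infinitely many variables has infinite Krull dimension. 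Hence $\dim(\gr(A)) \ge \dim(S/\fp) = \infty$, as required.

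The Noetherian clause is then immediate from the classical identity $\dim(A) = \dim(\gr_\fm(A))$ for Noetherian local rings. The main obstacle is the infinite-dimensional case, and specifically the passage from a finite-height minimal prime of $K$ to an explicit model of $S/\fp$ showing its dimension is infinite; this is exactly where \cref{r:finite-def-properties-poly} is indispensable.
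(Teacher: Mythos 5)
Your proof is correct and follows the same route as the paper's: reducing the identity to the dimension formula $\dim(P) = \dim(P/\fa) + \height(\fa)$ for polynomial rings over a field, with the infinite-variable case handled via the fact (Remark~\ref{r:finite-def-properties-poly}) that finite-height primes are finitely generated. The paper's proof simply cites this dimension formula as known; you have supplied the verification it elides, but the underlying strategy is identical.
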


\begin{proof}
  This follows from the fact that for every
  polynomial ring $P =  k[x_i \mid i \in I]$ and every ideal $\fa \subset P$, 
  we have $\dim(P) = \dim(P/\fa) + \height(\fa)$
  (cf.\ \cref{r:finite-def-properties-poly}).
  For the last assertion we use that $\dim(\gr(A)) = \dim(A)$ if $A$ is Noetherian.
\end{proof}

\begin{remark} 
The formula in \cref{p:embdim=grcodim+dim} is still valid, and informative, when some
of the quantities involved are infinite.
\end{remark}

\begin{remark}
  Higher rank valuation rings provide examples of finite dimensional 
  non-Noetherian rings whose embedding dimension 
  is smaller than their dimension. 
  For example, let $A \subset k(x,y)$ be the valuation ring associated to the valuation
  $v \colon k(x,y)^* \to \Z^2_{\rm lex}$ given by $v(f) = (\ord_x(f), \ord_y(fx^{-\ord_x(f)}|_{x=0}))$. 
  This is a two-dimensional ring whose maximal ideal is principal, which implies that the embedding
  dimension is one. 
  In particular, 
  the second equation in \cref{p:embdim=grcodim+dim}
  does not hold for such rings.
\end{remark}

\begin{remark}
%
The embedding codimension of a local ring was studied in the Noetherian setting
in \cite{Lec64} under the name of \emph{regularity defect}. One of the main results proved
there is that if $\fp$ is a prime ideal of a Noetherian local ring $(A,\fm)$
such that $\dim(A) = \dim(A/\fp) + \dim(A_\fp)$, then $\embcodim(A_\fp) \le
\embcodim(A)$, see \cite[Theorem~3]{Lec64}. It would be interesting to find
suitable conditions for the same property to hold in the non-Noetherian
setting.
\end{remark}

We now come to the main result of this section, which gives bounds for the
embedding codimension of $A$ from maps into $A$.

\begin{proposition}
  \label{p:grcodim-projection}
  Let $\f \colon (B,\fn,k_0) \to (A,\fm,k)$ be a homomorphism of local rings, and assume that
  $(B,\fn)$ has finite embedding dimension. Let $\f^* \colon \fn/\fn^2 \otimes_{k_0}k \to
  \fm/\fm^2$ be the induced $k$-linear map on the Zariski cotangent spaces. Then 
  \[
  \embcodim(A) \ge \rank(\f^*) - \dim(\gr(B)).
  \]
  In particular, if $B$ is Noetherian then
  $\embcodim(A) \ge \rank(\f^*) - \dim(B)$.
\end{proposition}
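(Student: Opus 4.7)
The plan is to bound $\embcodim(A) = \height(\ker(\g_A))$ from below by exhibiting inside $P_A := \Sym_k(\fm/\fm^2)$ a finite-dimensional polynomial subring on which the relevant height can be controlled through $\f^*$ and the hypothesis on $B$.

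Set $r := \rank(\f^*)$. Since $B$ has finite embedding dimension, $P_B := \Sym_{k_0}(\fn/\fn^2)$ is a finitely generated polynomial $k_0$-algebra, and the base change of $\f^*$ to $k$ extends to a $k$-algebra map $\widetilde\f \colon P_B \otimes_{k_0} k \to P_A$ whose image is $S := \Sym_k(\im \f^*) \subseteq P_A$, a polynomial ring over $k$ in $r$ variables. Extending a basis of $\im \f^*$ to a basis of $\fm/\fm^2$, we may view $S$ as the subring of $P_A$ generated by $r$ of its variables, so that $P_A$ is a (faithfully flat) polynomial extension of $S$. These maps fit into the commutative square
\[
\xymatrix{
P_B \otimes_{k_0} k \ar[r]^{\widetilde\f} \ar[d] & P_A \ar[d]^{\g_A} \\
\gr(B) \otimes_{k_0} k \ar[r]^{\gr(\f)} & \gr(A),
}
\]
in which the left vertical map is the base change of the canonical surjection $P_B \twoheadrightarrow \gr(B)$. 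Chasing the diagram, the image $C$ of $S$ in $\gr(A)$ equals the image of $\gr(B) \otimes_{k_0} k$ in $\gr(A)$, hence is a quotient of $\gr(B) \otimes_{k_0} k$. Since dimension is preserved under base change for finitely generated algebras over a field, $\dim C \le \dim(\gr(B) \otimes_{k_0} k) = \dim \gr(B)$. Because $S$ is a polynomial $k$-algebra of dimension $r$ and $C = S/(\ker(\g_A) \cap S)$, the standard dimension formula for polynomial rings over a field yields
\[
\height_S(\ker(\g_A) \cap S) \ge r - \dim \gr(B).
\]

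Finally, transfer this height bound from $S$ to $P_A$ using flatness of $S \to P_A$: for any minimal prime $\fp$ of $\ker(\g_A)$ in $P_A$, the contraction $\fp \cap S$ contains a minimal prime of $\ker(\g_A) \cap S$, and a chain in $S$ below $\fp \cap S$ of length $\height_S(\ker(\g_A) \cap S)$ lifts to a chain in $P_A$ below $\fp$ via the going-down property. Taking infima over minimal primes of $\ker(\g_A)$ gives $\embcodim(A) \ge r - \dim \gr(B)$; when $B$ is Noetherian one has $\dim \gr(B) = \dim B$, yielding the last assertion. The main subtle point is this final height transfer: $P_A$ need not be Noetherian when $I$ is infinite, so one invokes going-down for flat extensions (which holds without Noetherian hypotheses) rather than appealing directly to \cite[Theorem~15.1]{Mat89}.
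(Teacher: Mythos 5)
Your proof is correct and follows the same overall strategy as the paper's: pass to the polynomial subring $S = \Sym_k(\im \f^*)$ of $\Sym_k(\fm/\fm^2)$, compute the height of $\ker(\g) \cap S$ using the dimension formula for a finitely generated polynomial ring over a field, and transfer the bound to $\Sym_k(\fm/\fm^2)$ via going-down along the faithfully flat inclusion $S \hookrightarrow \Sym_k(\fm/\fm^2)$.

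Where you differ, and improve on the paper, is in how $\dim C$ is bounded. The paper bounds $\dim(\im(\s))$ by embedding it into $\im(\gr(\f)) \otimes_{k_0} k$ and appealing to a dimension inequality for an inclusion of graded Noetherian local rings with the same residue field. You instead observe, by chasing the square, that the image of $S$ in $\gr(A)$ is precisely the image of the induced map $\gr(B)\otimes_{k_0}k \to \gr(A)$, hence a \emph{quotient} of $\gr(B)\otimes_{k_0}k$. This makes the bound $\dim C \le \dim(\gr(B)\otimes_{k_0}k) = \dim\gr(B)$ immediate (dimension of a quotient, plus invariance of dimension under ground field extension for finitely generated $k_0$-algebras), and sidesteps the dimension-of-a-subring step altogether. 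Your closing remark is also apt: $\Sym_k(\fm/\fm^2)$ need not be Noetherian when $\embdim(A) = \infty$, so the height transfer really does rely on going-down for flat extensions, which requires no Noetherian hypothesis; the paper invokes the going-down theorem at the same point and implicitly uses this general form.
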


\begin{remark}
  \label{r:can-assume-phi-inj}
  A stronger form of \cref{p:grcodim-projection} 
  is obtained by replacing $\dim(\gr(B))$ with
  $\dim(\gr(B/\ker(\gr(\f))))$ in the displayed formula.
  Note, in fact, that this
  sharper form of the \lcnamecref{p:grcodim-projection} 
  follows from the special case of the \lcnamecref{p:grcodim-projection} where
  $\f$ is assumed to be injective.
\end{remark}

\begin{remark}
  Consider the special case where $\f$ is a homomorphism of local $k$-algebras
  with residue fields $k$ (that is, such that the natural maps $k \to B/\fn$ and
  $k \to A/\fm$ are isomorphisms) and with $B$ essentially of finite type. The
  geometric interpretation is the following. Let $f \colon X \to Y$ be a morphism
  of schemes over $k$, with $Y$ of finite type over $k$, and let $p \in X(k)$ and
  $q = f(p) \in Y(k)$. Denote by $T_pf \colon T_pX \to T_qY$ the map induced on
  Zariski tangent spaces. Then the \lcnamecref{p:grcodim-projection} gives
  \[
    \embcodim(\O_{X,p}) \ge \dim(\im(T_pf)) - \dim_q(\im(f))
  \]
  where $\im(f) \subset Y$ is the scheme-theoretic image of $f$. Note in
  particular that if $X$ is Noetherian then this formula reduces to the intuitive
  statement that
  \[
    \dim(T_pX) - \dim_p(X) \ge \dim(\im(T_pf)) - \dim_q(\im(f)).
  \]
  Another special case is when $f$ is a submersion onto $Y$, in which case the formula reduces to
  the inequality $\embcodim(\O_{X,p}) \ge \embcodim(\O_{Y,q})$.
\end{remark}

\begin{proof}[Proof of \cref{p:grcodim-projection}]
  We have the commutative diagram
  \[
    \xymatrix{
      \Sym_{k_0}(\fn/\fn^2) 
      \ar[r]^\p
      \ar@{->>}[d]
      & \Sym_k(\im(\f^*)) 
      \ar[d]_{\s}
      \ar@{^(->}[r]^\iota
      & \Sym_k(\fm/\fm^2)
      \ar@{->>}[d]_\g
      \\
      \gr(B)
      \ar[r]
      \ar@/_1.5pc/[rr]_{\gr(\f)}
      & \im(\gr(\f)) \otimes_{k_0}k
      \ar@{^(->}[r]^(.6){\ff}
      & \gr(A)
      \\
    }
  \]
  The existence of $\s$ follows from the fact that 
  $\im(\p) \otimes_{k_0}k = \Sym_k(\im(\f^*))$. 
  The map $\iota$ is a linear extension of polynomial rings, and hence is faithfully flat.
  Since $\iota^{-1}(\ker(\g)) = \ker(\s)$ we see that
  \[
    \height(\ker(\g)) \geq \height(\ker(\s))
  \]
  by the going-down theorem. On the other hand, 
  \[
    \height(\ker(\s))
    =
    \rank(\f^*) - \dim(\im(\s)).
  \]
  Since the inclusion $\im(\s) \subset \im(\gr(\f)) \otimes_{k_0}k$ 
  is an inclusion of Noetherian local rings with the
  same residue field, and $\im(\gr(\f))$ is a quotient of 
  $\gr(B)$, we have
  \[
    \dim(\im(\s)) \le \dim(\gr(B)).
  \]
  Combining the above formulas, we get
  \[
    \height(\ker(\g))
    \geq
    \rank(\f^*) - \dim(\gr(B)).
  \]
  To conclude, notice that $\dim(\gr(B)) = \dim(B)$ if $B$ is Noetherian.
\end{proof}

The following result shows that the embedding codimension of $A$ is invariant
under change of the base field, provided the residue field is already contained
in $A$.

\begin{proposition}
  \label{p:grcodim-field-change}
  Let $(A,\fm,k)$ be a local $k$-algebra 
  such that the natural map $k \to A/\fm$ is an isomorphism, 
  and let $k \subset k'$ be a field extension. 
  Denoting $A' := A \otimes_k k'$, we have
  \[
    \embcodim(A') = \embcodim(A).
  \]
\end{proposition}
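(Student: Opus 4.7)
The plan is to reduce the claim to a height-preservation statement for ideals in polynomial rings under field base change, and then handle the non-Noetherian aspect by descending to a finite-variable Noetherian subring.

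First I would set up the base change. Writing $\fm' := \fm A'$, flatness of $k \to k'$ yields $A'/\fm' = (A/\fm) \otimes_k k' = k'$, so $\fm'$ is a maximal ideal with residue field $k'$, and $\fm'^n \isom \fm^n \otimes_k k'$ for all $n$. Consequently there are canonical isomorphisms
\[
  \fm'/\fm'^2 \isom (\fm/\fm^2) \otimes_k k' \quad\text{and}\quad \gr_{\fm'}(A') \isom \gr_\fm(A) \otimes_k k'.
\]
The surjection $\g'$ for $A'$ is therefore the base change of $\g$, and flatness once more gives $\ker(\g') = \ker(\g) \otimes_k k'$. Even if $A'$ fails to be local, localizing at $\fm'$ leaves this graded data unchanged, so $\embcodim(A')$ is well-defined and unaffected.

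Setting $P := \Sym_k(\fm/\fm^2) = k[x_i \mid i \in I]$, $P' := P \otimes_k k'$, $J := \ker(\g)$, and $J' := JP' = \ker(\g')$, the claim then reduces to the purely algebraic statement that $\height(JP') = \height(J)$ for any ideal $J$ in a polynomial ring over $k$ in an arbitrary set $I$ of indeterminates, under an arbitrary field extension $k \subset k'$.

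To prove this height equality, the inequality $\height(JP') \ge \height(J)$ follows from going-down for the faithfully flat extension $P \subset P'$: any prime $\fq \supset J'$ restricts to a prime $\fp := \fq \cap P \supset J$, and any chain of primes below $\fp$ in $P$ lifts to a chain of the same length below $\fq$ in $P'$. For the reverse inequality, the case $\height(J) = \infty$ is automatic; otherwise let $h := \height(J) < \infty$ and take a minimal prime $\fp$ of $J$ with $\height(\fp) = h$. By \cref{r:finite-def-properties-poly}, $\fp$ is finitely generated, so there exist a finite subset $I_0 \subset I$ and a prime $\fp_0 \subset P_0 := k[x_i \mid i \in I_0]$ of height $h$ with $\fp = \fp_0 P$. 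The classical Noetherian theory of faithfully flat extensions applied to $P_0 \to P_0 \otimes_k k'$ produces a minimal prime $\fq_0$ over $\fp_0(P_0 \otimes_k k')$ of height $h$; then $\fq := \fq_0 P'$ remains prime (its quotient being a polynomial ring in the remaining variables over the domain $(P_0 \otimes_k k')/\fq_0$) of height $h$ in $P'$ and contains $J'$, yielding $\height(J') \le h$.

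The chief obstacle is the non-Noetherianity of $P$, which precludes a direct use of dimension theory of flat extensions on the whole ring. The decisive input bridging this gap is \cref{r:finite-def-properties-poly}, which ensures that every finite-height prime in an infinite-variable polynomial ring is finitely generated, so that the height computation can be transported down to a Noetherian finite-variable subring where standard techniques apply.
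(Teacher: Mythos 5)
Your proof is correct, and the overall strategy is the same as the paper's: reduce to showing that, for an ideal $J$ in a polynomial ring $P = k[x_i \mid i \in I]$ over $k$, extension to $P' = k'[x_i \mid i \in I]$ preserves height, and then use the fact from \cref{r:finite-def-properties-poly} that finite-height primes of $P$ are finitely generated so that the computation descends to a Noetherian subring $k[x_i \mid i \in I_0]$, where classical flat dimension theory applies. Where you diverge is in how the height-preservation statement (the paper's \cref{l:same-ht}) is actually proved: the paper's argument is a compressed proof by contradiction, asserting that if the heights of $\fa$ and $\fa P'$ differed one could find a finite $J$ with $\height(\fa_J) \neq \height(\fa'_J)$, whereas you split the two inequalities and give a direct constructive argument --- going-down for the flat extension for $\height(JP') \geq \height(J)$, and an explicit witness prime $\fq = \fq_0 P'$ built from the Noetherian case for $\height(JP') \leq \height(J)$. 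Your version is somewhat more explicit and, I would say, easier to verify; in particular you correctly flag that the height of the minimal witness prime must be transported both down to $P_0$ and back up to $P'$ via the polynomial analogue of \cref{r:ht(p_j)=ht(p)}. You also correctly note (which the paper's proof text glosses over by simply asserting $A'$ is local) that $A' = A \otimes_k k'$ need not be a local ring, and that one should localize at $\fm' = \fm A'$; since this localization does not affect the graded data, the embedding codimension is well-defined and the computation goes through. This is a genuine minor imprecision in the paper's prose that your write-up handles more carefully.
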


\begin{proof}
  First, observe that $A'$ is a local $k'$-algebra with maximal ideal $\fm' = \fm \otimes_k k'$.
  We have $\embcodim(A) = \height(\ker(\g))$, where 
  \[
    \g \colon \Sym_k(\fm/\fm^2) \to \gr(A)
  \]
  is defined, as at the beginning.
  Since for every $n$ we have $(\fm')^n/(\fm')^{n+1} = \fm^n/\fn^{n+1} \otimes_k k'$, we see that $\g$
  induces, by base change, the analogous map
  \[
    \g' \colon \Sym_k(\fm'/(\fm')^2) \to \gr(A').
  \]
  The next \lcnamecref{l:same-ht} gives $\height(\ker(\g')) = \height(\ker(\g))$
  and the assertion follows.
\end{proof}

\begin{lemma}
  \label{l:same-ht}
  Let $P = k[x_i \mid i \in I]$ and $P' = P \otimes_kk' = k'[x_i \mid i \in I]$, 
  where $k \subset k'$ is a field extension.
  Then for every ideal $\fa \subset P$ we have $\height(\fa) = \height(\fa P')$.
\end{lemma}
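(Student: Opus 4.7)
The plan is to prove the two inequalities $\height(\fa P') \ge \height(\fa)$ and $\height(\fa P') \le \height(\fa)$ separately, combining faithful flatness of the polynomial extension $P \to P'$ (inherited from $k \to k'$, which is faithfully flat as $k$-modules) with \cref{r:finite-def-properties-poly}, which reduces primes of finite height in $P$ to the Noetherian situation by declaring them finitely generated.

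For the inequality $\height(\fa P') \ge \height(\fa)$: if $\fq$ is any prime of $P'$ minimal over $\fa P'$, then $\fq \cap P \supseteq \fa$, and going-down for the faithfully flat extension $P \to P'$ lifts any saturated prime chain of $P$ beneath $\fq \cap P$ to one in $P'$ beneath $\fq$, yielding $\height(\fq) \ge \height(\fq \cap P) \ge \height(\fa)$. For the reverse inequality we may assume $\height(\fa) < \infty$ and pick a minimal prime $\fp$ of $\fa$ with $\height(\fp) = \height(\fa)$; by \cref{r:finite-def-properties-poly}, $\fp$ is finitely generated, so $\fp = \fp_J P$ for some finite $J \subset I$, where $\fp_J := \fp \cap P_J$ is prime in the Noetherian ring $P_J = k[x_j \mid j \in J]$. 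A key auxiliary observation, to be proved first, is that for every prime $\fr \subset P$ one has $\height(\fr) = \sup_{J'} \height(\fr \cap P_{J'})$ as $J'$ ranges over the finite subsets of $I$: the direction $\ge$ is obtained by choosing witness elements $f_i \in \fr_{i+1} \setminus \fr_i$ in any saturated chain $\fr_0 \subsetneq \cdots \subsetneq \fr_n = \fr$ and enlarging $J'$ to contain their indices, while the direction $\le$ uses going-down for the flat polynomial extension $P_{J'} \to P$ to lift chains. In particular $\height(\fp) = \height(\fp_J)$.

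Now the classical theory for Noetherian polynomial rings over fields applies: since $P_J/\fp_J$ is a finitely generated $k$-algebra, Noether normalization shows that $(P_J/\fp_J) \otimes_k k'$ has the same Krull dimension as $P_J/\fp_J$, and so there exists a minimal prime $\fq_J$ of $\fp_J P'_J$ in $P'_J := k'[x_j \mid j \in J]$ with $\height(\fq_J) = \height(\fp_J)$. The ideal $\fq_J P'$ is prime in $P'$, because $P'/\fq_J P' \isom (P'_J/\fq_J)[x_i \mid i \in I \setminus J]$ is a domain; applying the auxiliary height observation again, this time inside $P'$, gives $\height(\fq_J P') = \height(\fq_J)$. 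Since $\fa P' \subseteq \fp P' = \fp_J P' \subseteq \fq_J P'$, we conclude $\height(\fa P') \le \height(\fq_J P') = \height(\fp) = \height(\fa)$. The main obstacle is the auxiliary height lemma bridging $P$ with its Noetherian finite polynomial subrings $P_J$; once it is in place, the rest is a routine combination of Noetherian dimension theory (preservation of dimension under base change to $k'$) with faithful flatness.
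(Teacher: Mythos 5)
Your proof is correct, and while it ultimately rests on the same reduction to the Noetherian case as the paper's argument, the route is organized quite differently. The paper's proof is a short argument by contradiction: if the two heights disagree, one can find a finite $J \subset I$ with $\height(\fa \cap P_J) \ne \height(\fa P' \cap k'[x_j \mid j \in J])$, and the identity $\fa P' \cap k'[x_j \mid j \in J] = (\fa \cap P_J)\, k'[x_j \mid j \in J]$ (coming from flatness of $k \to k'$) then contradicts the finite-dimensional case, which is handled by standard dimension theory; the fact that the heights of $\fa$ and $\fa P'$ are detected on finite subsets $J$ is left implicit in the citation of \cref{r:finite-def-properties-poly}. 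You instead split into two inequalities. The direction $\height(\fa P') \ge \height(\fa)$ comes cleanly from going-down for the faithfully flat extension $P \to P'$, a tool the paper does not invoke at all; the reverse direction goes through a minimal prime $\fp$ of $\fa$ of minimal (hence finite) height, uses its finite generation (via \cref{r:finite-def-properties-poly}) to descend to a Noetherian $P_J$, and then produces a prime of the required height in $P'$ via Noether normalization and base change. Your auxiliary observation $\height(\fr) = \sup_{J'} \height(\fr \cap P_{J'})$ makes explicit what the paper treats as folklore. Both approaches are valid; yours buys a more transparent bookkeeping at the cost of length, and in particular brings out that the $\ge$ inequality needs nothing beyond flatness. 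Two small remarks: in your auxiliary observation the labels "$\ge$" and "$\le$" for the two directions are interchanged relative to the way the equality is written (though the arguments attached to each label are correct), and the "in particular $\height(\fp) = \height(\fp_J)$" step silently uses, beyond the auxiliary observation, that for $J' \supseteq J$ one has $\fp \cap P_{J'} = \fp_J P_{J'}$ together with the fact that extending a prime of a Noetherian polynomial ring to a larger polynomial ring in finitely many more variables preserves height; this is routine, but worth flagging since the sup in the auxiliary observation is not a priori witnessed at $J' = J$.
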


\begin{proof}
  For short, let $\fa' = \fa P'$. 
  If $I$ is finite, then the \lcnamecref{l:same-ht} follows from dimension theory.
  In general, 
  suppose by contradiction that $\height(\fa) \ne \height(\fa')$. 
  Then we can find a finite subset $J \subset I$
  such that $\height(\fa_J) \ne \height(\fa'_J)$
  (cf.\ \cref{r:finite-def-properties-poly}). 
  Since $\fa'_J = \fa_J P'_J$, this contradicts the finite dimensional case. 
\end{proof}


\section{Formal embedding codimension}

\label{s:fembcodim}

In the case of equicharacteristic local rings, looking at the completion
instead of the associated graded provides a different way of defining 
embedding codimension. To distinguish the two, we introduce the following 
terminology.

\begin{definition}
  \label{d:embcodim-1}
  The \emph{formal embedding codimension} of an equicharacteristic local ring $(A, \fm,
  k)$ is defined to be
  \[
    \fembcodim(A) := \inf_{\t} \height(\ker(\t))
  \]
  where the infimum is taken over all choices of formal embeddings $\tau \colon
  \^P \to \^A$ (see \cref{def:formal-emb}).
\end{definition}

\begin{proposition}
  In the above \lcnamecref{d:embcodim-1}, 
  we have $\fembcodim(A) = \height(\ker(\t))$
  for every efficient formal embedding $\tau \colon \^P \to \^A$.
\end{proposition}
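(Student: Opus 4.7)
The inequality $\fembcodim(A) \le \height(\ker(\tau))$ is immediate from the definition as an infimum, so the entire content of the proposition is the reverse inequality $\fembcodim(A) \ge \height(\ker(\tau))$. The plan is to prove this by showing the pointwise statement $\height(\ker(\tau')) \ge \height(\ker(\tau))$ for every formal embedding $\tau' \colon \^{P}' \to \^A$, and then take the infimum over $\tau'$.

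The key tool is \cref{r:emb-indep-basis}. Since $\tau$ is efficient, it supplies a continuous $k$-algebra homomorphism $\varphi \colon \^{P}' \to \^P$ with $\tau' = \tau \circ \varphi$ and $\varphi$ surjective. In particular $\ker(\tau') = \varphi^{-1}(\ker(\tau))$. Now I would invoke the elementary fact that the map $\fp \mapsto \varphi^{-1}(\fp)$ preserves strict inclusions of primes whenever $\varphi$ is surjective, since $\varphi(\varphi^{-1}(\fp)) = \fp$: if $a \in \fp_i \setminus \fp_{i+1}$ and $b \in \^{P}'$ satisfies $\varphi(b) = a$, then $b \in \varphi^{-1}(\fp_i) \setminus \varphi^{-1}(\fp_{i+1})$. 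Therefore any chain $\ker(\tau) \supseteq \fp_0 \supsetneq \fp_1 \supsetneq \cdots \supsetneq \fp_h$ lifts to a chain $\ker(\tau') \supseteq \varphi^{-1}(\fp_0) \supsetneq \cdots \supsetneq \varphi^{-1}(\fp_h)$ of primes in $\^{P}'$, and chains of arbitrarily large finite length lift as well in the case $\height(\ker(\tau)) = \infty$. This gives $\height(\ker(\tau')) \ge \height(\ker(\tau))$.

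I do not anticipate any genuine obstacles: the argument is purely order-theoretic on prime spectra, and the delicate infinite-dimensional analytic content (the existence and surjectivity of the factorization $\varphi$, which depends on the formal smoothness lifting from \cref{p:formally-smooth-lift} and the inverse function theorem \cref{p:inverse-fn-thm}) has already been packaged into \cref{r:emb-indep-basis}. In particular, there is no need to construct new prime ideals in the non-Noetherian ring $\^{P}'$ or to worry about whether heights behave well for ideals that are not of finite definition; we only pull back an existing chain.
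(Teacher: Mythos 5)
Your overall strategy matches the paper's: you reduce to showing $\height(\ker(\tau')) \ge \height(\ker(\tau))$ for an arbitrary formal embedding $\tau'$, produce the surjection $\varphi \colon \^{P}' \to \^P$ with $\tau' = \tau \circ \varphi$ via \cref{r:emb-indep-basis}, and observe that preimages of primes under a surjection preserve strict inclusions. That observation is the right engine, and the paper compresses exactly this step into a terse ``and hence.'' The gap is in how you apply it. You wrote the chains as
\[
  \ker(\tau) \supseteq \fp_0 \supsetneq \fp_1 \supsetneq \cdots \supsetneq \fp_h,
\]
so every $\fp_i$ is contained \emph{in} $\ker(\tau)$, and likewise the pull-backs sit inside $\ker(\tau')$. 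But the height of an ideal $\fa$ is governed by chains descending from a prime that \emph{contains} $\fa$, not by chains of primes contained in $\fa$. For a concrete failure, take $\fa = (xy,xz)$ in $k[x,y,z]$: the only prime contained in $\fa$ is $(0)$, so chains of your type have length $0$, yet $\height(\fa) = 1$. As written, the existence of long chains inside $\ker(\tau')$ tells you nothing about $\height(\ker(\tau'))$.

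The deduction should run the other way. Fix any prime $\fq \subset \^{P}'$ with $\fq \supseteq \ker(\tau')$. Since $\ker(\tau') = \varphi^{-1}(\ker(\tau)) \supseteq \ker(\varphi)$ and $\varphi$ is surjective, $\varphi(\fq)$ is a prime of $\^P$ containing $\ker(\tau)$, and $\varphi^{-1}(\varphi(\fq)) = \fq$. By the very definition of the height of an ideal as an infimum over primes containing it, $\height(\varphi(\fq)) \ge \height(\ker(\tau))$. Now a chain $\fr_0 \subsetneq \cdots \subsetneq \fr_h = \varphi(\fq)$ witnessing this height pulls back — this is where your strictness observation is correctly invoked — to a chain $\varphi^{-1}(\fr_0) \subsetneq \cdots \subsetneq \varphi^{-1}(\fr_h) = \fq$, so $\height(\fq) \ge \height(\ker(\tau))$; since $\fq$ was arbitrary over $\ker(\tau')$, taking the infimum gives the desired inequality. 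So the idea and the key lemma are right, but the orientation of the chains — and the need to start from an arbitrary prime over $\ker(\tau')$ rather than from a chain inside $\ker(\tau)$ — needs to be corrected.
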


\begin{proof}
  Given two formal embeddings
  $\t \colon \^P \to \^A$ and $\t' \colon \^P' \to \^A$
  with $\t$ efficient, by \cref{r:emb-indep-basis}
  there is a surjection $\f \colon \^P' \to \^P$ such that $\t' = \t \o \f$, 
  and hence $\height(\ker(\t')) \ge \height(\ker(\t))$. 
\end{proof}

\begin{remark}
  \label{p:grcodim=0}
  If $A$ is a local $k$-algebra such that the residue field $A/\fm$
  is separable over $k$, then it follows by \cite[Chapter~$0_{\text{IV}}$, Corollary~19.5.4]{EGAiv_1}
  that the following are equivalent:
  \begin{enumerate}
    \item
    $A$ is formally smooth over $k$.
    \item
    $\embcodim(A) = 0$.
    \item
    $\fembcodim(A) = 0$.
  \end{enumerate}
\end{remark}

\begin{proposition}
  \label{p:embdim=embcodim+dim}
  For every equicharacteristic local ring $(A,\fm,k)$, we have
  \[
    \embdim(A) \ge \dim(\^A) + \fembcodim(A),
  \]
  and equality holds if $A$ has finite embedding dimension.
  In particular, if $A$ is Noetherian then 
  $\embdim(A) = \dim(A) + \fembcodim(A)$.
\end{proposition}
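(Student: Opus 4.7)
The plan is to dispose of the case $\embdim(A)=\infty$ trivially and then reduce the problem to proving the equality $\embdim(A)=\dim(\^A)+\fembcodim(A)$ under the assumption $\embdim(A) < \infty$. When $\embdim(A)=\infty$ the left-hand side of the displayed inequality is infinite, so the statement holds vacuously. This reduction also simultaneously addresses the second sentence of the proposition, and (combined with standard facts for Noetherian rings) the third.

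Assume then that $\embdim(A) = d < \infty$. By \cref{th:embdim=inf} together with \cref{l:tau-exists}, there exists an efficient formal embedding $\tau\colon \^P \to \^A$ with $\^P = k[[x_1,\dots,x_d]]$, and by the proposition immediately preceding we have $\fembcodim(A) = \height(\ker\tau)$. Now $\^P$ is a regular local ring of Krull dimension $d$, hence Cohen--Macaulay, so for every ideal $J \subset \^P$ the dimension formula $\height(J) + \dim(\^P/J) = d$ holds. Applying this with $J = \ker\tau$ and using the identification $\^A \isom \^P/\ker\tau$ coming from surjectivity of $\tau$, one obtains
\[
\embdim(A) = d = \height(\ker\tau) + \dim(\^P/\ker\tau) = \fembcodim(A) + \dim(\^A),
\]
which is the desired equality.

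For the final Noetherian assertion, Noetherianity forces $\embdim(A) < \infty$ since $\fm/\fm^2$ is finitely generated over $k$, so the previous paragraph applies. Combining with the classical equality $\dim(A) = \dim(\^A)$ for Noetherian local rings then yields $\embdim(A) = \dim(A) + \fembcodim(A)$.

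No step presents a serious obstacle: the substantive content has already been packaged in the existence and realization results for efficient formal embeddings, and the dimension formula for Cohen--Macaulay local rings is classical. The only subtlety worth double-checking is that the inequality in the infinite embedding dimension case should be read in the sense of cardinal arithmetic, where $\infty \ge x + y$ is automatic.
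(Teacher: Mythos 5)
Your proof is correct and takes essentially the same approach as the paper's. The paper first establishes the inequality for arbitrary $A$ via the general fact $\dim(\^P) \ge \height(\ker\tau) + \dim(\^P/\ker\tau)$ and then upgrades to equality in the finite embedding-dimension case by citing catenarity of $k[[x_1,\dots,x_d]]$, whereas you dismiss the infinite case trivially and invoke the Cohen--Macaulay dimension formula for the regular local ring $\^P$; these two routes amount to the same thing, since for a regular (hence catenary CM domain) local ring both give $\height(J) + \dim(\^P/J) = \dim(\^P)$ for every ideal $J$.
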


\begin{proof}
  Consider an efficient formal embedding $\t \colon \^P = k[[x_i \mid i\in I]] \to \^A$. 
  Note that $\dim(\^P) = \embdim(A)$ by \cref{th:embdim=inf}. 
  The first formula follows from the simple
  fact that $\dim(\^P) \ge \height(\ker(\t)) + \dim(\^P/\ker(\t))$.
  If $A$ has finite embedding dimension then the set $I$ is finite, and 
  equality holds in the formula because a power series ring in finitely many
  variables is catenary of dimension equal to the number of variables. The second
  formula follows from the first and the fact that $\dim(A) = \dim(\^A)$ if $A$
  is Noetherian.
\end{proof}

\begin{corollary}
  \label{p:grcodim-embcodim:finite-embdim}
  If $(A,\fm, k)$ is an equicharacteristic local ring of finite embedding
  dimension, then 
  \[
    \embcodim(A) = \fembcodim(A).
  \]
\end{corollary}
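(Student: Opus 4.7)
The plan is to derive the equality by comparing two dimension formulas that were established earlier in the paper, using the fact that under the finite embedding dimension hypothesis, the completion $\^A$ is Noetherian and hence its dimension agrees with that of its associated graded.

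First, I would invoke \cref{p:embdim=grcodim+dim}, which gives the identity
\[
\embdim(A) = \dim(\gr(A)) + \embcodim(A).
\]
Next, since $A$ has finite embedding dimension, \cref{p:embdim=embcodim+dim} applies in its full equality form, yielding
\[
\embdim(A) = \dim(\^A) + \fembcodim(A).
\]

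Subtracting the two identities, the assertion $\embcodim(A) = \fembcodim(A)$ reduces to verifying that $\dim(\gr(A)) = \dim(\^A)$. For this, recall from \cref{s:power-series} that there is a natural identification $\gr(A) \isom \gr(\^A)$ induced by the isomorphisms $\^{\fm^p}/\^{\fm^q} \isom \fm^p/\fm^q$. Because $\embdim(A) < \infty$, any efficient formal embedding provided by \cref{l:tau-exists} realizes $\^A$ as a quotient of a power series ring in finitely many variables $k[[x_1,\dots,x_n]]$, so $\^A$ is a Noetherian local ring. By the standard dimension theory for Noetherian local rings one has $\dim(\^A) = \dim(\gr(\^A))$, and combining this with $\gr(\^A) \isom \gr(A)$ gives the required equality of dimensions.

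No step here poses a real obstacle: the only thing to be careful about is that \cref{p:embdim=embcodim+dim} gives equality (rather than just an inequality) precisely because we are in the finite embedding dimension regime, where the power series ring appearing in an efficient formal embedding is catenary with dimension equal to its number of variables. Once those two formulas and the Noetherianity of $\^A$ are in hand, the conclusion is an immediate cancellation.
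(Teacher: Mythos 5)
Your proof is correct and follows essentially the same route as the paper: reduce to $\dim(\gr(A)) = \dim(\^A)$ via the two dimension formulas, then use Noetherianity of $\^A$ (as a quotient of a finite-variable power series ring) together with the identification $\gr(\^A) \isom \gr(A)$. The only detail left tacit is that for the standard theorem $\dim(\^A) = \dim(\gr_{\^\fm}(\^A))$ to match the paper's $\gr(\^A) = \bigoplus_n \^{\fm^n}/\^{\fm^{n+1}}$, one should observe that in this finite setting $\^A$ actually carries the $\^\fm$-adic topology, i.e.\ $\^{\fm^n} = \^\fm^n$ — exactly the point the paper's proof makes explicit before citing Matsumura.
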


\begin{proof}
  By \cref{p:embdim=embcodim+dim,p:embdim=grcodim+dim}, it suffices to show that
  $\dim(\gr(A)) = \dim(\^A)$. By \cref{l:tau-exists} the completion $\^A$ is the
  quotient of a power series ring in finitely many variables, and therefore
  is Noetherian and carries the $\^\fm$-adic topology. The result now follows from
  \cite[Theorem~15.7]{Mat89} and the identification $\gr(\^A) \isom \gr(A)$.
\end{proof}

\begin{proposition}
  \label{p:DGK-grcodim}
  Let $(A, \fm, k)$ be an equicharacteristic local ring. 
  If $A$ admits a DGK decomposition $\^A \isom \^B \,\cotimes_k\, \^P$, then
  \[
    \embcodim(A) = \fembcodim(A) = \embcodim(B) < \infty.
  \]
\end{proposition}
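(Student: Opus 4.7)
The plan is to construct an efficient formal embedding of $\^A$ from one for $\^B$ and then compute $\fembcodim(A)$ and $\embcodim(A)$ separately. First, by hypothesis we have $\^A \isom \^B \,\cotimes_k\, \^P$ where $\^P = k[[x_i \mid i \in I]]$ and $(B, \fn, k)$ is essentially of finite type over $k$ (see \cref{r:DGK-and-finite-def}); in particular $B$ is Noetherian with finite embedding dimension, so \cref{p:grcodim-embcodim:finite-embdim} yields $\embcodim(B) = \fembcodim(B) < \infty$, handling the finiteness assertion.

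For the equality $\fembcodim(A) = \embcodim(B)$, I would fix an efficient formal embedding $\pi_B \colon k[[y_1, \dots, y_d]] \to \^B$ with $d = \embdim(B)$, so that $\fa := \ker(\pi_B)$ satisfies $\height(\fa) = \fembcodim(B) = \embcodim(B)$. Via the identification $k[[y_1,\dots,y_d]] \,\cotimes_k\, \^P \isom k[[y_1, \dots, y_d, x_i \mid i \in I]] =: \^Q$, the induced tensor map $\pi \colon \^Q \to \^A$ is a continuous surjection with kernel $\fa \^Q$, and a direct check on continuous Zariski cotangent spaces (both equal $k^d \oplus \bigoplus_{i \in I} k \cdot x_i$) shows $\pi$ is efficient. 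Since $\fa$ is generated by finitely many elements of $\^Q$ involving only the finite set of variables $y_1, \dots, y_d$, \cref{r:ht(p_j)=ht(p)} gives $\height(\fa \^Q) = \height(\fa) = \embcodim(B)$, so $\fembcodim(A) = \embcodim(B)$.

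For the equality $\embcodim(A) = \embcodim(B)$, I use the natural identification $\gr(A) \isom \gr(\^A)$ together with the computation that appears in the proof of \cref{p:inverse-fn-thm} (applied with $\^B$ in place of the local $k$-algebra $S$) to obtain
\[
\gr(A) \isom \gr(\^B) \otimes_k k[x_i \mid i \in I] \isom \gr(B) \otimes_k k[x_i \mid i \in I],
\]
where the last step uses that $B$ is Noetherian. Similarly $\fm/\fm^2 \isom \fn/\fn^2 \oplus \bigoplus_{i \in I} k \cdot x_i$ gives $\Sym_k(\fm/\fm^2) \isom \Sym_k(\fn/\fn^2) \otimes_k k[x_i \mid i \in I]$, and under these identifications the canonical map $\g_A \colon \Sym_k(\fm/\fm^2) \to \gr(A)$ is the base change of the analogous map $\g_B$ for $B$. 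Hence $\ker(\g_A)$ is the extension of $\ker(\g_B)$, which lives in the finite-variable polynomial subring $k[y_1,\dots,y_d]$. The polynomial analogue of \cref{r:ht(p_j)=ht(p)} noted in \cref{r:finite-def-properties-poly} then yields $\embcodim(A) = \height(\ker(\g_A)) = \height(\ker(\g_B)) = \embcodim(B)$.

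The main obstacle is the height-preservation step under extension from the finite-variable subring to the full ring in infinitely many variables, in both the power series and the polynomial settings; this is precisely what is supplied by \cref{r:ht(p_j)=ht(p)} and its polynomial counterpart. Once those are in hand, the proof reduces to identifying $\gr(\^A)$ via the formula from \cref{p:inverse-fn-thm} and to constructing the efficient formal embedding $\pi$ above by tensoring an efficient formal embedding of $B$ with $\^P$.
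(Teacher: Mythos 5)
Your proof is essentially correct and follows essentially the same route as the paper's: both first reduce the finiteness claim to $B$ via \cref{p:grcodim-embcodim:finite-embdim}, then use the power-series height result \cref{r:ht(p_j)=ht(p)} to pass from $\fembcodim(B)$ to $\fembcodim(A)$, and finally identify $\ker(\g_A)$ as the extension of $\ker(\g_B)$ to the larger polynomial ring, concluding with the polynomial-ring height preservation noted in \cref{r:finite-def-properties-poly}. The only material differences are presentational: you construct the efficient formal embedding $\pi \colon \^Q \to \^A$ by tensoring an efficient embedding of $B$ with $\^P$ and assert $\ker(\pi) = \fa\^Q$ (which requires a small argument that $\fa\^Q$ is closed in $\^Q$, available via \cref{l:in-for-fin-def} and \cref{p:f.g.initial}), whereas the paper starts from the description in \cref{r:DGK-and-finite-def} and obtains $\t_A$ directly from $\t_B$; and for the $\embcodim$ equality you invoke the graded-ring identification from the proof of \cref{p:inverse-fn-thm} plus a base-change observation, while the paper instead identifies $\ker(\g_A) = \ini(\ker\t_A)$ and applies \cref{l:in-for-fin-def} to a commutative diagram relating $\t_A$ and $\t_B$ — two phrasings of the same underlying fact. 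Also note a minor imprecision: the identification $\gr(\^B) \isom \gr(B)$ is not because $B$ is Noetherian but holds for any local ring once $\gr(\^B)$ is taken with respect to the filtration $\{\^{\fn^n}\}$, as fixed in the conventions of \cref{s:power-series}.
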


\begin{proof}
  Since $B$ is Noetherian, we have $\embdim(B) < \infty$.
  By \cref{p:grcodim-embcodim:finite-embdim}, we have
  that $\embcodim(B) = \fembcodim(B)$. Now we make use of the fact that $A
  \isom k[[x_i \mid i \in I]]/\fa$ with $\fa$ of finite definition. Note that
  there exists $J \subset I$ finite such that $\fa$ is the extension of $\fa_J
  := \fa \cap k[[x_j \mid j \in J]]$ and $\^B \isom k[[x_j \mid j \in
  J]]/\fa_J$. We may assume that the surjection $\tau_{B} \colon k[[x_j \mid j
  \in J]] \to \^B$ is an efficient formal embedding; then so is $\tau_{A}
  \colon k[[x_i \mid i \in I]] \to \^A$. By
  \cref{r:DGK-and-finite-def,r:ht(p_j)=ht(p)} it follows that $\fembcodim(B) =
  \fembcodim(A)$. It remains to show that $\embcodim(B) = \embcodim(A)$.
  
  To that avail, note that $\gr(\tau_A)$ factors through the natural surjection
  $\Sym_k(\fm/\fm^2) \to \gr(A)$, and similarly for $\gr(\tau_B)$. We have the
  commutative diagram
  \[
    \xymatrix{k[x_i \mid i \in I] \ar[r]^-{\isom} & \Sym_k(\fm/\fm^2) \ar[r] & \gr(A) \\
              k[x_j \mid j \in J] \ar[r]^-{\isom} \ar[u] & \Sym_k(\fn/\fn^2) \ar[r] \ar[u] & \gr(B). \ar[u]
    }
  \]
  and thus the claim follows by \cref{l:in-for-fin-def}.
\end{proof}

\begin{remark}
  The analogous statement of \cref{p:DGK-grcodim} holds for equicharacteristic
  local rings $(A,\fm,k)$ admitting a weak DGK decomposition.	
\end{remark}

The proof of \cref{p:grcodim-embcodim:finite-embdim} 
does not extend beyond the case of finite embedding dimension. 
Nonetheless, the following general comparison \lcnamecref{t:grcodim-embcodim} holds.

\begin{theorem}
  \label{t:grcodim-embcodim}
  For every equicharacteristic local ring $(A,\fm,k)$, we have
  \[
    \embcodim(A)
    \leq
    \fembcodim(A).
  \]
\end{theorem}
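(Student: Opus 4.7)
The plan is to reduce the theorem to a height comparison between an ideal of $\^P = k[[x_i \mid i \in I]]$ and its initial ideal in $P = k[x_i \mid i \in I]$, and then prove that comparison using an extended Rees algebra degeneration. First, by \cref{l:tau-exists} fix an efficient formal embedding $\tau \colon \^P \twoheadrightarrow \^A$ realizing $\fembcodim(A) = \height(\ker \tau)$, and set $\fa := \ker(\tau)$. Identifying $P$ with $\Sym_k(\fm/\fm^2)$ via the variables $x_i$, \cref{r:tau-using-Sym} gives $\gr(\tau) = \gamma$. Surjectivity of $\tau$, combined with the fact that $\tau(\^{\fn^n}) = \^{\fm^n}$ (as in the proof of \cref{th:embdim=inf}), then yields $\ker(\gamma) = \ini(\fa)$: any homogeneous element of $\ker(\gamma)$ of degree $n$ lifts to some $g \in \^{\fn^n}$, and because $\tau(g) \in \^{\fm^{n+1}} = \tau(\^{\fn^{n+1}})$ we can correct $g$ modulo $\^{\fn^{n+1}}$ by an element of $\^{\fn^{n+1}}$ to produce an element of $\fa$ with the same initial form. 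Hence $\embcodim(A) = \height_P(\ini \fa)$ and $\fembcodim(A) = \height_{\^P}(\fa)$, and the theorem reduces to $\height_P(\ini \fa) \le \height_{\^P}(\fa)$. Assuming the right-hand side is a finite $h$ and picking a prime $\fq \supseteq \fa$ with $\height(\fq) = h$, the containment $\ini(\fq) \supseteq \ini(\fa)$ shows that any prime of $P$ above $\ini(\fq)$ contains $\ini(\fa)$. The theorem therefore reduces to the following \emph{Key Lemma}: for every prime $\fq \subset \^P$ of finite height $h$, some prime of $P$ containing $\ini(\fq)$ has height at most $h$.

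For the Key Lemma I would deploy the extended Rees algebra $\widetilde\cR := \^P[\^\fn t, t^{-1}] \subset \^P[t, t^{-1}]$, regarded as a $k[s]$-algebra for $s := t^{-1}$. It is $s$-torsion-free inside the Laurent ring and hence flat over $k[s]$, with fibers $\widetilde\cR/(s-1)\widetilde\cR = \^P$ at $s = 1$ and $\widetilde\cR/s\widetilde\cR = P$ at $s = 0$. The homogenization $\widetilde\fq$, whose $n$-th graded piece is $(\fq \cap \^{\fn^n})\,t^n$ for $n \ge 0$ and $\fq \cdot t^n$ for $n < 0$, is a prime ideal of $\widetilde\cR$ by a direct leading-term argument: if $fg \in \widetilde\fq$ while $f, g \notin \widetilde\fq$, then taking the smallest indices $n_0, m_0$ for which the coefficients of $f,g$ lie outside $\fq$ forces the $(n_0+m_0)$-coefficient of $fg$ to equal $f_{n_0} g_{m_0}$ modulo $\fq$, contradicting the primality of $\fq$. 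The quotient $B := \widetilde\cR/\widetilde\fq$ is then a $k[s]$-flat domain interpolating between the fibers $\^P/\fq$ and $P/\ini\fq$, and the picture is that $\Spec B \to \Spec k[s]$ is a flat degeneration of $\Spec(\^P/\fq)$ to $\Spec(P/\ini \fq)$. Upper semi-continuity of fiber dimension in this flat family over the regular one-dimensional base $\Spec k[s]$ yields $\dim(P/\ini \fq) \ge \dim(\^P/\fq)$, and combined with the equality of ambient dimensions $\dim P = \dim \^P$ and the catenarity available (for primes of finite height, through the finite-variable reduction of \cref{r:ht(p_j)=ht(p)}), one extracts a prime $\fp \supset \ini(\fq)$ in $P$ with $\height_P(\fp) \le h$.

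The main difficulty is that $\^P$, $P$, and $\widetilde\cR$ are all non-Noetherian when $I$ is infinite, so that Matsumura's flat dimension formula, Krull's principal ideal theorem, and the classical Gröbner degeneration theorem are not available in their strongest forms. To make the upper semi-continuity and dimension bookkeeping rigorous in this setting, one has to either transfer the argument to the Noetherian catenary base $k[s]$ using going-down along the flat extension $k[s] \to \widetilde\cR$, or reduce a given chain of primes in $\^P$ to the Noetherian subring $\^P_J$ for an appropriate finite $J \subset I$ via \cref{r:ht(p_j)=ht(p)} and apply the classical Gröbner degeneration there. The most delicate point is the chain-lifting step, namely faithfully transferring a chain of primes in $\^P$ ending at $\fq$ into a chain controlling the height of some prime over $\ini \fq$ in the special fiber; this is where the flat $k[s]$-structure of $\widetilde\cR/\widetilde\fq$ and the catenarity of $k[s]$ do the essential work.
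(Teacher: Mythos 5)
Your reduction to the inequality $\height_{\^P}(\fa) \geq \height_P(\ini(\fa))$, where $\fa = \ker(\tau)$ for an efficient formal embedding $\tau$, is exactly the paper's reduction (the paper handles the identification $\ker(\gamma) = \ini(\fa)$ by citing Bourbaki, while your correction-modulo-$\^{\fn^{n+1}}$ argument is a fine substitute). You also correctly identify the extended Rees algebra $\tilde\cR$ as the degeneration device. However, at the crucial technical step your proposal diverges from the paper and runs into a genuine gap.

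You propose to prove the height comparison by controlling \emph{chains of primes} (the ``Key Lemma'': a prime of finite height $h$ in $\^P$ specializes to a prime of height $\leq h$ over $\ini(\fq)$ in $P$), invoking upper semicontinuity of fiber dimension in the flat family $\Spec(\tilde\cR/\tilde\fq) \to \A^1_k$. The two escape routes you offer both fail to close the gap: (a) semicontinuity of fiber dimension, and the dimension bookkeeping you need (``$\dim P = \dim\^P$'', ``catenarity'', ``height drops by one modulo $s$''), require finite type or Noetherianity, neither of which holds here when $I$ is infinite; (b) the reduction to the Noetherian subring $\^P_J$ does not work for prime \emph{chains} in $\^P$, because a power series separating two consecutive primes $\fq_i \subsetneq \fq_{i+1}$ may genuinely involve infinitely many variables, so that $\fq_i \cap \^P_J = \fq_{i+1} \cap \^P_J$ for every finite $J$. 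This is in sharp contrast to the polynomial ring $P$ (where every element lies in some $P_J$), and it is precisely why \cref{r:finite-def-properties-poly} works for $P$ but there is no analogue for $\^P$ — indeed, whether finite-height primes of $\^P$ are even finitely generated is the open \cref{q:in-fin-gen}.

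The paper's proof of \cref{t:degeneration-to-gr} sidesteps this entirely by working with \emph{regular sequences and depth} rather than prime chains. Because $\ini(\fa)$, being finitely generated, lives in some $P_J$ with $J$ finite, one can choose homogeneous $g_1,\dots,g_c$ forming a regular sequence in the Noetherian ring $P_J$, lift them to $f_1,\dots,f_c \in \fa$ with $\ini(f_i) = g_i$, and use the extended Rees algebra together with \cref{th:u-adically-separated,th:lifting-regularity-flatness} to show that the $f_i$ form a regular sequence in $\^P$. The conclusion $\height(\fa) \geq \depth(\fa,\^P) \geq c$ then follows from a Koszul-type depth--height inequality that holds with no Noetherian hypotheses. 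The moral: regular sequences, unlike prime chains, \emph{can} be pushed down to a finite-variable Noetherian subring, and the depth machinery is what makes the Rees algebra degeneration rigorous in this infinite-dimensional setting. To salvage your argument you would need to replace the chain-lifting step by a regular-sequence/depth argument of this kind.
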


\begin{proof}
  Fix an efficient formal embedding $\t \colon \^P \to \^A$,
  and let $\gr(\t) \colon P \to \gr(A)$ be the induced map
  on associated graded rings (as in \cref{r:gr-are-poly} we identify $\gr(\^P) =
  \gr(P) = P$). As explained in \cref{r:tau-using-Sym}, $P \isom
  \Sym_k(\fm/\fm^2)$ and $\gr(\t)$ gets identified with the canonical surjection
  $\g$. In particular, it is enough to show that
  \[
    \height(\ker(\t)) \geq \height(\ker(\gr(\t))).
  \]
  Write $\fa = \ker(\t)$.
  By \cite[Chapter~III, Section~2.4, Proposition~2]{Bou72}, we have that
  $\ker(\gr(\t)) = \ini(\fa)$. To conclude, it is therefore enough to prove that
  \[
    \height(\fa) \ge \height(\ini(\fa)).
  \]
  This follows from the next \lcnamecref{t:degeneration-to-gr}.
\end{proof}

\begin{proposition}
  \label{t:degeneration-to-gr}
  Let $P = k[x_i \mid i \in I]$ and $\^P = k[[x_i \mid i \in I]]$, where $k$ is a
  field. Let $\fa \subset \^P$ be an ideal and $\ini(\fa) \subset P$ the
  corresponding initial ideal. Then 
  $\height(\fa) \ge \height(\ini(\fa))$.
\end{proposition}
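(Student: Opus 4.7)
The plan is a Rees-algebra degeneration argument that realizes $\ini(\fa)$ as the flat specialization at $T = 0$ of $\fa$, viewed as an ideal in the generic fiber. Set $\^\fm = (x_i \mid i \in I)\^P$ and form the extended Rees algebra
\[
R := \bigoplus_{n \in \Z} \^{\fm^n}\, T^{-n} \subset \^P[T, T^{-1}],
\]
with the convention $\^{\fm^n} = \^P$ for $n \le 0$. By \cref{r:gr-are-poly} one identifies $R/(T) \isom P$, while $R[T^{-1}] = \^P[T, T^{-1}]$; moreover $T$ is a non-zero-divisor on $R$, and $\bigcap_n T^n R = 0$ follows from $\bigcap_n \^{\fm^n} = 0$, so $R$ is a $k[T]$-flat domain. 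Setting $\~\fa := \fa\,\^P[T, T^{-1}] \cap R$, one checks that the image of $\~\fa$ in $R/(T) = P$ is exactly $\ini(\fa)$, and that $T$ remains a non-zero-divisor on $R/\~\fa$.

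I would first reduce to the case where $\fa = \fq$ is a prime of $\^P$ of finite height $n$: if $\height(\fa) = \infty$ there is nothing to prove, and otherwise, taking a minimal prime $\fq$ of $\fa$ of minimal height $n$, the inclusion $\ini(\fa) \subseteq \ini(\fq)$ yields $\height_P(\ini(\fa)) \le \height_P(\ini(\fq))$, so it suffices to prove $\height_P(\ini(\fq)) \le n$. Set $\~\fq := \fq\,\^P[T, T^{-1}] \cap R$, a prime of $R$ with $T \notin \~\fq$ and with image $\ini(\fq)$ in $P = R/(T)$; using that localization away from $T$ identifies $R[T^{-1}] = \^P[T, T^{-1}]$ and preserves heights of primes not containing $T$, one obtains $\height_R(\~\fq) = n$.

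The core step is to produce a minimal prime $\fP$ of $\~\fq + (T)$ in $R$ with $\height_R(\fP) \le n + 1$. Granted such $\fP$, the quotient $\fp := \fP/(T)$ is a minimal prime of $\ini(\fq)$ in $P$; lifting through $(T)$ any chain of primes in $P$ below $\fp$ produces a chain in $R$ below $\fP$ of length one greater, so $\height_P(\fp) + 1 \le \height_R(\fP) \le n + 1$, hence $\height_P(\fp) \le n$, giving the desired bound on $\height_P(\ini(\fq))$. The bound $\height_R(\fP) \le n + 1$ is a non-Noetherian principal-ideal-theorem statement for the principal ideal generated by $T$ in the domain $R/\~\fq$: its minimal prime $\fP/\~\fq$ has height at most $1$.

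The main obstacle is justifying this principal-ideal-theorem bound in the non-Noetherian setting. The plan is to verify the adic separation $\bigcap_n T^n(R/\~\fq) = 0$, which follows by showing that $\~\fq$ is closed in the $T$-adic topology of $R$ — using its explicit description as the set of elements $\sum_n g_n T^{-n} \in R$ with $g_n \in \fq$ for every $n$ — together with closedness of $\fq$ in the $\^\fm$-adic topology of $\^P$ (the identity $\bigcap_m(\fq + \^{\fm^m}) = \fq$). Granted this separation, an argument modelled on the proof that $\height_R((T)) = 1$ completes the claim: a prime strictly between $0$ and a minimal prime of $(T)$ in $R/\~\fq$ cannot contain $T$, so any of its nonzero elements would be infinitely divisible by $T$, forcing it to lie in $\bigcap_n T^n(R/\~\fq) = 0$, a contradiction.
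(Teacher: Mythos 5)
Your construction of the extended Rees algebra $R = \bigoplus_{n} \^{\fm^n} T^{-n}$ and the identification of $\ini(\fq)$ as the specialization of $\~\fq$ at $T = 0$ is the same degeneration the paper sets up. However, the way you try to extract the height bound from it has a genuine gap, precisely at the step you flag as the ``main obstacle.''

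The claim that $(T)$-adic separation of $R/\~\fq$ implies a principal ideal theorem is false, and the reasoning offered for it does not work. You argue: a prime $\fp'$ strictly below a minimal prime $\fP/\~\fq$ of $(T)$ cannot contain $T$, hence its nonzero elements are infinitely divisible by $T$, hence lie in $\bigcap_n T^n (R/\~\fq) = 0$. But nothing forces an element of a prime not containing $T$ to be even once divisible by $T$. A rank-two valuation ring $V$ with value group $\Z^2_{\mathrm{lex}}$, say $v(x) = (1,0)$ and $v(y) = (0,1)$, is a domain with $\bigcap_n x^n V = 0$, and yet the maximal ideal is the unique (hence minimal) prime over $(x)$ and has height $2$; the intermediate prime $\fp' = \{v(f)\ \text{in the convex subgroup}\} \cup \{0\}$ contains $y$, which is not divisible by $x$ at all. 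So $T$-adic separation, which is all you have, does not yield the hauptidealsatz; that theorem genuinely needs Noetherianness (via symbolic powers and Nakayama), which $R/\~\fq$ does not have when $I$ is infinite. There are two further, secondary gaps in the same step: even granting $\height_{R/\~\fq}(\fP/\~\fq) \le 1$, the conclusion $\height_R(\fP) \le n+1$ is a catenarity statement, which is not established for $R$; and the claimed closedness of $\fq$ in $\^P$ (needed for the separation itself) is asserted without proof, and is in fact delicate — the paper explicitly leaves open whether finitely generated ideals of $\^P$ are closed.

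The paper's proof uses the same Rees-algebra degeneration but organizes it around regular sequences rather than heights of primes, and this is exactly what makes it robust in the non-Noetherian setting. It passes to a finite subset $J \subset I$ so that $\height(\ini(\fa)_J)$ is as large as needed, then uses the Cohen--Macaulay property of the Noetherian polynomial ring $P_J$ to choose a homogeneous regular sequence $g_1, \dots, g_c \in \ini(\fa)_J$, lifts each $g_i$ to $f_i \in \fa$, and uses the Rees algebra to prove that $f_1, \dots, f_c$ form a regular sequence in $\^P$. The separation lemma is invoked only for the finitely generated ideals $(f_1,\dots,f_r)$, where \cref{p:f.g.initial} applies because their initial ideal is visibly finitely generated. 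The conclusion $\height(\fa) \ge c$ then follows from the inequality $\height(\fa) \ge \depth(\fa, \^P)$, avoiding any height computation in the Rees algebra. Replacing your PIT step with this regular-sequence argument — in particular, noticing that depth gives a one-sided height bound without any catenarity or hauptidealsatz — is the idea you were missing.
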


\begin{proof}
  If $\height(\ini(\fa)) < \infty$ then $\ini(\fa)$ is finitely generated and
  there exists a finite subset $J \subset I$ such that $\ini(\fa)_J$ has the
  same height of $\ini(\fa)$, see \cref{r:finite-def-properties-poly}.
  Otherwise, if $\height(\ini(\fa)) = \infty$ then we can pick a finite $J
  \subset I$ such that the height of $\ini(\fa)_J$ is arbitrary large. Let $c
  := \height(\ini(\fa)_J)$.

  By \cite[Proposition~1.5.11]{BH93}, we can fix homogeneous elements $g_1,\dots,g_c \in \ini(\fa)_J$ 
  forming a regular sequence in $P_J$. 
  By the definition of initial ideal, there are elements
  $f_1,\dots,f_c \in \fa$ such that $\ini(f_i) = g_i$ for all $i$. 
  Let $R = R(\^P) := \bigoplus_{n \in \Z} \^{\fm^n}u^{-n}$ be the extended Rees
  algebra of $\^P$, where we set $\^{\fm^n} = \^P$ whenever $n < 0$. 
  For every $i$, let $\~f_i := u^{-\ord(f_i)}f_i \in R$.
  Note that $\~f_i|_{u=0} = \ini(f_i) = g_i$ via the identification $R/uR \isom P$.

  We claim that, for every $1 \le r \le c$, the elements $\~f_1,\dots,\~f_r$
  form a regular sequence in $R$ and $R/(\~f_1,\dots,\~f_r)$ is flat over $k[u]$. 
  We argue by induction on $r$, the assertion being clear if $r=0$. 
  Letting for short $B := R/(\~f_1,\dots,\~f_{r-1})$, 
  we know by induction that $B$ is flat over $k[u]$. Assume that there exists $h\in\^P$ with $h=\sum_{i=1}^{r-1} a_if_i$
  and $\ini(h)$ not divisible by $g_1,\ldots,g_{r-1}$. Writing
  \[
    \sum_{i=1}^{r-1} u^{\ord(f_i)}a_i \~f_i=u^{\ord(h)}\~h,
  \]
  \cref{l:initial-ideal-rees-algebra} yields that $B$ has torsion over $k[u]$, which gives a contradiction.
  Thus $\ini(f_1,\ldots,f_{r-1})=(g_1,\ldots,g_{r-1})$ and $B$ 
  is isomorphic to the algebra
  $\bigoplus_{n \in \Z} \fb_n u^{-n}$ where
  $\fb_n := (\^{\fm^n} + (f_1,\dots,f_{r-1}))/(f_1,\dots,f_{r-1})$ for $n \ge 0$
  and $\fb_n = B$ for $n < 0$. 
  It follows by \cref{p:f.g.initial} that 
  $\bigcap_{n \ge 1} \fb_n = \{0\}$.
  Then \cref{th:u-adically-separated} implies that $B$ is $(u)$-adically separated, 
  and \cref{th:lifting-regularity-flatness} (with $t = u$)
  implies that the class $b$ of $\~f_r$ in $B$ is a regular element and $B/bB$
  is flat over $k[u]$.

  The natural isomorphism $R/(u-1)R \isom \^P$ sends $\~f_i$ to $f_i$, and hence
  we see by \cref{th:lifting-regularity-flatness} (with $t = u-1$)
  that $f_1,\dots,f_c$ form a regular sequence in $\^P$.
  This implies that $\depth(\fa,\^P) \ge c$. We conclude using the fact that
  $\height(\fa) \ge \depth(\fa,\^P)$
  (e.g., see \cite[Proposition~2.3 and Lemma~3.2]{AT09}).
\end{proof}

\begin{lemma}
  \label{l:initial-ideal-rees-algebra}
  Let $f_1,\ldots,f_r\in \^P$. If $h\in \^P$, then
  $\ini(h)\in(\ini(f_1),\ldots,\ini(f_r))$ if and only if there exist elements
  $b_1,\ldots,b_r$ in the Rees algebra $R=R(\^P)$ such that
  \[
    \~h=\sum_i^r b_i \~f_i.
  \]
\end{lemma}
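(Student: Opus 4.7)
The plan rests on the standard degeneration picture attached to the extended Rees algebra $R = \bigoplus_{n \in \Z} \^{\fm^n} u^{-n}$: the quotient $R/uR$ is canonically identified with $P = \gr_{\^\fm}(\^P)$, and under this identification the Rees generator $\~f = u^{-\ord(f)}f$ corresponds to the initial form $\ini(f)$ for every $f \in \^P$. This is the single ingredient that lets one pass back and forth between relations in the graded ring $P$ and relations in $R$.

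For the implication $\~h = \sum_i b_i \~f_i \Rightarrow \ini(h) \in (\ini(f_1),\dots,\ini(f_r))$, the argument is immediate: reducing the given relation modulo $uR$ and invoking the identification above yields $\ini(h) = \sum_i \bar b_i \ini(f_i)$ in $P$, which is exactly the desired membership.

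For the converse, starting from a relation $\ini(h) = \sum_i \alpha_i \ini(f_i)$ in $P$, I would use homogeneity of initial forms to take each $\alpha_i$ homogeneous of degree $d_i := \ord(h) - \ord(f_i)$ (with $\alpha_i = 0$ when $d_i < 0$). Viewed as a homogeneous polynomial in $\^{\fm^{d_i}} \subset \^P$, each $\alpha_i$ gives rise to an element $b_i := \alpha_i u^{-d_i} \in R$, and a direct calculation produces
\[
    \~h - \sum_i b_i \~f_i \;=\; u^{-\ord(h)}\bigl(h - \sum_i \alpha_i f_i\bigr),
\]
which lies in $uR$ because $h - \sum_i \alpha_i f_i$ has order strictly greater than $\ord(h)$ by construction.

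The hard part is upgrading this congruence modulo $uR$ to an exact equality in $R$. The natural attempt is to iterate: treat $h - \sum_i \alpha_i f_i$ as a new $h$ and repeat, so that each successive correction to the $b_i$ contributes in a strictly higher power of $u$. The technically delicate point is ensuring that the resulting chain of corrections can be repackaged into honest elements of $R$, that is, finite Laurent expressions in $u$, rather than merely formal power series; this calls for careful bookkeeping of both the $\fm$-adic orders of the residuals and the Rees grading on the $b_i$, and it is the step most sensitive to the way the $f_i$'s relate to their initial forms.
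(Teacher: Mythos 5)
Your forward implication (from $\~h=\sum_i b_i\~f_i$ to $\ini(h)\in(\ini(f_1),\dots,\ini(f_r))$) is correct and is essentially the paper's argument: you reduce modulo $uR$ and invoke the identification $R/uR\isom P$, whereas the paper takes the $b_i$ homogeneous, writes $b_i=u^{-\ord(h)+\ord(f_i)}a_i$ with $\ord(a_i)\ge\ord(h)-\ord(f_i)$, and extracts the degree-$\ord(h)$ component of $h=\sum_i a_i f_i$. These are two phrasings of the same computation.

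The converse, where you rightly sense trouble, is in fact false, so no amount of bookkeeping will rescue your iteration. Take $\^P=k[[x,y]]$, $f_1=x$, $h=x+y^2$. Then $\ini(h)=x\in(\ini(f_1))$, but any equality $\~h=b_1\~f_1$ would force, after comparing homogeneous components of degree $\ord(h)=1$ in $R$, a relation $h=a_1x$ with $a_1\in\^P$, which is impossible since $y^2\notin(x)$. The structural obstruction to your recursion is visible in the same example: the first residual $h-\alpha_1 f_1=y^2$ has initial form $y^2\notin(\ini(f_1))$, so the iteration cannot even take a second step; and even when it can proceed, the corrections occur in ever higher powers of $u$ and would never terminate into a finite Laurent polynomial, as you suspected. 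Note that the paper's own proof establishes only the forward direction, and only that direction is invoked later (in the proof of \cref{t:degeneration-to-gr}, via its contrapositive); the phrase ``if and only if'' in the lemma statement is an overstatement of both what is true and what is proved.
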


\begin{proof}
  Given $\~h=\sum b_i \~f_i$, we may assume that $b_i$ is homogeneous in $R$,
  i.e.\ of the form $b_i=u^{-\ord(h)+\ord(f_i)}a_i$ with $a_i\in \^P$. But then
  $\ord(a_i)\geq\ord(h)-\ord(f_i)$ and the claim follows.
\end{proof}

\begin{lemma}
  \label{th:u-adically-separated}
  Let $A$ be a ring and $(\fa_n)_{n \ge 0}$ a graded sequence of ideals of $A$, and
  let $R(A) := \bigoplus_{n \in \Z} \fa_n u^{-n}$ where we set $\fa_n = A$ for $n < 0$. 
  Assume that $\bigcap_{n \ge 1} \fa_n = \{0\}$. 
  Then $R(A)$ is $(u)$-adically separated. 
\end{lemma}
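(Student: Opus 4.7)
The plan is to unravel the definitions and show directly that $\bigcap_{m\ge 1} u^m R(A) = 0$. An element $r \in R(A)$ is a finite sum $r = \sum_{n \in \Z} b_n u^{-n}$ with $b_n \in \fa_n$ (using the convention $\fa_n = A$ for $n < 0$), and only finitely many $b_n$ are nonzero since $R(A)$ is a direct sum. Note that for $R(A)$ to be a subring of $A[u,u^{-1}]$ we implicitly use that $\{\fa_n\}$ is a descending filtration in nonnegative degrees, so that $\fa_m \supseteq \fa_n$ whenever $m \le n$.

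First I would characterize membership in $u^m R(A)$. Multiplying $s = \sum_n c_n u^{-n}$ by $u^m$ gives $u^m s = \sum_k c_{k+m} u^{-k}$, so $r \in u^m R(A)$ if and only if $b_k \in \fa_{k+m}$ for every $k \in \Z$. (The implicit point here is that $c_{k+m} \in \fa_{k+m}$ is forced by $s \in R(A)$.)

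Next I would intersect over $m \ge 1$. If $r \in \bigcap_{m\ge 1} u^m R(A)$ then for each fixed $k$ we have $b_k \in \bigcap_{m \ge 1} \fa_{k+m}$. Because the sequence $\{\fa_n\}$ is descending for $n$ large (in particular a tail of it sits inside $\bigcap_{n \ge 1} \fa_n$), this intersection is contained in $\bigcap_{n \ge 1} \fa_n$, which is $\{0\}$ by hypothesis. Hence $b_k = 0$ for every $k$, so $r = 0$.

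I do not anticipate any real obstacle — the content of the lemma is essentially a direct unpacking of the grading, with the only conceptual ingredient being the observation that the condition $r \in u^m R(A)$ translates, component by component, into an inclusion $b_k \in \fa_{k+m}$. The hypothesis $\bigcap_n \fa_n = 0$ then immediately kills each component.
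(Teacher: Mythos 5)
Your proof is correct and follows essentially the same componentwise argument as the paper: membership in $u^m R(A)$ forces each graded piece $b_k$ into $\fa_{k+m}$, and intersecting over $m$ kills every component. You are slightly more explicit than the paper in noting that for $R(A)$ to be a ring (given $\fa_n = A$ for $n<0$) the sequence must be descending, which is what justifies $\bigcap_{m\ge 1}\fa_{k+m} = \bigcap_{n\ge 1}\fa_n$ — a detail the paper's proof takes for granted.
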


\begin{proof}
  Let $a \in R(A)$ be any element.
  Write $a = \sum_{i=p}^q a_i u^{-i}$ for some $a_i \in \^P$ and $p,q \in \Z$. 
  By the definition of Rees algebra, we have $a_i \in \fa_i$ for all $i$.
  The condition that $a \in u^nR(A)$ is equivalent to having $a_i \in \fa_{n+i}$ for all $i$.
  If $a \in \bigcap_{n \ge 1} u^nR(A)$, then we have
  $a_i \in \bigcap_{n \ge 1}\fa_n = \{0\}$ for all $i$, and hence $a = 0$. 
\end{proof}

\begin{lemma}
  \label{th:lifting-regularity-flatness}
  Let $B$ be a flat and  $k[t]$-algebra. 
  For any given $b \in B$, consider the following properties:
  \begin{enumerate}
    \item
    \label{item1:lifting-regularity-flatness}
    $b$ is a regular element of $B$ and $B/bB$ is flat over $k[t]$.
    \item
    \label{item2:lifting-regularity-flatness}
    The image $\bar{b}$ of $b$ in $B/tB$ is regular. 
  \end{enumerate}
  Then $\eqref{item1:lifting-regularity-flatness} \Rightarrow \eqref{item2:lifting-regularity-flatness}$, and the converse holds if $B$ is
  $(t)$-adically separated.
\end{lemma}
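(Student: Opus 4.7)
The plan rests on the short exact sequence $0 \to B \xrightarrow{b} B \to B/bB \to 0$, which is exact precisely when $b$ is a nonzerodivisor on $B$, together with the long exact sequence of $\Tor$ over $k[t]$. For $\eqref{item1:lifting-regularity-flatness} \Rightarrow \eqref{item2:lifting-regularity-flatness}$ I would simply tensor this sequence with $k = k[t]/(t)$ over $k[t]$; by the assumed $k[t]$-flatness of $B/bB$ the first $\Tor$ vanishes, so the sequence remains exact and yields $0 \to B/tB \xrightarrow{\bar b} B/tB \to B/(t,b)B \to 0$, which is exactly the regularity of $\bar b$ on $B/tB$.

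For the converse, assuming $B$ is $(t)$-adically separated, I argue in two steps. \emph{Step 1 (regularity of $b$ on $B$).} Flatness of $B$ over $k[t]$ makes multiplication by $t$ injective on $B$. If $bx = 0$, reducing modulo $t$ gives $\bar b \bar x = 0$, so $\bar x = 0$ by hypothesis and $x \in tB$; writing $x = tx_1$ and cancelling the regular element $t$ from $t(bx_1) = 0$ produces $bx_1 = 0$, so by iteration $x \in \bigcap_n t^nB$, which vanishes by separatedness. \emph{Step 2 (flatness of $B/bB$ over $k[t]$).} With the short exact sequence now exact by Step 1, combining it with the flatness of $B$ identifies $\Tor_1^{k[t]}(B/bB, M) \cong \ker(b \colon B \otimes_{k[t]} M \to B \otimes_{k[t]} M)$ for every $k[t]$-module $M$; since $k[t]$ is a PID, flatness of $B/bB$ reduces to showing that $b$ is a nonzerodivisor on $B/fB$ for each nonzero $f \in k[t]$. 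The primary case $f = t^n$ follows by induction on $n$ via exactly the template of Step 1: from $bx = t^n y$ one obtains $x \in tB$, writes $x = tx_1$, cancels the regular $t$ to get $bx_1 = t^{n-1}y$, and invokes the inductive hypothesis.

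The delicate point I anticipate is Step 2 for $f \in k[t]$ coprime to $t$: the hypothesis supplies only $(t)$-adic information, whereas flatness over the non-local ring $k[t]$ a priori requires control at every prime. I would close this gap by observing that such an $f$ becomes a unit in each $B/t^nB$, so an equation $bz = fw$ forces $z \in fB + t^nB$ for every $n$; the $(t)$-adic separatedness of $B$, combined with the regularity of $b$ and $t$ established above, should then propagate to give $z \in fB$ and thereby establish flatness of $B/bB$ over $k[t]$.
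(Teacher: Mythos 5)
Your argument for $(1) \Rightarrow (2)$ and for the regularity of $b$ in the converse direction are correct and match the paper's (which uses a snake‑lemma diagram in place of the $\Tor$ long exact sequence, to the same effect). The reduction of the flatness of $B/bB$ to torsion‑freeness over the PID $k[t]$, and the induction for $f = t^n$, are also fine. The sketch for $f$ coprime to $t$, however, does not hold up: since such an $f$ is a unit in $B/t^nB$, the containment $z \in fB + t^nB$ holds for \emph{every} element of $B$ and carries no information, so there is nothing to propagate via separatedness.

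The difficulty you flagged is in fact a flaw in the statement, not merely in your attempt. As written the lemma is false: take $B = k[t]$ and $b = t - 1$. Then $B$ is flat over $k[t]$ and $(t)$‑adically separated, and $\bar b = -1$ is a unit (hence regular) in $B/tB = k$, yet $B/bB = k[t]/(t-1)$ is a torsion $k[t]$‑module and so not flat. The paper's own proof has the same gap: after establishing regularity of $b$ it computes $\Tor^{k[t]}_1(k, B/bB) = 0$ and cites the local flatness criterion \cite[Theorem~22.3]{Mat89}, but that criterion requires $B/bB$ itself to be idealwise separated for $(t)$, which is not implied by $(t)$‑adic separatedness of $B$ and fails in the example above, where $t$ acts invertibly on $B/bB$. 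In the application within \cref{t:degeneration-to-gr} this extra condition does hold — inverting $u$ turns the extended Rees algebra into a Laurent polynomial ring over $\^P$, so $B_u/bB_u$ is $k[u,u^{-1}]$‑free — and the overall argument there survives, but the lemma as formulated needs an extra hypothesis (for instance $(t)$‑adic separatedness of $B/bB$, or replacing $k[t]$ by $k[t]_{(t)}$ in the flatness conclusion) for the converse to hold.
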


\begin{proof}
  The proof is an adaptation of the proof of \cite[Theorem~22.5]{Mat89}. 
  The implication $\eqref{item1:lifting-regularity-flatness} \Rightarrow
  \eqref{item2:lifting-regularity-flatness}$ follows by the snake lemma applied
  to the commutative diagram\[
  \xymatrix{
  B \ar[r]^{b} \ar[d]^{t} & B \ar[d]^{t} \ar[r] & B/bB \ar[d]^{t}\\\
  B \ar[r]^{b} \ar[d] & B \ar[r] \ar[d] & B/bB  \\
  B/tB \ar[r]^{\bar b} & B/tB & 
  }
  \]
  after observing that the map $B \to B$ given by multiplication by $b$ is injective
  since $b$ is regular, and so is the map $B/bB \to B/bB$ given by multiplication by 
  $t$ since $B/bB$ is flat over $k[t]$. 

  In order to prove the implication $\eqref{item2:lifting-regularity-flatness}
  \Rightarrow \eqref{item1:lifting-regularity-flatness}$ when $B$ is
  $(t)$-adically separated, 
  suppose $x \in B$ is an element such that $bx = 0$. Then $\bar{b}\bar{x} = 0$ in $B/tB$
  and hence $\bar{x} = 0$. This means that $x \in tB$. 
  Suppose $x \in t^nB$ for some positive integer $n$, and write $x = t^ny$ in $B$. 
  Then $t^n(by) = bx = 0$ and hence $by = 0$ since $B$ is flat over $k[t]$. 
  This implies that $y \in tB$, and hence $x \in t^{n+1}B$. 
  Therefore $x \in \bigcap_{n \ge 1} t^nB$, and since $B$ is $(t)$-adically separated, 
  this means that $x = 0$. This proves that $b$ is a regular element. 
  To conclude that $B/bB$ is flat over $k[t]$, we just compute that
  $\Tor^{k[t]}_1(k,B/bB) = 0$ from the exact sequence 
  $0 \to B \to B \to B/bB \to 0$ and apply \cite[Theorem 22.3]{Mat89}.
\end{proof}

\begin{question}
  We do not know of any example where the inequality in \cref{t:grcodim-embcodim} is strict. 
  The question whether 
  $\embcodim(A)=\fembcodim(A)$ holds for all equicharacteristic local rings
  $(A,\fm,k)$ is, to our knowledge, still open.
\end{question}


\section{Embedding codimension of arc spaces}

\label{s:genproj}

Let $X$ be a scheme of finite type over a field $k$. The \emph{arc space} $X_\infty$
of $X$ is the scheme over $k$ representing the functor of points given, for any
$k$-algebra $R$, by $R \mapsto \liminv_m X_m(R)$, where
$X_m(R)=\Hom_k(\Spec(R[t]/(t^{m+1})),X)$ is the functor of points of the $m$-th
jet scheme of $X$. By \cite[Remark~4.6]{Bh16}, the functor $X_\infty(R)$ is
naturally isomorphic to $\Hom_k(\Spec(R[[t]]),X)$.
A point $\a \in X_\infty$ is called an \emph{arc on $X$} and corresponds to a
morphism $\Spec (L[[t]]) \to X$ where $L$ is the residue field of $\a$. 
A point $\a \in X_\infty$ is said to be \emph{constructible}
if $\a$ is the generic point of an irreducible constructible subset of $X_\infty$
(cf.\ \cite[Section~10]{dFD}).

Given an arc $\a \colon \Spec (k[[t]]) \to X$, we will denote by $\a(0)$ and $\a(\e)$ the
images in $X$ of the closed point and the generic point of $\Spec (k[[t]])$; 
we call $\a(0)$ the \emph{special point} of $\a$ and $\a(\e)$ the \emph{the generic point} of $\a$. 

Given an open set $U \subset X$, we have $\a(\e) \in U$
if and only if the morphism $\a \colon \Spec (k[[t]]) \to X$ does not
factor through the complement $X \setminus U$. 
We will be interested in the case where $U = X_\sm$, the smooth locus of $X$. 
Note that if $k$ is perfect, then the complement $X \setminus X_\sm$
is the singular locus $\Sing(X)$ of $X$.

We following result is a variant of \cite[Theorems~9.2 and~9.3]{dFD}.

\begin{theorem}
  \label{t:general-proj}
  Suppose that $X$ is an affine scheme over a perfect field $k$. 
  Let $\a \in X_\infty$ be an arc and let $d:=\dim_{k(\a(\e))}(\Omega_{X/k}\otimes_k k(\a(\e)))$
  where $k(\a(\e))$ is the residue field of $\a(\e) \in X$.
  Assume that one of the following occurs:
  \begin{enumerate}
    \item
    \label{item1:general-proj}
      $k$ is a field of characteristic zero, or
    \item
    \label{item2:general-proj}
      $\a \in X_\infty(k)$. 
  \end{enumerate}
  Fix a closed embedding $X \subset \A^N$, let 
  $f \colon X \to Y :=\A^d$ 
  be the morphism induced by a general linear projection $\A^N \to \A^d$, 
  and let $\b := f_\infty(\a) \in Y_\infty$. Let
  $\fm \subset \O_{X_\infty,\a}$ and $\fn \subset \O_{Y_\infty,\b}$
  be the respective maximal ideals and $L$ and $L'$
  the residue fields.
  Then the induced $L$-linear map
  \[
    (T_\a f_\infty)^* \colon \fn/\fn^2 \otimes_{L'}L \to \fm/\fm^2
  \]
  is an isomorphism. 
\end{theorem}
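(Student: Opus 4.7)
The plan is to reduce the claim to a generic-rank statement about the pullback of sheaves of differentials along $\a$, and then to use the explicit description of $\Omega_{X_\infty/k}$ from \cite{dFD} to transfer this generic-rank statement back into an isomorphism at the level of cotangent spaces of the arc spaces.

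First I would note that $(T_\a f_\infty)^*$ is obtained by taking the stalk at $\a$ of the natural map $f_\infty^*\Omega_{Y_\infty/k}\to\Omega_{X_\infty/k}$ and tensoring with the residue field $L$. The key input is the formula of \cite{dFD} describing $\Omega_{Z_\infty/k}$ for a scheme $Z$ of finite type over $k$ in terms of $\Omega_{Z/k}$ pulled back along the universal arc. Applying this formula functorially to $X$ and to $Y$ reduces the analysis of $(T_\a f_\infty)^*$ to the analysis of the induced $L[[t]]$-linear map $\a^*f^*\Omega_{Y/k}\to\a^*\Omega_{X/k}$. Since $Y=\A^d$ has $\Omega_{Y/k}$ free of rank $d$, the source of this map is the free $L[[t]]$-module $L[[t]]^d$.

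Next, I would show that for a general linear projection $f$, the map $\a^*f^*\Omega_{Y/k}\to\a^*\Omega_{X/k}$ becomes an isomorphism after inverting $t$. The differential $df\colon f^*\Omega_{Y/k}\to\Omega_{X/k}$ is given by a general $d\times N$ $k$-linear combination of the generators of $\Omega_{X/k}$ inherited from the embedding $X\subset\A^N$. By hypothesis $\Omega_{X/k}\otimes k(\a(\e))$ has dimension exactly $d$, so for a general projection the base change of $df$ to $k(\a(\e))$ is an isomorphism, forcing in turn the base change of $\a^*f^*\Omega_{Y/k}\to\a^*\Omega_{X/k}$ to $L((t))$ to be an isomorphism. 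The hypotheses \eqref{item1:general-proj} and \eqref{item2:general-proj} serve to make this genericity argument run: in characteristic zero the appropriate open condition on the Grassmannian of projections is automatically non-empty, while in case \eqref{item2:general-proj} one can take $L=L'=k$ and perform the genericity argument directly over the infinite perfect field $k$, bypassing potential issues coming from inseparability of residue-field extensions.

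Finally, I would upgrade the generic isomorphism of $L[[t]]$-modules to an isomorphism of cotangent spaces. Under the \cite{dFD} identification, the kernel and cokernel of the map $\a^*f^*\Omega_{Y/k}\to\a^*\Omega_{X/k}$ contribute in a controlled symmetric way on both sides, and the hypothesis on $d$ ensures that these torsion contributions match. The main obstacle, and the substantive generalization of \cite[Theorems~9.2 and 9.3]{dFD} (which assumed $\a(\e)\in X_\sm$), will be tracking precisely how the torsion cancels in the absence of any smoothness assumption at $\a(\e)$, using only the intrinsic hypothesis that the fiber of $\Omega_{X/k}$ at $\a(\e)$ has dimension $d$; this is what justifies the formulation in terms of $d = \dim_{k(\a(\e))}(\Omega_{X/k} \otimes_k k(\a(\e)))$ rather than a geometric dimension.
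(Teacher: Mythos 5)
Your overall plan is in the right direction: use the formula for $\Omega_{X_\infty/k}$ from \cite{dFD} (this is \cite[Theorem~5.3]{dFD}), use the general projection to produce a generic isomorphism, and then control the torsion. However, there are concrete gaps in how the pieces fit together, and the role of the hypotheses is misidentified.

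The most significant issue is that you conflate the cotangent map $(T_\a f_\infty)^*$ with the map $\f$ induced on stalks of $\Omega$-sheaves. These are not the same. Since $k$ is perfect, the conormal exact sequence gives short exact rows
\[
0 \to \fm/\fm^2 \to \Omega_{X_\infty/k}\otimes L \to \Omega_{L/k} \to 0
\]
(and similarly for $Y_\infty$ at $\b$), and $(T_\a f_\infty)^*$ is the leftmost vertical arrow in the resulting commutative diagram. What the dFD formula and the torsion computation directly control is the middle arrow $\f$; to conclude that the left arrow is an isomorphism one must also show that the right arrow $\d\colon \Omega_{L'/k}\otimes L \to \Omega_{L/k}$ is injective (surjectivity of $\d$ follows from surjectivity of $\f$). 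Your proposal has no step addressing $\d$, and the statement that $(T_\a f_\infty)^*$ ``is obtained by taking the stalk'' of $f_\infty^*\Omega_{Y_\infty/k}\to\Omega_{X_\infty/k}$ and tensoring with $L$ is only correct when $L=k$, i.e.\ in case \eqref{item2:general-proj}.

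This is also where hypotheses \eqref{item1:general-proj} and \eqref{item2:general-proj} enter, and you have misattributed their role. They are not needed to make the genericity of the linear projection run; they are precisely what makes $\d$ injective. In characteristic zero the extension $L'\subset L$ is automatically separable, so $\Omega_{L'/k}\otimes_{L'}L \to \Omega_{L/k}$ is injective; in case \eqref{item2:general-proj} one has $L=L'=k$ so both terms vanish. Finally, your description of the torsion cancellation (``contribute in a controlled symmetric way... the hypothesis on $d$ ensures that these torsion contributions match'') is too vague. The precise input is that a general linear projection ensures $\ord_\a(\Fitt^d(\Omega_{X/k}))=\ord_\a(\Fitt^0(\Omega_{X/Y}))$, and then one runs a $\Tor$ long exact sequence against the divisible module $P_L = L(\:\!\!(t)\:\!\!)/tL[[t]]$: since $\Tor_1^{L[[t]]}(T,P_L)\isom T$ for $T$ a finite-length torsion module, the above equality of orders forces the map on $\Tor_1$'s to be an isomorphism, and hence $\f$ is one. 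This step needs to be made explicit; ``the hypothesis on $d$'' alone only controls the generic fiber and does not by itself pin down the lengths.
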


\begin{proof}
  By assumption, we have that $\ord_\a(\Fitt^d(\Omega_{X/k}))<\infty$ and by
  taking a general linear projection we can ensure that
  $\ord_\a(\Fitt^d(\Omega_{X/k}))=\ord_\a(\Fitt^0(\Omega_{X/Y}))$.

  Since $k$ is perfect, we have a commutative diagram with exact rows
  \[
      \xymatrix{
	  0 \ar[r]
	  & \fn/\fn^2 \otimes_{L'}L \ar[r] \ar[d]^{(T_\a f_\infty)^*}
	  & \Omega_{Y_\infty/k} \otimes_{\mathcal O_{Y_\infty}} L \ar[r] \ar[d]^{\f}
	  & \Omega_{L'/k} \otimes_{L'}L \ar[r] \ar[d]^\d
	  & 0 \\
	  0 \ar[r]
	  & \fm/\fm^2 \ar[r]
	  & \Omega_{X_\infty/k} \otimes_{\mathcal O_{X_\infty}} L \ar[r]
	  & \Omega_{L/k} \ar[r]
	  & 0 \\
      }.
  \]

  The main step is to understand the map $\f$. 
  As in the proof of \cite[Theorems~9.2]{dFD}, denote for short
  $B_L := L[[t]]$ and $P_L := L(\:\!\!(t)\:\!\!)/tL[[t]]$.

  Note that, by \cite[Theorem~5.3]{dFD}, there are natural isomorphisms
  \[
    \Om_{X_\infty/k} \otimes_{\O_{X_\infty}} L \isom \Om_{X/k} \otimes_{\O_X} P_L
  \]
  and
  \[
    \Om_{Y_\infty/k} \otimes_{\O_{Y_\infty}} L \isom \Om_{Y/k} \otimes_{\O_Y} P_L.
  \]
  We will use these isomorphisms to study $\f$. 

  By pulling back the terms of the exact sequence
  \[
    \Om_{Y/k} \otimes_{O_Y} \O_X \to \Om_{X/k} \to \Om_{X/Y} \to 0
  \]
  along $\a$, we obtain the exact sequence
  \[
    \Om_{Y/k} \otimes_{\O_Y} B_L \to \Om_{X/k} \otimes_{\O_X} B_L 
    \to \Om_{X/Y} \otimes_{\O_X} B_L \to 0.
  \]
  Since $Y$ is smooth, we see that the term $F_Y := \Om_{Y/k} \otimes_{\O_Y} B_L$ is a free
  $B_L$-module. Write $\Om_{X/k} \otimes_{\O_X} B_L = F_X \oplus T_X$ where
  $F_X$ is free and $T_X$ is torsion. 
  Since $\ord_\a(\Fitt^0(\Omega_{X/Y})) < \infty$, the term $T_{X/Y} := \Om_{X/Y} \otimes_{\O_X} B_L$
  is a torsion $B_L$-module, and we get an exact sequence:
  \[
    0 \to F_Y \to F_X \oplus T_X \to T_{X/Y} \to 0.
  \]
  Since $P_L$ is a divisible $B_L$-module, tensoring with $P_L$ kills torsion, and
  hence the above sequence gives the exact sequence
  \[
    0 \to \Tor_1^{B_L}(T_X,P_L) \to \Tor_1^{B_L}(T_{X/Y},P_L) \to 
    F_Y \otimes_{B_L}P_L \xrightarrow{\,\f'\,} F_X \otimes_{B_L}P_L \to 0.
  \]
  Note that $\f' = \f$ under the aforementioned isomorphisms. 
  We have $\Tor_1^{B_L}(T_X,P_L) \isom T_X$, and this has dimension $\ord_\a(\Fitt^d(\Omega_{X/k}))$ over $L$.
  Similarly, $\Tor_1^{B_L}(T_{X/Y},P_L) \isom T_{X/Y}$ has dimension $\ord_\a(\Fitt^0(\Omega_{X/Y}))$ over $L$.
  Since these two dimensions are equal, the map $\f'$ in the sequence above is an isomorphism. 
  We conclude that $\f$ is an isomorphism.

  The surjectivity of $\f$ implies that $\d$ is surjective, and the injectivity of $\d$ follows from our  
  assumption that either \eqref{item1:general-proj} or \eqref{item2:general-proj} holds. We conclude that $(T_\a f_\infty)^*$ is an isomorphism.
\end{proof}

\begin{corollary}
  \label{c:general-proj-jets}
  Keeping the assumptions and notation from \cref{t:general-proj}, let $\a_n$,
  $\b_n$ denote the images of $\a$, $\b$ under the projections $X_\infty\to
  X_n$ and $Y_\infty\to Y_n$, and $\fm_n\subset \O_{X_n,\a_n}$, $\fn_n\subset
  \O_{Y_n,\b_n}$ denote the corresponding ideals with residue fields $L_n$,
  $L'_n$. Then the induced $L$-linear map
  \[
    (T_{\a_n} f_n)^* \colon \fn_n/\fn^2_n \otimes_{L'_n}L \to \fm_n/\fm_n^2 \otimes_{L_n} L
  \]
  is injective for all $n\in\N$.
\end{corollary}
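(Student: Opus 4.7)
The strategy is to reduce the injectivity of $(T_{\a_n}f_n)^*$ to the isomorphism established in \cref{t:general-proj} by comparing arc and jet levels via the truncation maps $X_\infty \to X_n$ and $Y_\infty \to Y_n$. These induce local ring homomorphisms $\O_{X_n,\a_n} \to \O_{X_\infty,\a}$ and $\O_{Y_n,\b_n} \to \O_{Y_\infty,\b}$ that preserve maximal ideals, and hence fit into the commutative diagram
\[
\xymatrix{
\fn_n/\fn_n^2 \otimes_{L'_n} L \ar[r] \ar[d]_{(T_{\a_n} f_n)^*} & \fn/\fn^2 \otimes_{L'} L \ar[d]^{(T_\a f_\infty)^*} \\
\fm_n/\fm_n^2 \otimes_{L_n} L \ar[r] & \fm/\fm^2,
}
\]
whose right vertical arrow is an isomorphism by \cref{t:general-proj}. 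A short chase then reduces injectivity of the left vertical to injectivity of the top horizontal arrow: any element in the kernel of $(T_{\a_n}f_n)^*$ maps to zero in $\fm/\fm^2$, and by commutativity together with the isomorphism on the right, it also maps to zero in $\fn/\fn^2 \otimes_{L'} L$.

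The remaining task concerns only $Y = \A^d$, where everything can be made explicit. Writing $Y_n = \Spec k[y_{j,i} : 0 \le i \le n]$ and $Y_\infty = \Spec k[y_{j,i} : i \ge 0]$, the truncation corresponds to the inclusion of polynomial rings in a subset of the indeterminates, and the modules of K\"ahler differentials are free on the $dy_{j,i}$. Under the standing hypotheses, the residue fields $L'$ and $L'_n$ are separable over $k$---trivially so in the $k$-rational case, and automatic in characteristic zero---so the first fundamental exact sequence
\[
0 \to \fn/\fn^2 \to \bigoplus_{j,\, i \ge 0} L' \cdot dy_{j,i} \to \Omega_{L'/k} \to 0
\]
is exact on the left, together with its analogue at the jet level where the direct sum ranges only over $0 \le i \le n$. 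Tensoring both sequences with $L$ and comparing them via the truncation produces a ladder whose middle column is the obvious inclusion of a finite direct sum of copies of $L$ into the full one, and hence is injective. A diagram chase (or the snake lemma) then forces the map of leftmost columns to be injective, completing the reduction.

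The main subtle point is the validity of the first fundamental exact sequence for the non-Noetherian local ring $\O_{Y_\infty,\b}$. When $\a \in X_\infty(k)$ this is vacuous since $L' = k$ and $\Omega_{L'/k}=0$, reducing the sequence to the tautological identification $\fn/\fn^2 \isom \bigoplus_{j,i} k \cdot dy_{j,i}$. In characteristic zero it follows from the separability of $L'/k$ together with the standard fact that K\"ahler differentials commute with localization of a polynomial ring, so that $\Omega_{\O_{Y_\infty,\b}/k} \otimes L' = \bigoplus_{j,i} L' \cdot dy_{j,i}$ and the classical argument establishing left exactness carries over.
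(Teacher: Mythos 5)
Your argument is correct and follows essentially the same route as the paper: both reduce the claim to the commutativity of the same square, the isomorphism $(T_\a f_\infty)^*$ from \cref{t:general-proj}, and the injectivity of the truncation map $\fn_n/\fn_n^2 \otimes_{L'_n} L \to \fn/\fn^2 \otimes_{L'} L$ on the $Y$-side. The paper merely asserts this last injectivity, whereas you supply the detailed justification via the conormal sequences for $Y=\A^d$ and the separability of the residue fields.
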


\begin{proof}
 This follows from the diagram
 \[
  \xymatrix{\fn/\fn^2 \otimes_{L'} L \ar[r] &  \fm/\fm^2 \\
	    \fn_n/\fn_n^2 \otimes_{L'_n} L \ar[r] \ar[u] & \fm_n/\fm_n^2 \otimes_{L_n} L \ar[u]}
 \]
 and the fact that the top horizontal and left vertical arrows are injections.
\end{proof}

\begin{theorem}
  \label{t:grcodim-of-arcs}
  Let $X$ be a scheme of finite type over a perfect field $k$ and $\a \in X_\infty$.
  Assume that one of the following occurs:
  \begin{enumerate}
    \item
    $k$ is a field of characteristic zero, or
    \item
    $\a \in X_\infty(k)$. 
  \end{enumerate}
  Then we have
  \[
    \embcodim(\O_{X_\infty,\a}) \le 
    \limsup_{n \to \infty} \embcodim\big(\O_{X_n,\a_n}\big)
  \]
  where $\a_n$ is the image of $\a$ under the truncation map $\p_n \colon X_\infty \to X_n$.
\end{theorem}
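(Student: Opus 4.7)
The strategy rests on two pillars: the identification $A := \O_{X_\infty,\a} = \varinjlim_n A_n$ (with $A_n := \O_{X_n,\a_n}$), which comes from the defining limit $X_\infty = \varprojlim_n X_n$, together with a K\"onig-type compactness argument for minimal primes. The plan is first to show that this direct-limit structure passes to the associated graded rings: since filtered colimits commute with products of ideals, $\fm^r = \varinjlim_n \fm_n^r$ for every $r$, so $\gr(A) \isom \varinjlim_n \gr(A_n)$ as graded $L$-algebras, where $L = A/\fm = \varinjlim_n L_n$. Base-changing the finite-level maps $\gamma_n \colon \Sym_{L_n}(\fm_n/\fm_n^2) \to \gr(A_n)$ from $L_n$ up to $L$---which preserves heights of kernels, and hence $\embcodim$, by \cref{l:same-ht}---realizes $\gamma$ as the filtered colimit of $\gamma_n \otimes_{L_n} L \colon P_n^L \to \gr(A_n) \otimes_{L_n} L$. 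In particular,
\[
  \ker(\gamma) \;=\; \bigcup_n \ker\bigl(\gamma_n \otimes_{L_n} L\bigr)
\]
inside $P := \Sym_L(\fm/\fm^2) = \varinjlim_n P_n^L$.

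The next step is monotonicity. The factorization $\gamma_{n+1}|_{P_n^L} = \bigl((\gr A_n \to \gr A_{n+1}) \otimes L\bigr) \circ (\gamma_n \otimes L)$ gives $\ker(\gamma_n \otimes L) \subseteq \ker(\gamma_{n+1} \otimes L)$, and because $P_n^L \hookrightarrow P_{n+1}^L$ is a flat polynomial extension (a flat field extension composed with adjunction of variables), heights are preserved by extension of scalars, yielding $\embcodim(A_n) \le \embcodim(A_{n+1})$. Consequently $\limsup_n \embcodim(A_n)$ coincides with the pointwise limit; if it is $\infty$ the inequality is trivial, so assume it equals a finite integer $c$, and fix $N_0$ such that $\embcodim(A_n) = c$ for all $n \ge N_0$.

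The heart of the proof is a compactness argument. For each $n \ge N_0$, let $\mathcal{Q}_n$ denote the finite, nonempty set of minimal primes of $\ker(\gamma_n \otimes L)$ in $P_n^L$ of height exactly $c$. For $m \ge n$, the contraction $\fq \mapsto \fq \cap P_n^L$ sends $\mathcal{Q}_m$ into $\mathcal{Q}_n$: flatness of $P_n^L \hookrightarrow P_m^L$ gives $\height(\fq \cap P_n^L) \le c$, while the containment $\ker(\gamma_n \otimes L) \subseteq \ker(\gamma_m \otimes L) \cap P_n^L$ forces $\height(\fq \cap P_n^L) \ge c$. The inverse limit of the resulting system of finite nonempty sets is nonempty (K\"onig's lemma / compactness of products of finite discrete spaces), yielding a compatible family $(\fq_n)_{n \ge N_0}$ with $\fq_{n+1} \cap P_n^L = \fq_n$. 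Since $\fq_n \cdot P_{n+1}^L \subseteq \fq_{n+1}$ is an inclusion of primes of equal height $c$, it is an equality, and inductively $\fq_m = \fq_{N_0} \cdot P_m^L$ for all $m \ge N_0$. Thus $\fq := \fq_{N_0} \cdot P$ is a prime of $P$ of height $c$ containing $\ker(\gamma_n \otimes L)$ for every $n$, and so contains $\ker(\gamma)$. We conclude $\embcodim(A) = \height(\ker \gamma) \le \height(\fq) = c$.

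The main obstacle is the verification in the first paragraph: one must check the identifications $\gr(A) \isom \varinjlim_n \gr(A_n)$ and $P \isom \varinjlim_n P_n^L$ cleanly in the presence of potentially nontrivial residue-field extensions $L_n \subseteq L$, and argue that the transition maps $V_n \otimes_{L_n} L \to V_{n+1} \otimes_{L_{n+1}} L$ on cotangent spaces are injective so that $P_n^L \hookrightarrow P_{n+1}^L$ is genuinely a flat polynomial extension. This is where the hypotheses that $k$ be perfect and either $\charK(k) = 0$ or $\a \in X_\infty(k)$ enter, guaranteeing that the differentials of the truncation maps $X_{n+1} \to X_n$ at $\a$ behave well enough for the filtered-colimit description on cotangent spaces and graded rings to be faithful. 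Once that is in place, the rest of the proof is purely ideal-theoretic, relying only on flatness of polynomial extensions and the Noetherianness of each $P_n^L$.
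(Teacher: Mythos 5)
Your proposal has a genuine gap, and it is exactly at the point you flag in your last paragraph as ``the main obstacle.'' The transition maps $\fm_n/\fm_n^2 \otimes_{L_n} L \to \fm_{n+1}/\fm_{n+1}^2 \otimes_{L_{n+1}} L$ on cotangent spaces of the jet schemes of $X$ need \emph{not} be injective when $X$ is singular: the truncation morphisms $X_{n+1} \to X_n$ fail to be smooth over the singular locus, and there is no reason for the dual tangent maps $T_{\a_{n+1}}X_{n+1} \to T_{\a_n}X_n$ to be surjective. Consequently $P_n^L \to P_{n+1}^L$ need not be a flat polynomial extension, and both your monotonicity step and your K\"onig-lemma compactness step --- which depend on that flatness and on the ambient rings forming a nested chain of polynomial rings over a common field --- collapse. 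The hypotheses on $k$ (perfect, characteristic zero or $\a$ rational) are not what rescue these cotangent maps; they play a different role.

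The paper circumvents this precisely by \emph{not} working with the jet schemes of $X$ directly. It fixes a general linear projection $f \colon X \to Y := \A^d$ and replaces the jet schemes of $X$ by those of $Y$, which are affine spaces; the truncation maps $Y_{n+1} \to Y_n$ are smooth projections and the cotangent transition maps on $\gr(\O_{Y_n,\b_n})$ are genuinely flat polynomial inclusions. The technical heart is \cref{t:general-proj}, which shows that for a general projection the induced map on continuous Zariski cotangent spaces at the level of \emph{arc} spaces is an isomorphism --- this is where the hypotheses on $k$ and $\a$ are actually used, via the formula for differentials of the arc space from \cite{dFD}. \cref{c:general-proj-jets} then gives injectivity at every finite jet level. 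This recasts $\embcodim(\O_{X_\infty,\a})$ as $\height(K_\infty)$, where $K_n = \ker\bigl(\gr(\O_{Y_n,\b_n})\otimes L \to \gr(\O_{X_n,\a_n})\otimes L\bigr)$ satisfies $\height(K_n) \le \embcodim(\O_{X_n,\a_n})$, and \cref{l:ht-K=limsup} (which is close in spirit to your compatible-primes argument) controls $\height(K_\infty)$ by $\limsup_n \height(K_n)$. Without the projection, there is no way to carry out the remaining steps of your argument; note also that the paper never claims monotonicity of $\embcodim(\O_{X_n,\a_n})$, and its use of $\limsup$ rather than $\lim$ is an indication that this is not available.
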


\begin{proof}
  We can assume without loss of generality that $X$ is affine.
  Given a map 
  \[
    f \colon X \to Y := \A^d,
  \] 
  we let $\b := f_\infty(\a) \in Y_\infty$. 
  For every $n$, we denote by $\a_n \in X_n$ and $\b_n \in Y_n$ the images of $\a$
  and $\b$ at the respective $n$-jet schemes. 
  It is convenient, within this proof, to change notation from before and let 
  $A_\infty := \O_{X_\infty,\a}$
  and
  $B_\infty := \O_{Y_\infty,\b}$, 
  and denote by $\fm_\infty \subset A_\infty$ and $\fn_\infty \subset B_\infty$ the respective
  maximal ideals and by $L_\infty := A_\infty/\fm_\infty$ 
  and $L_\infty' := B_\infty/\fn_\infty$ the residue fields.
  Similarly, for every $n \in \N$, we let
  $A_n := \O_{X_n,\a_n}$
  and
  $B_n := \O_{Y_n,\b_n}$, and denote by $\fm_n \subset A_n$ and $\fn_n \subset B_n$ the respective
  maximal ideals and by $L_n := A_n/\fm_n$ 
  and $L_n' := B_n/\fn_n$ the residue fields. 

  Note that we have direct systems $\{A_n \to A_{n+1} \mid n \in \N\}$ and 
  $\{B_n \subset B_{n+1} \mid n \in \N \}$, and 
  $A_\infty = \dirlim_n A_n$ and $B_\infty = \dirlim_n B_n$.
  Moreover, we have commutative diagrams
  \[
  \xymatrix@C=20pt@R=20pt{
  B_\infty \ar[r]^{\f_\infty} & A_\infty \\
  B_n \ar[r]^{\f_n} \ar@{^(->}[u] & A_n \ar[u]
  }
  \]
  where $\fn_n = \f_n^{-1}(\fm_n)$, $\fn_n = \fn_\infty \cap B_n$, and $\fm_n = \fm_\infty \cap A_n$.
  
  For every $n \in \N \cup \{\infty\}$, let
  \[
  d\f_n \colon \fn_n/\fn_n^2 \otimes_{L'_n}L_n \to \fm_n/\fm_n^2
  \]
  be the induced $L_n$-linear map. 

  We pick $f$ as in \cref{t:general-proj}.
  For every $n \in \N \cup \{\infty\}$, there is an associated map of graded rings
  $\gr(\f_n) \colon \gr(B_n) \to \gr(A_n)$.
  We denote by 
  \[
    \ff_n \colon \gr(B_n) \otimes_{L'_n}L_\infty \to 
    \gr(A_n) \otimes_{L_n}L_\infty.
  \]
  the map induced by $\gr(\f_n)$ by the indicated base changes.

  Note that $\fm_\infty = \dirlim_n \fm_n$ and  
  hence $\fm_\infty^r = \dirlim_n \fm_n^r$ for all $r$. 
  Indeed, if $a \in \fm_\infty^r$ for some $r \ge 2$, then we can write
  $a = a_1\dots a_r$ with $a_i \in \fm_\infty$; then we can pick $n$ such that
  such that $a_i \in \fm_n$ for all $i$ and hence $a \in \fm_n^r$. 
  It follows that 
  \[
    \gr(A_\infty) = \dirlim_n \gr(A_n) \otimes_{L_n}L_\infty, 
  \]
  and similarly we have 
  \[
    \gr(B_\infty) = \dirlim_n \gr(B_n) \otimes_{L'_n}L'_\infty.
  \]
  Since $\fn_n^r = \f_n^{-1}(\fm_n^r)$ for all $r$, for every $n$ we have a commutative diagram
  \[
    \xymatrix@C=35pt@R=20pt{
      \gr(B_\infty) \otimes_{L'_\infty}L_\infty \ar[r]^(.6){\ff_\infty} 
      & \gr(A_\infty) \\
      \gr(B_n) \otimes_{L'_n}L_\infty \ar[r]^{\ff_n} \ar@{^(->}[u] 
      & \gr(A_n) \otimes_{L_n}L_\infty \ar[u]
    }.
  \]

  For short, let 
  $R_n := \gr(A_n) \otimes_{L_n}L_\infty$, 
  $S_n := \gr(B_n) \otimes_{L'_n}L_\infty$,
  and
  $K_n := \ker(\ff_n)$.

\begin{lemma}
  \label{l:ht-K=limsup}
  $\height(K_\infty) = \limsup_n \height(K_n)$.
\end{lemma}

\begin{proof}
  First, note that 
  $K_\infty = \dirlim_n K_n$.
  Indeed, the inclusion $K_\infty \supset \dirlim_n K_n$ is clear, and conversely, 
  if $b \in K_\infty$ and we fix $n \in \N$ such that 
  $b \in S_n$, then $\ff_m(b)$ is in the kernel of 
  $R_m \to R_\infty$ for all $m \ge n$
  and hence, since each $\ff_m(b)$ maps to $\ff_{m+1}(b)$ via 
  $R_m \to R_{m+1}$, it follows that $\ff_m(b)$
  is zero for $m \gg n$, which means that $b \in K_m$ for $m \gg n$. 

  We are now ready to prove that
  \[
    \height(K_\infty) = \limsup_n\height(K_n).
  \]
  Note that the maps
  $S_n \to S_\infty$ are extensions of polynomial rings over the same field $L_\infty$.
  Thus they are faithfully flat and hence for every prime
  $\fp_n \subset S_n$ its extension $\fp_n S_\infty$ is prime.

  Consider first the case where $\height(K_\infty) < \infty$ and let $\fp
  \subset S_\infty$ be a minimal prime over $K_\infty$ with
  $\height(\fp)=\height(K_\infty)$. By \cref{r:finite-def-properties-poly} we
  have that $\fp$ is finitely generated by elements $f_1,\ldots,f_r\in
  S_\infty$. For each $n>0$ let $\fp_n$ be any minimal prime over $K_n$
  contained in $\fp\cap S_n$. Then $\fp':=\dirlim_n \fp_n$ is a prime of
  $S_\infty$ with $K_\infty \subset \fp' \subset \fp$, so $\fp'=\fp$. Let
  $n_1>0$ be such that $f_1,\ldots,f_r\in \fp_{n_1}$, then $\fp_n=\fp\cap S_n$
  for $n\geq n_1$. Given any chain of primes
  \[
    (0) = \fq_0 \subsetneq \fq_1 \subsetneq \dots \subsetneq \fq_t = \fp \subset S_\infty,
  \]
  pick $s_i \in \fq_i \setminus \fq_{i-1}$, and fix $n_2$ such that
  $s_1,\dots,s_t \in S_{n_2}$. Then for every $n \ge \max\{n_1,n_2\}$
  we get a chain of primes
  \[
    (0) = \fq_0 \cap S_n \subsetneq \fq_1 \cap S_n 
    \subsetneq \dots \subsetneq \fq_t \cap S_n = \fp_n.
  \]
  Thus $\height(K_\infty) \le \limsup_n \height(K_n)$. 
  The other inequality follows by the going-down theorem applied to $S_n \to S_\infty$.

  If $\height(K_\infty) = \infty$, then, since $\height(K_\infty) \ge \height(K_nS_\infty)$, 
  a similar argument shows that the sequence $\{\height(K_n)\}_n$ is unbounded.
\end{proof}

We can now finish the proof of the theorem.
Since $B_n$ is formally smooth, for every $n \in \N \cup \{\infty\}$ 
the natural map
\[
  \Sym_{L_n'}(\fn_n/\fn_n^2) \to \gr(B_n)
\]
is an isomorphism (see \cref{p:grcodim=0} for the case $n=\infty$).
Furthermore, the diagram
\[
  \xymatrix{
    S_n \ar[r]^{\ff_n} & R_n \\
    \Sym_{L'_n}(\fn_n/\fn_n^2) \otimes_{L'_n}L_\infty \ar[r]^{\s_n} \ar[u]^\isom 
    & \Sym_{L_n}(\fm_n/\fm_n^2) \otimes_{L_n}L_\infty \ar[u]_{\g_n}
  }
\]
is commutative.

By \cref{t:general-proj}, the map $\s_\infty$ is an isomorphism, and hence
\[
  \height(K_\infty) = \height(\ker(\g_\infty)) = \embcodim(A_\infty).
\]
Similarly, by \cref{c:general-proj-jets} $\s_n$ is an injective $L_\infty$-linear map of
polynomial rings, and we have
\[
  \height(K_n) \le \height(\ker(\g_n)) = \embcodim(A_n).
\]
Then we conclude by \cref{l:ht-K=limsup}.
\end{proof}

\begin{theorem}
  \label{t:arc-finite-grcodim}
  Let $X$ be a scheme of finite type over a perfect field $k$ and $\a \in X_\infty$.
  Assume that either $k$ is a field of characteristic zero, or $\a$ is a $k$-rational point. 
  Then we have
  \[
    \embcodim(\O_{X_\infty,\a}) \le \ord_\a(\Fitt^d(\Om_{X/k}))
  \]
  where $d = \dim_{\a(\e)}(X)$. 
  In particular:
  \begin{enumerate}
    \item
  \label{item1:arc-finite-grcodim}
    If $X$ is a variety then $\embcodim(\O_{X_\infty,\a}) \le \ord_\a(\Jac_X)$.
    \item
  \label{item2:arc-finite-grcodim}
    If $\a(\e) \in X_\sm$ and $X^0 \subset X$ is the irreducible component containing $\a(\e)$, then 
    $\embcodim(\O_{X_\infty,\a}) \le \ord_\a(\Jac_{X^0}) < \infty$.
  \end{enumerate}
\end{theorem}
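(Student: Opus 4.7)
The plan is as follows. We may assume $e := \ord_\a(\Fitt^d(\Om_{X/k})) < \infty$, since otherwise the inequality is vacuous, and, by localising, that $X$ is affine. Finiteness of $e$ forces $\a(\e)$ to lie in the smooth locus $X^0_\sm$ of a (unique) irreducible component $X^0 \subset X$ with $\dim X^0 = d$. By \cref{t:grcodim-of-arcs} it then suffices to show that
\[
    \embcodim(\O_{X_n, \a_n}) \le e
\]
for all $n$ sufficiently large, where $\a_n$ is the $n$-th truncation of $\a$. Since $A_n := \O_{X_n, \a_n}$ is Noetherian, \cref{p:embdim=grcodim+dim} gives $\embcodim(A_n) = \embdim(A_n) - \dim(A_n)$, and I would bound each term separately.

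Fix a general linear projection $f \colon X \to Y := \A^d$ as in \cref{t:general-proj}, chosen so that moreover $\ord_\a(\Fitt^0(\Om_{X/Y})) = e$. Let $\b_n := f_n(\a_n)$, $B_n := \O_{Y_n, \b_n}$ (regular of dimension $d(n+1)$), and denote the residue fields of $A_n, B_n$ by $L_n, L'_n$. By \cref{c:general-proj-jets} together with faithful flatness of the field extension $L_n \subseteq L$, the cotangent map $(T_{\a_n} f_n)^* \colon \fn_n/\fn_n^2 \otimes_{L'_n} L_n \hookrightarrow \fm_n/\fm_n^2$ is injective with $d(n+1)$-dimensional image. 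Its cokernel is $\fm_n/(\fm_n^2 + \fn_n A_n)$, the Zariski cotangent space of the fibre local ring $A_n/\fn_n A_n$, so
\[
    \embdim(A_n) = d(n+1) + \dim_{L_n}\bigl(\fm_n/(\fm_n^2 + \fn_n A_n)\bigr).
\]

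The central technical bound is
\[
    \dim_{L_n}\bigl(\fm_n/(\fm_n^2 + \fn_n A_n)\bigr)
    \;\le\;
    \dim_{L_n}\bigl(\Om_{X_n/Y_n} \otimes_{\O_{X_n}} L_n\bigr)
    \;\le\; e,
\]
valid for every $n$. The first inequality is the standard identification of the fibre cotangent space as a quotient of $\Om_{X_n/Y_n}$ at $\a_n$. For the second, I would prove the truncated analogue of the arc-level formula of Theorem~5.3 of~\cite{dFD}, namely a natural isomorphism
\[
    \Om_{X_n/Y_n} \otimes_{\O_{X_n}} L_n \isom \Om_{X/Y} \otimes_{\O_X} L_n[t]/(t^{n+1}),
\]
by adapting that sheaf-theoretic argument to the relative $n$-jet setting. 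By the choice of $f$, the module $\Om_{X/Y} \otimes_{\O_X} B_{L_n}$ is $B_{L_n}$-torsion of length $e$; writing it as $\bigoplus_i B_{L_n}/(t^{a_i})$ with $\sum_i a_i = e$, the right-hand side has $L_n$-dimension $\sum_i \min(a_i, n+1) \le e$.

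Finally, $\dim(A_n) \ge d(n+1)$ because $\a \in (X^0)_\infty$ and $\a(\e) \in X^0_\sm$ force $\a_n$ to lie on the distinguished component of $(X^0)_n$ of dimension $d(n+1)$. Combining the bounds yields $\embcodim(A_n) \le (d(n+1)+e) - d(n+1) = e$. Parts~(1) and~(2) follow from standard identifications: for a variety $X$ one has $\Jac_X = \Fitt^d(\Om_{X/k})$, and in case~(2) the conclusion reduces to part~(1) applied to the component $X^0$ since $\a(\e) \in X_\sm$ forces $X$ to coincide with $X^0$ locally at $\a(\e)$. The main obstacle is the truncated identification $\Om_{X_n/Y_n} \otimes L_n \isom \Om_{X/Y} \otimes L_n[t]/(t^{n+1})$: rigorously adapting the sheaf-theoretic proof of Theorem~5.3 of~\cite{dFD} from the full arc space to finite jet schemes, in a form that preserves the relative torsion-length control, is the delicate technical point on which the whole argument rests.
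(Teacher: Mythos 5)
Your opening moves match the paper: assume $e := \ord_\a(\Fitt^d(\Om_{X/k})) < \infty$ (so $\a(\e) \in X_\sm$), reduce via \cref{t:grcodim-of-arcs} to showing $\embcodim(\O_{X_n,\a_n}) \le e$ for $n \gg 0$, and deduce parts (1) and (2) in the same way. From that point on, however, you diverge from the paper and a genuine gap appears.

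The paper simply quotes \cite[Lemma~8.1]{dFD}, which gives the exact formula
\[
  \embdim(\O_{X_n,\a_n}) = (n+1)d_n - \dim(\ov{\{\a_n\}}) + \ord_\a(J_{d_n}),
\]
combines it with $\dim(\O_{X_n,\a_n}) \ge (n+1)d - \dim(\ov{\{\a_n\}})$, and the $\dim(\ov{\{\a_n\}})$ terms cancel. You instead try to re-derive the embedding-dimension estimate from scratch via the general projection $f\colon X\to\A^d$. The technical obstacle you flag — a jet-level analogue of \cite[Theorem~5.3]{dFD} — is in fact a non-obstacle: that work is exactly what \cite[Lemma~8.1]{dFD} already packages, so you are proposing to redo a computation the paper just cites.

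The real gap is in your dimension counting. Your formulas $\embdim(A_n) = d(n+1) + \dim_{L_n}\bigl(\fm_n/(\fm_n^2 + \fn_n A_n)\bigr)$ and $\dim(A_n) \ge d(n+1)$ tacitly assume $\a_n$ and $\b_n$ are $k$-rational. In general $Y_n \isom \A^{d(n+1)}_k$ is smooth, so $\dim_{L'_n}(\fn_n/\fn_n^2) = d(n+1) - \trdeg_k(L'_n)$; hence the rank of the injective cotangent map is $d(n+1) - \trdeg_k(L'_n)$, not $d(n+1)$, and your $\embdim$ formula is overstated by $\trdeg_k(L'_n)$. Likewise the correct bound is $\dim(A_n) \ge d(n+1) - \trdeg_k(L_n)$, so your $\dim$ estimate is overstated by $\trdeg_k(L_n)$. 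Since $L'_n \subseteq L_n$, one has $\trdeg_k(L_n) \ge \trdeg_k(L'_n)$ with no obvious equality, so the two corrections do not cancel; your argument as written only yields $\embcodim(A_n) \le e + \bigl(\trdeg_k(L_n) - \trdeg_k(L'_n)\bigr)$. This collapses to the desired bound when $\a \in X_\infty(k)$ (all residue fields equal $k$), but the theorem must also cover $\charK k = 0$ with $\a$ an arbitrary point of $X_\infty$, and there your argument breaks. The paper's use of \cite[Lemma~8.1]{dFD}, which carries the $-\dim(\ov{\{\a_n\}})$ correction explicitly, sidesteps this entirely.
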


\begin{proof}
  First note that it suffices to prove the theorem when $\a(\e) \in X_\sm$, as otherwise
  the right hand side of the stated inequality is infinite and the statement is trivial.
  Let us therefore assume that $\a(\e) \in X_\sm$.

  For every $r$, let $J_r := \Fitt^r(\Om_{X/k}) \subset \O_X$. 
  On the one hand, for every finite $n$ we have by \cite[Lemma~8.1]{dFD} that
  \[
    \embdim(\O_{X_n,\a_n}) = (n+1)d_n - \dim(\ov{\{\a_n\}}) + \ord_\a(J_{d_n}),
  \]
  where $d_n = d(\a_n,\Om_{X/k})$ is the Betti number of $\Om_{X/k}$ 
  with respect to $\a_n$ (see \cite[Definition~6.1]{dFD}) and $\ov{\{\a_n\}}$
  the closure of $\a_n$ in $X_n$. 
  On the other hand, since $\a$ is not in an irreducible component
  of $X_\infty$ that is fully contained in $(\Sing(X))_\infty$, 
  we have 
  \[
    \dim(\O_{X_n,\a_n}) \ge (n+1)d - \dim(\ov{\{\a_n\}})
  \]
  for all finite $n$.
  Since for all $n$ large enough we have $d_n = d$, we deduce by 
  \cref{p:embdim=grcodim+dim} that
  $\embcodim(\O_{X_n,\a_n}) \le \ord_\a(J_d)$
  for all $n \gg 1$.
  We conclude by \cref{t:grcodim-of-arcs} that
  $\embcodim(\O_{X_\infty,\a}) \le \ord_\a(J_d)$, 
  as stated. 

  Regarding the last two assertions of the theorem, 
  \eqref{item1:arc-finite-grcodim} follows by the fact that if $X$ is a variety
  then, by definition, $\Jac_X = \Fitt^d(\Om_{X/k})$. 
  As for \eqref{item2:arc-finite-grcodim},
  if $\a(\e) \in X_\sm$ then by \cref{l:X=X^0} we have 
  $\^{\O_{X_\infty,\a}} \isom \^{\O_{X^0_\infty,\a}}$, and hence
  we can apply \eqref{item1:arc-finite-grcodim} to $X^0$; note also that 
  in this case we have $\a \in X^0_\sm$ and hence $\ord_\a(\Jac_{X^0}) < \infty$. 
\end{proof}

We include a proof of the following property, which is well known to experts and is remarked in \cite{Dri}. 

\begin{lemma}
\label{l:X=X^0}
Let $X$ be a scheme of finite type over a field $k$ and $\a \in X_\infty$ an arc with $\a(\e) \in X_\sm$. 
Let $X^0 \subset X$ be the irreducible component containing $\a(\e)$. 
Then $\^{\O_{X_\infty,\a}} \isom \^{\O_{X^0_\infty,\a}}$.
\end{lemma}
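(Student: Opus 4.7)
The plan is to reduce to the affine case and set $A := \O_{X,\a(0)}$. Let $\fp \subset A$ be the prime corresponding to $\a(\e)$, namely the kernel of the map $\f \colon A \to L[[t]]$ that encodes $\a$, and let $\fq \subset A$ be the minimal prime cutting out $X^0$ locally. Since $\a(\e) \in X_\sm$, the local ring $A_\fp$ is regular, hence integral, so $\fq$ is the unique minimal prime contained in $\fp$ and $\fq A_\fp = 0$. Because $\fq \subset \fp = \ker\f$, the map $\f$ factors through $A/\fq = \O_{X^0,\a(0)}$, so $\a$ is also an arc on $X^0$, and the closed immersion $X^0_\infty \inj X_\infty$ induces a surjection $\pi \colon \O_{X_\infty,\a} \surj \O_{X^0_\infty,\a}$.

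The heart of the argument is to show that $I := \ker\pi$ sits inside $\bigcap_n \fm_\a^n$, where $\fm_\a$ denotes the maximal ideal of $\O_{X_\infty,\a}$. I would prove the stronger statement $I \subset \fm_\a \cdot I$, from which the former follows by iteration (each element of $I$ being a finite sum of products, so no Noetherian hypothesis is needed). As $I$ is generated by the jet coordinates $g^{(j)}$ with $g \in \fq$ and $j \ge 0$, it suffices to verify the inclusion on each such generator. For any $g \in \fq$, the vanishing $\fq A_\fp = 0$ provides $s \in A \setminus \fp$ with $sg = 0$ in $A$, which expands to relations $(sg)^{(j)} = \sum_{i+k=j} s^{(i)} g^{(k)} = 0$ in $\O_{X_\infty,\a}$ for every $j \ge 0$. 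Since $s$ has nonzero image in $L[[t]]$, there is a smallest $j_0$ such that $s^{(j_0)}$ is a unit at $\a$, while $s^{(j)} \in \fm_\a$ for $j < j_0$. Now induct on $m$: the relation for $j = j_0 + m$ isolates the term $s^{(j_0)} g^{(m)}$, and every other summand lies in $\fm_\a \cdot I$, either because it involves some $s^{(i)} \in \fm_\a$ with $i < j_0$ paired with $g^{(k)} \in I$, or because it contains a factor $g^{(k)}$ with $k < m$, which already lies in $\fm_\a \cdot I$ by the inductive hypothesis. Dividing by the unit $s^{(j_0)}$ then yields $g^{(m)} \in \fm_\a \cdot I$.

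Once this estimate is in hand, the filtrations $\{\fm_\a^n\}$ and $\{\fm_\a^n + I\}$ on $\O_{X_\infty,\a}$ coincide, so passing to inverse limits gives $\^{\O_{X_\infty,\a}} \isom \^{\O_{X^0_\infty,\a}}$. The principal obstacle is the inductive step above: having the unit $s^{(j_0)}$ available to solve for $g^{(m)}$ is essential, and this crucially relies on smoothness at $\a(\e)$, which is what makes the scheme locally agree with $X^0$ along the generic point of the arc.
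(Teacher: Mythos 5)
Your proof is correct, though it takes a genuinely different route from the paper's. The paper argues via deformation theory: using the fact that $\^{\O_{X_\infty,\a}}$ pro-represents $A$-valued deformations of $\a$ for local Artinian test rings $A$, it shows any such deformation automatically factors through $X^0_\infty$. The mechanism there is a product trick: for $f$ in the minimal prime defining $X^0$, one picks elements $f_i$ of the other primary components of $(0)\subset\O(X)$ that avoid $\ker(\a^\#)$; then $f\prod_i f_i=0$ identically in $\O(X)$, and since the $f_i$ become units in $A(\!(t)\!)$ under any deformation, $f$ itself must map to zero. Your argument instead works directly with the Hasse--Schmidt coordinates on the arc space: you show that the ideal $I=\ker(\O_{X_\infty,\a}\to\O_{X^0_\infty,\a})$ satisfies $I\subset\fm_\a I$, hence $I\subset\bigcap_n\fm_\a^n$, by locating an annihilator $s\notin\fp$ of each generator $g$ of the ideal of $X^0$ and unwinding the relations $\sum_{i+k=j}s^{(i)}g^{(k)}=0$ inductively on $j$. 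Both proofs rest on the same geometric fact --- that $\a(\e)$ being a smooth, hence regular, point forces a unique component through it, which is what produces the annihilator $s$ in your version and the vanishing product in theirs --- but the mechanisms differ. The paper's argument is shorter and avoids any explicit coordinate induction, at the cost of invoking pro-representability of formal completions; yours is more elementary in its technology and has the advantage of making the cleaner algebraic statement $I\subset\bigcap_n\fm_\a^n$ explicit. One small point of hygiene: the relation $sg=0$ is first obtained in the local ring $A=\O_{X,\a(0)}$, so to interpret the $s^{(j)}$ as jet coordinates you should either lift the relation to the affine coordinate ring of $X$ by multiplying $s$ by a suitable unit of $A$, or note that Hasse--Schmidt derivations are compatible with localization so that the identity may be read off in $(\Spec A)_\infty$ directly.
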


\begin{proof}
We may assume that $X = \Spec(R)$ is affine. By abuse of notation we write $\a$ for the map $R \to A[[t]]$. 
Let $\fa := \ker(\a)$. If $(0) = \prod_i \fq_i \subset R$ is a primary decomposition with $\fq_0$
the minimal prime defining $X^0$, then the condition $\a(\e) \in X^0$ translates to 
$\fq_0 \subset \fa$ and $\fq_i \not\subset \fa$ for $i \ne 0$.
Let $A$ be a test-ring, i.e., A is local with maximal ideal $\fm$, residue field $K$ equal to the residue field
of $\a \in X_\infty$,  and $\fm^n = 0$ for some $n \in \N$. Let $\a'$ be any $A$-deformation of $\a$, 
that is, given by a map $R \to A[[t]]$. To prove the lemma, it suffices to show that 
$\fa' := \ker(\a') \supset \fq_0$. We have the commutative diagram
\[
\xymatrix{
R \ar[r]^{\a'} \ar[rd]_{\g'} & A[[t]] \ar[r] \ar[d] & L[[t]] \ar[d] \\
& A(\:\!\!(t)\:\!\!) \ar[r] & L(\:\!\!(t)\:\!\!)
}
\]
where $A(\:\!\!(t)\:\!\!)$ denotes the localization of $A[[t]]$ at the ideal $\fm$. 
Since $A[[t]] \to A(\:\!\!(t)\:\!\!)$ 
is injective, we have $\fa' = \ker(\g')$. Let $f \in \fq_0$. Take any $f_i \in \fq_i \setminus \fa$ for $i \ne 0$. 
Then $g := f \prod_i f_i \in \fa'$. Since $\g'(f_i) \ne 0$ modulo $\fm$, we have that 
$\g'(f_i)$ is a unit. Thus $0 = \g'(g) = \g'(f)u$ where $u$ is a unit, and in particular $f \in \fa'$.
\end{proof}


\begin{theorem}
  \label{t:arc-infinite-embcodim}
  Let $X$ be a scheme of finite type over a perfect field $k$.
  For $\a \in X_\infty$ such that $\a(\e) \in X \setminus X_\sm$, we have
  $\embcodim(\O_{X_\infty,\a}) = \infty$.
\end{theorem}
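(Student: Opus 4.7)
The plan is to show that the embedding codimensions of the jet-scheme local rings $B_n := \O_{X_n,\a_n}$ grow without bound in $n$, and then to transfer this lower bound to the arc-space local ring $A := \O_{X_\infty,\a}$ via \cref{p:grcodim-projection} applied to the truncation homomorphisms $\f_n\colon B_n \to A$.

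First, I will reduce to $X$ affine and irreducible and set $d := \dim_{\a(\e)}(X)$. Since $\a(\e) \in X \setminus X_\sm = \Sing(X)$ (using that $k$ is perfect), the arc $\a$ factors through $\Sing(X)$ scheme-theoretically, so the Fitting ideal $\Fitt^d(\Om_{X/k})$ defining $\Sing(X)$ via the Jacobian criterion pulls back to zero in $L[[t]]$; in particular $\ord_\a(\Fitt^d(\Om_{X/k})) = \infty$. It follows that the Betti number $d_n := d(\a_n,\Om_{X/k})$ appearing in the formula
\[
  \embdim(B_n) = (n+1)d_n - \dim\ov{\{\a_n\}} + \ord_\a(\Fitt^{d_n}(\Om_{X/k}))
\]
of \cite[Lemma 8.1]{dFD} (already used in the proof of \cref{t:arc-finite-grcodim}) must satisfy $d_n > d$ for all $n$ sufficiently large. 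Combining this with the bound $\dim_{\a_n}(X_n) \le (n+1)d$, valid for reduced $X$ of pure dimension $d$, gives
\[
  \embcodim(B_n) \ge (n+1)(d_n - d) + \ord_\a(\Fitt^{d_n}(\Om_{X/k})) \to \infty.
\]

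To transfer this bound to $A$, \cref{p:grcodim-projection} yields $\embcodim(A) \ge \rank(\f_n^*) - \dim(B_n)$, where $\f_n^*$ is the induced $L$-linear map on cotangent spaces. By $L$-duality the kernel of $\f_n^*$ is the cokernel of the tangent lifting map $T_\a X_\infty \to T_{\a_n}X_n$; this cokernel sits inside the $t^{n+1}$-annihilator of the torsion part of the finitely-generated $L[[t]]$-module $L[[t]]^s/\im(Dg(a))$, where $g_1,\dots,g_s$ are defining equations for $X$ and $Dg(a)$ is the Jacobian evaluated along the arc. Its length $\ell$ is finite and depends only on $\a$, not on $n$, so $\rank(\f_n^*) \ge \embdim(B_n) - \ell$ and $\embcodim(A) \ge \embcodim(B_n) - \ell \to \infty$. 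The hardest step will be making this obstruction-theoretic estimate uniform in $n$, since the transition maps $B_n \to B_{n+1}$ need not be flat when $X$ is singular at $\a(0)$; the lifting obstruction must be identified explicitly through $Dg(a)$ and the structure theorem for finitely-generated modules over the discrete valuation ring $L[[t]]$ to extract an $n$-independent length bound.
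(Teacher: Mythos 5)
Your overall strategy---feeding the truncation maps $\O_{X_n,\a_n}\to\O_{X_\infty,\a}$ into \cref{p:grcodim-projection}---is the same as the paper's, but two of your inequalities are unjustified, and one of them is actually false. The bound $\dim_{\a_n}(X_n)\le(n+1)d$ fails for singular reduced pure-dimensional $X$: jet schemes of varieties with singularities worse than log canonical acquire components over the singular locus whose dimension exceeds $(n+1)\dim(X)$. For instance, take $X=V(x^7+y^7+z^7)\subset\A^3$ (so $d=2$) and $\a$ the constant arc at the origin. The fiber of $X_5\to X$ over the origin is all of $(tk[t]/t^6)^3$, of dimension $15>12=(5+1)d$, and one computes $\embcodim(\O_{X_5,\a_5})=18-15=3$, whereas your displayed inequality would predict a value $\geq 6$. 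Separately, you flag yourself that the uniform bound $\rank(\f_n^*)\ge\embdim(\O_{X_n,\a_n})-\ell$ on the kernel of the cotangent map is only sketched; this is precisely the hard content that needs to be established, not a routine reduction.

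The paper avoids both problems. First, it applies \cref{p:grcodim-projection} in the sharpened form of \cref{r:can-assume-phi-inj}: the dimension that enters is not $\dim(\O_{X_n,\a_n})$ but $\dim(\O_{\ov{\p_n(X_\infty)},\a_n})$, the local ring along the closure of the image of the truncation map. That quantity is what can be bounded---using that $X_\infty$ has finitely many irreducible components and that $\dim\ov{\p_n(C)}\le(n+1)\dim_{\a(0)}(X)$ by \cite[Lemma~8.6]{dFD}---whereas $\dim_{\a_n}(X_n)$ cannot. Second, the rank estimate $\rank(\f_n^*)\ge(n+1)d(\a)-\dim\ov{\{\a_n\}}$ is imported directly from \cite[Lemma~8.3]{dFD}, so no separate kernel/cokernel argument is required; the conclusion then follows from $d(\a)>\dim_{\a(0)}(X)$. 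To salvage your version you would need to (a) pass from $X_n$ to the image of $X_\infty$ in $X_n$ when estimating dimensions, and (b) actually prove the rank bound, at which point you would have reproduced the paper's argument.
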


\begin{proof}
  Note that since $k$ is perfect we have $X \setminus X_\sm = \Sing(X)$, 
  and in particular the condition that $\a(\e) \in X \setminus X_\sm$
  is equivalent to having $\a \in (\Sing(X))_\infty$.

  For every $n \in \N$, let $\p_n \colon X_\infty \to X_n$ be the truncation
  morphism, and let $\a_n := \p_n(\a) \in X_n$. 
  Note that for $n=0$ we have $\a_0 = \a(0)$. 
  Let $L$ and $L_n$ denote the residue fields of $X_\infty$ at $\a$
  and of $X_n$ at $\a_n$.
  By \cite[Lemma~8.3]{dFD} (see also \cite[Remark~7.4]{dFD}), for all $n$
  sufficiently large the differential map
  \[
    (T_\a\p_n)^* \colon \fm_{\a_n}/\fm_{\a_n}^2 
    \otimes_{L_n}L \to \fm_\a/\fm_\a^2
  \]
  has rank at least $(n+1)d(\a)-\dim(\ov{\{\a_n\}})$,
  where 
  \[
    d(\a) := \dim_{k(\a(\e))}(\Om_{X/k} \otimes k(\a(\e))).
  \]
  Then, by \cref{p:grcodim-projection}, we have
  \begin{align*}
    \embcodim(\O_{X_\infty,\a}) &\ge (n+1)d(\a) - \trdeg_k(L_n) - \dim\left(\O_{\ov{\p_n(X_\infty)},\a_n}\right)\\
    & = (n+1)d(\a) - \dim_{\a_n}(\ov{\p_n(X_\infty)})
  \end{align*}
  where $\ov{\p_n(X_\infty)}$ 
  denotes the Zariski closure of 
  $\p_n(X_\infty)$ 
  in $X_n$.

  Since $X$ is of finite type, $X_\infty$ has finitely many irreducible
  components
  (see \cite[Theorem~2.9]{Reg09} and \cite[Corollary~3.16]{NS10}).
  This implies that for $n$ sufficiently large we have
  \[
    \dim_{\a_n}(\ov{\p_n(X_\infty)}) = \max_{C \ni \a} \dim(\ov{\p_n(C)})
  \]
  where the maximum is taken over the irreducible components $C$ of $X_\infty$ that contain $\a$. 

  Let $C$ be one of the irreducible components of $X_\infty$ 
  containing $\a$, let $\b \in C$ be its generic point, and let $Z \subset X$
  be the closure of $\b(\e)$ in $X$. From \cite[Lemma~8.6]{dFD}
  we have
  \[
    \dim(\ov{\p_n(C)})
    \leq
    (n+1) \dim(Z)
    \leq
    (n+1) \dim_{\a(0)}(X).
  \]
  Since $\a(\e)\in \Sing(X)$, we see by the definition
  of $d(\a)$ that $d(\a) > \dim_{\a(0)}(X)$, and therefore
  \[
    \lim_{n \to \infty}\big((n+1)d(\a) - \dim(\ov{\p_n(C)})\big) 
    \geq
    \lim_{n \to \infty}(n+1)\big(d(\a) - \dim_{\a(0)}(X)\big) 
    = \infty.
  \]
  We conclude that $\embcodim(\O_{X_\infty,\a}) = \infty$, as claimed. 
\end{proof}

\begin{corollary}
  \label{c:arc-finite-grcodim-embcodim}
  Let $X$ be a scheme of finite type over a field $k$ and $\a \in X_\infty$.
  Assume that either $k$ has characteristic zero, or $\a \in X_\infty(k)$. 
  Then we have $\a(\e) \in X_\sm$ if and only if 
  $\embcodim(\O_{X_\infty,\a}) < \infty$.
\end{corollary}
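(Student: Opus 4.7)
The plan is a direct combination of \cref{t:arc-finite-grcodim} and \cref{t:arc-infinite-embcodim}, together with a reduction to the perfect case via base change.

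If $\charK(k) = 0$, then $k$ is automatically perfect, and the hypotheses of the two referenced theorems are satisfied. The forward implication ``$\a(\e) \in X_\sm \Rightarrow \embcodim(\O_{X_\infty,\a}) < \infty$'' will then follow from part~\eqref{item2:arc-finite-grcodim} of \cref{t:arc-finite-grcodim}, while the reverse implication will follow from \cref{t:arc-infinite-embcodim} (using that $X \setminus X_\sm = \Sing(X)$ whenever $k$ is perfect). The same argument applies verbatim when $\a \in X_\infty(k)$ and $k$ happens to already be perfect.

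The remaining case is when $\a \in X_\infty(k)$ with $k$ imperfect of positive characteristic; here I would reduce to the perfect case by base-changing to the perfect closure $k' := k^{\mathrm{perf}}$. Setting $X' := X \times_k k'$, the $k$-rational arc $\a$ pulls back to a $k'$-rational arc $\a' \in X'_\infty(k')$. Since the extension $k \subset k'$ is faithfully flat, smoothness both ascends and descends along the base change, so $\a(\e) \in X_\sm$ if and only if $\a'(\e) \in X'_\sm$. The functorial description of the arc space gives a canonical isomorphism $\O_{X'_\infty,\a'} \isom \O_{X_\infty,\a} \otimes_k k'$ as local $k'$-algebras whose common residue field is $k'$, so \cref{p:grcodim-field-change} yields $\embcodim(\O_{X'_\infty,\a'}) = \embcodim(\O_{X_\infty,\a})$. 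Applying the already-established perfect case to $(X',\a')$ then transfers both implications back to $(X,\a)$ and completes the argument.

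The only technical point requiring genuine care is verifying the base-change identification $\O_{X'_\infty,\a'} \isom \O_{X_\infty,\a} \otimes_k k'$ with enough precision to invoke \cref{p:grcodim-field-change}; this follows from the representability of the arc functor and the commutativity of arc-space formation with flat base change on the ground field. Once this identification is in place, the rest is a direct assembly of the preceding results and requires no further work.
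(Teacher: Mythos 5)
Your proposal is correct and takes essentially the same approach as the paper: in both cases the equivalence for a perfect base field follows directly from \cref{t:arc-finite-grcodim} (with part~\eqref{item2:arc-finite-grcodim}) and \cref{t:arc-infinite-embcodim}, and the remaining $k$-rational case in positive characteristic is reduced to the perfect case by an algebraic base change, using the identification $\O_{X'_\infty,\a'} \isom \O_{X_\infty,\a} \otimes_k k'$ together with \cref{p:grcodim-field-change}. The only cosmetic difference is that you pass directly to $k' = k^{\mathrm{perf}}$ and argue that smoothness both ascends and descends along the faithfully flat base change, which handles both implications in one step, whereas the paper phrases the reduction in terms of geometric regularity (finding a witnessing extension when $\a(\e) \notin X_\sm$ and then enlarging to a perfect field); both formulations rely on the same underlying facts and are interchangeable.
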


\begin{proof}
  If $k$ has characteristic zero, then the \lcnamecref{c:arc-finite-grcodim-embcodim}
  follows by \cref{t:arc-finite-grcodim,t:arc-infinite-embcodim}.

  Let then $k$ be any field, and assume that $\a \in X_\infty(k)$.
  For a field extension $k \subset k'$, we denote
  $X' := X \times_{\Spec(k)} \Spec(k')$ and let $\a' \colon \Spec(k'[[t]]) \to X'$
  be the arc obtained by base change from $\a$. 
  Since a point of $X$ is in the smooth locus if and only if it is geometrically
  regular, we can find a field extension $k \subset k'$ such that
  $\a'$ is not a regular point of $X'$. By faithfully flat descent of regularity,  
  we can replace $k'$ with a larger
  field extension and assume without loss of generality that $k'$ is perfect. 
  Note that $X'_\infty \isom X_\infty \times_{\Spec(k)} \Spec(k')$, and hence
  $\O_{X'_\infty,\a'} \isom \O_{X_\infty,\a} \otimes_k k'$. 
  Then, by \cref{p:grcodim-field-change}, we have
  \[
    \embcodim(\O_{X'_\infty,\a'}) = \embcodim(\O_{X_\infty,\a}).
  \]
  This reduces to the case of perfect fields, where the result follows
  from again by \cref{t:arc-finite-grcodim,t:arc-infinite-embcodim}.
\end{proof}


\section{On Drinfeld--Grinberg--Kazhdan's theorem}

\label{s:DGK}

\cref{t:arc-finite-grcodim} can be seen as a finiteness statement for
singularities of the arc space at arcs that are not fully contained in the
singular locus. One of the first major results in this direction is the theorem
of Drinfeld, Grinberg, and Kazhdan, which we will state below in its version in
\cite{Dri}.  Recall that for any equicharacteristic local ring $(A,\fm,k)$ a
DGK decomposition is an isomorphism $\^A\isom k[[t_i \mid i\in I]]/\fa$, where
$\fa$ is an ideal of finite polynomial definition.

\begin{theorem}[\protect{\cite[Theorem~2.1]{GK00}, \cite[Theorem~0.1]{Dri}}]
  \label{t:DGK}
  Let $X$ be a scheme of finite type over a field $k$, and let $\a \in
  X_\infty(k)$. If $\a(\e) \in X_\sm$, then the local ring $\O_{X_\infty,\a}$
  admits a DGK decomposition. 
\end{theorem}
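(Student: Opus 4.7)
The plan is to combine the isomorphism of Zariski cotangent spaces produced by a general linear projection (\cref{t:general-proj}) with the a priori finiteness of the embedding codimension (\cref{t:arc-finite-grcodim}), and to then extract polynomial generators of the defining ideal via iterated Weierstrass preparation. First I would assume $X$ is affine, fix a closed embedding $X \subset \A^N$, and set $d := \dim_{\a(\e)}(X)$. Choosing a general linear projection $f \colon X \to Y := \A^d$ as in \cref{t:general-proj} and setting $\b := f_\infty(\a) \in Y_\infty(k)$, that theorem says the induced map $(T_\a f_\infty)^*$ is an isomorphism. Via \cref{l:tau-exists} this promotes to an efficient formal embedding
\[
\t \colon \widehat{\O_{Y_\infty,\b}} \surj \widehat{\O_{X_\infty,\a}},
\]
and since $Y = \A^d$ we have $\widehat{\O_{Y_\infty,\b}} \isom k[[u_i^{(j)} \mid 1\le i\le d,\,j\in\N]]$. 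Writing $\fa := \ker(\t)$, the problem reduces to showing that $\fa$ is of finite polynomial definition.

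To produce explicit generators of $\fa$, I would choose polynomial generators $g_1,\dots,g_r$ of the ideal of $X$ in $\A^N$ and, after a linear change of coordinates on $\A^N$, split the coordinates as $(y_1,\dots,y_d,z_1,\dots,z_{N-d})$ so that $f$ is the projection onto the $y$-coordinates and the minor $\det(\de g_l/\de z_m)$ is nonzero at $\a(\e)$. Expanding the universal arc as $(y(t),z(t))$ and taking Hasse--Schmidt derivatives gives a countable family of generators $g_l^{(j)} \in k[[y_i^{(j)},z_m^{(j)}]]$ of $\fa$. The bound $\embcodim(\O_{X_\infty,\a}) \le e$, with $e := \ord_\a(\Jac_{X^0}) < \infty$, from \cref{t:arc-finite-grcodim}, governs the depth of the upcoming Weierstrass procedure; combined with \cref{t:grcodim-embcodim} it also yields the a priori bound $\height(\fa) \le e$, ensuring that the portion of $\fa$ that ``survives'' the elimination is finite-dimensional in the appropriate sense.

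The main step, and what I expect to be the main obstacle, is replacing this infinite generating set by finitely many polynomials. Here I would follow Drinfeld's strategy \cite{Dri}: because $\det(\de g_l/\de z_m)$ vanishes along $\a$ to order at most $e$, for each sufficiently large $j$ a suitable linear combination of the $g_l^{(j)}$ becomes Weierstrass-regular in some $z_m^{(j')}$ with $j'<j$, after a change of coordinates in the spirit of \cref{l:coordinate-change-regular}. Iterating the Weierstrass division theorem (\cref{p:weierstrass-division}) lets one successively eliminate the $z_m^{(j)}$ with large $j$ as power series in the remaining variables, while replacing the original generators by their remainders, which are polynomials in the variables $u_i^{(j)}$ with $j$ bounded in terms of $e$. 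The delicate points are keeping polynomiality through infinitely many nested substitutions and arguing that only finitely many remainders are needed; finiteness of $e$ is precisely what bounds the preparation depth and makes the procedure terminate, producing the desired isomorphism $\widehat{\O_{X_\infty,\a}} \isom k[[t_i \mid i\in I]]/\fa$ with $\fa$ of finite polynomial definition, i.e., a DGK decomposition.
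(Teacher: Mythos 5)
The paper does not prove \cref{t:DGK}; it is a cited result from \cite{GK00} and \cite{Dri}, used later as a black box (e.g., in \cref{t:DGK+converse}, \cref{c:emb=femb}, and \cref{c:proj-eff-formal-emb}). So there is no internal proof for your attempt to be measured against, and what you have written is an outline of how one might re-prove Drinfeld's theorem, not a reconstruction of anything in the paper.

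Your first step is fine: a general linear projection $f \colon X \to \A^d$ together with \cref{t:general-proj} does give an isomorphism of continuous cotangent spaces, and since $\O_{Y_\infty,\b}$ is formally smooth one does get a surjection $\^{\O_{Y_\infty,\b}} \twoheadrightarrow \^{\O_{X_\infty,\a}}$; this is precisely the first assertion of \cref{c:proj-eff-formal-emb} and it does not depend on \cref{t:DGK}. However, the second and central step — showing that $\ker(\t)$ is of finite \emph{polynomial} definition via iterated Weierstrass elimination — is exactly the hard content of Drinfeld's theorem, and your sketch names the difficulty without resolving it. The paper's own example at the end of \cref{s:DGK} shows that even after eliminating the $y^{(j)}$-variables (using exactly the derivative formula $\partial f^{(p)}/\partial y^{(q)} = D_{p-q}(\partial f/\partial y)$ and a change of coordinates of the kind you describe), the resulting presentation $\^{\O_{X_\infty,\a}} \isom k[[x^{(j)}]]/\^\fa$ has $\fa$ generated by finitely many elements that are a priori power series in \emph{all} the $x^{(j)}$, and the paper explicitly remarks that this ``does not induce a DGK decomposition a priori since the ideal $\^\fa$ is not necessarily of finite polynomial definition.'' In other words, the efficient formal embedding plus Weierstrass elimination gets you a finitely generated kernel, but not finitely many generators living in finitely many variables; closing that gap requires the genuinely different mechanism in Drinfeld's proof (a parametric Newton/Hensel step over the finite-type scheme $Z$ defined by \eqref{eq:dri-model-equations}, which determines the full arc from a bounded truncation), not an unbounded iteration of one-variable Weierstrass divisions.

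There is also a concrete error in your intermediate claim: $\embcodim(A) \le e$ together with \cref{t:grcodim-embcodim} does \emph{not} give $\height(\fa) \le e$, because that theorem gives the inequality $\embcodim \le \fembcodim = \height(\ker \t)$, which runs in the opposite direction. The equality $\embcodim = \fembcodim$ for rational arcs is \cref{c:emb=femb}, whose proof relies on \cref{t:DGK} and hence cannot be invoked here without circularity.
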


As mentioned in \cref{r:DGK-and-finite-def}, any DGK decomposition of
$\O_{X_\infty,\a}$ induces an isomorphism of formal schemes
\[
  \^{X_{\infty,\a}} \isom \^{Z_z} \hat\times \D^\N,
\]
with $Z$ a scheme of finite type over $k$, $z\in Z(k)$ and $\D=\Spf(k[[t]])$. The formal scheme $\^{Z_z}$ is
often referred to as a \emph{formal model} for $\a$. While $\^{Z_z}$ is not unique, there
exists a unique minimal one in the following sense.

\begin{theorem}[\protect{\cite[Theorem 7.1]{BS17b}, \cite[Theorem 1.2]{BS19a}}]
  Let $\^{Z_z}$ and $\^{W_w}$ be two formal models for $\a$ which are
  \emph{indecomposable}, i.e., they are not of the form $\mathcal{Y} \hat\times \D$
  with $\mathcal{Y}$ a formal scheme. Then $\^{Z_z}\isom \^{W_w}$.
\end{theorem}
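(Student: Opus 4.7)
The statement is a cancellation-type uniqueness result: combining the two DGK decompositions for $\a$ one obtains an isomorphism of complete local $k$-algebras
\[
\Phi \colon \^{\O_{Z,z}}\,\hat\otimes_k\, k[[x_i\mid i\in I]]\xrightarrow{\,\isom\,}\^{\O_{W,w}}\,\hat\otimes_k\, k[[y_j\mid j\in J]]
\]
for countable index sets $I,J$, where $\^{\O_{Z,z}}$ and $\^{\O_{W,w}}$ are Noetherian complete local rings, neither of which admits an isomorphism with a ring of the form $\mathcal C\,\hat\otimes_k\, k[[t]]$. The claim reduces to: in this setting $\^{\O_{Z,z}}\isom\^{\O_{W,w}}$.

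The plan is first to match invariants across $\Phi$. By \cref{p:DGK-grcodim}, both sides have the same formal embedding codimension, namely $\embcodim(\^{\O_{X_\infty,\a}})$, and by \cref{l:in-for-fin-def} the associated graded rings $\gr(\^{\O_{Z,z}})$ and $\gr(\^{\O_{W,w}})$ agree after adjoining countably many polynomial variables. To upgrade this to matching embedding dimensions and Krull dimensions, I would compare the modules of continuous differentials on both sides and extract the maximal free direct summand: its rank is precisely the number of $\D$-factors that can be split off, so under the indecomposability hypothesis this rank on the $\^{\O_{Z,z}}$-side equals $|I|$, and symmetrically for $\^{\O_{W,w}}$. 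Matching these free summands across $\Phi$ yields $\embdim(\^{\O_{Z,z}})=\embdim(\^{\O_{W,w}})$ and $\dim(\^{\O_{Z,z}})=\dim(\^{\O_{W,w}})$.

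The real work is then a cancellation lemma: if $R\,\hat\otimes_k\, k[[x_i\mid i\in I]]\isom S\,\hat\otimes_k\, k[[y_j\mid j\in J]]$ with $R,S$ Noetherian complete local and indecomposable, then $R\isom S$. The approach I would take is to fix efficient formal embeddings of $R$ and $S$ into $k[[u_1,\ldots,u_m]]$ with $m$ their common embedding dimension, and then use iterated Weierstrass preparation (\cref{p:weierstrass-division}) combined with \cref{l:coordinate-change-regular} to straighten formal coordinates through $\Phi$ so that it restricts to an isomorphism between the finite-dimensional embedding spaces; an inverse-function-type argument as in \cref{p:inverse-fn-thm} would then upgrade the straightened map to an isomorphism $R\isom S$.

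I expect the cancellation lemma to be the genuine obstacle: in general, cancellation even of a single polynomial variable is delicate (classical Zariski cancellation counterexamples exist in related settings), so indecomposability must be used essentially at every coordinate-straightening step to forbid splitting off a smooth direction and ensure termination. A cleaner alternative that would sidestep the cancellation argument entirely would be to construct an intrinsic characterization of $\^{\O_{Z,z}}$ as a canonical quotient of $\^{\O_{X_\infty,\a}}$, for example by modding out the image of a maximal free direct summand of $\hat\Om_{\^{\O_{X_\infty,\a}}/k}$, thereby realizing the indecomposable formal model as a well-defined invariant of $\^{X_{\infty,\a}}$ and yielding uniqueness for free.
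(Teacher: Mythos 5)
This statement is quoted by the paper from \cite{BS17b} and \cite{BS19a}; the paper contains no proof of it, so there is no ``paper's own proof'' to compare against. Evaluating your proposal on its own merits, it correctly identifies that the heart of the matter is a cancellation theorem for completed tensor products with power series rings, and that indecomposability must be what forbids extra disk factors from floating around. However, both of the routes you sketch have genuine gaps, and in each case the gap sits exactly where the work of \cite{BS19a} lies.

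The pivotal unproven claim in your first route is that the rank of a ``maximal free direct summand'' of $\hat\Omega_{\hat A/k}$ equals the number of $\D$-factors that can be split off. This is not an invariant you can simply read off: a free direct summand of the module of continuous differentials does not by itself produce a factorization of the ring, and it is not obvious that a maximal such summand exists, is unique up to isomorphism, or is preserved under the isomorphism $\Phi$. Asserting that it is amounts to assuming a strong form of the theorem. Your ``cleaner alternative'' has the same circularity: defining the indecomposable model as a quotient by the image of such a summand presupposes precisely the canonicity that the theorem asserts. The actual proof in \cite{BS19a} works with the dual notion, namely \emph{regular continuous derivations} (a continuous $k$-derivation $d$ of $\hat A$ such that $d(a)$ is a unit for some $a$), proves that $\hat A$ splits off a $\D$-factor if and only if it admits such a derivation, and then carries out the cancellation by carefully manipulating these derivations. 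This makes the ``number of disk factors'' into something one can control, which the free-summand picture does not.

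Your second route, iterated Weierstrass preparation plus coordinate straightening, is more a program than an argument: you acknowledge that cancellation of even a single variable is delicate, but the sketch does not explain how indecomposability enters at each straightening step, what the termination argument is, or why the straightened map between finite-dimensional embedding spaces would descend to an isomorphism $R \isom S$ rather than just a surjection. Without those ingredients this is the statement restated rather than proved. In short, you have located the crux of the theorem but not supplied a mechanism to cross it; the mechanism in the literature is the theory of regular derivations, which is genuinely different from (and more robust than) either the differentials-summand heuristic or a Weierstrass-straightening scheme.
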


\begin{definition}
  The indecomposable formal model of $\a$ is called the \emph{minimal formal
  model} and denoted by $\mathcal{Z}^{\min}_\a$.
\end{definition}

Note that the formal model provided by \cref{t:DGK} is not minimal in general,
which we will see in \cref{s:Drinfeld-model}.

Combining \cref{t:DGK} with the results of this paper, we obtain the next
result which provides a characterization of $k$-rational arcs admitting a DGK
decomposition. The result also gives an explicit bound for the embedding
codimension; we should stress that such bound does not follow from the proofs
in \cite{GK00,Dri,Dri18}.

\begin{theorem}
  \label{t:DGK+converse}
  Let $X$ be a scheme of finite type over a field $k$. For any $\a \in
  X_\infty(k)$, the following are equivalent:
  \begin{enumerate}
    \item
    \label{item1:DGK+converse}
      $\a(\e) \in X_\sm$.
    \item
    \label{item2:DGK+converse}
      $\O_{X_\infty,\a}$ admits a DGK decomposition.
    \item
    \label{item3:DGK+converse}
      $\O_{X_\infty,\a}$ admits a weak DGK decomposition.
    \item
    \label{item4:DGK+converse}
      $\embcodim(\O_{X_\infty,\a})<\infty$.
  \end{enumerate}
  Moreover, if $k$ is perfect and $\a(\e) \in X_\sm$, then 
  \[
  \embcodim(\O_{X_\infty,\a})\leq \ord_\a(\Jac_{X^0})
  \]
  where $X^0 \subset X$ is the irreducible component containing $\a(\e)$. 
\end{theorem}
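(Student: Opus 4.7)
The plan is to prove the theorem by establishing the cycle of implications $(1) \Rightarrow (2) \Rightarrow (3) \Rightarrow (4) \Rightarrow (1)$, and then deducing the quantitative bound in the ``moreover'' clause separately.

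For the implication $(1) \Rightarrow (2)$, I would simply invoke the Drinfeld--Grinberg--Kazhdan theorem (\cref{t:DGK}), which is precisely this statement. The implication $(2) \Rightarrow (3)$ is immediate from the definitions in \cref{s:findef}: an ideal of finite polynomial definition is in particular an ideal of finite definition, so any DGK decomposition is automatically a weak DGK decomposition.

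The implication $(3) \Rightarrow (4)$ is where the results of \cref{s:fembcodim} come in. If $\O_{X_\infty,\a}$ admits a weak DGK decomposition $\^{\O_{X_\infty,\a}} \isom k[[x_i \mid i \in I]]/\fa$, then by the remark following \cref{p:DGK-grcodim} (which asserts that the analogue of that proposition holds for weak DGK decompositions), one gets $\embcodim(\O_{X_\infty,\a}) = \embcodim(\mathcal B)$ for the Noetherian complete local factor $\mathcal B$; since a Noetherian local ring has finite embedding codimension, the conclusion follows. Alternatively, finiteness of the embedding codimension can be extracted directly from the finite-definition hypothesis by combining \cref{t:grcodim-embcodim} with \cref{r:ht(p_j)=ht(p)}, since a choice of generators of $\fa$ involving only finitely many coordinates gives a formal embedding of $\O_{X_\infty,\a}$ whose kernel has finite height.

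The implication $(4) \Rightarrow (1)$, which is the genuine converse to Drinfeld--Grinberg--Kazhdan, is exactly the content of \cref{c:arc-finite-grcodim-embcodim} applied to $k$-rational arcs (this is where the hypothesis $\a \in X_\infty(k)$ allows us to avoid assuming $k$ is perfect). Finally, for the ``moreover'' part, assuming $k$ is perfect and $\a(\e) \in X_\sm$, the quantitative bound $\embcodim(\O_{X_\infty,\a}) \le \ord_\a(\Jac_{X^0})$ is precisely part~(2) of \cref{t:arc-finite-grcodim}. The main obstacle in the proof is conceptual rather than technical: the only substantial input is the classical \cref{t:DGK} on one side and the new infinite-height estimate of \cref{t:arc-infinite-embcodim} (packaged in \cref{c:arc-finite-grcodim-embcodim}) on the other; the remaining implications are essentially bookkeeping with the formal results on power series rings developed in \cref{s:findef,s:fembcodim}.
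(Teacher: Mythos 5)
Your proof is correct and follows essentially the same route as the paper's: $(1)\Rightarrow(2)$ by \cref{t:DGK}, $(2)\Rightarrow(3)$ by definitions, $(3)\Rightarrow(4)$ from the finite-definition theory of \cref{s:findef} (the paper cites \cref{p:finite-def-finite-ht}, which is the corollary of \cref{r:ht(p_j)=ht(p)} packaging exactly the argument you sketch, combined with \cref{t:grcodim-embcodim}; your first alternative via the remark after \cref{p:DGK-grcodim} is an equally valid repackaging), $(4)\Rightarrow(1)$ via \cref{c:arc-finite-grcodim-embcodim}, and the bound from \cref{t:arc-finite-grcodim}.
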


\begin{proof}
  The implication $\eqref{item1:DGK+converse} \Rightarrow \eqref{item2:DGK+converse}$ is \cref{t:DGK},
  the implication $\eqref{item2:DGK+converse} \Rightarrow \eqref{item3:DGK+converse}$ is obvious,
  the implication $\eqref{item3:DGK+converse} \Rightarrow \eqref{item4:DGK+converse}$ follows from \cref{p:finite-def-finite-ht},
  and finally \cref{c:arc-finite-grcodim-embcodim} gives the implication 
  $\eqref{item4:DGK+converse} \Rightarrow \eqref{item1:DGK+converse}$.
  The last statement follows from \cref{t:arc-finite-grcodim}.
\end{proof}

\begin{example}
  \label{e:first-DGK-example}
  Let $X$ be the hypersurface defined by $x_0x_{n+1}+f(x_1,\ldots,x_n)=0$ and
  $\a\in X(k)$ the arc given by $(t,0,\ldots,0)\in k[[t]]^{n+2}$. Assume
  further that the hypersurface $H \subset \A^n$ given by $f(x_1,\ldots,x_n) =
  0$ has a singularity at $0$. Then, as shown in \cite{Dri}, a DGK
  decomposition for $\O_{X_\infty,\a}$ is given by
  \[
    \^{\O_{X_\infty,\a}}
    \isom
    k[[x_1,\ldots,x_n]]/f(x_1,\ldots,x_n)
    \,\cotimes_k\,
    k[[t_i \mid i\in\N]].
  \]
  The singularity of $\a$ is thus again given by $H$ and
  $\embcodim(\O_{X_\infty,\a})=1$. On the other hand, the order of $\a$ with
  respect to the Jacobian ideal $\Jac_X$ is $1$, hence the bound in
  \cref{t:DGK+converse} is sharp in this case.
\end{example}

\begin{example}
  \label{e:second-DGK-example}
  Similarly as in the previous example, let $X$ be the hypersurface defined by
  $x_0x_{n+1} + f(x_1,\dots,x_n) = 0$ where $f$ is a polynomial of multiplicity
  2, and take this time $\a \in X_\infty(k)$ to be the arc given by
  $(t^m,0,\dots,0) \in k[[t]]^{n+2}$. Denoting by $g^{(j)}$ the $j$-th
  Hasse--Schmidt derivative of an element $g \in k[x_0,\dots,x_{n+1}]$ and
  setting for short $I = \{0,1,\dots,n+1\}$ and $J = \Z_{\ge 0}$, $X_\infty$ is
  defined by the ideal $\fa = ((x_0x_{n+1} + f)^{(j)} \mid j \in J)$ of $P :=
  k[x_i^{(j)} \mid (i,j) \in I \times J]$, see \cite{Voj07}. Let $\fm \subset P$ be the maximal
  ideal at $\a$. Since $x_0^{(m)}$ is a unit in the local ring $P_\fm$, we see
  that the ideal $\ini(\fa P_\fm)$ is generated by the elements $x_{n+1}^{(j)}$
  for $j \in J$, and $\ini(f^{(l)})$ for $0 \le l \le m-1$. As long as $f$ is
  chosen so that $\ini(f^{(l)})$, for $0 \le l \le m-1$, form a regular
  sequence (e.g., $f = x_1x_2$ would work), we get that $\O_{X_\infty,\a}$ has
  embedding codimension $m$. Since clearly the order of $\a$ with respect to
  $\Jac_X$ is also $m$, this shows that the bound in \cref{t:DGK+converse} is
  sharp for all possible orders of the arc with the Jacobian ideal of $X$. 
\end{example}

Let us mention here the following consequence of \cref{t:DGK+converse}, which
implies that the local rings of closed arcs in the arc space provide plenty of
examples of non-Noetherian rings for which their embedding codimension agrees
with their formal embedding codimension.

\begin{corollary}
  \label{c:emb=femb}
  If $X$ is a scheme of finite type over a field $k$, then
  \[
    \embcodim(\O_{X_\infty,\a}) = \fembcodim(\O_{X_\infty,\a})
  \]
  for every $\a \in X_\infty(k)$. If $k$ is perfect, then the same holds for
  all constructible points $\a \in X_\infty$ with $\a(\e) \in X_\sm$.
\end{corollary}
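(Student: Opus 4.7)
By \cref{t:grcodim-embcodim} we already have $\embcodim(\O_{X_\infty,\a}) \le \fembcodim(\O_{X_\infty,\a})$ in general, so the task is to prove the opposite direction. For the first statement, with $\a \in X_\infty(k)$, I would split according to whether $\a(\e) \in X_\sm$. If $\a(\e) \in X_\sm$, then \cref{t:DGK} provides a DGK decomposition of $\O_{X_\infty,\a}$, and \cref{p:DGK-grcodim} immediately yields the equality. If $\a(\e) \notin X_\sm$, then \cref{c:arc-finite-grcodim-embcodim} gives $\embcodim(\O_{X_\infty,\a}) = \infty$, which combined with the general inequality forces $\fembcodim(\O_{X_\infty,\a}) = \infty$ as well. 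In both subcases the equality holds.

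For the second statement, the strategy is to reduce to the first via base change. Setting $L := k(\a)$, the hypothesis that $k$ is perfect ensures that $L/k$ is separable. I would consider $X_L := X \times_k \Spec(L)$ together with the natural $L$-rational arc $\~\a \in (X_L)_\infty(L)$ obtained by pairing $\a \colon \Spec(L[[t]]) \to X$ with the structure morphism $\Spec(L[[t]]) \to \Spec(L)$. Since $\a(\e) \in X_\sm$ and base change by $\Spec(L) \to \Spec(k)$ preserves the smooth locus, $\~\a(\e) \in (X_L)_\sm$. Applying the first statement to $(X_L,\~\a)$ over $L$ yields a DGK decomposition of $\O_{(X_L)_\infty,\~\a}$. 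The base-change map $\O_{X_\infty,\a} \to \O_{(X_L)_\infty,\~\a}$ is local, with both rings carrying residue field $L$; on completions it presents $\^{\O_{(X_L)_\infty,\~\a}}$ as an extension of $\^{\O_{X_\infty,\a}}$ by a formal power series ring in countably many variables, which is retracted by specialising those extra variables at $\~\a$. Descending the DGK decomposition of the larger ring along this retraction, chosen compatibly with the tensor product structure, produces a DGK decomposition of $\^{\O_{X_\infty,\a}}$; applying \cref{p:DGK-grcodim} over $L$ then yields the desired equality.

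The main obstacle lies in the descent step. One must verify that $\^{\O_{(X_L)_\infty,\~\a}}$ is genuinely a completed tensor product of $\^{\O_{X_\infty,\a}}$ with a power series ring over $L$, and that the polynomials witnessing finite polynomial definition in the DGK decomposition of the larger ring can be arranged to involve only the variables coming from $\^{\O_{X_\infty,\a}}$; examples show that the two completions are in general very different in size, so the descent is not merely an isomorphism of local rings. A cleaner alternative, which bypasses this descent, would be to invoke directly a constructible-points version of the DGK theorem, for instance the formal-scheme extension of \cite{BS17b}, and then combine it with \cref{p:DGK-grcodim}. Either route hinges on establishing that DGK decompositions propagate under base change along separable field extensions, and this is the technical heart of the argument.
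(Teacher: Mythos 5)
Your treatment of the first statement (for $\a \in X_\infty(k)$) is correct and is precisely the paper's argument: split on $\a(\e) \in X_\sm$ versus not, invoke \cref{t:DGK} together with \cref{p:DGK-grcodim} in the smooth-generic case, and use \cref{c:arc-finite-grcodim-embcodim} together with the universal inequality \cref{t:grcodim-embcodim} in the other case.

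Your treatment of the second statement, however, both diverges from the paper and contains a genuine gap that you yourself flag but do not close. You attempt to extend $k$ to $L = k(\a)$, get a DGK decomposition over $L$, and then descend it; but as you correctly observe, $\^{\O_{(X_L)_\infty,\~\a}}$ is not simply a completed tensor product of $\^{\O_{X_\infty,\a}}$ with a disk, and there is no argument given for why a finite polynomial presentation upstairs can be arranged so as to descend. Moreover, the goal of the corollary is only the equality of the two embedding codimensions, not the existence of a DGK decomposition for constructible $\a$ --- so trying to manufacture a DGK decomposition is both harder than needed and not something this corollary asserts. The paper takes a much shorter route: by \cite[Theorem~10.8]{dFD}, a constructible point $\a$ with $\a(\e) \in X_\sm$ has $\embdim(\O_{X_\infty,\a}) < \infty$, and then \cref{p:grcodim-embcodim:finite-embdim} gives $\embcodim = \fembcodim$ immediately for any equicharacteristic local ring of finite embedding dimension. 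This sidesteps the entire descent question. You should replace the second half of your argument with this observation.
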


\begin{proof}
  Assume first that $\a \in X_\infty(k)$. If $\a(\e) \in X_\sm$ then the
  equality follows by \cref{t:DGK,p:DGK-grcodim}. If $\a(\e) \in X \setminus
  X_\sm$, then we have $\embcodim(\O_{X_\infty,\a}) = \infty$ by
  \cref{c:arc-finite-grcodim-embcodim}, and we conclude by
  \cref{t:grcodim-embcodim}. Suppose now that $\a \in X_\infty$ is a
  constructible point with $\a(\e) \in X_\sm$. By \cite[Theorem~10.8]{dFD},
  $\O_{X_\infty,\a}$ has finite embedding dimension, and hence the assertion
  follows by \cref{p:grcodim-embcodim:finite-embdim}.
\end{proof}

We now state the following application of \cref{t:general-proj,t:DGK}, which
says that a generic projection of the base scheme induces an efficient DGK
decomposition at an arc that is not contained in the singular locus.

\begin{theorem}
  \label{c:proj-eff-formal-emb}	
  Let $X \subset \A^N$ be an affine scheme of finite type over a perfect field $k$, $\a\in
  X_\infty(k)$ with $\a(\e)\in X_\sm$ and $d=\dim_{\a(\e)}(X)$. Let
  $f\colon X\to Y:=\A^d$ be the map induced by a
  general linear projection $\A^N \to \A^d$, and let $\b:=f_\infty(\a)$.
  Then the associated map
  \[
    \varphi\colon \O_{Y_\infty,\b} \to \O_{X_\infty,\a},
  \]
  gives an efficient formal embedding of $\O_{X_\infty,\a}$. 
  Moreover, if $k$ is infinite, then there exist formal
  coordinates $u_i\in \^{\O_{Y_\infty,\b}}$, $i\in \N$, such that
  $\ker(\^\varphi)$ is generated by finitely many polynomials in $u_i$, hence
  $\^\varphi$ induces an efficient DGK decomposition.
\end{theorem}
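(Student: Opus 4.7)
My plan is to deduce part (1) directly from \cref{t:general-proj} and then obtain part (2) by combining part (1) with Drinfeld's theorem (\cref{t:DGK}), using the rigidity of efficient formal embeddings provided by \cref{p:emb-indep-basis}.

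For part (1), I would first observe that $Y = \A^d$ is smooth and $\b \in Y_\infty(k)$ is $k$-rational, so $\O_{Y_\infty,\b}$ is formally smooth over $k$ with countable embedding dimension (cf.\ \cref{p:grcodim=0}); hence one may fix an isomorphism $\^{\O_{Y_\infty,\b}} \isom k[[u_i \mid i \in \N]]$. By \cref{t:general-proj}, the induced map $(T_\a f_\infty)^*$ on Zariski cotangent spaces is an isomorphism, and so is the induced map $\^\fn/\^\fn^2 \to \^\fm/\^\fm^2$ on continuous cotangent spaces. To conclude that $\^\varphi$ is an efficient formal embedding it remains to establish surjectivity. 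Passing to associated graded rings, the composition
\[
    \Sym_k(\^\fn/\^\fn^2) \,\isom\, \gr(\^{\O_{Y_\infty,\b}}) \,\xrightarrow{\,\gr(\^\varphi)\,}\, \gr(\^{\O_{X_\infty,\a}})
\]
coincides, via the cotangent-space isomorphism, with the canonical surjection $\g$, and is therefore surjective. The standard filtered-lifting argument used in the proof of \cref{l:tau-exists} then yields surjectivity of $\^\varphi$.

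For part (2), assuming $k$ infinite, I would invoke \cref{t:DGK} to produce a decomposition $\^{\O_{X_\infty,\a}} \isom \^B \cotimes_k k[[x_i \mid i \in \N]]$, with $B$ essentially of finite type at a $k$-rational point. Since $B$ is Noetherian, Cohen's structure theorem (together with a linear coordinate change absorbing any degree-one terms) gives a presentation $\^B \isom k[[y_1,\ldots,y_m]]/\fb$ with $\fb \subset (y_1,\ldots,y_m)^2$ generated by finitely many polynomials. Merging $\{y_1,\ldots,y_m,x_i\}$ into a single family $\{t_i \mid i \in \N\}$ yields an efficient formal embedding $\tau \colon k[[t_i]] \twoheadrightarrow \^{\O_{X_\infty,\a}}$ whose kernel is of finite polynomial definition. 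I now have two efficient formal embeddings of $\^{\O_{X_\infty,\a}}$ with the same index set: the embedding $\tau$, and the embedding $\^\varphi \circ \theta_0$ for any fixed isomorphism $\theta_0 \colon k[[u_i \mid i \in \N]] \to \^{\O_{Y_\infty,\b}}$. By \cref{p:emb-indep-basis} there exists an isomorphism $\psi \colon k[[t_i]] \to k[[u_i]]$ with $\tau = \^\varphi \circ \theta_0 \circ \psi$. Declaring $u_i := (\theta_0 \circ \psi)(t_i) \in \^{\O_{Y_\infty,\b}}$ gives the required formal coordinates: in these coordinates $\ker(\^\varphi)$ is identified with $\ker(\tau)$, an ideal of finite polynomial definition.

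The principal subtlety I anticipate is verifying that the graded composition in part (1) genuinely agrees with $\g$, which requires exploiting the formal smoothness of $\^{\O_{Y_\infty,\b}}$ to produce the factorization $\Sym_k(\^\fn/\^\fn^2) \to \Sym_k(\^\fm/\^\fm^2) \to \gr(\^{\O_{X_\infty,\a}})$ of $\gr(\^\varphi)$, together with a careful invocation of the filtered-lifting lemma in the non-Noetherian setting. A secondary bookkeeping point is ensuring that the DGK decomposition can be arranged to be efficient and indexed by $\N$; this is routine since $\embdim(\^{\O_{Y_\infty,\b}}) = \aleph_0$ (as $d \ge 1$) and Drinfeld's theorem already supplies a $\D^\N$-factor, so \cref{p:emb-indep-basis} applies after identifying the two countable index sets.
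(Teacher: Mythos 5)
Your part (1) is correct and follows the same line as the paper: \cref{t:general-proj} supplies the cotangent isomorphism, formal smoothness of $\O_{Y_\infty,\b}$ identifies its completion with a power series ring, and surjectivity of $\^\varphi$ is obtained via the graded-ring / filtered-lifting argument as in \cref{l:tau-exists}.

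Your part (2) has a genuine gap, localized in the sentence claiming that ``Cohen's structure theorem (together with a linear coordinate change absorbing any degree-one terms) gives a presentation $\^B \isom k[[y_1,\ldots,y_m]]/\fb$ with $\fb \subset (y_1,\ldots,y_m)^2$ generated by finitely many \emph{polynomials}.'' Cohen's theorem gives only a power series presentation; it says nothing about polynomiality of the generators. And the reduction from the naive polynomial presentation $Z \subset \A^M$ (with $M$ possibly larger than $m = \embdim(\O_{Z,z})$) to one with exactly $m$ coordinates cannot be achieved by a \emph{linear} change of coordinates. To eliminate the redundant variables $y_{m+1},\dots,y_M$ one must solve equations of the form $y_{m+i} = h_i(y)$ with $h_i$ of order $\ge 2$ in all the variables, and the formal inverse function theorem produces \emph{power series} $\phi_i(y_1,\dots,y_m)$, not polynomials; substituting these into the remaining polynomial generators destroys polynomiality. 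This is precisely the obstruction that \cref{t:min-zariski-emb} is designed to overcome: for $k$ infinite (and $Z$ singular at $z$), a \emph{geometric} argument with general linear projections produces a new scheme $Z' \subset \A^m$ of finite type with $\O_{Z',z'} \isom \O_{Z,z}$ and $m = \embdim(\O_{Z,z})$, whence an efficient formal embedding whose kernel is generated by polynomials. The paper invokes \cref{t:min-zariski-emb} at exactly this point (handling the smooth case of $Z$ separately, where $\ker = 0$ trivially works), and this is also the genuine use of the hypothesis that $k$ is infinite — not, as you suggest, for indexing the $\D^\N$-factor, which is indeed routine. Once you replace your Cohen-plus-linear-change step with an appeal to \cref{t:min-zariski-emb}, the remainder of your argument (merging the coordinate systems and invoking \cref{p:emb-indep-basis} to transport the polynomial ideal along the comparison isomorphism) agrees with the paper's and is correct.
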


\begin{proof}
  The first part follows from \cref{t:general-proj} together with the fact that
  $\O_{Y_\infty,\b}$ is formally smooth over $k$. Regarding the second assertion,
  we know by \cref{t:DGK} that the map $\varphi$ induces a surjection
  \[
    \psi\colon \^{\O_{Y_\infty,\b}} \to \^{\O_{Z,z}} \cotimes_k k[[t_i \mid i\in \N]]
  \]
  with $Z$ a scheme of finite type over $k$ and $z\in Z(k)$. If $Z$ is smooth
  at $z$ then there is nothing to show. Otherwise, by \cref{t:min-zariski-emb},
  we may assume that $Z\subset \A^n$, where $n=\embdim(\O_{Z,z})$. Since $\psi$
  induces an isomorphism of continuous cotangent spaces the statement follows
  from \cref{p:emb-indep-basis}.
\end{proof}

The next example illustrates in concrete terms the content of
\cref{c:proj-eff-formal-emb} when $X$ is a hypersurface in an affine space,
where the existence of the efficient formal embedding as in \lcnamecref{c:proj-eff-formal-emb}
can be verified directly from the equations.

\begin{example}
  Let $f\in k[x_1,\ldots,x_n,y]$ and $X$ be the hypersurface defined by $f$.
  For the sake of convenience, we will write $x=(x_1,\ldots,x_n)$. Let
  $\a=(x(t),y(t))$ be an arc on $X$ such that
  $\ord_\a(\Jac_X)=\ord_t(\frac{\partial f}{\partial y}(x(t),y(t)))=d>0$. We
  write $x(t)=\sum_j a^{(j)} t^j$ and $y(t)=\sum_j b^{(j)} t^j$; note that
  $a^{(j)}=(a_1^{(j)},\ldots,a_n^{(j)})$. Let
  $D=(D_p)_{p\geq0}$ be the universal Hasse--Schmidt derivation on
  $k[x^{(j)},y^{(j)} \mid j\geq0]$, where $x^{(j)}=(x_1^{(j)},\ldots,x_n^{(j)})$. 
  Then $X_\infty=\Spec(R_\infty)$, where
  \[
    R_\infty=k[x^{(j)},y^{(j)} \mid j\geq0]/(f^{(p)} \mid p\geq0),
  \]
  with $f^{(p)}:=D_p(f)$. Note that $f^{(p)}$ depends only on $x^{(j)},
  y^{(j)}$ for $j\leq p$. The arc $\a$ then corresponds to the ideal $\fm_\a$
  of $R_\infty$ given by
  \[
    \fm_\a=(x^{(j)}-a^{(j)},y^{(j)}-b^{(j)} \mid j\geq0).
  \]
  Setting $\~{{f}^{(p)}}(x^{(j)},y^{(j)}):=f^{(p)}(x^{(j)}+a^{(j)},y^{(j)}+b^{(j)})$, we get that
  \[
    \O_{X_\infty,\a}\isom k[x^{(j)},y^{(j)}]_{(x^{(j)},y^{(j)})}/(\~{{f}^{(p)}}).
  \]
  We are going to make use of the following explicit formula from
  \cite[Section~5]{dFD}:
  \[
    \frac{\partial f^{(p)}}{\partial y^{(q)}}
    =
    D_{p-q}\left(\frac{\partial f}{\partial y}\right),\quad q\leq p.
  \]
  The condition $\ord_t(\frac{\partial f}{\partial y}(x(t),y(t)))=d$ implies
  that, for $p\geq d$,
  \[
    \frac{\partial \~{{f}^{(p)}}}{\partial y^{(q)}}(0,0)
    =
    \frac{\partial f^{(p)}}{\partial y^{(q)}}(a,b)
    =
    D_{p-q}\left(\frac{\partial f}{\partial y}\right)(a,b)
    \begin{cases}
      =0, & p-d<q\leq p \\
      \neq 0, & q=p-d.
    \end{cases}
  \]
  Now the above implies that the initial forms of $f^{(p)}$, for $p\geq d$, can
  be written as
  \[
    \ini(\~{f^{(d+i)}})=y^{(i)}+g^{(d+i)}
  \]
  where $g^{(d)} \in k[x^{(j)} \mid j\leq d]$ and, for $i > 0$, $g^{(d+i)}
  \in k[x^{(j)},y^{(l)} \mid j\leq d+i, l<i]$.
  In particular, the elements $x^{(j)}$ and $\~{f^{(d+j)}}$, for $j\geq 0$,
  give formal coordinates in $k[[x^{(j)},z^{(j)} \mid j\geq0]]$, hence the
  map
  \[
    \varphi\colon
        k[[x^{(j)},z^{(j)} \mid j\geq0]]
        \to
        k[[x^{(j)},y^{(j)} \mid j\geq0]],
    \quad
    x^{(j)}\mapsto x^{(j)},\; z^{(j)}\mapsto \~{f^{(j+d)}}
  \]
  is an isomorphism. Write $h_i:=\varphi^{-1}(f^{(i)})$ and
  $\fa:=(\bar{h}_0,\ldots,\bar{h}_{d-1})$, where $\bar{h}_i$ is obtained from
  $h_i$ by setting $z^{(j)}=0$ for all $j\geq0$. Then we get that
  \[
    \^{\O_{X_\infty,\a}}\isom k[[x^{(j)} \mid j\geq0]]/\^\fa.
  \]
  Observe that the map $k[[x^{(j)} \mid j\geq0]] \to \^{\O_{X_\infty,\a}}$ is
  the efficient formal embedding from \cref{c:proj-eff-formal-emb} with respect
  to the projection $(x,y)\mapsto x$. However, this isomorphism does not
  induce a DGK decomposition a priori since the ideal $\^\fa$ is not
  necessarily of finite polynomial definition with respect to the variables
  $x^{(j)}$. 
\end{example}


\section{Efficient embedding of the Drinfeld model}

\label{s:Drinfeld-model}

It is useful to compare the formal embedding given by
\cref{c:proj-eff-formal-emb} to the one provided by the
Drinfeld--Grinberg--Kazhdan theorem. This comparison is done below in
\cref{p:dgk-comparison}. We first need to recall the construction of the
Drinfeld models.

Let $X \subset \A^N$ be an affine scheme of finite type over a field $k$,
consider a $k$-rational arc $\alpha \in X_\infty(k)$ such that $\alpha(\eta) \in
X_\sm$, and let $d := \dim_{\alpha(\eta)}(X)$ and $c := N-d$. Let $X^0$ be the
irreducible component of $X$ containing $\a(\e)$ (note that $d = \dim X^0$), 
and let $X' \supset X^0$ be the complete intersection scheme
defined by the vanishing of $c$ general linear combinations $p_1, \ldots, p_c$ of 
a set of generators of the ideal of $X^0$ in $\A^N$. 
As explained in \cite{Dri}, the respective inclusions induce isomorphisms
$\^{\O_{X_\infty,\a}} \isom \^{\O_{X^0_\infty,\a}} \isom \^{\O_{X'_\infty,\a}}$
(detailed proofs are given in \cref{l:X=X^0} and \cite[Section~4.2]{BS17b}).
Pick coordinates $x_1, \ldots, x_d, y_1, \ldots, y_c$ in the ambient affine space
$\A^N$. For a general choice of such coordinates, we can assume that
\[
    \ord_\a\left(
        \det\left(
            \frac%
                {\partial (p_1,\ldots,p_c)}%
                {\partial (y_1,\ldots,y_c)}%
        \right)
    \right)
    = \ord_\a(\Jac_{X'})
    = \ord_\a(\Jac_{X^0})
    =: e < \infty.
\]

Drinfeld defines a specific formal model for $\^{\mathcal O_{X_\infty,\a}}$
depending only on the choices of the coordinates $x_i, y_j$, the equations
$p_l$, and the order of contact $e$. Concretely, consider the affine space
$\A^{m}$ where $m = e(1+2d+c)$. 
We denote by $R[t]_{<n}$ the space of polynomials of degree
$< n$ with coefficients in $R$. 
Denoting by $Q_n$ the scheme representing the functor
$R \mapsto t^n + R[t]_{<n}$, the space of monic polynomials of degree
$n$ with coefficients in $R$, we identify $\A^m$ with the product
$Q_e \times \A^d_{2e-1} \times \A^c_{e-1}$. Under this identification, 
a $k$-rational point of $\A^m$ corresponds to a triple 
\[
    (q(t), \bar x(t), \bar y(t)) 
    \in
    (t^e + k[t]_{< e})
    \times
    (k[t]_{< 2e})^d
    \times
    (k[t]_{< e})^c.
\]
In particular, coordinates in $\A^m$ take the form $q^{(n)}, \bar x_i^{(n)}, \bar y_j^{(n)}$.
Consider the conditions
\begin{equation}
\label{eq:dri-model-equations}
\begin{array}{rccl}
    p_{1}(\bar x(t), \bar y(t)) \;\;\equiv\;\; \cdots \;\;\equiv\;\; p_{c}(\bar x(t), \bar y(t)) 
    &\equiv &0 &\mod q(t), \\[1em]
    \det\left(
        \dfrac%
            {\partial (p_1,\ldots,p_c)}%
            {\partial (y_1,\ldots,y_c)}%
        (\bar x(t), \bar y(t)) 
    \right)
    &\equiv &0 &\mod q(t), \\[1.6em]
    \adj\left(
        \dfrac%
            {\partial (p_1,\ldots,p_c)}%
            {\partial (y_1,\ldots,y_c)}%
        (\bar x(t), \bar y(t)) 
    \right)
    \begin{pmatrix}
        p_1(\bar x(t), \bar y(t))\\
        \vdots \\
        p_c(\bar x(t), \bar y(t))
    \end{pmatrix}
    &\equiv 
    &
    \begin{pmatrix}
        0 \\ \vdots \\ 0
    \end{pmatrix}
    &\mod q(t)^2.
    \\[2em]
\end{array}
\end{equation}
Here $\adj(B)$ denotes the classical adjoint of a matrix $B$. As explained in
\cite{Dri} and \cite[Sections~3.3~and~3.4]{BS17b}, the conditions in
\eqref{eq:dri-model-equations} are polynomial in the coefficients of $q(t),
\bar x(t), \bar y(t)$, and therefore they define a finite type subscheme $Z
\subset \A^m$.

Write the arc $\a$ in the coordinates $(x,y)$ of $\A^N$ as $\a = (a(t), b(t))$
where $a(t) \in k[[t]]^d$ and $b(t) \in k[[t]]^c$. To $\a$ we associate the
point $z = (t^e, \bar a(t), \bar b(t)) \in Z$ given by
\begin{equation}
    \label{eq:dri-z}
    \bar a(t) \equiv a(t) \mod t^{2e},
    \qquad
    \bar b(t) \equiv b(t) \mod t^{e}.
\end{equation}
It is shown in \cite{Dri} that $\^{Z_z}$ gives a (finite-dimensional)
formal model for $\a$, that is,
\begin{equation}
    \label{eq:dri-iso}
    \^{X_{\infty,\a}} \isom \^{Z_z} \hat\times \D^\N.
\end{equation}

The isomorphism in \eqref{eq:dri-iso} can be expressed somewhat explicitly in
coordinates. We identify $\A^d_\infty$ with an infinite-dimensional
affine space $\A^\N$, and we use the notation $\x(t)$ for points in $\A^\N$.
Hence coordinates in $\A^\N$ take the form $\x_i^{(n)}$, with $1 \le i \le d$ 
and $n \ge 0$. The disk $\D^\N$
appearing in \eqref{eq:dri-iso} is the formal neighborhood of $c(t)$ in $\A^\N$,
where $c(t) := t^{-2e}(a(t))_{\geq 2e}$ is the truncation of $a(t)$ to degrees
$\geq 2e$ divided by $t^{2e}$. Summarizing, we have described coordinates
$(x(t),y(t))$ in $\^{X_{\infty,\a}}$ and coordinates $(q(t), \bar x(t), \bar
y(t), \x(t))$ in $\^{Z_z} \hat\times \D^\N$. As explained in \cite{Dri}, the
isomorphism in \eqref{eq:dri-iso} gives the relation
\[
    x(t) = q(t)^2\x(t) + \bar x(t), 
\]
and we have
\begin{equation}
\label{eq:dri-proj}
    \bar x(t) \equiv  x(t) \mod q(t)^2, 
    \qquad
    \bar y(t) \equiv  y(t) \mod q(t).
\end{equation}
We emphasize that these relations only hold at the level of formal
neighborhoods. 

Notice that the point $z \in Z$ depends on the arc $\a$, but the scheme $Z$
only depends on the choices of the coordinates $x_i, y_j$, the equations $p_l$,
and the order of contact $e$. The choice of coordinates $x_i,y_j$
also determines the linear projection $\A^N \to \A^d$ given by $(x,y) \mapsto x$,  
and hence the induced map $f \colon X \to \A^d$.

\begin{definition}
With the above notation, we say that $(Z,z)$ is a \emph{Drinfeld model} of $X_\infty$ at $\alpha$, 
and that it is \emph{compatible} with $f$. 
\end{definition}

We are now ready to state and prove our comparison theorem.

\begin{theorem}
  \label{p:dgk-comparison}	
  Let $X \subset \A^N$ be an affine scheme of finite type over a perfect field
  $k$, let $\a\in X_\infty(k)$ with $\a(\e)\in X_\sm$, and let
  $d:=\dim_{\a(\e)}(X)$. Let $f \colon X \to Y := \A^d$ be induced by a general
  linear projection $\A^N \to \A^d$ and $\b :=f_\infty(\a)$.
  Let $(Z,z)$ be a Drinfeld model compatible
  with $f$, and let $\^{X_{\infty,\a}} \isom \^{Z_z}
  \hat\times \D^\N$ be the corresponding DGK decomposition.
  \begin{enumerate}
    \item
    \label{item1:dgk-comparison}
      The composition map 
      \[
        \^{Z_z} \hat\times \D^\N
        \xrightarrow{\sim}
        \^{X_{\infty,\a} }
        \inj
        \^{Y_{\infty,\b}}
      \] 
      is the completion of a morphism
      $g\colon Z \times \A^\N \to Y_\infty$. 
    \item  
    \label{item2:dgk-comparison}
      If $X^0 \subset X$ is the irreducible component of containing $\a(\e)$
      and $e := \ord_\a(\Jac_{X^0})$, then the composition map 
      \[
        \^{Z_z} \inj \^{Z_z} \hat\times \D^\N \xrightarrow{\sim} 
        \^{X_{\infty,\a} }\inj \^{Y_{\infty,\b}} \surj \^{Y_{2e-1,\b_{2e-1}}}
      \] 
      is an efficient formal embedding.
      Moreover, at the level of associated graded rings, we have that
      \[
        \gr(\O_{Z,z})=\im(\gr(\O_{Y_{2e-1},\b_{2e-1}})\to \gr(\O_{X_\infty,\a})).
      \]
  \end{enumerate}
\end{theorem}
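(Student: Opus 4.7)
The plan is to base both parts on the explicit formula $x(t) = q(t)^2\x(t) + \bar x(t)$ that governs the Drinfeld isomorphism of \cref{t:DGK} once one projects to the $Y$-coordinates.

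For part~\eqref{item1:dgk-comparison}, the Drinfeld isomorphism $\^{Z_z} \hat\times \D^\N \xrightarrow{\sim} \^{X_{\infty,\a}}$ composed with the closed immersion $\^{X_{\infty,\a}} \inj \^{Y_{\infty,\b}}$ is, in coordinates, the assignment $(q,\bar x,\bar y,\x) \mapsto q(t)^2\x(t) + \bar x(t)$. Since the right-hand side is polynomial in the coordinate functions $q^{(j)}$, $\bar x_i^{(n)}$, $\x_i^{(k)}$ of $Z \times \A^\N$, the formula defines a morphism $g \colon Z \times \A^\N \to Y_\infty$ of $k$-schemes whose completion at $(z, c(t))$ recovers the stated composition.

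For part~\eqref{item2:dgk-comparison}, let $\fm$, $\fq$, $\fd$ denote the maximal ideals of $\^{\O_{X_\infty,\a}}$, $\^{\O_{Z,z}}$ and $\^{\O_{\D^\N}}$, respectively, so that the DGK factorization gives $\fm/\fm^2 = \fq/\fq^2 \oplus \fd/\fd^2$. Writing $\delta q, \delta\bar x, \delta\x, \ldots$ for the shifted variables around $(z, c(t))$, a linearization of $x(t) = q(t)^2\x(t) + \bar x(t)$ yields
\[
x(t) - a(t) \equiv 2 t^e c(t)\,\delta q(t) + t^{2e}\,\delta\x(t) + \delta\bar x(t) \pmod{\fm^2}.
\]
Taking $t^n$-coefficients, the image of $x_i^{(n)} - a_i^{(n)}$ in $\fm/\fm^2$ is $\delta\bar x_i^{(n)}$ for $n<e$; it is $\delta\bar x_i^{(n)} + 2\sum_{l=0}^{n-e} c_i^{(n-e-l)}\delta q^{(l)}$ for $e\le n<2e$; and it is $\delta\x_i^{(n-2e)} + 2\sum_{l=0}^{e-1} c_i^{(n-e-l)}\delta q^{(l)}$ for $n\ge 2e$. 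In particular the first two cases lie in $\fq/\fq^2$, while for $n\ge 2e$ the $\fd/\fd^2$-component is the basis vector $\delta\x_i^{(n-2e)}$.

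By \cref{c:proj-eff-formal-emb} these images form a basis of $\fm/\fm^2$. Since the $\fd/\fd^2$-projections of the entries with $n\ge 2e$ form a basis of $\fd/\fd^2$ while the entries with $n<2e$ project to zero, decomposing any element of $\fq/\fq^2$ in the $\fm/\fm^2$-basis and killing the $\fd$-components forces the entries with $n<2e$ to constitute a basis of $\fq/\fq^2$. Hence $\embdim(\O_{Z,z}) = 2ed$, and the ring map $\^{\O_{Y_{2e-1,\b_{2e-1}}}} \to \^{\O_{Z,z}}$ carries its $2ed$ power-series generators bijectively onto a basis of $\fq/\fq^2$, so it is an isomorphism on continuous cotangent spaces; surjectivity then follows by topological Nakayama for Noetherian complete local rings. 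The associated-graded identity is obtained by factoring the composition as $\^{\O_{Y_{2e-1,\b_{2e-1}}}} \surj \^{\O_{Z,z}} \inj \^{\O_{X_\infty,\a}}$ and observing that the second arrow is the inclusion of the first tensor factor of the DGK decomposition, which at the graded level realizes $\gr(\O_{Z,z})$ as a subring of $\gr(\O_{X_\infty,\a})$. The hard part is this cotangent-space bookkeeping inside the decomposition $\fm/\fm^2 = \fq/\fq^2 \oplus \fd/\fd^2$; the truncation bound $2e-1$ is optimal because the $t^{2e-1}$-coefficient of $q(t)^2$ contributes the last linear $\delta q^{(e-1)}$-term.
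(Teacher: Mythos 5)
Your proof is essentially correct and follows the same route as the paper: write the composition explicitly as $(q,\bar x,\bar y,\xi)\mapsto q^2\xi+\bar x$ for part~(1), and then extract part~(2) from the linearization of this formula at $(z,c(t))$ together with the fact that the images of the $x_i^{(n)}$ give a basis of $\fm/\fm^2$ (Theorem~\ref{t:general-proj} via \cref{c:proj-eff-formal-emb}). Your computation of the tangent map matches the paper's, and your projection argument inside the decomposition $\fm/\fm^2=\fq/\fq^2\oplus\fd/\fd^2$ correctly shows that the $2ed$ vectors $v_{i,n}$ with $n<2e$ form a basis of $\fq/\fq^2$, giving the efficient formal embedding and $\embdim(\O_{Z,z})=2ed$. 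Where the paper packages the same information into an auxiliary commutative square with the unipotent change of coordinates $\lambda$ and the shift $\mu$, your direct bookkeeping is an acceptable substitute.

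There is, however, one genuine misstep in your justification of the associated-graded identity. You assert that the ring map $\^{\O_{Y_{2e-1},\b_{2e-1}}}\to \^{\O_{X_\infty,\a}}$ factors as $\^{\O_{Y_{2e-1},\b_{2e-1}}}\surj\^{\O_{Z,z}}\inj\^{\O_{X_\infty,\a}}$, the second arrow being the tensor-factor inclusion of the DGK decomposition. This factorization is false at the ring level: expanding $q(t)^2\xi(t)+\bar x(t)-a(t)$ with $q=t^e+\delta q$, $\xi=c+\delta\xi$, one finds for $n<2e$ that the coefficient of $t^n$ picks up quadratic and cubic terms such as $2t^e\delta q\,\delta\xi$ and $\delta q^2\delta\xi$, so the image of $x_i^{(n)}-a_i^{(n)}$ lands outside the subring $\^{\O_{Z,z}}\cotimes 1$. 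What is true, and is all that is needed, is the corresponding statement at the level of associated graded rings: the initial forms of $x_i^{(n)}-a_i^{(n)}$, for $n<2e$, lie in $\gr(\O_{Z,z})\otimes 1$ (your degree-one computation), and since $\gr(\O_{Y_{2e-1},\b_{2e-1}})$ is a polynomial ring generated in degree one, the whole image of the graded map lies in, and (by the spanning statement you proved) equals, the subalgebra $\gr(\O_{Z,z})\otimes 1$. Replace the claimed ring-level factorization with this degree-one argument and the proof is complete.
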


\begin{proof}
We use the notation introduced at the beginning of the section. In particular: we have coordinates
$(x, y)$ in $\A^N$ such that the projection $\A^N \to Y$ is given by $(x,y)
\mapsto x$; we write $\a = (a(t), b(t))$, so $\b = f_\infty(\a) = a(t)$; we have a
space $\A^m$ with coordinates $(q(t), \bar x(t), \bar y(t))$, and $Z \subset
\A^m$ is defined by the conditions in \eqref{eq:dri-model-equations}; the point
$z \in Z$ is given by $z = (t^e, \bar a(x), \bar b(x))$ as in \eqref{eq:dri-z};
the formal scheme $\^{Z_z} \hat\times \D^\N$ is contained in the completion of
$\A^m \times \A^\N$ at $(z,c(t))$ where $c(t) = t^{-2e}(a(t))_{\geq 2e}$, 
and the coordinates in this affine space
have the form $(q(t), \bar x(t), \bar y(t), \xi(t))$.

We define a map $\A^m \times \A^\N \to Y_\infty$ via
\[
    (q(t), \bar x(t), \bar y(t), \xi(t))
    \;\mapsto\;
    q(t)^2\xi(t) + \bar x(t),
\]
and we let $g \colon Z \times \A^\N \to Y_\infty$ be the restriction. It is
clear from the discussion surrounding \eqref{eq:dri-proj} that the completion of
$g$ gives the composition of $\^{Z_z} \hat\times \D^\N \xrightarrow{\sim}
\^{X_{\infty,\a} } \inj \^{Y_{\infty,\b}}$, and the first statement of the
\lcnamecref{p:dgk-comparison} follows.

We compute the tangent map of $g$ explicitly. 
With a small abuse of notation where coordinates of elements and
coordinate functions are written in the same way,
we denote a tangent vector on
$Z \times \A^\N$ based at a point $(z,c(t)) = (t^e, \bar a(t), \bar b(t), c(t))$ by
\[
    (\,
        t^e + dq(t) \ep,
        \; \bar x(t) + d{\bar x}(t) \ep,
        \; \bar y(t) + d{\bar y}(t) \ep,
        \; c(t) + d\xi(t) \ep
    \,)
\]
where 
$dq(t) \in k[t]_{<e}$,
$d{\bar x}(t) \in (k[t]_{<2e})^d$,
$d{\bar y}(t) \in (k[t]_{<e})^c$,
$d\xi(t) \in k[t]^d$,
and $\ep^2 = 0$.
The image of such a tangent vector under $g$ is given by
\begin{align*}
    & \big(t^e + dq(t)\ep\big)^2
    \big(c(t) + d\xi(t) \ep\big)
    +
    \big(\bar x(t) + d{\bar x}(t) \ep\big) \\
    & \qquad = 
    \big(t^{2e} c(t) + \bar x(t)\big)
    +
    \big(d{\bar x}(t) + d\xi(t)t^{2e} + 2c(t)dq(t)t^e\big) \ep \\
    & \qquad = 
    a(t)
    +
    \big(d{\bar x}(t) + d\xi(t)t^{2e} + 2c(t)dq(t)t^e\big) \ep.
\end{align*}
In other words, the tangent map of $g$ at $(z,c(t))$ is given by
\[
    dx(t) 
    = 
    d{\bar x}(t) + d\xi(t)t^{2e} + 2c(t)dq(t)t^e,
\]
or, in coordinates (recall that $c(t) = t^{-2e}(a(t))_{\geq 2e}$), by
\[
    dx_i^{(n)} =
    \begin{cases}
        d{\bar x}_i^{(n)} 
        & \text{if $n<e$,} \\
        d{\bar x}_i^{(n)} + 2 \sum_{k+l=n-e} a_i^{(k+2e)}dq^{(l)}
        & \text{if $e \leq n<2e$,} \\
        d\xi_i^{(n-2e)} + 2 \sum_{k+l=n-e} a_i^{(k+2e)}dq^{(l)}
        & \text{if $n \geq 2e$.}
    \end{cases}
\]
From this we see that $g$ induces a surjective map between associated graded
rings
\[
    \varphi \colon
    k\!\left[\,
        dx^{(n)}_i
        \,\middle|\,
        \begin{smallmatrix}
            n\in\N
            \\
            1 \leq i \leq d
        \end{smallmatrix}
    \,\right]
    \isom
    \gr(\O_{Y_\infty,\b})
    \longrightarrow
    \gr(\O_{Z,z})
    \otimes_k 
    k\!\left[\,
        d\xi^{(m)}_i
        \middle|
        \begin{smallmatrix}
            m\in\N
            \\
            1 \leq i \leq d
        \end{smallmatrix}
    \right].
\]
We have a commutative diagram
\[
\xymatrix{
    k\!\left[\,
        dx^{(n)}_i
        \,\middle|\,
        \substack{
            n \in \N
            \\
            1 \leq i \leq d
        }
    \,\right]
    \ar[r]^-\varphi
    &
    \gr(\O_{Z,z})
    \otimes_k
    k\!\left[\,
        d\xi^{(m)}_i
        \,\middle|\,
        \substack{
            m\in\N
            \\
            1 \leq i \leq d
        }
    \,\right]
    \ar[d]^-\lambda
    \\    
    k\!\left[\,
        dx^{(n)}_i
        \,\middle|\,
        \substack{
            n < 2e
            \\
            1 \leq i \leq d
        }
    \,\right]
    \otimes_k
    k\!\left[\,
        dx^{(n)}_i
        \,\middle|\,
        \substack{
            n \ge 2e
            \\
            1 \leq i \leq d
        }
    \,\right]
    \ar@{=}[u]
    \ar[r]^-{\ff \otimes_k \m}
    & 
    \gr(\O_{Z,z})
    \otimes_k
    k\!\left[\,
        d\xi^{(m)}_i
        \,\middle|\,
        \substack{
            m\in\N
            \\
            1 \leq i \leq d
        }
    \,\right]
}
\]
where $\lambda$ is the $\gr(\O_{Z,z})$-linear map given by 
\[
    d\xi^{(m)}_{i}
    \;\mapsto\;
    d\xi^{(m)}_{i} - 2\sum_{k+l=m-e} a_i^{(k+2e)}dq^{(l)}, 
\]
$\m$ is given by $dx_i^{(n)} \mapsto d\x_i^{(n-2e)}$ for $n \ge 2e$,
and $\psi$ agrees with the map $\gr(\O_{Y_{2e-1},\b_{2e-1}}) \to \gr(\O_{Z,z})$
induced by the composition 
\[
    Z\to Z\times\{c(t)\}\hookrightarrow Z\times \A^\N \to Y_\infty \to Y_{2e-1},
\]
which is given by
\[
    (\bar{x}(t),\bar{y}(t),q(t))
    \;\mapsto\;
    \bar{x}(t)
    +
    t^{-2e}(a(t))_{\geq 2e}\,q(t)^2 \mod t^{2e}.
\]

The map $\lambda$ is invertible and, by \cref{t:general-proj}, the map $\varphi$ is surjective, 
thus $\psi$ is surjective as well. This implies that
\[
    \gr(\O_{Z,z})
    =
    \im(\gr(\O_{Y_{2e-1},\b_{2e-1}})\to \gr(\O_{X_\infty,\a})),
\]
and hence the last assertion follows.
For the first part of \eqref{item2:dgk-comparison}, the fact that $\psi$ is surjective implies that
the map induced on completions $\^{O_{Y_{2e-1},\b_{2e-1}}}\to \^{\O_{Z,z}}$
is surjective as well. The fact that this is an efficient embedding follows from the injectivity of
the corresponding tangent map.
\end{proof}


\section{Applications to Mather--Jacobian discrepancies}

Throughout this section, let $X$ be a variety over a field $k$ of characteristic zero. 

Given a prime divisor $E$ on a normal birational model $f \colon Y \to X$, we define 
the \emph{Mather discrepancy} $\^k_E :=  \ord_E(\Jac_f)$ and the 
\emph{Mather--Jacobian discrepancy} (or simply \emph{Jacobian discrepancy}) 
$k^\MJ_E := \^k_E - \ord_E(\Jac_X)$ of $E$ over $X$.
Note that these definitions only depend on the valuation $\ord_E$ defined by $E$ and not
by the particular model chosen. The definition extends to any divisorial valuation $v = q \ord_E$, 
where $q$ is a positive integer, by setting $\^k_v := q\^k_E$ and  $k^\MJ_v := q k^\MJ_E$.
When $X$ is smooth, both discrepancies agree with the usual discrepancy of $E$ over $X$. 
We say that $X$ is \emph{MJ-terminal} if $k^\MJ_E > 0$ whenever $E$ is exceptional over $X$. 
As proved in \cite{Ish13,dFD14}, this condition is equivalent to the condition that 
if $X \subset Y$ is a closed embedding with $Y$ smooth and $c = \codim(X,Y)$, then 
for any closed subset $T \subsetneq X$ the pair $(Y,cX)$ has
minimal log discrepancy $\mld_T(Y,cX) > 1$. 
We refer to \cite{dFEI08,Ish13,dFD14,EI15} for general studies 
related to these invariants.

The results of this article, together with a theorem from \cite{dFD}, yields
a new proof of the following theorem of Mourtada and Reguera. 

\begin{theorem}[\protect{\cite[Theorem~4.1]{MR18}}]
\label{t:MR}
With the above notation, let $\a \in X_\infty$ be the maximal arc defining 
a given divisorial valuation $q\ord_E$ (i.e., such that $\ord_\a = q\ord_E$).
Then $\dim(\^{\O_{X_\infty,\a}}) \ge q (k_E^\MJ + 1)$.
\end{theorem}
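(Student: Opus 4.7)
The plan is to combine the identity
\[
\dim(\^A) \;=\; \embdim(A) - \embcodim(A),
\]
valid for any Noetherian local ring, with the upper bound on $\embcodim$ from \cref{t:arc-finite-grcodim} and a lower bound on $\embdim$ coming from a reduction to the smooth ambient $\A^d$ via a general linear projection.

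Since $\a$ is the maximal arc of $q\ord_E$, its generic point $\a(\e)$ is the generic point of $X$ and hence lies in $X_\sm$; therefore $\a$ is a stable point of $X_\infty$ in the sense of \cite{DL99,Reg06}, and $A := \O_{X_\infty,\a}$ is Noetherian of finite Krull dimension and finite embedding dimension. The identity above is then given by \cref{p:embdim=grcodim+dim} combined with the Noetherian equality $\dim(\gr(A)) = \dim(\^A)$. Next, \cref{t:arc-finite-grcodim}\eqref{item2:arc-finite-grcodim} provides the upper bound
\[
\embcodim(A) \;\le\; \ord_\a(\Jac_X) \;=\; q\ord_E(\Jac_X),
\]
so it suffices to prove $\embdim(A) \ge q(\^k_E + 1)$, which would yield
\[
\dim(\^A) \;\ge\; q(\^k_E + 1) - q\ord_E(\Jac_X) \;=\; q(k_E^\MJ + 1).
\]

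For the lower bound on $\embdim(A)$, I would pass through a general linear projection $f \colon X \to Y := \A^d$ with $d = \dim X$. By \cref{t:general-proj} the induced cotangent map at $\a$ and $\b := f_\infty(\a)$ is an isomorphism, so $\embdim(A) = \embdim(\O_{Y_\infty,\b})$. Since $Y$ is smooth, \cref{p:embdim=embcodim+dim} gives $\embdim(\O_{Y_\infty,\b}) = \dim(\^{\O_{Y_\infty,\b}})$, and the classical formula for the codimension of a maximal divisorial set on a smooth variety identifies this with $q'(k_{E'} + 1)$, where $\ord_\b = q'\ord_{E'}$ is the divisorial valuation on $Y$ obtained by restricting $v = q\ord_E$ to $k(Y) \subset k(X)$. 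The theorem from \cite{dFD} used here---the formula $\Om_{X_\infty/k} \otimes L \isom \Om_{X/k} \otimes P_L$ of \cite[Theorem~5.3]{dFD} that underlies \cref{t:general-proj}---together with the chain-rule identity $\Jac_{Y_1/Y} = \Jac_{Y_1/X} \cdot \pi^*(\Jac_{X/Y})$ on a birational model $\pi \colon Y_1 \to X$ on which $E$ is a prime divisor, gives both $q' \ge q$ and $k_{E'} \ge \^k_E$; combining these yields $q'(k_{E'} + 1) \ge q(\^k_E + 1)$, as required.

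The main obstacle is this last comparison of multiplicities and discrepancies under a general projection: one must check that the restricted valuation on $Y$ has multiplicity at least $q$ and discrepancy at least $\^k_E$, which requires a careful analysis of how a generic projection interacts with the Mather-theoretic invariants of $E$ over $X$. Once the chain rule for Jacobians and the book-keeping of value groups under the restriction $k(Y) \subset k(X)$ are in place, the plug-in computation above delivers the asserted inequality.
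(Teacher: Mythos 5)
Your overall reduction coincides with the paper's: write $\dim(\^A) = \embdim(A) - \embcodim(A)$ for $A = \O_{X_\infty,\a}$ (using \cref{p:embdim=embcodim+dim,p:grcodim-embcodim:finite-embdim}, which need only finite embedding dimension), bound $\embcodim(A) \le \ord_\a(\Jac_X) = q\,\ord_E(\Jac_X)$ via \cref{t:arc-finite-grcodim}, and reduce the claim to $\embdim(A) \ge q(\^k_E + 1)$. Where you diverge is in obtaining that last inequality. The paper simply invokes \cite[Theorem~11.4]{dFD}, which gives $\embdim(A) = q(\^k_E + 1)$ as an equality; you instead attempt to re-derive the bound via a general projection $f\colon X \to Y=\A^d$ and a comparison of the divisorial data of $\ord_\a$ with those of the restricted valuation $\ord_\b$ on $Y$.

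That re-derivation has a real gap, not merely unfinished book-keeping. The chain-rule identity you write, $\Jac_{Y_1/Y} = \Jac_{Y_1/X}\cdot\pi^*(\Jac_{X/Y})$, lives on a birational model $\pi\colon Y_1 \to X$ of $X$, but the composite $Y_1 \to Y$ is generically finite of degree $>1$, hence not a birational model of $Y$; so the divisor $E_1\subset Y_1$ computing $\^k_E$ is not a divisor over $Y$ in the sense required to compute $k_{E'}$, and the chain rule does not directly compare $k_{E'}$ with $\^k_E$. Moreover, since the combination of \cite[Theorem~11.4]{dFD} with \cref{t:general-proj} yields the \emph{equality} $q'(k_{E'}+1) = q(\^k_E+1)$ (when $\b$ is the maximal arc of $\ord_\b$), your two asserted inequalities $q'\ge q$ and $k_{E'}\ge\^k_E$ would have to both be equalities; establishing that pair separately is substantially harder than, and essentially equivalent to, the theorem you are trying to avoid citing. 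Finally, the identification $\dim(\^{\O_{Y_\infty,\b}}) = q'(k_{E'}+1)$ presumes $\b$ is the maximal arc of its valuation on $Y$; what is immediate is only the inequality $\dim(\^{\O_{Y_\infty,\b}}) \ge q'(k_{E'}+1)$, because $\b$ lies in (hence specializes the generic point of) the maximal divisorial set of $\ord_\b$.

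In short, after the shared reduction, the content of \cite[Theorem~11.4]{dFD} is exactly the ``careful analysis of how a generic projection interacts with the Mather-theoretic invariants'' that you flag at the end; treating it as routine is where the argument breaks.
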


\begin{proof}
We have $\embdim(\O_{X_\infty,\a}) = q(\^k_E+1)$ by \cite[Theorem~11.4]{dFD}, 
and \cref{t:arc-finite-grcodim} gives us $\embcodim(\O_{X_\infty,\a}) \le \ord_\a(\Jac_X)$. 
It follows then by \cref{p:embdim=embcodim+dim,p:grcodim-embcodim:finite-embdim} that
\[
\dim(\^{\O_{X_\infty,\a}})
=
\embdim(\O_{X_\infty,\a}) - \embcodim(\O_{X_\infty,\a}) \ge q (k_E^\MJ + 1).
\qedhere
\]
\end{proof}

Assume now that $X$ is an affine toric variety. To fix notation, let $T$ be an algebraic $k$-torus, 
$N := \Hom_k({\mathbb G}_m,T)$, $M := \Hom_\Z(M,\Z)$, 
$\s \subset N_\R$ a rational convex cone, and $X := \Spec k[\s^\vee \cap M]$.
Note that every $v \in \s \cap N$ defines a $T$-invariant divisorial valuation on $X$.

In their recent article \cite{BS19b}, 
Bourqui and Sebag study DGK decompositions of $X_\infty$ at arcs that are not fully contained
in the $T$-invariant divisor of $X$. The focus is on the open set
$X_\infty^\o \subset X_\infty$ consisting of those arcs whose generic point is in $T$.
They prove that for any $\a \in X_\infty^\o$, the local ring $\O_{X_\infty,\a}$
only depends on the associated valuation $\ord_\a$, 
and in particular so does the minimal formal model
\cite[Corollary~3.3]{BS19b}.
In particular, if we set
\[
X_{\infty,v}^\o := \{ \a \in X_\infty^\o \mid \ord_\a = v \},
\]
then we can denote by $\mathcal{Z}^{\min}_v$ the minimal formal
model of $X_\infty$ at any arc $\a \in X_{\infty,v}^\o$.

The next theorem is one of the main results of \cite{BS19b}.
A similar, more general property is proved for elements $v$
satisfying a certain property called $\cP_v$; we refer to the original source for the precise statement.

\begin{theorem}[\protect{\cite[Corollary~6.4]{BS19b}}]
\label{t:BS}
With the above notation, if 
$v$ is indecomposable in $\s \cap N$, then the associated minimal formal model $\mathcal{Z}^{\min}_v$
has $\dim(\mathcal{Z}^{\min}_v) = 0$ and $\embdim(\mathcal{Z}^{\min}_v) = \^k_v$.
\end{theorem}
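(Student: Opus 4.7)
The plan is to combine the paper's DGK-decomposition machinery with \cite[Theorem~11.4]{dFD} and explicit toric considerations to pin down $\embdim(\mathcal{Z}^{\min}_v)$ and $\dim(\mathcal{Z}^{\min}_v)$ separately.

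By \cite[Corollary~3.3]{BS19b}, the local ring $\O_{X_\infty,\a}$ is independent, up to isomorphism, of the choice of $\a \in X_{\infty,v}^\o$, so I would work with a specific $k$-rational representative $\a_v \in X_\infty(k)$, namely the arc induced by the one-parameter subgroup of $T$ associated with $v$. Since $v \in \s \cap N$, this arc satisfies $\a_v(\e) \in T \subset X_\sm$, so by \cref{t:DGK+converse} it admits a DGK decomposition whose indecomposable factor is $\mathcal{Z}^{\min}_v$. The local ring $\O_{\mathcal{Z}^{\min}_v,z}$ is Noetherian, hence \cref{p:embdim=grcodim+dim} yields
\[
\embdim(\mathcal{Z}^{\min}_v) = \dim(\mathcal{Z}^{\min}_v) + \embcodim(\mathcal{Z}^{\min}_v),
\]
while \cref{p:DGK-grcodim} gives $\embcodim(\mathcal{Z}^{\min}_v) = \embcodim(\O_{X_\infty,\a_v})$. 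It therefore suffices to establish the two bounds $\embdim(\mathcal{Z}^{\min}_v) \le \^k_v$ and $\embcodim(\mathcal{Z}^{\min}_v) \ge \^k_v$, after which the inequality $\embcodim \le \embdim$ forces all three quantities to equal $\^k_v$ and $\dim(\mathcal{Z}^{\min}_v) = 0$.

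For the upper bound on embedding dimension, I would consider the maximal stable constructible arc $\b_v \in X_\infty$ with $\ord_{\b_v} = v$. Since $v$ is indecomposable, the multiplicity in the decomposition $v = q\ord_E$ is $q = 1$, so \cite[Theorem~11.4]{dFD} gives $\embdim(\O_{X_\infty,\b_v}) = \^k_v + 1$. The extra $+1$ corresponds to the one-parameter family of specializations of $\b_v$ to $k$-rational arcs in $X_{\infty,v}^\o$, realized via the action of the one-parameter subgroup of $T$ generated by $v$. A devissage argument in the spirit of the structure theorems in \cite[Section~10]{dFD}, comparing the Noetherian completion at $\b_v$ with that at $\a_v$ up to a single smooth parameter, would then give $\embdim(\mathcal{Z}^{\min}_v) = \^k_v$.

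For the matching lower bound $\embcodim(\mathcal{Z}^{\min}_v) \ge \^k_v$, the approach is to exploit the toric structure directly: the ideal defining $X$ in a toric embedding $X \inj \A^N$ is generated by binomials $x^a - x^b$, and their Hasse--Schmidt derivatives provide explicit generators for the ideal of $X_\infty$ inside the polynomial ring on jet coordinates. Computing the initial ideal at $\a_v$ and proving it has height exactly $\^k_v$ is the main obstacle. This is where indecomposability is used essentially: if $v$ were decomposable as $v = v_1 + v_2$, one would obtain lower-order binomial relations contributing redundant initial forms that shrink the height, whereas indecomposability guarantees the expected regular sequence of length $\^k_v$ in $\gr(\O_{X_\infty,\a_v})$.
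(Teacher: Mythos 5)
The paper does not prove \cref{t:BS}---it imports it from \cite[Corollary~6.4]{BS19b}, where Bourqui and Sebag construct a finite formal model adapted to the toric structure and compute its invariants directly. Your reduction, via \cref{p:DGK-grcodim} and the identity $\embdim = \dim + \embcodim$ for Noetherian local rings, to the two bounds $\embdim(\mathcal{Z}^{\min}_v) \le \^k_v$ and $\embcodim(\mathcal{Z}^{\min}_v) \ge \^k_v$ is sound as a strategy and makes good use of the machinery in the present paper, but neither bound is actually established, so what you have is an outline rather than a proof.

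For the upper bound, the proposed "devissage" passing from $\embdim(\O_{X_\infty,\b_v}) = \^k_v + 1$ at the Noetherian stable point $\b_v$ to $\embdim(\mathcal{Z}^{\min}_v) = \^k_v$ at the $k$-rational specialization $\a_v$ is not supplied; $\O_{X_\infty,\b_v}$ has finite embedding dimension while $\O_{X_\infty,\a_v}$ does not, and the paper provides no mechanism to control how the minimal formal model's invariants relate to the local ring at a stable point lying over it. The most relevant available tool, Lech's inequality $\embcodim(A_\fp) \le \embcodim(A)$, is noted in \S\ref{s:embcodim} to hold only for Noetherian $A$, with the non-Noetherian case flagged as an open problem---and $\O_{X_\infty,\a_v}$ is non-Noetherian. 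For the lower bound you rightly identify the height computation of the initial ideal at $\a_v$ as "the main obstacle," but the claim that indecomposability of $v$ forces a regular sequence of length $\^k_v$ among the initial forms of the Hasse--Schmidt derivatives of the defining binomials is asserted, not argued; this is precisely the combinatorial core of the Bourqui--Sebag proof and would have to be reproduced here. Without either bound the pinching argument cannot close.
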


Indecomposable elements $v \in \s \cap N$ are characterized by the property that their
centers on any resolution of singularity of $X$ are irreducible components of codimension 1
of the exceptional locus, see \cite[Theorem~2.7]{BS19b}. In the terminology of the Nash problem, 
these form a particular class of \emph{essential valuations}. 
By combining the above \lcnamecref{t:BS} with our results, we obtain the following corollary. 

\begin{corollary}
\label{c:toric-k^MJ}
Let $X = \Spec k[\s^\vee \cap M]$ be an affine toric variety. 
\begin{enumerate}
\item
\label{item1:toric-k^MJ}
For any indecomposable element $v \in \s \cap N$, we have $k^\MJ_v \le 0$. 
\item
\label{item2:toric-k^MJ}
If $X$ is singular and $\Q$-factorial, then $X$ is not MJ-terminal. 
\end{enumerate}
\end{corollary}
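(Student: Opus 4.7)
The plan is to prove part \eqref{item1:toric-k^MJ} by combining Theorem~\ref{t:BS} with Proposition~\ref{p:DGK-grcodim} (which transfers embedding codimension across a DGK decomposition) and the Jacobian bound from Theorem~\ref{t:arc-finite-grcodim}. Part \eqref{item2:toric-k^MJ} will then follow by producing, for a singular $\Q$-factorial toric $X$, an indecomposable element $v \in \s \cap N$ that does not lie on any ray of $\s$.

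For part \eqref{item1:toric-k^MJ}, I would fix a maximal arc $\a \in X_{\infty,v}^\o$ realizing the valuation $v$. Theorem~\ref{t:BS} tells us that the minimal formal model of $\a$ is of the form $\^{Z_z}$ with $\dim(\O_{Z,z}) = 0$ and $\embdim(\O_{Z,z}) = \^k_v$, so Proposition~\ref{p:embdim=grcodim+dim} (in its Noetherian form) yields $\embcodim(\O_{Z,z}) = \^k_v$. Since the minimal formal model provides a DGK decomposition $\^{\O_{X_\infty,\a}} \isom \^{\O_{Z,z}} \,\cotimes_k\, \^P$ for some power series ring $\^P$, Proposition~\ref{p:DGK-grcodim} transports this equality to give $\embcodim(\O_{X_\infty,\a}) = \^k_v$. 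On the other hand, the characteristic is zero and $\a(\e)$ lies in $T \subset X_\sm$, so Theorem~\ref{t:arc-finite-grcodim}\eqref{item1:arc-finite-grcodim} gives $\embcodim(\O_{X_\infty,\a}) \le \ord_v(\Jac_X)$. Combining, $\^k_v \le \ord_v(\Jac_X)$, i.e., $k^\MJ_v \le 0$.

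For part \eqref{item2:toric-k^MJ}, $\Q$-factoriality of $X$ means $\s$ is simplicial, and singularity of $X$ means $\s$ is not smooth; consequently, the submonoid $\sum_i \Z_{\ge 0} v_i$ of $\s \cap N$ generated by the primitive ray generators $v_1,\dots,v_r$ of $\s$ is properly contained in $\s \cap N$. By Gordan's lemma $\s \cap N$ is finitely generated as a monoid, and its minimal generating set is exactly the set of its indecomposable elements, so there exists an indecomposable $v \in \s \cap N$ distinct from each $v_i$. Being indecomposable, $v$ cannot equal $c v_i$ for any $c > 1$ either, so $v$ lies off every ray of $\s$ and thus defines a divisorial valuation that is exceptional over $X$. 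Applying part \eqref{item1:toric-k^MJ} to this $v$ yields $k^\MJ_v \le 0$, so $X$ is not MJ-terminal.

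The main obstacle is in part \eqref{item1:toric-k^MJ}, where one has to confirm that the setup of Bourqui--Sebag, which treats arcs whose residue field is in general strictly larger than $k$, genuinely yields a DGK decomposition in the precise sense required by Proposition~\ref{p:DGK-grcodim}. If this requires care, one can instead base change to the residue field $L$ of $\a$, apply Theorem~\ref{t:BS} and the DGK formalism over $L$ (where $\a$ becomes $L$-rational), and then recover the original embedding codimension via Proposition~\ref{p:grcodim-field-change}.
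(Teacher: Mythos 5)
Your proof is correct and follows essentially the same route as the paper: combine Theorem~\ref{t:BS} (the minimal formal model has dimension zero and embedding dimension $\^k_v$) with the Jacobian bound on the embedding codimension of $\O_{X_\infty,\a}$, using Proposition~\ref{p:DGK-grcodim} and Proposition~\ref{p:embdim=grcodim+dim} to turn the data about the formal model into the equality $\embcodim(\O_{X_\infty,\a}) = \^k_v$. The paper cites \cref{t:DGK+converse} for the Jacobian bound, which is stated only for $k$-rational arcs; you correctly observe that the maximal divisorial arc defining $v$ is generally not $k$-rational, and you go directly to \cref{t:arc-finite-grcodim}, whose hypotheses are satisfied for any $\a$ with $\a(\e)\in X_\sm$ when $\charK k=0$. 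This is a more careful route. For part~\eqref{item2:toric-k^MJ} your argument via Gordan's lemma and the Hilbert basis of $\s\cap N$ is a clean, elementary alternative to the paper's appeal to essential valuations and the purity of the exceptional locus over a $\Q$-factorial base; both arguments are correct, and the Hilbert-basis version is arguably more self-contained.

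One small caveat on your fallback in the last paragraph: Proposition~\ref{p:grcodim-field-change} is stated for a local $k$-algebra whose residue field \emph{is} $k$, whereas $\O_{X_\infty,\a}$ has residue field $L\supsetneq k$, so that proposition does not apply verbatim (and in any case $\O_{(X_L)_\infty,\tilde\a}$ is not literally $\O_{X_\infty,\a}\otimes_k L$). Fortunately the fallback is unnecessary: the Bourqui--Sebag result already provides a DGK decomposition over the residue field $L$ in the sense of the present paper (the definition of DGK decomposition is made relative to the residue field of $A$), so Proposition~\ref{p:DGK-grcodim} applies directly with $L$ in place of $k$.
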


\begin{proof}
Part \eqref{item1:toric-k^MJ} follows immediately from \cref{t:BS}
and \cref{t:DGK+converse}, and
\eqref{item2:toric-k^MJ} follows from \eqref{item1:toric-k^MJ}
and the observation that if $X$ is singular and $\Q$-factorial then 
$\s \cap N$ necessarily contains an exceptional indecomposable element. 
This is just because the exceptional locus of any resolution of singularity 
of a $\Q$-factorial variety has always pure codimension 1, 
and the set of essential (toric) valuations is nonempty if $X$ is singular.
\end{proof}

  

\begin{bibdiv}
\begin{biblist}


\let\^\circum

\hypersetup{urlcolor=black}

\newcommand\arxiv[1]{%
    \href%
        {https://arxiv.org/abs/#1}%
        {arXiv:#1}%
}

\renewcommand\url[1]{%
    \href%
        {#1}%
        {#1}%
}



\bib{EGAiv_1}{article}{
   label    = {EGA\,IV$_1$},
   author   = {Grothendieck, A.},
   title    = {\'{E}l\'{e}ments de g\'{e}om\'{e}trie alg\'{e}brique. IV. \'{E}tude locale des sch\'{e}mas et des morphismes de sch\'{e}mas. I},
   language = {French},
   journal  = {Inst. Hautes \'{E}tudes Sci. Publ. Math.},
   number   = {20},
   date     = {1964},
   pages    = {259},
   issn     = {0073-8301},
   review   = {\MR{0173675}},
}

 \bib{stacks-project}{article}{
    label  = {Stacks}
    title  = {The Stacks Project},
    note   = {\url{http://stacks.math.columbia.edu}},
    url    = {http://stacks.math.columbia.edu},
 }

\bib{AT09}{article}{
   author  = {Asgharzadeh, M.},
   author  = {Tousi, M.},
   title   = {On the notion of Cohen-Macaulayness for non-Noetherian rings},
   journal = {J. Algebra},
   volume  = {322},
   date    = {2009},
   number  = {7},
   pages   = {2297--2320},
   issn    = {0021-8693},
   review  = {\MR{2553201}},
   doi     = {10.1016/j.jalgebra.2009.06.017},
}

\bib{AM69}{book}{
   author    = {Atiyah, M. F.},
   author    = {Macdonald, I. G.},
   title     = {Introduction to commutative algebra},
   publisher = {Addison-Wesley Publishing Co., Reading, Mass.-London-Don Mills, Ont.},
   date      = {1969},
   pages     = {ix+128},
   review    = {\MR{0242802}},
}

\bib{Art69}{article}{
   author  = {Artin, M.},
   title   = {Algebraic approximation of structures over complete local rings},
   journal = {Inst. Hautes \'{E}tudes Sci. Publ. Math.},
   number  = {36},
   date    = {1969},
   pages   = {23--58},
   issn    = {0073-8301},
   review  = {\MR{268188}},
}

\bib{Bec90}{article}{
    AUTHOR = {Becker, T.},
     TITLE = {Stability and {B}uchberger criterion for standard bases in
              power series rings},
   JOURNAL = {J. Pure Appl. Algebra},
    VOLUME = {66},
      YEAR = {1990},
    NUMBER = {3},
     PAGES = {219--227},
      ISSN = {0022-4049},
       DOI = {10.1016/0022-4049(90)90028-G},
}

\bib{Bh16}{article}{
   author  = {Bhatt, B.},
   title   = {Algebraization and Tannaka duality},
   journal = {Camb. J. Math.},
   volume  = {4},
   date    = {2016},
   number  = {4},
   pages   = {403--461},
   issn    = {2168-0930},
   review  = {\MR{3572635}},
   doi     = {10.4310/CJM.2016.v4.n4.a1},
}

\bib{BK84}{article}{
   author  = {Bochnak, J.},
   author  = {Kucharz, W.},
   title   = {Local algebraicity of analytic sets},
   journal = {J. Reine Angew. Math.},
   volume  = {352},
   date    = {1984},
   pages   = {1--14},
   issn    = {0075-4102},
   review  = {\MR{758691}},
   doi     = {10.1515/crll.1984.352.1},
}

\bib{Bou72}{book}{
   author    = {Bourbaki, N.},
   title     = {Elements of mathematics. Commutative algebra},
   note      = {Translated from the French},
   publisher = {Hermann, Paris; Addison-Wesley Publishing Co., Reading, Mass.},
   date      = {1972},
   pages     = {xxiv+625},
   review    = {\MR{0360549}},
}

\bib{Bou74}{book}{
  author    = {Bourbaki, N.},
  title     = {Elements of mathematics. Algebra, Part I: Chapters 1-3},
  note      = {Translated from the French},
  publisher = {Hermann, Paris; Addison-Wesley Publishing Co., Reading Mass.},
  date      = {1974},
  pages     = {xxiii+709},
  review    = {\MR{0354207}},
}


\bib{BS17a}{article}{
   author  = {Bourqui, D.},
   author  = {Sebag, J.},
   title   = {The Drinfeld-Grinberg-Kazhdan theorem is false for singular arcs},
   journal = {J. Inst. Math. Jussieu},
   volume  = {16},
   date    = {2017},
   number  = {4},
   pages   = {879--885},
   issn    = {1474-7480},
   review  = {\MR{3680347}},
   doi     = {10.1017/S1474748015000341},
}

\bib{BS17b}{article}{
   author  = {Bourqui, D.},
   author  = {Sebag, J.},
   title   = {The Drinfeld-Grinberg-Kazhdan theorem for formal schemes and singularity theory},
   journal = {Confluentes Math.},
   volume  = {9},
   date    = {2017},
   number  = {1},
   pages   = {29--64},
   issn    = {1793-7434},
   review  = {\MR{3713815}},
   doi     = {10.5802/cml.35},
}

\bib{BS17c}{article}{
   author  = {Bourqui, D.},
   author  = {Sebag, J.},
   title   = {Smooth arcs on algebraic varieties},
   journal = {J. Singul.},
   volume  = {16},
   date    = {2017},
   pages   = {130--140},
   issn    = {1949-2006},
   review  = {\MR{3670512}},
}

\bib{BS19a}{article}{
   author  = {Bourqui, D.},
   author  = {Sebag, J.},
   title   = {Cancellation and regular derivations},
   journal = {J. Algebra Appl.},
   volume  = {18},
   number  = {09},
   date    = {2019},
}

\bib{BS19b}{article}{
   author  = {Bourqui, D.},
   author  = {Sebag, J.},
   title   = {Finite formal model of toric singularities},
   journal = {J. Math. Soc. Japan},
   volume  = {71},
   date    = {2019},
   pages   = {805--829},
 }

\bib{Bou}{article}{
   author = {Bouthier, A.},
   title  = {Cohomologie étale des espaces d'arcs},
   note   = {Preprint, \arxiv{1509.02203v6}},
   url    = {https://arxiv.org/abs/1509.02203v6},
}

\bib{BK17}{article}{
   author = {Bouthier, A.},
   author = {Kazhdan, D.},
   title  = {Faisceaux pervers sur les espaces d'arcs},
   note   = {Preprint, \arxiv{1509.02203v5}},
   url    = {https://arxiv.org/abs/1509.02203v5},
}

\bib{BNS16}{article}{
   author  = {Bouthier, A.},
   author  = {Ng\^{o}, B. C.},
   author  = {Sakellaridis, Y.},
   title   = {On the formal arc space of a reductive monoid},
   journal = {Amer. J. Math.},
   volume  = {138},
   date    = {2016},
   number  = {1},
   pages   = {81--108},
   issn    = {0002-9327},
   review  = {\MR{3462881}},
   doi     = {10.1353/ajm.2016.0004},
}

\bib{BH93}{book}{
   author    = {Bruns, W.},
   author    = {Herzog, J.},
   title     = {Cohen-Macaulay rings},
   series    = {Cambridge Studies in Advanced Mathematics},
   volume    = {39},
   publisher = {Cambridge University Press, Cambridge},
   date      = {1993},
   pages     = {xii+403},
   isbn      = {0-521-41068-1},
   review    = {\MR{1251956}},
}

\bib{Chi20}{thesis}{
   author = {Chiu, C.},
   title  = {Local geometry of the space of arcs},
   type   = {PhD dissertation},
   year   = {2020},
   school = {University of Vienna},
}

\bib{CH}{article}{
   author = {Chiu, C.},
   author = {Hauser, H.},
   title  = {On the formal neighborhood of degenerate arcs},
   note   = {Preprint, available at \url{https://homepage.univie.ac.at/herwig.hauser}},
   url    = {https://homepage.univie.ac.at/herwig.hauser},
}

\bib{dFD14}{article}{
   author={de Fernex, T.},
   author={Docampo, R.},
   title={Jacobian discrepancies and rational singularities},
   journal={J. Eur. Math. Soc. (JEMS)},
   volume={16},
   date={2014},
   number={1},
   pages={165--199},
   issn={1435-9855},
   review={\MR{3141731}},
   doi={10.4171/JEMS/430},
}


\bib{dFD}{article}{
    AUTHOR = {de Fernex, T.},
    AUTHOR = {Docampo, R.},
     TITLE = {Differentials on the arc space},
   JOURNAL = {Duke Math. J.},
    VOLUME = {169},
      YEAR = {2020},
    NUMBER = {2},
     PAGES = {353--396},
      ISSN = {0012-7094},
   MRCLASS = {14E18 (13N05 14F10)},
  MRNUMBER = {4057146},
       DOI = {10.1215/00127094-2019-0043},
       URL = {https://doi.org/10.1215/00127094-2019-0043},
}

\bib{dFEI08}{article}{
   author={de Fernex, T.},
   author={Ein, L.},
   author={Ishii, S.},
   title={Divisorial valuations via arcs},
   journal={Publ. Res. Inst. Math. Sci.},
   volume={44},
   date={2008},
   number={2},
   pages={425--448},
   issn={0034-5318},
   review={\MR{2426354 (2010d:14055)}},
   doi={10.2977/prims/1210167333},
}

\bib{DL99}{article}{
   author  = {Denef, J.},
   author  = {Loeser, F.},
   title   = {Germs of arcs on singular algebraic varieties and motivic integration},
   journal = {Invent. Math.},
   volume  = {135},
   date    = {1999},
   number  = {1},
   pages   = {201--232},
   issn    = {0020-9910},
   review  = {\MR{1664700}},
   doi     = {10.1007/s002220050284},
}

\bib{Dri}{article}{
   author  = {Drinfeld, V.},
   title   = {On the Grinberg--Kazhdan formal arc theorem},
   note    = {Preprint, \arxiv{math/0203263}},
   url     = {http://arxiv.org/abs/math/0203263},
}

\bib{Dri18}{article}{
   author  = {Drinfeld, V.},
   title   = {Grinberg--Kazhdan theorem and Newton groupoids},
   note    = {Preprint, \arxiv{1801.01046}},
   url     = {http://arxiv.org/abs/1801.01046},
}

\bib{EI15}{article}{
   author={Ein, L.},
   author={Ishii, S.},
   title={Singularities with respect to Mather-Jacobian discrepancies},
   conference={
      title={Commutative algebra and noncommutative algebraic geometry. Vol.
      II},
   },
   book={
      series={Math. Sci. Res. Inst. Publ.},
      volume={68},
      publisher={Cambridge Univ. Press, New York},
   },
   date={2015},
   pages={125--168},
   review={\MR{3496863}},
}

\bib{EM09}{article}{
   author  = {Ein, L.},
   author  = {Musta\c{t}\u{a}, M.},
   title   = {Generically finite morphisms and formal neighborhoods of arcs},
   journal = {Geom. Dedicata},
   volume  = {139},
   date    = {2009},
   pages   = {331--335},
   issn    = {0046-5755},
   review  = {\MR{2481855}},
   doi     = {10.1007/s10711-008-9320-7},
}

\bib{Eis95}{book}{
   author    = {Eisenbud, D.},
   title     = {Commutative algebra},
   series    = {Graduate Texts in Mathematics},
   volume    = {150},
   publisher = {Springer-Verlag, New York},
   date      = {1995},
   pages     = {xvi+785},
   isbn      = {0-387-94268-8},
   isbn      = {0-387-94269-6},
   review    = {\MR{1322960}},
   doi       = {10.1007/978-1-4612-5350-1},
}

\bib{GH93}{article}{
   author  = {Gilmer, R.},
   author  = {Heinzer, W.},
   title   = {Primary ideals with finitely generated radical in a commutative ring},
   journal = {Manuscripta Math.},
   volume  = {78},
   date    = {1993},
   number  = {2},
   pages   = {201--221},
   issn    = {0025-2611},
   review  = {\MR{1202161}},
   doi     = {10.1007/BF02599309},
}

\bib{Gla89}{book}{
   author    = {Glaz, S.},
   title     = {Commutative coherent rings},
   series    = {Lecture Notes in Mathematics},
   volume    = {1371},
   publisher = {Springer-Verlag, Berlin},
   date      = {1989},
   pages     = {xii+347},
   isbn      = {3-540-51115-6},
   review    = {\MR{999133}},
   doi       = {10.1007/BFb0084570},
}

\bib{GK00}{article}{
   author  = {Grinberg, M.},
   author  = {Kazhdan, D.},
   title   = {Versal deformations of formal arcs},
   journal = {Geom. Funct. Anal.},
   volume  = {10},
   date    = {2000},
   number  = {3},
   pages   = {543--555},
   issn    = {1016-443X},
   review  = {\MR{1779611}},
   doi     = {10.1007/PL00001628},
}

\bib{HW}{article}{
   author = {Hauser, H.},
   author = {Woblistin, S.},
     TITLE = {Arquile {V}arieties -- {V}arieties {C}onsisting of {P}ower
              {S}eries in a {S}ingle {V}ariable},
   JOURNAL = {Forum Math. Sigma},
    VOLUME = {9},
      YEAR = {2021},
     PAGES = {Paper No. e78},
       DOI = {10.1017/fms.2021.73},
       URL = {https://doi.org/10.1017/fms.2021.73},
}

\bib{Ish13}{article}{
   author={Ishii, S.},
   title={Mather discrepancy and the arc spaces},
   journal={Ann. Inst. Fourier (Grenoble)},
   volume={63},
   date={2013},
   number={1},
   pages={89--111},
   issn={0373-0956},
   review={\MR{3089196}},
   doi={10.5802/aif.2756},
}

\bib{Lec64}{article}{
   author={Lech, C.},
   title={Inequalities related to certain couples of local rings},
   journal={Acta Math.},
   volume={112},
   date={1964},
   pages={69--89},
   issn={0001-5962},
   review={\MR{161876}},
   doi={10.1007/BF02391765},
}

\bib{Mat89}{book}{
   author    = {Matsumura, H.},
   title     = {Commutative ring theory},
   series    = {Cambridge Studies in Advanced Mathematics},
   volume    = {8},
   edition   = {2},
   publisher = {Cambridge University Press, Cambridge},
   date      = {1989},
   pages     = {xiv+320},
   isbn      = {0-521-36764-6},
   review    = {\MR{1011461}},
}

 \bib{MR18}{article}{
    author  = {Mourtada, H.},
    author  = {Reguera, A. J.},
    title   = {Mather discrepancy as an embedding dimension in the space of arcs},
    journal = {Publ. Res. Inst. Math. Sci.},
    volume  = {54},
    date    = {2018},
    number  = {1},
    pages   = {105--139},
    issn    = {0034-5318},
    review  = {\MR{3749346}},
    doi     = {10.4171/PRIMS/54-1-4},
}

\bib{NS10}{article}{
   author  = {Nicaise, J.},
   author  = {Sebag, J.},
   title   = {Greenberg approximation and the geometry of arc spaces},
   journal = {Comm. Algebra},
   volume  = {38},
   date    = {2010},
   number  = {11},
   pages   = {4077--4096},
   issn    = {0092-7872},
   review  = {\MR{2764852}},
   doi     = {10.1080/00927870903295398},
}

\bib{Ngo17}{article}{
   label         = {Ngo},
   author        = {Ng\^{o}, B. C.},
   title         = {Weierstrass preparation theorem and singularities in the space of non-degenerate arcs},
   note          = {Preprint, \arxiv{1706.05926}},
   url           = {http://arxiv.org/abs/1706.05926},
}

\bib{Reg06}{article}{
   author  = {Reguera, A. J.},
   title   = {A curve selection lemma in spaces of arcs and the image of the Nash map},
   journal = {Compos. Math.},
   volume  = {142},
   date    = {2006},
   number  = {1},
   pages   = {119--130},
   issn    = {0010-437X},
   review  = {\MR{2197405}},
   doi     = {10.1112/S0010437X05001582},
}

\bib{Reg09}{article}{
   author  = {Reguera, A. J.},
   title   = {Towards the singular locus of the space of arcs},
   journal = {Amer. J. Math.},
   volume  = {131},
   date    = {2009},
   number  = {2},
   pages   = {313--350},
   issn    = {0002-9327},
   review  = {\MR{2503985}},
   doi     = {10.1353/ajm.0.0046},
}

\bib{Reg18}{article}{
   author  = {Reguera, A. J.},
   title   = {Coordinates at stable points of the space of arcs},
   journal = {J. Algebra},
   volume  = {494},
   date    = {2018},
   pages   = {40--76},
   issn    = {0021-8693},
   review  = {\MR{3723170}},
   doi     = {10.1016/j.jalgebra.2017.09.031},
}

\bib{Seb16}{article}{
   author  = {Sebag, J.},
   title   = {Primitive arcs on curves},
   journal = {Bull. Belg. Math. Soc. Simon Stevin},
   volume  = {23},
   date    = {2016},
   number  = {4},
   pages   = {481--486},
   issn    = {1370-1444},
   review  = {\MR{3579662}},
}


\bib{Whi65}{article}{
   author       = {Whitney, H.},
   title        = {Local properties of analytic varieties},
   conference   = {
      title     = {Differential and Combinatorial Topology (A Symposium in Honor of Marston Morse)},
   },
   book         = {
      publisher = {Princeton Univ. Press, Princeton, N. J.},
   },
   date         = {1965},
   pages        = {205--244},
   review       = {\MR{0188486}},
}

 \bib{Voj07}{article}{
    author       = {Vojta, P.},
    title        = {Jets via Hasse-Schmidt derivations},
    conference   = {
       title     = {Diophantine geometry},
    },
    book         = {
       series    = {CRM Series},
       volume    = {4},
       publisher = {Ed. Norm., Pisa},
    },
    date         = {2007},
    pages        = {335--361},
    review       = {\MR{2349665}},
}


\end{biblist}
\end{bibdiv}

\end{document}